\DeclareSymbolFontAlphabet{\mathbbm}{bbold}
\DeclareSymbolFontAlphabet{\mathbb}{AMSb}%
\DeclareFontFamily{T1}{calligra}{}
\DeclareFontShape{T1}{calligra}{m}{n}{<-> s * [1.80] callig15}{}
\DeclareMathAlphabet{\mathcalligra}{T1}{calligra}{m}{n}
\DeclareFontFamily{OT1}{pzc}{}
\DeclareFontShape{OT1}{pzc}{m}{it}{<-> s * [1.30] pzcmi7t}{}
\DeclareMathAlphabet{\mathpzc}{OT1}{pzc}{m}{it}
\DeclareMathAlphabet{\pazocal}{OMS}{zplm}{m}{n}
\newtheorem{theorem}{Theorem}[section]
\newtheorem{lemma}[theorem]{Lemma}
\newtheorem{corollary}[theorem]{Corollary}
\newtheorem{proposition}[theorem]{Proposition}
\newtheorem*{theorem*}{Theorem}
\newtheorem*{lemma*}{Lemma}
\theoremstyle{definition}
\newtheorem{definition}{Definition}
\newtheorem{example}{Example}
\newtheorem{problem}{Problem}
\tikzset{point/.style = {fill=gray,circle,inner sep=2pt}}
\renewcommand\AA{{\mathbb A}}
\newcommand\BB{{\mathbb B}}
\newcommand\VV{{\mathbb V}}
\newcommand{\Aut}{\operatorname{Aut}\nolimits}
\newcommand{\End}{\operatorname{End}\nolimits}
\newcommand{\id}{\operatorname{id}\nolimits}
\newcommand{\Mat}{\operatorname{Mat}\nolimits}
\newcommand\Spec{\operatorname{Spec}\nolimits}
\newcommand{\tr}{\operatorname{tr}\nolimits}
\renewcommand\mod{{\,\mathrm{mod}\,}}
\renewcommand\tilde[1]{\widetilde{#1}}
\renewcommand\bar[1]{\overline{#1}}
\def\[[{[\![}
\def\]]{]\!]}
\def\(({(\!(}
\def\)){)\!)}
\newlength{\rrrr}
\newcommand{\intoo}[1]{\:
\xymatrix@1{\ar@{^(->}[r]^{#1}&}\:}
\newcommand{\ootni}[1]{\:
\xymatrix@1{&\ar@{_(->}[l]_(.3){#1}}\:}
\newcommand{\loeqs}[1]{\:
\xymatrix@1{\ar@{=}[rr]^{\ #1\ }&&}\:}
\def\geq{\geqslant}
\def\leq{\leqslant}
\newtheorem*{main-theo}{Main Theorem}
\theoremstyle{remark}
\newtheorem{remark}[theorem]{Remark}
\numberwithin{equation}{section}
\newcommand{\Z}{\ensuremath{\mathbb{Z}}}
\newcommand{\kk}{\ensuremath{\Bbbk}}
\newcommand{\Hom}{\operatorname{Hom}}
\newcommand{\Der}{\operatorname{Der}}
\newcommand{\Id}{\operatorname{Id}}
\newcommand{\Gl}{\operatorname{GL}}
\newcommand{\Rep}{\operatorname{Rep}}
\newcommand{\res}{\operatorname{res}}
\newcommand{\Vect}{\operatorname{Vect}}
\newcommand{\gl}{\ensuremath{\mathfrak{gl}}}
\newcommand{\GG}{\ensuremath{\mathbf{G}}}
\newcommand{\HH}{\ensuremath{\mathbf{H}}}
\newcommand{\g}{\ensuremath{\mathfrak{g}}}
\newcommand{\h}{\ensuremath{\mathfrak{h}}} 
\newcommand{\del}{\ensuremath{\partial}}
\newcommand{\Jinf}{\ensuremath{\mathcal{J}_\infty}}
\newcommand{\red}{\ensuremath{\mathrm{red}}}
\newcommand\br[1]{\{ #1 \}}
\newcommand\dgal[1]{  \left\{\!\!\left\{#1\right\}\!\!\right\} }
\newcommand{\PA}{\ensuremath{\mathtt{PA}}}
\newcommand{\PVA}{\ensuremath{\mathtt{PVA}}}
\newcommand{\DPA}{\ensuremath{\mathtt{DPA}}}
\newcommand{\DPVA}{\ensuremath{\mathtt{DPVA}}}
\newcommand{\HoP}{\ensuremath{\mathtt{H}_0\mathtt{P}}}
\newcommand{\HoPV}{\ensuremath{\mathtt{H}_0\mathtt{PV}}}
\def\restriction#1#2{\mathchoice
              {\setbox1\hbox{${\displaystyle #1}_{\scriptstyle #2}$}
              \restrictionaux{#1}{#2}}
              {\setbox1\hbox{${\textstyle #1}_{\scriptstyle #2}$}
              \restrictionaux{#1}{#2}}
              {\setbox1\hbox{${\scriptstyle #1}_{\scriptscriptstyle #2}$}
              \restrictionaux{#1}{#2}}
              {\setbox1\hbox{${\scriptscriptstyle #1}_{\scriptscriptstyle #2}$}
              \restrictionaux{#1}{#2}}}
\def\restrictionaux#1#2{{#1\,\smash{\vrule height .8\ht1 depth .85\dp1}}_{\,#2}} 
\title[Functorial constructions related to double Poisson vertex algebras]{Functorial constructions related to \\ double Poisson vertex algebras}
\author{Tristan Bozec}
\author{Maxime Fairon}
\author{Anne Moreau}
\address{Univ Angers, CNRS, LAREMA, SFR MATHSTIC, F-49000 Angers, France}
\email{tristan.bozec@univ-angers.fr}
\address{Université Bourgogne Europe, CNRS, IMB UMR 5584, F-21000 Dijon, France}
\email{maxime.fairon@u-bourgogne.fr}
\address{Universit\'{e} Paris-Saclay, CNRS, Laboratoire de Math\'{e}matiques d'Orsay, 
Rue Michel Magat, B\^{a}t. 307, 
91405 Orsay, France}
\email{anne.moreau@universite-paris-saclay.fr}
\begin{document}

\begin{abstract} 
For any double Poisson algebra, we produce a double Poisson vertex algebra 
using the jet algebra construction. We show that this construction is compatible 
with the representation functor which associates to any double Poisson (vertex) 
algebra and any positive integer a Poisson (vertex) algebra. 
We also consider related constructions, such as Poisson reductions and 
Hamiltonian reductions, with the aim of comparing the different corresponding categories. 
This allows us to provide various interesting examples of double Poisson vertex algebras, 
in particular from double quivers. 
\end{abstract}

\maketitle

\selectlanguage{english}

\tableofcontents

\section{Introduction}
This article is based on the jet construction for double Poisson algebras, with the aim 
of comparing different categories: the category of {\em Poisson algebras}, 
of {\em double Poisson algebras}, of {\em Poisson vertex algebras} and 
of {\em double Poisson vertex algebras}. 
This allows us to provide various interesting examples of double Poisson vertex algebras.

\subsection{}
Let $\AA$ denote an associative unital algebra of finite type over a field $\kk$ of characteristic $0$.  
It was realised by Van den Bergh \cite{VdB} that if we endow $\AA$ 
with a \emph{double Poisson bracket}, that is a $\kk$-linear map 
$$\dgal{-,-} \colon \AA \otimes \AA \longrightarrow \AA\otimes \AA\,, \quad a \otimes b \longmapsto \dgal{a, b}\,,$$
satisfying some rules of skewsymmetry, derivation and Jacobi identity (see \eqref{Eq:DA}, \eqref{Eq:Dl}, \eqref{Eq:Dr} and \eqref{Eq:DJ}), then the pair $(\AA,\dgal{-,-})$ induces a structure of Poisson algebra on the $N$-th representation algebra $\AA_N:=\kk[\Rep(\AA,N)]$, where $N\in \Z_{\geq0}$ (cf.~\S\ref{sss:RepAlg} for the definition). 
Slightly rephrasing Van den Bergh's result, we see that each representation functor $(-)_N \colon\AA \mapsto \AA_N:=\kk[\Rep(\AA,N)]$ from associative algebras to commutative algebras restricts to a functor between the category of double Poisson algebras $\DPA$ and that of Poisson algebras~$\PA$.  
We should emphasize that double brackets are compatible with Hamiltonian reduction 
(see \S\ref{ss:Intro-HRed}), which makes them particularly convenient to study the Poisson structure of families of moduli spaces of representations. This is the case of quiver varieties, and it played a crucial role 
 in the study of new integrable systems of Calogero-Moser type (see~\cite{F22,FG}), 
or in Van den Bergh's construction of a Poisson structure on multiplicative quiver varieties\footnote{An experienced reader should remark that one needs to use double \emph{quasi-}Poisson brackets in the multiplicative case, which are not considered in the present work, cf. \S\ref{ss:qPVA}.} \cite{CBS}.

Let us now assume that $(\VV,\del)$ is a differential associative unital algebra over $\kk$. 
Note that the derivation $\del\in \Der(\VV)$ naturally induces a derivation $\del$ on $\VV_N$. 
Thus $(\VV_N,\del)$ is a commutative (associative unital) differential algebra, that is to say, 
a {\em commutative vertex algebra} (\cite{Bo}). 
Building on the previous situation, De Sole, Kac and Valeri \cite{DSKV} introduced the notion of \emph{double Poisson vertex algebras} which are based on a \emph{double $\lambda$-bracket}, that is a $\kk$-linear map 
$$\dgal{-_\lambda-} \colon  \VV \otimes \VV \longrightarrow (\VV\otimes \VV)[\lambda]\,, \quad a \otimes b \longmapsto \dgal{a_\lambda b}\,,$$
compatible with $\del$ and satisfying properties similar to the double Poisson case (see 
Definition~\ref{def:DPVA}). Again, the definition is chosen so that each representation functor $(-)_N$ restricts to a functor between the category of double Poisson vertex algebras $\DPVA$ and that of Poisson vertex algebras $\PVA$. 

In a nutshell, the present paper aims at comparing the two functors $\DPA\to \PA$ and $\DPVA \to \PVA$. 
We shall subsequently describe several constructions which are standard in $\DPA$, $\DPVA$, $\PA$ or $\PVA$ (or some of these categories) but are yet lacking analogues in the other categories. 
Thus, our goal is to extend the theory of (double) Poisson vertex algebras from that of (double) Poisson algebras, and vice-versa, by adopting several \emph{a priori} separate point of views which are well-known to experts from some communities, but are unheard of in the other fields.

\subsection{}

There are two standard approaches to Poisson vertex algebras. 
With the first approach, given a commutative differential algebra $(V,\del)$, we encode the extra structure in terms of a $\lambda$-bracket 
$$\br{-_\lambda-} \colon  V \otimes V \to V[\lambda]\,, \quad a \otimes b \mapsto \br{a_\lambda b}\,,$$
where $\lambda$ is a formal variable and $V[\lambda]:=V\otimes \kk[\lambda]$. The $\lambda$-bracket is a derivation in its second argument, which is required to satisfy rules analogous to that of a Lie algebra; the main difference is that the variable $\lambda$ serves to have an operation compatible with the differential~$\del$, cf.~Definition \ref{Def:PVA-br}.  
That point of view is, for example, particularly suited to the study of integrable PDEs \cite{BDSK,Ka}. 
The introduction of double Poisson vertex algebras by De Sole, Kac and Valeri \cite{DSKV} follows this idea. As we mentioned earlier, their definition is based on endowing an associative differential algebra with a \emph{double} $\lambda$-bracket, and this theory allows to study non-abelian integrable PDEs efficiently. More recently, this was also used to upgrade the one-to-one correspondence between some (graded) Poisson vertex algebras and  Courant-Dorfman algebras to the associative setting \cite{AFH}.

With the second approach, we are given a family of `products', which are $\kk$-bilinear maps 
\begin{equation} \label{Eq:I-mult}
    -_{(n)}-\colon V\times V \to V\,, \qquad n\in \Z_{\geq0},
\end{equation}
satisfying compatibility conditions between themselves and with the differential, see Definition \ref{Def:PVA-end}. 
The advantage of these products is their appearance within the theory of vertex algebras. 

It is a standard result that we can go back and forth between these two definitions of Poisson vertex algebra,  as we recall in Proposition \ref{Pr:CorPVA}. 
One can therefore wonder if such a correspondence exists at the associative level. 
This requires the introduction of an alternative definition of double Poisson vertex algebras based on `double products'  
\begin{equation} \label{Eq:I-Dmult}
    -_{(\!(n)\!)}-\colon \VV\times \VV \to \VV\otimes \VV\,, \qquad n\in \Z_{\geq0}.
\end{equation}
A first result of the present paper is to provide the alternative Definition \ref{def:DPVA-b}, and prove its equivalence with the definition of \cite{DSKV}, cf. Proposition \ref{Pr:CorDPVA}. 
As opposed to the commutative setting, this is not entirely straightforward. 
We shall also explain that both definitions of a double Poisson vertex algebras induce the analogous (commutative) structures of Poisson vertex algebras after passing to representation algebras under each functor $(-)_N$, $N\geq 1$.  
In particular, we recover the usual equivalence between the two definitions in the commutative setting from that in the associative setting. 

Another original motivation for the present work was to bridge the gap between double Poisson vertex algebras and Van den Bergh's original notion of double Poisson algebras \cite{VdB}. 
If we glance at the commutative theory, we can observe that 
\begin{itemize}
    \item a Poisson vertex algebra leads to a Poisson algebra by quotienting with respect to the ideal generated by the image of the differential;
    \item a Poisson vertex algebra can be defined from the jet algebra of a Poisson algebra~\cite{A12}.
\end{itemize}
These observations are at the basis of functors $\mathtt{Q}\colon\PVA\to \PA$ and $\mathtt{J}\colon\PA\to \PVA$, respectively. 
We shall construct the analogues of these functors between the categories $\mathtt{DP}(\mathtt{V})\mathtt{A}$ of double Poisson (vertex) algebras in Section \ref{Sec:JQ}. 
Namely, we associate to each associative algebra $\AA$ its {\em jet algebra} $\Jinf \AA$ 
and show that, if $\AA$ is a double Poisson algebra, then 
$\Jinf \AA$ has a unique double Poisson vertex structure extending the double Poisson structure on $\AA$ 
(Lemma~\ref{Lem:DPAtoDPVA}). 
It will be shown that these functors are compatible with their commutative versions when one goes to representation algebras.

\begin{theorem}[see Theorem \ref{Thm:QJ-Rep}]  
The following diagrams commute: 
\begin{center}
\begin{tikzpicture}
 \node  (TopLeft) at (-1,1) {$\DPVA$};
 \node  (TopRight) at (1,1) {$\DPA$};
 \node  (BotLeft) at (-1,-1) {$\PVA$};
 \node  (BotRight) at (1,-1) {$\PA$};
\path[->,>=angle 90,font=\small]  
   (TopLeft) edge node[above] {$\mathtt{Q}$}  (TopRight) ;
   \path[->,>=angle 90,font=\small]  
   (BotLeft) edge node[above] {$\mathtt{Q}$}  (BotRight) ;
\path[->,>=angle 90,font=\small]  
   (TopLeft) edge node[left] {$(-)_N$}  (BotLeft) ;
\path[->,>=angle 90,font=\small]  
   (TopRight) edge node[right] {$(-)_N$}  (BotRight) ;
   \end{tikzpicture}
   \qquad
   \begin{tikzpicture}
   \node at (0,0) {and};
   \node at (0,-1.2) {~};
      \end{tikzpicture}
\qquad
   \begin{tikzpicture}
 \node  (TopLeft) at (-1,1) {$\DPVA$};
 \node  (TopRight) at (1,1) {$\DPA$};
 \node  (BotLeft) at (-1,-1) {$\PVA$};
 \node  (BotRight) at (1,-1) {$\PA$};
\path[->,>=angle 90,font=\small]  
   (TopRight) edge node[above] {$\mathtt{J}$}  (TopLeft) ;
   \path[->,>=angle 90,font=\small]  
   (BotRight) edge node[above] {$\mathtt{J}$}  (BotLeft) ;
\path[->,>=angle 90,font=\small]  
   (TopLeft) edge node[left] {$(-)_N$}  (BotLeft) ;
\path[->,>=angle 90,font=\small]  
   (TopRight) edge node[right] {$(-)_N$}  (BotRight) ;
   \end{tikzpicture}
\end{center}
\end{theorem}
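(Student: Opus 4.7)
The plan is to treat the two diagrams in parallel. In each case I would first produce a natural algebra isomorphism between the two composites, and then verify that it intertwines the induced Poisson (vertex) brackets. Both components reduce to unraveling universal properties.

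For the diagram involving $\mathtt{Q}$, recall that $\mathtt{Q}(\VV) = \VV/\del\VV$, equipped with the double bracket $\dgal{a,b} = \dgal{a_\lambda b}|_{\lambda=0}$. The underlying algebra isomorphism $(\VV/\del\VV)_N \cong \VV_N/\del\VV_N$ follows from right-exactness of $(-)_N$ combined with the fact that the induced derivation on $\VV_N$ acts entrywise, so that $\del\VV_N$ is precisely the image under $(-)_N$ of the two-sided ideal generated by $\del\VV$. The Poisson bracket on either side is then the $\lambda=0$ specialization of the Poisson vertex bracket on $\VV_N$ (itself induced from the DPVA structure on $\VV$), and setting $\lambda=0$ tautologically commutes with the application of the representation functor; this finishes the first diagram.

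For the diagram involving $\mathtt{J}$, the key algebra-level statement is a natural isomorphism $(\Jinf\AA)_N \cong \Jinf(\AA_N)$, matching the generator $(a^{(n)})_{\alpha\beta}$ on the left with $(a_{\alpha\beta})^{(n)}$ on the right. This would be checked by comparing presentations by generators and relations: the jet construction takes $\del^n$ of defining relations, while the representation construction expands noncommutative generators into $N^2$ commutative ones and writes the relations matricially, and the two operations commute. To upgrade the identification to a PVA isomorphism, I would invoke Lemma~\ref{Lem:DPAtoDPVA} together with its commutative analogue, which jointly characterize the $\lambda$-brackets on $\Jinf\AA$ and on $\Jinf(\AA_N)$ as the unique extensions of the double / single Poisson brackets on $\AA$ and on $\AA_N$ that are compatible with sesquilinearity, the Leibniz rule, and the differential. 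Since the functor $(-)_N \colon \DPA \to \PA$ identifies the two brackets on the common subalgebra $\AA_N$, the induced PVA structures on $(\Jinf\AA)_N \cong \Jinf(\AA_N)$ must coincide by uniqueness.

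The routine portion is the algebra-level identification. The real obstacle is the last argument for the $\mathtt{J}$ diagram: one is comparing a PVA produced by extending double brackets first and then representing, with a PVA produced by representing first and then extending single brackets. Since the double-to-single passage is not obviously functorial through the jet construction, I expect the uniqueness of the jet extension to be the essential input that saves the day, once one verifies that the two $\lambda$-brackets agree on the generators coming from the copy of $\AA_N$ sitting inside both PVAs.
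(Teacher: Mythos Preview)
Your proposal is correct and follows essentially the same route as the paper. Two minor stylistic differences are worth noting. First, for the algebra isomorphism $(\Jinf\AA)_N \cong \Jinf(\AA_N)$, the paper uses the adjunction $(-)_N \dashv (-\otimes M_N(\kk))$ together with the universal property of $\Jinf$ to obtain the identification in one line, rather than comparing presentations; this is cleaner but equivalent to what you outline. Second, for the bracket comparison, the paper carries out the explicit computation on general generators $\del^k(a_{ij})$ versus $(\del^k a)_{ij}$, whereas you appeal to the uniqueness clause in Lemma~\ref{Lem:PAtoPVA} after matching the brackets on the degree-zero copy $\AA_N$; your argument is valid because $\dgal{a_\lambda b}=\dgal{a,b}$ for $a,b\in\AA$ (Lemma~\ref{Lem:DPAtoDPVA}) forces $\{a_{ij}{}_\lambda b_{kl}\}$ to have no higher $\lambda$-terms, which is exactly the hypothesis needed for uniqueness. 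The paper also checks naturality in $\AA$ explicitly; your phrase ``natural algebra isomorphism'' implicitly covers this, but you should make sure to verify it.
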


\subsection{}

We assume that $\kk=\overline{\kk}$ in addition to $\textrm{char}(\kk)=0$. 
Let $A$ be a commutative algebra of finite type over $\kk$ and $\GG$ be an algebraic group acting on $A$. 
Assume that $A$ is equipped with a Poisson bracket and that each element  $g\in \GG$ acts on $A$ through a Poisson automorphism. This guarantees that the subalgebra $A^\GG \subset A$ of $\GG$-invariant elements inherits the Poisson bracket from $A$; we call $A^\GG$ the \emph{Poisson reduction} of $A$ by $\GG$. 

Fix $N\geq 1$.  
The $N$-th representation algebra $\AA_N$ of a double Poisson algebra $\AA$ admits an action of the algebraic group $\Gl_N$ by Poisson automorphism (for the induced Poisson structure given by Theorem \ref{Thm:RepdP}). Thus, we can form the Poisson reduction $\AA_N^{\Gl_N}$. 
Interestingly, Van den Bergh has also explained that the Poisson bracket on $\AA_N^{\Gl_N}$ is completely determined by the data of a Lie bracket on the vector space $H_0(\AA):=\AA/[\AA,\AA]$ with lifts to derivations on $\AA$; these data form a \emph{$H_0$-Poisson structure} according to Crawley-Boevey's terminology \cite{CB}, which is induced by the double Poisson bracket on $\AA$. 
One can observe that everything can be assembled functorially in order to form a commutative square  
\begin{center}
    \begin{tikzpicture}
 \node  (TopLeft) at (-1,1) {$\DPA$};
 \node  (TopRight) at (1,1) {$\PA^{\Gl_N}$};
 \node  (BotLeft) at (-1,-1) {$\HoP$};
 \node  (BotRight) at (1,-1) {$\PA$};
\path[->,>=angle 90,font=\small]  
   (TopLeft) edge (TopRight) ;
   \path[->,>=angle 90,font=\small]  
   (BotLeft) edge (BotRight) ;
\path[->,>=angle 90,font=\small]  
   (TopLeft) edge  (BotLeft) ;
\path[->,>=angle 90,font=\small]  
   (TopRight) edge  (BotRight) ;
   \end{tikzpicture}
\end{center} 
between the categories of double Poisson algebras $\DPA$, $H_0$-Poisson structures $\HoP$ and Poisson algebras $\PA$ (with a $\Gl_N$ action by Poisson automorphisms $\PA^{\Gl_N}$), see Proposition \ref{Pr:ComFront}. 

In view of our previous motivation, we can ask whether this square can be extended to a cube by adding a `vertex direction', i.e. by adding analogues of the jet functors $\mathtt{J} \colon \PA\to \PVA$ and $\mathtt{J}\colon\DPA\to \DPVA$. Applying this idea, we obtain the jet algebras $\Jinf(\AA_N)$ and $\Jinf(\AA_N^{\Gl_N})$ as the new vertices at the top and bottom of the right side of the (not-yet-defined) cube. 
This is where technicalities appear: to relate these two algebras we should consider the $\Jinf(\Gl_N)$-action on $\Jinf(\AA_N)$ induced by the $\Gl_N$-action on $\AA_N$, but if we consider invariants we only have a natural 
differential algebra 
morphism $\Jinf(\AA_N^{\Gl_N})\to \Jinf(\AA_N)^{\Jinf(\Gl_N)}$, which is not an isomorphism in general. 
Moreover, $\Jinf(\Gl_N)$ does not act on $\Jinf(\AA_N)$ by Poisson vertex automorphisms.  

However, we have the following general statement, of independent interest.

\begin{theorem}[Theorem \ref{Th:invariants_are_PVA}]
Let $\GG$ be an affine algebraic group 
acting on a Poisson algebra $A$ by Poisson automorphism. 
The invariant algebra $(\Jinf A)^{\Jinf \GG}$ 
is a Poisson vertex subalgebra of $\Jinf A$, 
and the natural morphism 
$$j_A \colon \Jinf (A^{\GG}) \longrightarrow (\Jinf A)^{\Jinf \GG}$$  
induced by the inclusion $A^{\GG} \hookrightarrow A$ 
is a  Poisson vertex algebra morphism. 
\end{theorem}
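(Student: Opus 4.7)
The plan is to exploit the functoriality of the jet functor $\mathtt{J}\colon \PA\to \PVA$. Recall that $\mathtt{J}$ sends a Poisson algebra $A$ to $\Jinf A$ equipped with its canonical PVA structure, the latter being uniquely determined by the requirement that $A\hookrightarrow \Jinf A$ preserve the Poisson bracket and that $\Jinf A$ be generated by $A$ as a differential algebra, as in \cite{A12}. By this uniqueness, any Poisson algebra morphism $\phi\colon A\to B$ induces a PVA morphism $\mathtt{J}\phi\colon \Jinf A\to \Jinf B$. Moreover, since $\Jinf$ is a left adjoint it commutes with tensor products, and one checks that $\mathtt{J}(A\otimes B)\cong \mathtt{J}A\otimes \mathtt{J}B$ holds as PVAs (with the tensor product PVA structure on the right).

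The action of $\GG$ on $A$ by Poisson automorphisms is equivalent to the datum of a coaction morphism of Poisson algebras
\[
\mu\colon A\longrightarrow A\otimes \kk[\GG]\,,
\]
where $\kk[\GG]$ carries the trivial Poisson bracket. Applying $\mathtt{J}$ therefore yields a PVA morphism
\[
\mathtt{J}\mu\colon \Jinf A\longrightarrow \Jinf A\otimes \kk[\Jinf \GG]\,,
\]
where $\kk[\Jinf \GG]$ is endowed with the trivial $\lambda$-bracket. This is exactly the coaction corresponding to the natural $\Jinf \GG$-action on $\Jinf A$, so we conclude that $\Jinf \GG$ acts on $\Jinf A$ by PVA automorphisms.

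From this, the first claim follows by a direct computation. If $a,b\in (\Jinf A)^{\Jinf \GG}$, that is $\mathtt{J}\mu(a)=a\otimes 1$ and $\mathtt{J}\mu(b)=b\otimes 1$, then the fact that $\mathtt{J}\mu$ preserves the $\lambda$-bracket together with the triviality of $\br{-_\lambda -}$ on $\kk[\Jinf \GG]$ yields
\[
\mathtt{J}\mu\br{a_\lambda b}=\br{(a\otimes 1)_\lambda (b\otimes 1)}=\br{a_\lambda b}\otimes 1\,,
\]
so every coefficient of $\br{a_\lambda b}$ lies in $(\Jinf A)^{\Jinf \GG}$; closure under $\partial$ is immediate since $\mathtt{J}\mu$ commutes with $\partial$. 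For the second claim, the inclusion $A^{\GG}\hookrightarrow A$ is tautologically a Poisson morphism, so applying $\mathtt{J}$ gives a PVA morphism $\Jinf(A^{\GG})\to \Jinf A$. Its image is $\Jinf \GG$-invariant: indeed, $\Jinf(A^{\GG})$ is generated as a differential algebra by $A^{\GG}$, and for $f\in A^{\GG}$ we have $\mu(f)=f\otimes 1$, hence $\mathtt{J}\mu(\partial^n f)=\partial^n f\otimes 1$ for every $n\geq 0$. This produces the desired PVA morphism $j_A\colon \Jinf(A^{\GG})\to (\Jinf A)^{\Jinf \GG}$.

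The main technical input is the functoriality of $\mathtt{J}\colon \PA\to \PVA$ together with its compatibility with tensor products; both follow from the uniqueness of the PVA structure extending a prescribed Poisson structure, a commutative analogue of Lemma \ref{Lem:DPAtoDPVA}. It is worth emphasizing that $j_A$ is generally not an isomorphism, as already noted in the paragraph preceding the theorem, but this plays no role in the present statement.
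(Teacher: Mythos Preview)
Your argument is essentially correct and substantially simpler than the paper's, but one sentence is wrong and one step deserves a cleaner justification.

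The erroneous sentence is ``so we conclude that $\Jinf \GG$ acts on $\Jinf A$ by PVA automorphisms.'' This is false: for a non-constant arc $g\in\Jinf\GG(\kk)$ the map $g\cdot(-)\colon\Jinf A\to\Jinf A$ does not even commute with $\partial$ (evaluate the identity $\rho_\infty\circ\partial=\partial\circ\rho_\infty$ at $g$ and note that $\mathrm{ev}_g\colon\kk[\Jinf\GG]\to\kk$ is not a morphism of differential algebras). The paper explicitly flags this in the introduction. Fortunately you never use this claim---your computation works directly with the coaction $\mathtt{J}\mu$---so simply delete the sentence.

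The step $\{(a\otimes 1)_\lambda(b\otimes 1)\}=\{a_\lambda b\}\otimes 1$ is correct, but your appeal to an unspecified ``tensor product PVA structure'' on $\Jinf A\otimes\kk[\Jinf\GG]$ is vaguer than necessary. A self-contained version: the unit $\iota\colon A\to A\otimes\kk[\GG]$, $a\mapsto a\otimes 1$, is a Poisson morphism, hence $\mathtt{J}\iota$ is a PVA morphism with $\mathtt{J}\iota(c)=c\otimes 1$ for every $c\in\Jinf A$. Since $(\Jinf A)^{\Jinf\GG}=\ker(\mathtt{J}\mu-\mathtt{J}\iota)$ (as noted in \S\ref{ss:JetInv}), the whole proof reduces to applying the functor $\mathtt{J}$ to the two Poisson morphisms $\mu,\iota\colon A\to A\otimes\kk[\GG]$; no tensor-product PVA needs to be defined.

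The paper's route is entirely different and much more laborious: it reduces to the connected case, passes to the infinitesimal $\g[\![t]\!]$-action, and proves by an involved induction the explicit commutation formula
\[
x_{(k)}(a_{(n)}b)=a_{(n)}(x_{(k)}b)+\sum_{\ell=0}^{k}\binom{k}{\ell}(x_{(k-\ell)}a)_{(n+\ell)}b
\]
for all $a,b\in\Jinf A$ (equation~\eqref{eq:main_ind}). Your functorial approach bypasses this machinery entirely. The paper's extra work is not wasted, however: that commutation formula is reused verbatim in the proof of Proposition~\ref{Pr:HamRedV} to handle invariance \emph{modulo an ideal}, a setting where the pure coaction argument does not apply directly.
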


We are led to introduce the new notion of $H_0$-Poisson vertex structures in \S\ref{ss:HOPconf}, which are analogues of Crawley-Boevey's $H_0$-Poisson structures \cite{CB} in the presence of a differential. 
We obtain the following result. 

\begin{theorem}[Theorem \ref{Thm:ComPoissonRed}]
\label{Th:main_cube_commute}
Fix $N\geq 1$.  
The cube in Figure~\ref{Fig:PoiRed} is commutative. 
The precise categories in which each object at a node belongs, 
and the precise functors between them, are specified in Section \ref{S:PoiRed}.

\medskip

{\tiny
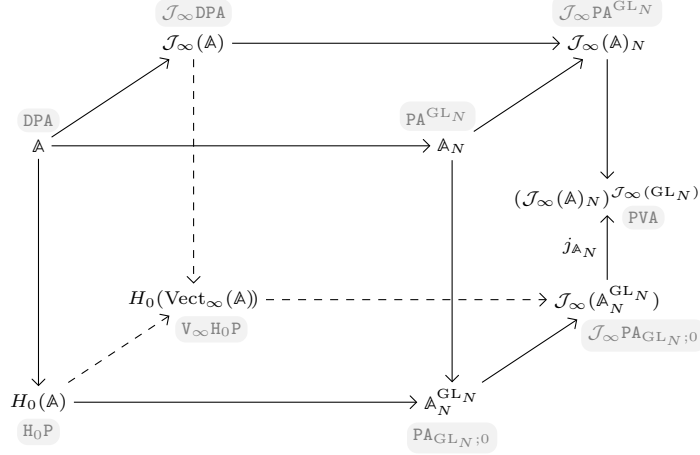
\begin{figure}[!h]
\centering
\begin{tikzpicture}[scale=.68]
 \node  (TopLeft) at (-5,2) {$\AA$};
 \node  (TopRight) at (3,2) {$\AA_{N}$};
 \node  (BotLeft) at (-5,-3) {$H_0(\AA)$}; 
 \node  (BotRight) at (3,-3) {$\AA_N^{\Gl_N}$};
 \node  (BackTopLeft) at (-2,4) {$\Jinf(\AA)$};
 \node  (BackTopRight) at (6,4) {$\Jinf(\AA)_N$};
 \node   (BackBotLeft) at (-2,-1) {$H_0(\Vect_\infty(\AA)\!)$}; 
 \node  (JVJG) at (6,1) {$(\Jinf(\AA)_N)^{\Jinf(\Gl_N)}$};
 \node  (BackBotRight) at (6,-1) {$\Jinf(\AA_N^{\Gl_N})$};
\node[gray,fill=gray!10,rounded corners, above=0cm of TopLeft] (CatTL) {$\DPA$};
\node[gray,fill=gray!10,rounded corners, above left=0cm and -0.6cm of TopRight] (CatTR) {$\PA^{\Gl_N}$};
\node[gray,fill=gray!10,rounded corners, below=-0cm of BotLeft] (CatBL) {$\HoP$};
\node[gray,fill=gray!10,rounded corners, below=-0cm of BotRight] (CatBR) {$\PA_{\Gl_N;0}$};
\node[gray,fill=gray!10,rounded corners, above=0cm of BackTopLeft] (CatBTL) {$\Jinf\DPA$};
\node[gray,fill=gray!10,rounded corners, above=0cm of BackTopRight] (CatBTR) {$\Jinf\PA^{\Gl_N}$};
\node[gray,fill=gray!10,rounded corners, below right=-0cm and -1.2cm of BackBotLeft] (CatBBL) {$\mathtt{V}_\infty\HoP$};
\node[gray,fill=gray!10,rounded corners] (CatBBR) at (6.7,-1.7) {$\Jinf \PA_{\Gl_N;0}$};
\node[gray,fill=gray!10,rounded corners] (CatJVJG) at (6.7,0.6)  {$\PVA$};
\path[->,>=angle 90,font=\small]  
   (TopLeft) edge (TopRight) ;
   \path[->,>=angle 90,font=\small]  
   (BotLeft) edge (BotRight) ;
\path[->,>=angle 90,font=\small]  
   (TopLeft) edge (BotLeft) ;
\path[->,>=angle 90,font=\small]  
   (TopRight) edge (BotRight) ;
\path[->,>=angle 90,font=\small]  
   (BackTopLeft) edge (BackTopRight) ;
   \path[->,>=angle 90,dashed,font=\small]  
   (BackBotLeft) edge (BackBotRight) ;
\path[->,>=angle 90,dashed,font=\small]  
   (BackTopLeft) edge (BackBotLeft) ;
\path[->,>=angle 90,font=\small]  
   (BackTopRight) edge (JVJG) ;
\path[->,>=angle 90,font=\small]  
   (BackBotRight) edge node[left] {{\tiny $j_{\AA_N}$}} (JVJG) ;
\path[<-,>=angle 90,font=\small]  
   (BackTopLeft) edge (TopLeft) ;
   \path[<-,>=angle 90,dashed,font=\small]  
   (BackBotLeft) edge (BotLeft) ;
\path[<-,>=angle 90,font=\small]  
   (BackTopRight) edge (TopRight) ;
\path[<-,>=angle 90,font=\small]  
   (BackBotRight) edge (BotRight) ;
   \end{tikzpicture}
   \caption{We indicate next to a node (on a light gray background) to which category an object belongs.}  
    \label{Fig:PoiRed}
\end{figure}
}
\end{theorem}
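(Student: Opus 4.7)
My plan is to decompose the cube into its six faces and check each commutes, leveraging the previously established back face and top face while treating the novel ``vertex direction'' with care. The back face (involving $\AA$, $\AA_N$, $H_0(\AA)$, $\AA_N^{\Gl_N}$) commutes by \S\ref{ss:ComFront}: this is Van den Bergh's Poisson reduction recast through Crawley-Boevey's $H_0$-Poisson structures. The top face commutes by Theorem~\ref{Thm:QJ-Rep} applied to the jet functor $\mathtt{J}\colon \DPA\to \DPVA$, which yields the canonical identification $\Jinf(\AA)_N\cong \Jinf(\AA_N)$ as Poisson vertex algebras, with the $\Gl_N$-action on $\AA_N$ inducing a compatible $\Jinf(\Gl_N)$-action on $\Jinf(\AA_N)$. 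For the right face, I would invoke Theorem~\ref{Th:invariants_are_PVA}: the Poisson vertex morphism $j_{\AA_N}\colon \Jinf(\AA_N^{\Gl_N})\to (\Jinf(\AA)_N)^{\Jinf(\Gl_N)}$ is induced by functoriality of $\Jinf$ applied to the inclusion $\AA_N^{\Gl_N}\hookrightarrow \AA_N$, and commutativity of the face is immediate from its very definition.

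The main obstacle lies in the front face and the two remaining (left and bottom) faces, all of which involve the new object $H_0(\Vect_\infty(\AA))$ carrying an $H_0$-Poisson vertex structure in the sense of \S\ref{ss:HOPconf}. I would proceed in three substeps. First, I would show that $H_0(\Vect_\infty(\AA))$ inherits a canonical $H_0$-Poisson vertex structure from the double Poisson vertex structure on $\Jinf(\AA)$ produced by Lemma~\ref{Lem:DPAtoDPVA}, and that this structure extends the $H_0$-Poisson structure on $H_0(\AA)$ compatibly with $\del$; this gives commutativity of the left face. Second, I would construct the dashed bottom map $H_0(\Vect_\infty(\AA))\to \Jinf(\AA_N^{\Gl_N})$ as the vertex analogue of Van den Bergh's map in the classical setting: to an equivalence class one associates the trace-type invariant obtained after lifting to a derivation on $\Jinf(\AA)$ and evaluating on matrix coordinates, using the $H_0$-Poisson vertex axioms to check independence of the lift.

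Third, for the front face, I would compare the two routes from $\Jinf(\AA)$ to $(\Jinf(\AA)_N)^{\Jinf(\Gl_N)}$: one through $\Jinf(\AA)_N$ followed by taking $\Jinf(\Gl_N)$-invariants, the other through $H_0(\Vect_\infty(\AA))$, then $\Jinf(\AA_N^{\Gl_N})$, then $j_{\AA_N}$. Since $\Jinf(\AA)$ is generated as a differential algebra by $\AA$, and double $\lambda$-brackets on $\Jinf(\AA)$ are determined by their values on $\AA$ through sesquilinearity, each such check reduces to its degree-zero analogue, which is exactly the already-established commutativity of the back face.

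The essential technical hurdle will be proving that the $H_0$-Poisson vertex bracket on $H_0(\Vect_\infty(\AA))$ determines and agrees with the restriction to $\Jinf(\AA_N)^{\Jinf(\Gl_N)}$ of the Poisson vertex bracket on $\Jinf(\AA)_N$; equivalently, that the bottom face commutes after composing with $j_{\AA_N}$. This is the vertex counterpart to Van den Bergh's identification of $H_0$-Poisson structures with the induced bracket on Poisson invariants. I would establish it by checking equality of the two brackets on representatives of generators in $H_0(\AA)\subset H_0(\Vect_\infty(\AA))$ (covered by the back face) and then propagating to all of $H_0(\Vect_\infty(\AA))$ using that both brackets commute with $\del$ and are uniquely determined by their value on such generators. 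The fact that $j_{\AA_N}$ need not be injective or surjective is harmless here, because the two routes only compare images in $(\Jinf(\AA)_N)^{\Jinf(\Gl_N)}$, where the Poisson vertex structure is unambiguous.
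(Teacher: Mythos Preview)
Your proposal is correct and follows essentially the same strategy as the paper: decompose the cube into its six faces and verify each one separately, reducing the vertex faces to their classical degree-zero analogues via sesquilinearity and the fact that jet algebras are generated by the original algebra. Note that your ``front'' and ``back'' are swapped relative to the paper's convention (the paper calls the classical Van den Bergh--Crawley-Boevey square the \emph{front} face and the vertex square the \emph{back} face), but this is purely terminological; the paper also packages the bottom and back face arguments through the explicit functors $\mathtt{tr}_N\colon \mathtt{V}_\infty\HoP\to \Jinf\PA_{\Gl_N;0}$ and $j_{(-)}$, whereas you describe the same checks more informally in terms of trace-type invariants and propagation by $\partial$.
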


\subsection{Example} 
\label{ss:double_quiver} 
Let $Q$ be a (finite) quiver with vertex set $S$.
The double quiver
$\bar{Q}$ is obtained by adjoining a new opposite arrow $a^\ast$ for each arrow $a$ originally in $Q$.
Let us consider the path algebra $\kk \bar{Q}$ of $\bar{Q}$ (cf. Remark \ref{Rem:ConvQuiver} for our convention).
The subalgebra $B=\oplus_{s\in S}\kk e_s$ of $\kk \bar{Q}$ formed by trivial paths may be identified
with $\kk S$. 
We get an involution $(-)^\ast\colon \bar{Q}\to \bar{Q}$ by setting $(a^\ast)^\ast=a$ if $a\in Q$. 
There is also a map $\epsilon\colon \bar{Q}\to \{\pm 1\}$ given by $\epsilon(a)=+1$ if $a\in Q$ and $\epsilon(a)=-1$ if $a\in \bar{Q}\setminus Q$.  
There is a unique $B$-linear double Poisson bracket on $\kk \bar{Q}$  satisfying (\cite{VdB}): 
\begin{equation} \label{Eq:dbr-quiver}
   \dgal{a,b}= \left\{
\begin{array}{ll}
\epsilon(a) \, e_{h(a)}\otimes e_{t(a)}  &\text{ if } b=a^\ast  \\
0     & \text{ otherwise,}
\end{array}
   \right.
\end{equation}
for each arrows $a,b \in \bar{Q}$. Here, $t,h\colon\bar{Q}\to S$ denote the tail and head maps which satisfy $t(a^\ast)=h(a)$ with respect to the involution $(-)^\ast$. 

Given a quiver $Q$, we construct $Q_\infty$ as the quiver whose vertex set is the same as $Q$, while its arrow
set is $Q_\infty=\{a^{(\ell)} \mid a\in Q,\, \ell \in \Z_{\geq0} \}$ with
$t(a^{(\ell)})=t(a)$ and $h(a^{(\ell)})=h(a)$ for each $a$ and $\ell$.
Then $\Jinf(\kk Q)$ is isomorphic to $\kk Q_\infty$ as a differential algebra
equipped with $\del\colon a^{(\ell)}\mapsto a^{(\ell+1)}$, $e_s\mapsto 0$. 
This construction is compatible with going to the doubles $\bar{Q}$, $\bar{Q}_\infty$.

Path algebras of double quivers are particular examples of {\em noncommutative cotangent algebras} introduced by Crawley-Boevey, Etingof and Ginzburg: 
for any smooth $B$-algebra $\AA$, it is defined 
to be $T^*\AA := T_\AA (\Der_B \AA)$, the tensor algebra of 
the $\AA$-bimodule of $B$-linear double derivations $\Der_B \AA$.  
Its $N$-th representation algebra is 
the coordinate ring of the cotangent bundle of $\Rep(\AA,N)$.
Path algebras associated to quivers are
typical examples of smooth algebras, and we have 
$\kk \bar{Q} = T^* \AA$ where $\AA$ is the path algebra $\kk Q$ of the original quiver. 
Applying Hamiltonian reduction to noncommutative cotangent algebras gives
an interesting class of associative algebras that includes preprojective
algebras associated with quivers. 
We refer to \cite{CBEG} for more details about this topic.

Motivated by this family of examples, we consider {\em semisimple versions} 
of our results. A double Poisson algebra {\em relative to a $\kk$-algebra $B$} 
is an algebra $\AA$ which contains $B$ as subalgebra equipped with a double Poisson 
bracket such that $\dgal{a,B}=0$ identically for any $a \in \AA$. 
Then all the previous statements can be adapted to this setting 
(cf.~Theorem \ref{Thm:rel-ComPoissonRed}), 
where the integer $N$ has to be replaced by $\underline{n}=(n_s)\in \Z_{\geq0}^S$ 
and $\Gl_N$ by $\Gl_{\underline{n}}:=\prod_{s\in S}\Gl_{n_s}$;
see Section~\ref{Sec:SemiS}. 
We refer to \S\ref{ss:SemiS-example} for a slight generalisation of the motivating example of a double quiver,
where we compute explicitly the (Lie, Poisson, Poisson vertex, double Poisson, $\ldots$)
bracket defined at each node of the cube in Figure~\ref{Fig:PoiRed} attached to 
the double Poisson algebra $\kk Q_{p,q}$
(which is $\kk\bar{Q}_{p,0}$ if $p=q$).

\subsection{} \label{ss:Intro-HRed}
We are also willing to adapt the discussion to the case of \emph{Hamiltonian reduction}. In that situation, we assume that $\GG$ is an algebraic group, and we consider the subcategory of $\PA^\GG$ whose objects admit a \emph{moment map} (which is respected by morphisms). Such a map $\tilde{\mu} \colon \kk[\g^\ast]\to A$ encodes the infinitesimal action on $A$ of the Lie algebra $\g$ of $\GG$ (with dual $\g^\ast$). 
After fixing an element $\xi\in \g^\ast$ invariant under the coadjoint action by $\GG$, we can define  the Hamiltonian reduction $A_{\red;\xi}$ of $A$ at $\xi$ by considering $\GG$-invariant elements of  $A/I_{\xi}$, where $I_{\xi}\subset A$ is the ideal generated by requiring that $\tilde{\mu}$ equals the map $ev_{\xi}\colon\kk[\g^\ast]\to \kk$ evaluating at $\xi$, see 
Section~\ref{Sec:HamRed} for a precise definition. 
Again, Van den Bergh remarked that we can understand moment maps on double Poisson algebras, which leads to a commutative square 
\begin{center}
    \begin{tikzpicture}
 \node  (TopLeft) at (-1,1) {$\DPA_\mu$};
 \node  (TopRight) at (1,1) {$\PA^{\Gl_{\underline{n}}}_\mu$};
 \node  (BotLeft) at (-1,-1) {$\HoP$};
 \node  (BotRight) at (1,-1) {$\PA$};
\path[->,>=angle 90,font=\small]  
   (TopLeft) edge (TopRight) ;
   \path[->,>=angle 90,font=\small]  
   (BotLeft) edge (BotRight) ;
\path[->,>=angle 90,font=\small]  
   (TopLeft) edge  (BotLeft) ;
\path[->,>=angle 90,font=\small]  
   (TopRight) edge  (BotRight) ;
   \end{tikzpicture}
\end{center} 
involving this time the categories of double Poisson algebras with a (noncommutative) moment map $\DPA_\mu$ and Poisson algebras with a $\Gl_{\underline{n}}$ action by Poisson automorphisms and a moment map $\PA^{\Gl_{\underline{n}}}_\mu$, cf.~\S\ref{ss:HamRedDPA}. 
For example, $\mu=\sum_{a\in \bar{Q}}\epsilon(a) aa^\ast$ is a noncommutative moment map 
for the path algebra $\kk \bar{Q}$ as in \S\ref{ss:double_quiver}, and the above commutative square yields an affine quiver variety associated with $Q$ as the object in $\PA$.  
Let us point out that we can not simply rely on the cube obtained in the Poisson reduction picture (cf. Figure \ref{Fig:PoiRed}) because we have to set up the general theory of noncommutative moment maps in the vertex case.  

In this setting, we obtain the following result.

\begin{theorem} [Theorem \ref{Th:cube_Hamiltonian}]
The cube in Figure~\ref{Fig:HamRed} is commutative. 
The precise categories in which each object at a node belongs, 
and the precise functors between them, are specified in Sections \ref{Sec:HamRed} 
and \ref{Sec:Momap}. 

{\tiny
\begin{figure}[!h]
\begin{center}
\begin{tikzpicture} [scale=.8]
 \node  (TopLeft) at (-4,2) {$(\AA,\bbmu)$};
 \node  (TopRight) at (3,2) {$(\AA_{\underline n},X(\bbmu))$};
 \node  (BotLeft) at (-4,-2) {$H_0(\mathbb{P})$}; 
 \node  (BotRight) at (3,-2) {$\mathbb{P}_{\underline n}^{\Gl_{\underline n}}$};
 \node  (BackTopLeft) at (-1,4) {$(\Jinf(\AA),\bbmu)$};
 \node  (BackTopRight) at (6,4) {$(\Jinf(\AA)_{\underline n},X(\bbmu))$};
 \node   (BackBotLeft) at (-1,0) {$H_0(\Vect_\infty(\mathbb{P}))$}; 
 \node  (BackBotRight) at (6,0) {$\Jinf\big(\mathbb{P}_{\underline n}^{\Gl_{\underline n}}\big)$};
 \node (BackMidRight) at (6,2) {$\Jinf(\mathbb{P}_{\underline n})^{\Jinf(\Gl_{\underline n})}$};
\path[->,>=angle 90,font=\small]  
   (TopLeft) edge (TopRight) ;
   \path[->,>=angle 90,font=\small]  
   (BotLeft) edge (BotRight) ;
\path[->,>=angle 90,font=\small]  
   (TopLeft) edge (BotLeft) ;
\path[->,>=angle 90,font=\small]  
   (TopRight) edge (BotRight) ;
\path[->,>=angle 90,font=\small]  
   (BackTopLeft) edge (BackTopRight) ;
   \path[->,>=angle 90,dashed,font=\small]  
   (BackBotLeft) edge (BackBotRight) ;
\path[->,>=angle 90,dashed,font=\small]  
   (BackTopLeft) edge (BackBotLeft) ;
\path[->,>=angle 90,font=\small]  
   (BackTopRight) edge (BackMidRight) ;
   \path[->,>=angle 90,font=\small]  
   (BackBotRight) edge (BackMidRight) ;
\path[<-,>=angle 90,font=\small]  
   (BackTopLeft) edge (TopLeft) ;
   \path[<-,>=angle 90,dashed,font=\small]  
   (BackBotLeft) edge (BotLeft) ;
\path[<-,>=angle 90,font=\small]  
   (BackTopRight) edge (TopRight) ;
\path[<-,>=angle 90,font=\small]  
   (BackBotRight) edge (BotRight) ;
\node[gray,fill=gray!10,rounded corners, above left=0cm and -0.8cm of TopLeft] (CatTL) {$\DPA_\mu$};
\node[gray,fill=gray!10,rounded corners, above left=0cm and -1cm of TopRight] (CatTR) {$\PA_\mu^{\Gl_{\underline n}}$};
\node[gray,fill=gray!10,rounded corners, below=-0cm of BotLeft] (CatBL) {$\HoP$};
\node[gray,fill=gray!10,rounded corners, below=-0cm of BotRight] (CatBR) {$\PA_{\Gl_{\underline n};0}$};
\node[gray,fill=gray!10,rounded corners, above=0cm of BackTopLeft] (CatBTL) {$\Jinf\DPA_\mu$};
\node[gray,fill=gray!10,rounded corners, above=0cm of BackTopRight] (CatBTR) {$\Jinf\PA_\mu^{\Gl_{\underline n}}$};
\node[gray,fill=gray!10,rounded corners, below right=-0cm and -1.2cm of BackBotLeft] (CatBBL) {$\mathtt{V}_\infty\HoP$};
\node[gray,fill=gray!10,rounded corners, below right=-0cm and -1.2cm of BackBotRight] (CatBBR) {$\Jinf \PA_{\Gl_{\underline n};0}$};
\node[gray,fill=gray!10,rounded corners, right=-0.1cm of BackMidRight] (CatBBR) {$\PVA$};
   \end{tikzpicture}
\end{center}
\caption{We indicate next to a node (on a light gray background) to which category an object belongs. 
Here, $\mathbb{P}=\AA/\bbmu-\zeta$ and thus $\mathbb{P}_{\underline n}=\AA_{\underline n}/\bbmu_{s,ij}-\zeta_{s,ij}$.}
\label{Fig:HamRed} 
\end{figure}
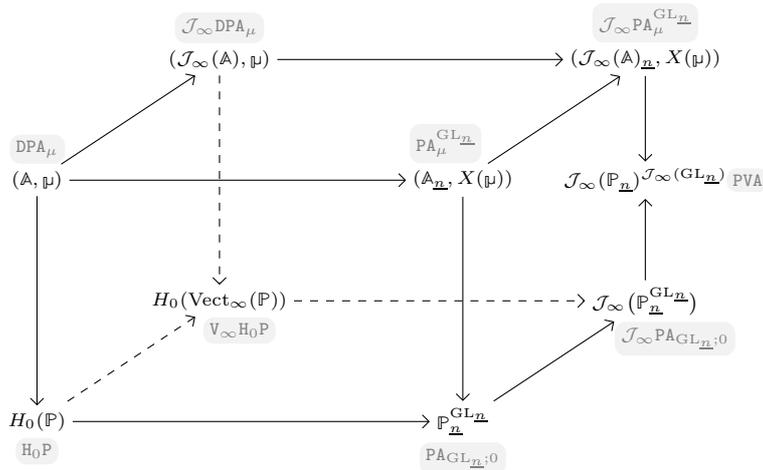}
\end{theorem}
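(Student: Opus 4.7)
The plan is to verify the commutativity of the cube face by face, recycling as much as possible from the Poisson reduction cube of Theorem \ref{Thm:ComPoissonRed} and then adjusting for the presence of the moment map. First, I would fix notation so that the front face is the Hamiltonian analogue of the commutative square of \S\ref{ss:ComFront}: given $(\AA,\bbmu)\in\DPA_\mu$, the representation functor sends the noncommutative moment map $\bbmu$ to its matrix‐entry moment map $X(\bbmu)$ on $\AA_{\underline n}$, and the vertical arrows take Hamiltonian reductions (on one side quotienting by the ideal generated by $\bbmu-\zeta$ and passing to $H_0$, on the other side quotienting by the ideal generated by the entries $X(\bbmu)-X(\zeta)$ and passing to $\Gl_{\underline n}$‑invariants). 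Commutativity of this square is essentially Van den Bergh's observation and is established in \S\ref{ss:HamRedDPA}; I would simply cite it. The back face is identical in shape but with every object replaced by its jet and the moment map $\bbmu$ viewed inside $\Jinf(\AA)$ via the natural embedding $\AA\hookrightarrow\Jinf(\AA)$. Its commutativity is then the jet version of the same noncommutative Hamiltonian‐reduction argument, combined with the $\Jinf\DPA_\mu\to\Jinf\PA^{\Gl_{\underline n}}_\mu$ compatibility which follows from Lemma \ref{Lem:DPAtoDPVA} and Theorem \ref{Thm:QJ-Rep}.

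Next, the top face commutes because Theorem \ref{Thm:QJ-Rep} already provides $\Jinf(\AA_{\underline n})\cong\Jinf(\AA)_{\underline n}$ functorially (and compatibly with the double Poisson vertex structure), so the identification sends the moment map $X(\bbmu)$ on $\AA_{\underline n}$ to the same element when computed after passing to the jets. The left face is the statement that the $H_0$-Poisson structure on $\AA/(\bbmu-\zeta)$ induced by a double Poisson algebra with moment map has a jet analogue on $H_0(\Vect_\infty(\mathbb{P}))$, and that this is exactly the $H_0$-Poisson vertex structure produced from $\Jinf(\AA)$ together with the same moment map $\bbmu$ seen in $\Jinf(\AA)$; this is precisely the content of the $H_0$-Poisson vertex formalism of \S\ref{ss:HOPconf} specialised to $\mathbb{P}$, together with the fact that $\del$ descends to the quotient by the $\del$-stable ideal generated by $\bbmu-\zeta$. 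The bottom face records that $\Jinf$ of the Hamiltonian reduction $\mathbb{P}_{\underline n}^{\Gl_{\underline n}}$ carries a Poisson vertex structure which coincides with the one produced by the $H_0$-Poisson vertex structure on $H_0(\Vect_\infty(\mathbb{P}))$ through its $\Gl_{\underline n}$-representation-plus-reduction counterpart; this follows from Theorem \ref{Th:invariants_are_PVA} applied to $\mathbb{P}_{\underline n}$ with $\GG=\Gl_{\underline n}$, combined with the known description of the Hamiltonian reduction as the invariant subalgebra of the quotient.

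The genuinely new piece is the right face, which is not a square but a triangle‑over‑square because of the intermediate vertex $\Jinf(\mathbb{P}_{\underline n})^{\Jinf(\Gl_{\underline n})}$. Here I would proceed in two stages. Stage one: check that the composite arrow from $(\Jinf(\AA)_{\underline n},X(\bbmu))$ to $\Jinf(\mathbb{P}_{\underline n})^{\Jinf(\Gl_{\underline n})}$ is well defined and is a Poisson vertex algebra morphism. This uses that $X(\bbmu)$ is a moment map at the Poisson vertex level, that the ideal generated by $X(\bbmu)-X(\zeta)$ is $\del$-stable (since $\del X(\bbmu)$ lies in this ideal by the derivation property of $\del$), and that the induced $\Jinf(\Gl_{\underline n})$‑action on the quotient $\Jinf(\AA_{\underline n})/(X(\bbmu)-X(\zeta))$ preserves the Poisson vertex bracket, so that the invariant subalgebra is a Poisson vertex subalgebra. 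Stage two: construct the map $j_{\AA_{\underline n}}\colon\Jinf(\mathbb{P}_{\underline n}^{\Gl_{\underline n}})\to\Jinf(\mathbb{P}_{\underline n})^{\Jinf(\Gl_{\underline n})}$ as a particular case of the map $j_A$ of Theorem \ref{Th:invariants_are_PVA} applied to $A=\mathbb{P}_{\underline n}$ and $\GG=\Gl_{\underline n}$, which is automatically a Poisson vertex algebra morphism. Commutativity of the triangle then amounts to the equality, inside $\Jinf(\AA)_{\underline n}$, of the two natural ways of first reducing and then jetifying versus first jetifying and then reducing, which holds because both the $\Gl_{\underline n}$-action and the ideal generated by $X(\bbmu)-X(\zeta)$ extend uniquely and functorially along $\AA_{\underline n}\hookrightarrow\Jinf(\AA)_{\underline n}$.

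The main obstacle I expect is the right face, specifically verifying that the image of the bottom‑back arrow, which is the morphism $j_{\AA_{\underline n}}$ of Theorem \ref{Th:invariants_are_PVA}, really does land in the invariants of the \emph{reduced} algebra $\mathbb{P}_{\underline n}=\AA_{\underline n}/(X(\bbmu)-X(\zeta))$ and agrees on the nose with the composition coming from the top‑back arrow. The subtlety is that $\Jinf$ does not commute with taking invariants in general (whence the need for the extra vertex), and one must check that the $\del$-closure of the Hamiltonian ideal is exactly the ideal used on the back face; this is where the choice of $\zeta$ as a $\Gl_{\underline n}$-invariant and the derivation property of the moment map under $\dgal{-_\lambda-}$ must be combined carefully. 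Once this is pinned down, all remaining faces commute by the earlier cited results, and the cube commutes as a whole.
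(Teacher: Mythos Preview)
Your face-by-face strategy matches the paper's, and your identifications for the front, top, left, bottom, and right faces are essentially what the paper does (it cites Corollary~\ref{corohamfront}, Theorem~\ref{thmhamtop}, Corollary~\ref{corohamleft}, \S\ref{subsubbottom}, and Proposition~\ref{Pr:HamRed_commute}, respectively). The one substantive difference concerns the back face. You attempt to prove it directly as ``the jet version of the same noncommutative Hamiltonian-reduction argument,'' which is vague and would require re-running the front-face computation in the vertex setting. The paper instead observes that the front-to-back functors $\mathtt{J}$ are essentially surjective and full onto their image categories $\Jinf\DPA_\mu$, $\Jinf\PA_\mu^{\Gl_{\underline n}}$, $\mathtt{V}_\infty\HoP$, $\Jinf\PA_{\Gl_{\underline n};0}$; once the other five faces commute, the back face commutes automatically. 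This buys the paper a one-line proof where you would have to unpack the back face explicitly.

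Two minor corrections: the map on the bottom-back edge should be written $j_{\mathbb{P}_{\underline n}}$ (i.e.\ $j_{\pi(A)}$ in the notation of Proposition~\ref{Pr:HamRed_commute}), not $j_{\AA_{\underline n}}$, since it is applied to the quotient $\mathbb{P}_{\underline n}=\AA_{\underline n}/I_\xi$; and your right-face argument is already packaged in Proposition~\ref{Pr:HamRed_commute}, so you need not reprove it from Theorem~\ref{Th:invariants_are_PVA} directly.
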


\subsection{}
\label{ss:IntroVA}
The motivation to construct double Poisson vertex algebras also comes from the theory of vertex algebras. 
Any vertex algebra $\mathcal{V}$ produces in a canonical way two interesting Poisson vertex algebras as follows~\cite{Li1,A12}. 
First, the graded algebra ${\rm gr}\,\mathcal{V}$ with respect to the Li 
filtration is a commutative vertex algebra, equipped with a Poisson vertex 
structure. 
One the other hand, the {\em Zhu $C_2$-algebra} $R_{\mathcal{V}}$ 
 is a certain quotient of $\mathcal{V}$ that inherits a Poisson 
structure. So 
its jet algebra has a Poisson vertex algebra structure. 
In fact, there is always a surjective 
Poisson vertex algebra morphism $\Jinf R_{\mathcal{V}} \twoheadrightarrow {\rm gr}\,\mathcal{V}$. 
In the case where the isomorphism 
\begin{align}
\label{eq:arc_iso}
\Jinf R_{\mathcal{V}}\cong {\rm gr}\,\mathcal{V}
\end{align} 
holds, and where the Poisson structure on $R_{\mathcal{V}}$ comes from a double Poisson 
structure, our jet algebra construction gives to $\mathcal{V}$ a richer structure, 
${\rm gr}\,\mathcal{V}$ being the representation algebra  
of some double Poisson vertex algebra.

Let us consider an illustrating example for which the isomorphism \eqref{eq:arc_iso} holds. 
If $\AA$ is a smooth associative algebra, then $\Rep(\AA,N)$ is an affine smooth scheme and  
one can consider the global section $\mathscr{D}_{\Rep(\AA,N)}^{ch}$ 
of the {\em chiral differential operators} 
on it introduced independently by Malikov,
Schechtman and Vaintrob \cite{MSV} and Beilinson and Drinfeld \cite{BD2}. 
This is a vertex algebra which is a natural chiralization 
of the algebra  $\mathcal{D}_{\Rep(\AA,N)}$ of differential operators 
on $\Rep(\AA,N)$ 
in the sense that its {\em Zhu's algebra}  
is $\mathcal{D}_{\Rep(\AA,N)}$ (\cite{ACM}).  
Recall that $\mathcal{D}_{\Rep(\AA,N)}$ is naturally filtered 
by the degree of operators. 
We have the following isomorphisms: 
\begin{align*}
 {\rm gr} \,\mathcal{D}_{\Rep(\AA,N)}  &\cong \kk[T^* \Rep(\AA,N)] = (T^*\AA)_N,& \\ 
{\rm gr}\,\mathscr{D}_{\Rep(\AA,N)}^{ch}  = \Jinf \kk [T^*\Rep(\AA,N)] 
&= \Jinf (T^*\AA)_N, \quad  
R_{\mathscr{D}_{\Rep(\AA,N)}^{ch}} \cong \kk [T^*\Rep(\AA,N)],&
\end{align*}
with the Poisson structure on $R_{\mathscr{D}_{\Rep(\AA,N)}^{ch}}$ 
coming from the symplectic structure on $T^*\Rep(\AA,N)$. 
In particular, the isomorphism \eqref{eq:arc_iso} holds.

For example, 
if $\BB$ is the free algebra $\kk\langle a,b \rangle$ equipped with the unique double Poisson bracket 
such that $\dgal{a,a}=0=\dgal{b,b}$, $\dgal{b,a}=1 \otimes 1$ 
(see Example \ref{Exmp:Symp}), then  
$\BB$ is isomorphic to the path algebra $\kk\bar{Q}$ of  
the double of the quiver 
$\raisebox{-0.375\height}{
\begin{adjustbox}{max totalsize={.055\textwidth}}
\begin{tikzpicture} 
\node(3)at(4,0){$\bullet$};
\draw[-latex](3)edge[out=-30,in=30,looseness=8]node{}(3); 
\end{tikzpicture}
\end{adjustbox}}$
having only one vertex and only one arrow, with 
the double Poisson structure as in \S\ref{ss:double_quiver}. 
In this case $\BB_N \cong \kk[T^* \gl_N ] \cong 
(T^* \kk Q)_N$, with $\gl_N=\Mat_N$ 
the Lie algebra of $\Gl_N$. 
Then $\mathcal{D}_{\Rep(\kk Q,N)}$ is the Weyl 
algebra of rank $N^2$,   
and $\mathscr{D}_{\Rep(\kk Q,N)}^{ch}$ the $\beta\gamma$-system  
generated by the fields $\beta_{i}(z),\gamma_j(z)$, 
for $i,j\in\{1, \ldots,N^2\}$, with OPEs:
\begin{align*}
\beta_i(z) \gamma_j(w) \sim \dfrac{\delta_{i,j}}{z-w}, \quad 
\gamma_j(z) \beta_i(w) \sim -\dfrac{\delta_{i,j}}{z-w}, \quad  
\beta_i(z) \beta_j(w) \sim 0,  \quad  \gamma_i(z) \gamma_j(w) \sim 0.&
\end{align*}

\medskip

{\tiny
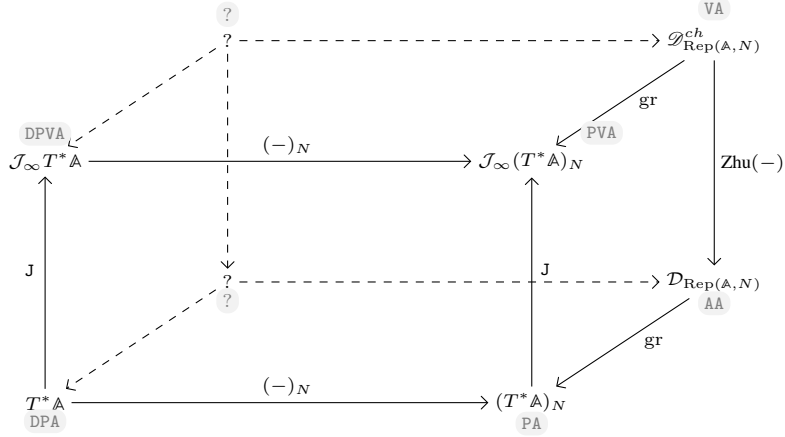
\begin{figure}
\centering
\begin{tikzpicture}[scale=.8]
 \node  (TopLeft) at (-5,2) {$\Jinf T^*\AA$};
 \node  (TopRight) at (3,2) {$\Jinf (T^*\AA)_N$};
 \node  (BotLeft) at (-5,-2) {$T^* \AA$}; 
 \node  (BotRight) at (3,-2) {$(T^*\AA)_N$};
 \node  (BackTopLeft) at (-2,4) {$?$};
 \node  (BackTopRight) at (6,4) {$\mathscr{D}_{\Rep(\AA,N)}^{ch}$};
 \node   (BackBotLeft) at (-2,0) {$?$}; 
 \node  (BackBotRight) at (6,0) {$\mathcal{D}_{\Rep(\AA,N)}$}; 
\path[->,>=angle 90,font=\small]  
   (TopLeft) edge node[above] {\;\;{\tiny $(-)_N$}} (TopRight) ;
   \path[->,>=angle 90,font=\small]  
   (BotLeft) edge node[above] {\;\;{\tiny $(-)_N$}} (BotRight) ;
\path[<-,>=angle 90,font=\small]  
   (TopLeft) edge node[left,above] {\!\!\!\!\!\!\!{\tiny $\mathtt{J}$}} (BotLeft) ;
\path[<-,>=angle 90,font=\small]  
   (TopRight) edge node[right,above] {\quad{\tiny $\mathtt{J}$}} (BotRight) ;
\path[->,>=angle 90,dashed,font=\small]  
   (BackTopLeft) edge (BackTopRight) ;
   \path[->,>=angle 90,dashed,font=\small]  
   (BackBotLeft) edge (BackBotRight) ;
\path[->,>=angle 90,dashed,font=\small]  
   (BackTopLeft) edge (BackBotLeft) ;
\path[->,>=angle 90,font=\small]  
   (BackTopRight) edge node[right] {{\tiny $\text{Zhu}(-)$}} (BackBotRight) ;
\path[->,>=angle 90,dashed,font=\small]  
   (BackTopLeft) edge (TopLeft) ;
   \path[->,>=angle 90,dashed,font=\small]  
   (BackBotLeft) edge (BotLeft) ;
\path[->,>=angle 90,font=\small]  
   (BackTopRight) edge node[right] {\,\;{\tiny ${\rm gr}$}} (TopRight) ;
\path[->,>=angle 90,font=\small]  
   (BackBotRight) edge node[right] {\;\;{\tiny ${\rm gr}$}} (BotRight) ;
\node[gray,fill=gray!10,rounded corners, above=0cm of TopLeft] (CatTL) {$\DPVA$};
\node[gray,fill=gray!10,rounded corners, above left=0cm and -2cm of TopRight] (CatTR) {$\PVA$};
\node[gray,fill=gray!10,rounded corners, below=-0.1cm of BotLeft] (CatBL) {$\DPA$};
\node[gray,fill=gray!10,rounded corners, below=-0.1cm of BotRight] (CatBR) {$\PA$};
\node[gray,fill=gray!10,rounded corners, above=0cm of BackTopLeft] (CatBTL) {$?$};
\node[gray,fill=gray!10,rounded corners, above=0cm of BackTopRight] (CatBTR) {$\mathtt{VA}$};
\node[gray,fill=gray!10,rounded corners, below=-0.1cm of BackBotLeft] (CatBBL) {$?$};
\node[gray,fill=gray!10,rounded corners, below=-0.1cm of BackBotRight] (CatBBR) {$\mathtt{AA}$};
   \end{tikzpicture}
   \caption{Noncommutative cotangent algebra and relative objects: 
   here $\mathtt{VA}$ is the category of vertex algebras, 
   and $\mathtt{AA}$ that of unital associative filtered algebras.} 
   \label{Fig:cdo}
\end{figure}
}

The diagram depicted in 
Figure \ref{Fig:cdo}, 
where the front and the right cube commute, 
summarizes the discussion. 
A natural, but hard, problem would be to construct {\em double} versions 
of $\mathcal{D}_{\Rep(\AA,N)}$ 
and $\mathscr{D}_{\Rep(\AA,N)}^{ch}$, 
corresponding to $T^*\AA$ and 
$\Jinf (T^*\AA)$, respectively. 
More generally, a similar problem can be raised for vertex 
algebras for which the Poisson structure 
on the Zhu $C_2$-algebras comes from a double Poisson structure 
and where the isomorphism \eqref{eq:arc_iso} holds, see Problem \ref{Pb:vertex}. 
More examples of such vertex algebras are discussed in Section~\ref{Sec:Open}.

\subsection{Organization of the article}
Section \ref{Sec:preliminary} is about the standard notions 
of Poisson, double Poisson, Poisson vertex and double Poisson vertex 
algebras, and the representation functor. 
In Section \ref{S:Alt-PVA}, we give an alternative definition of double Poisson vertex 
algebras. 
We prove in Section \ref{Sec:JQ} the commutativity of the top face of the cube depicted 
in Figure \ref{Fig:PoiRed}. In particular, the jet algebra functor for double Poisson algebras 
is introduced in this section. 
Section \ref{Sec:Inv_PVA} is of independent interest, and deals with the structure 
of invariants of jet algebras in the commutative setting. The main result of this section 
is Theorem \ref{Th:invariants_are_PVA}. 
We prove the commutativity (cf.~Theorem~\ref{Thm:ComPoissonRed}) of the cube depicted 
in Figure \ref{Fig:PoiRed} in Section \ref{S:PoiRed}. 
The categories involved in the figure are introduced in this section. 
Section \ref{Sec:SemiS} is about the semisimple version of Poisson reduction. 
An illustrating example from quivers is presented in \S\ref{ss:SemiS-example}. 
We consider the Hamiltonian reduction in the commutative setting 
(in particular, for Poisson vertex algebras) in Section~\ref{Sec:HamRed}. 
The Hamiltonian reduction in the noncommutative setting is discussed 
in Section~\ref{Sec:Momap}. The commutativity of the cube depicted in Figure
\ref{Fig:HamRed} is proved in this section (cf.~Theorem \ref{Th:cube_Hamiltonian}). 
Finally, problems related with vertex algebras and other topics are 
discussed in Section~\ref{Sec:Open}. 

Appendix \ref{App:cot} gathers calculations about the left/right (noncommutative) moment maps 
on the cotangent bundle $T^\ast\Gl_N\simeq \Rep(\kk\langle a,b^{\pm1}\rangle,N)$ considered in \S\ref{ss:Pb_red}.

\subsection{Acknowledgements}
The authors thank Michael Bulois, Reimundo Heluani, Thibault Juillard, Andrew Linshaw and Daniele Valeri for useful discussions. 
T.B. has received funding from the European Research Council (ERC) under the European Union's Horizon 2020 research and innovation programme (Grant Agreement No. 768679).
The authors wish to thank Damien Calaque, Johan Leray and Bruno Vallette for organizing a workshop on double Poisson structures, funded by the aforementioned grant. 
T.B. is partially supported by the Agence Nationale de la Recherche (project n°\! ANR-24-CE40-2583).
M.F. has received funding from the European Union’s Horizon 2020 research and innovation programme under the Marie 
Sk\l{}odowska-Curie grant agreement No.~101034255. 
A.M. is partially supported by the European Research 
Council (ERC) under the European Union's Horizon 2020 research innovation programme
(ERC-2020-SyG-854361-HyperK) and the ANR project ANR-24-CE40-3389 (GRAW).


\section{Preliminary notions}
\label{Sec:preliminary}

We fix a field $\kk$ of characteristic $0$. All unadorned tensor products are taken over $\kk$. Unless otherwise stated, an algebra is a unital associative algebra over $\kk$. 
Below, we recall a collection of conventions that come mainly from \cite{DSKV,VdB}.

\subsection{Basic operations}

Assume that $\AA$ is a vector space over $\kk$. 
Given $n \geq 2$, we form the tensor product $\AA^{\otimes n}$.  
When $n=2$, we use the following Sweedler's notation 
$d= \sum_l d_l'\otimes d_l'' =:d'\otimes d'' \,\in \AA\otimes \AA$ 
to denote elements. 
The permutation endomorphism $(-)^\sigma$ on $\AA\otimes \AA$ is given by  
$(a\otimes b)^\sigma=b\otimes a$.  
More generally for $n\geq 2$, 
we introduce the cyclic permutation 
\begin{equation}
  \AA^{\otimes n}\to \AA^{\otimes n} \colon 
a_1 \otimes \ldots \otimes a_n \mapsto (a_1 \otimes \ldots \otimes a_n)^\sigma:=
a_n \otimes a_1 \otimes \ldots \otimes a_{n-1}\,. 
\end{equation} 

If $\del \in \End(\AA)$, 
we extend $\del$ to an element of $\End(\AA^{\otimes n})$, which by abuse of notation we also denote by $\del$, as follows: 
\begin{equation} \label{Eq:DelExtend}
  \del(a_1 \otimes \ldots \otimes a_n)=\sum_{i=1}^n a_1 \otimes \ldots \otimes a_{i-1} \otimes \del(a_i) \otimes a_{i+1} \otimes \ldots \otimes a_n\,.
\end{equation} 
We also consider the induced linear maps acting as $\del$ on the leftmost and rightmost factors which are respectively denoted by $\del_L$ and $\del_R$: 
\begin{equation} \label{Eq:DelLRExtend}
\begin{aligned}
\del_L(a_1 \otimes \ldots \otimes a_n)&=\del(a_1) \otimes a_2 \otimes  \ldots \otimes a_n\,, \\ 
  \del_R(a_1 \otimes \ldots \otimes a_n)&=a_1 \otimes \ldots \otimes a_{n-1} \otimes \del(a_n)\,.
\end{aligned}
\end{equation}


Assume now that $\AA$ is a (unital associative) algebra over $\kk$.  
Given $n \geq 2$, the tensor product $\AA^{\otimes n}$ is seen as an associative algebra for 
\begin{equation} \label{Eq:prodtens}
    (a_1\otimes \ldots\otimes a_n)(b_1\otimes \ldots \otimes b_n)
    =a_1b_1\otimes \ldots \otimes a_nb_n\,.
\end{equation}

We define left and right $\AA$-module structures on $\AA^{\otimes n}$ as follows. For $0\leq i \leq n-1$, 
\begin{equation}
  \begin{aligned}
    b \ast_i (a_1 \otimes \ldots \otimes a_n)=& a_1 \otimes \ldots a_i \otimes b a_{i+1} \otimes a_{i+2} \otimes \ldots \otimes a_n \,, \\
(a_1 \otimes \ldots \otimes a_n) \ast_i b=& a_1 \otimes \ldots \otimes a_{n-i-1} \otimes  a_{n-i}b \otimes a_{n-i+1} \otimes \ldots \otimes a_n\,.
  \end{aligned}
\end{equation}
For $i=0$, we have the \emph{outer $\AA$-bimodule structure} on $\AA^{\otimes n}$ given by multiplication on the left of the left-most component, and on the right of the right-most component. In that case, we omit to write $\ast_0$, so that $b d=b \ast_0 d$ and $d b = d \ast_0 b$ for any $d\in \AA^{\otimes n}$.  
We set $\ast_{i+n}=\ast_i$ to define the operation on $\AA^{\otimes n}$ for any $i \in \Z$. 

We introduce tensor product rules in a similar way as maps 
$\AA\otimes \AA^{\otimes n}\to \AA^{\otimes (n+1)}$ and $\AA^{\otimes n}\otimes \AA \to \AA^{\otimes (n+1)}$. For $0\leq i \leq n-1$, 
\begin{equation}
  \begin{aligned}
    b \otimes_i (a_1 \otimes \ldots \otimes a_n)=& a_1 \otimes \ldots a_i \otimes b  \otimes a_{i+1} \otimes \ldots \otimes a_n \,, \\
(a_1 \otimes \ldots \otimes a_n) \otimes_i b=& a_1 \otimes \ldots\otimes  a_{n-i} \otimes b \otimes a_{n-i+1} \otimes \ldots \otimes a_n\,,
  \end{aligned}
\end{equation}
hence $b \otimes_i (-) = (-) \otimes_{n-i}b \colon \AA^{\otimes n}\to \AA^{\otimes (n+1)}$. 
We omit the subscript for $i=0$.

Let us spell out the case $n=2$. We have the outer ($i=0$) and inner ($i=1$ with $\ast := \ast_1$) bimodule structures on $\AA\otimes \AA$ defined for any $a,b\in \AA$ and $d\in \AA\otimes\AA$ by 
\begin{equation}
    a\,d \,b=ad'\otimes d''b\,, \quad 
    a\ast d\ast b=d'b\otimes ad''\,.
\end{equation}
Furthermore, $a\otimes_1 d=d'\otimes a \otimes d''=d \otimes_1 a$.

The previous constructions can be used for a differential algebra $(\VV,\del)$, that is $\VV$ is an associative algebra equipped with a derivation 
$$\del \in \Der(\VV)=\{\delta \in \Hom(\VV,\VV) \mid \delta(ab)= a \delta(b) + \delta(a) b\}\,.$$
In particular, we note that the extension \eqref{Eq:DelExtend}  of $\del$  to $\VV^{\otimes 2}$ is the sum of the extensions $\del_{L,R}\colon \VV^{\otimes 2}\to \VV^{\otimes 2}$ \eqref{Eq:DelLRExtend} which satisfy ($a_1,a_2,b_1,b_2\in \VV$)
\begin{equation*}
\begin{aligned}
 \del_L(a_1b_1\otimes a_2b_2)&=\del(a_1)b_1\otimes a_2b_2+ a_1\del(b_1)\otimes a_2b_2 \,, \\
 \del_R(a_1b_1\otimes a_2b_2)&=a_1 b_1\otimes \del(a_2)b_2 + a_1b_1 \otimes a_2\del(b_2)\,.
\end{aligned}
\end{equation*}

If $(\VV,\del)$ is a differential algebra, we call $\VV[\del]$ the algebra of scalar differential operators. 
An element $P(\del)=\sum_{n\geq 0}p_n \del^n\in \VV[\del]$ acts on $b\in \VV$ by the obvious formula $P(\del)b=\sum_{n\geq 0}p_n \del^n(b)$. Its symbol is the polynomial $P(\lambda)=\sum_{n\geq 0}p_n \lambda^n\in \VV[\lambda]$.   
We set\footnote{We should see $\res_\lambda \lambda^{-n-1}$ as a linear map $\VV[\lambda]\to \VV$. 
It is obtained from the usual residue $\res_\lambda\colon \VV[\lambda^{\pm 1}]\to \VV$, $\sum_{n\in \Z} p_n \lambda^n\mapsto p_{-1}$.} $\res_\lambda \lambda^{-n-1} P(\lambda)=p_n$ for all $n\in\Z_{\geq0}$. 
Given such a symbol of a scalar differential operator, we recall the standard notation (see e.g. \cite{DSKV}) 
\begin{align*}
\big|_{x=\del}\, P(\lambda+x) &=P(\lambda+\del) = \sum_{n\geq 0}\sum_{k=0}^n \binom{n}{k} \del^k(p_n) \lambda^{n-k}\in \VV[\lambda] \,, \\
P(\lambda+x)\, (\big|_{x=\del} b) &= P(\lambda+\del)_{\to}b = \sum_{n\geq 0} p_n \sum_{k=0}^n \binom{n}{k}  \del^k(b) \lambda^{n-k}\in \VV[\lambda]\,,   \\
(\big|_{x=\del} b)\, P(\lambda+x) &= b{}_{\leftarrow}P(\lambda+\del) = \sum_{n\geq 0}\sum_{k=0}^n \binom{n}{k}  \del^k(b)p_n \lambda^{n-k}\in \VV[\lambda]\,, 
\end{align*}
with $b\in \VV$.  
Similarly, we consider $\VV^{\otimes 2}[\del]$ as the algebra of scalar differential operators on $\VV\otimes \VV$ and use an analogous notation for the symbols  $P(\lambda)\in\VV^{\otimes 2}[\lambda]$ acting on  $\VV\otimes \VV$. 

\subsection{Representation algebra} 
\label{sss:RepAlg} 
Let $N \in \Z_{>0}$. For an algebra $\AA$, define $\AA_N$ as the commutative algebra with generators $\{a_{ij} \mid a\in \AA,\, 1\leq i,j \leq N\}$, subject to the relations 
\begin{equation} \label{Eq:RepRel}
  1_{ij}=\delta_{ij}\,, \quad (ab)_{ij}=\sum_{1\leq k \leq N}a_{ik}b_{kj}\,, \quad 
(\gamma a+\gamma' b)_{ij}=\gamma a_{ij}+\gamma' b_{ij}\,,
\end{equation}
for any $a,b \in \AA$, $\gamma,\gamma' \in \kk$, $1\leq i,j \leq N$. We call $\AA_N$ the \emph{$N$-th representation algebra} of~$\AA$. Clearly, $\AA_N$ is finitely generated if $\AA$ has this property. 
Note that $(-)_N$ is a functor, hence a morphism $\theta\colon \AA_1\to\AA_2$ gives rise to a morphism of commutative algebras $\theta_N \colon (\AA_1)_N\to (\AA_2)_N$, which we may simply denote $\theta:=\theta_N$. 
For example, if $\del \in \Der(\AA)$, it induces a unique derivation $\del\in \Der(\AA_N)$ given on generators by 
\begin{equation} \label{Eq:DerRep}
   \del(a_{ij}):=(\del(a)\!)_{ij}\,, \qquad  a\in \AA,\,\,\, 1\leq i,j\leq N\,.
\end{equation}

The algebra $\AA_N$ is the coordinate ring of the \emph{representation scheme} $\Rep(\AA,N)$ parametrised by representations of $\AA$ on $\kk^N$. 
There is a natural left action of $\Gl_N:=\Gl_N(\kk)$ on $\Rep(\AA,N)$ by conjugation of matrices. In turn, this induces a left action on $\AA_N$ by automorphisms through 
\begin{equation} \label{Eq:ActRep}
    g\cdot a_{ij}:=\sum_{1\leq k,l\leq N} (g^{-1})_{ik} a_{kl} g_{lj}\,, \qquad 
    g\in \Gl_N,\,\, a\in \AA,\,\, 1\leq i,j\leq N\,.
\end{equation}
The Lie algebra $\gl_N:=\gl_N(\kk)$ inherits an infinitesimal action by derivations on $\AA_N$ through 
\begin{equation} \label{Eq:ActInfRep}
    \xi \cdot a_{ij}:=\sum_{1\leq k \leq N} (a_{ik} \xi_{kj} - \xi_{ik} a_{kj})\,, \qquad 
    \xi \in \gl_N,\,\, a\in \AA,\,\, 1\leq i,j\leq N\,.
\end{equation} 
In terms of the matrix-valued element $X(a):=(a_{ij})_{1\leq i,j\leq N}$, these actions read $g\cdot X(a)=g^{-1}X(a) g$ and $\xi \cdot X(a)=-[\xi,X(a)]$. 

When performing reduction on representation algebras in Sections \ref{S:PoiRed}, \ref{Sec:SemiS} and \ref{Sec:Momap}, we shall assume that $\kk$ is algebraically closed in addition to the running assumption $\operatorname{char}(\kk)=0$.

\subsection{Commutative reminder: Poisson (vertex) algebras} 
\label{ss:Remind}

Let $A$ be a commutative algebra. 
A Poisson bracket on $A$ is a linear map 
$$\br{-,-} \colon A \otimes A \to A\,, \quad a\otimes b \mapsto \br{a,b}$$
such that for all $a,b,c\in A$ 
\begin{subequations}
  \begin{align}
& \quad \br{a,b}=- \br{b,a}\,, \quad & \text{(skewsymmetry)}  \\
& \quad \br{a, bc}=\br{a, b}c+b \br{a, c}\,, \quad & \text{(Leibniz rule)}  \\
& \quad \br{a,\br{b,c}}-\br{b,\br{a,c}}-\br{\br{a,b},c}=0\,.  
\quad & \text{(Jacobi identity)}  \label{Eq:J1}
  \end{align}
\end{subequations}
We say that $(A,\br{-,-})$ is a Poisson algebra. The category of Poisson algebras over $\kk$ is denoted by $\PA$. Its morphisms $(A_1,\br{-,-}_1)\to (A_2,\br{-,-}_2)$ are the morphisms of algebras $\phi\colon 
A_1\to A_2$ satisfying $\br{\phi(a),\phi(b)}_2=\phi(\br{a,b}_1)$ for any $a,b\in A_1$.

\begin{definition}[PVA from a bracket]  \label{Def:PVA-br} 
Let $V$ be a commutative differential algebra for the derivation $\del\in \Der(V)$. 
    A $\lambda$-bracket on $V$ is a linear map 
$$\br{-_\lambda-} \colon V \otimes V \to V[\lambda]\,, \quad a\otimes b \mapsto \br{a_\lambda b}$$
such that for all $a,b,c\in V$ 
\begin{subequations}
  \begin{align}
& \quad \br{\del(a)_\lambda b}=-\lambda \br{a_\lambda b}\,, \quad 
\br{a_\lambda \del(b)}=(\lambda+\del)\br{a_\lambda b}\,,
& \text{(sesquilinearity)}  \label{Eq:A1}\\
& \quad \br{a_\lambda bc}=\br{a_\lambda b}c+b \br{a_\lambda c}\,, \quad & \text{(left Leibniz rule)} \label{Eq:lL} \\
& \quad \br{ab_\lambda c}=\br{a_{\lambda+\del} c}_{\to}b 
+  \br{b_{\lambda+\del} c}_{\to}a \,. \quad & \text{(right Leibniz rule)}  \label{Eq:rL}
  \end{align}
\end{subequations}
The couple $(V,\br{-_\lambda-})$ is a Poisson vertex algebra if in addition 
\begin{subequations}
  \begin{align}
& \quad \br{a_{\lambda}b}=- \br{b_{-\lambda-\del}a}\,, \quad & \text{(skewsymmetry)}  \label{Eq:A2} \\
& \quad \br{a_\lambda \br{b_\mu c}}-\br{b_\mu\br{a_\lambda c}}-\br{\br{a_\lambda b}_{\lambda+\mu}c}=0\,.  
\quad & \text{(Jacobi identity)} \label{Eq:A3} 
  \end{align}
\end{subequations}
\end{definition}

\begin{definition}[PVA from endomorphisms] \label{Def:PVA-end}
A vector space $V$ is a Lie vertex algebra if it is equipped with a linear operator $\del\in \End(V)$ and $\kk$-bilinear products  
$$V \times V \to V\,, \quad (a,b) \mapsto a_{(n)}b\,, \quad n\in \Z_{\geq0}\,,$$
such that for fixed $a,b\in V$, $a_{(n)}b=0$  for $n$ sufficiently large, and for all $a,b,c\in V$, $m,n\in \Z_{\geq0}$: 
\begin{subequations}
  \begin{align}
& \quad (\del a)_{(n)}b=-n a_{(n-1)}b\,, \quad 
a_{(n)}(\del b)=\del(a_{(n)}b)+n a_{(n-1)}b \,,& \text{(sesquilinearity)}  
\label{Eq:A1-b} \\
& \quad a_{(n)}b =\sum_{i\geq 0}\frac{(-1)^{n+i+1}}{i!} \del^i(b_{(n+i)}a)\,, & \text{(skewsymmetry)}  \label{Eq:A2-b} \\
& \quad a_{(m)}b_{(n)} c - b_{(n)}a_{(m)}c=\sum_{i=0}^m \binom{m}{i} (a_{(i)}b)_{(m+n-i)}c\,. & \text{(Jacobi identity)} \label{Eq:A3-b}
  \end{align}
\end{subequations}
If in addition to the Lie vertex algebra structure, $(V,\del)$ is a commutative differential algebra, we say that it is a Poisson vertex algebra when 
\begin{subequations}
  \begin{align}
& \quad a_{(n)} bc=(a_{(n)} b)\,c+b \, (a_{(n)} c)\,,   & \text{(left Leibniz rule)} \label{Eq:L-b} \\
& \quad (ab)_{(n)}c=
\sum_{i \geq 0} \frac{1}{i!} \del^i(a)\, (b_{(n+i)}c) 
 +\sum_{i \geq 0} \frac{1}{i!}  (a_{(n+i)}c)\, \del^i(b)  \,.   & \text{(right Leibniz rule)}   \label{Eq:R-b} 
  \end{align}
\end{subequations}
\end{definition}

\begin{proposition}[\cite{Ka,AM}]
\label{Pr:CorPVA}
The two definitions are equivalent.
\end{proposition}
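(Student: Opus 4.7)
The plan is to exhibit an explicit bijection between the two kinds of data and then to verify, axiom by axiom, that the corresponding conditions coincide. Given a $\lambda$-bracket as in Definition \ref{Def:PVA-br}, I would define $\kk$-bilinear products on $V$ by
\[
  a_{(n)} b \,:=\, n!\,\res_\lambda \lambda^{-n-1}\br{a_\lambda b}\,, \qquad n \in \Z_{\geq 0}\,,
\]
so equivalently $\br{a_\lambda b} = \sum_{n \geq 0}\frac{\lambda^n}{n!}\,a_{(n)} b$. Polynomiality of $\br{a_\lambda b}$ in $\lambda$ is exactly the locality assumption that $a_{(n)}b=0$ for $n$ large in Definition \ref{Def:PVA-end}, and the two maps are manifestly inverse to each other, so the underlying data are in bijection.

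I would then translate each axiom by expanding in powers of $\lambda$ (and of $\lambda,\mu$ for Jacobi) and reading off the coefficient of $\lambda^m/m!$. Sesquilinearity \eqref{Eq:A1} unfolds directly into \eqref{Eq:A1-b}: $\br{\del(a)_\lambda b} = -\lambda\,\br{a_\lambda b}$ gives $(\del a)_{(n)}b = -n\,a_{(n-1)}b$, while $\br{a_\lambda \del(b)} = (\lambda+\del)\br{a_\lambda b}$ gives $a_{(n)}\del b = \del(a_{(n)}b) + n\,a_{(n-1)}b$. The left Leibniz rule \eqref{Eq:lL} reduces coefficient-wise to \eqref{Eq:L-b}. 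For the right Leibniz rule \eqref{Eq:rL}, I would expand $\br{a_{\lambda+\del}c}_{\to}b$ via the formula recalled in the paper, collect the coefficient of $\lambda^m/m!$, and use commutativity of $V$ to obtain \eqref{Eq:R-b}. Skewsymmetry \eqref{Eq:A2} is handled by expanding
\[
  \br{b_{-\lambda-\del}a} \,=\, \sum_{n \geq 0}\frac{1}{n!}(-\lambda-\del)^n(b_{(n)}a)
\]
with the binomial theorem and re-indexing $n = m + i$, whereupon the coefficient of $\lambda^m/m!$ is exactly $\sum_{i \geq 0}\frac{(-1)^{m+i}}{i!}\del^i(b_{(m+i)}a)$; the overall minus sign then gives \eqref{Eq:A2-b}.

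The remaining task, which I expect to be the main obstacle, is the Jacobi identity. After substituting the power-series expansion into all three terms of \eqref{Eq:A3}, the contribution of $\br{a_\lambda\br{b_\mu c}} - \br{b_\mu\br{a_\lambda c}}$ yields $a_{(m)}b_{(n)}c - b_{(n)}a_{(m)}c$ directly by comparing the coefficient of $\lambda^m\mu^n/(m!\,n!)$. The delicate term is $\br{\br{a_\lambda b}_{\lambda+\mu}c}$: one must treat $\br{a_\lambda b}$ as a polynomial in $\lambda$ whose coefficients lie in $V$, then use sesquilinearity \eqref{Eq:A1} to transfer the $\del$ implicit in the substitution $\mu \leadsto \lambda+\mu$ onto those coefficients before re-expanding. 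The binomial coefficients that appear in the process reproduce exactly the sum $\sum_{i=0}^{m}\binom{m}{i}(a_{(i)}b)_{(m+n-i)}c$ of \eqref{Eq:A3-b}. Running the same computation in reverse shows that the axioms of Definition \ref{Def:PVA-end} imply those of Definition \ref{Def:PVA-br}, completing the equivalence.
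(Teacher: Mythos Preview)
Your approach is exactly the one the paper sketches: set up the bijection $\br{a_\lambda b}=\sum_{n\geq 0}\frac{\lambda^n}{n!}a_{(n)}b$, $a_{(n)}b=n!\,\res_\lambda \lambda^{-n-1}\br{a_\lambda b}$, and translate axioms coefficient-wise, with the paper simply omitting the details you supply. One minor wobble: in the Jacobi term $\br{\br{a_\lambda b}_{\lambda+\mu}c}$ there is no $\del$ and no need for sesquilinearity---you just expand $\sum_i\frac{\lambda^i}{i!}\sum_j\frac{(\lambda+\mu)^j}{j!}(a_{(i)}b)_{(j)}c$ binomially and read off the coefficient of $\lambda^m\mu^n$, which indeed gives $\sum_{i=0}^m\binom{m}{i}(a_{(i)}b)_{(m+n-i)}c$.
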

\begin{proof}
It suffices to translate the axioms from each definition into one another using the correspondence $\br{a_\lambda b}:=\sum_{n\geq 0}\frac{\lambda^n}{n!} a_{(n)}b$ and $a_{(n)}b:=n!\, \res_{\lambda}\lambda^{-n-1}\br{a_\lambda b}$ for any $a,b\in V$. 
\end{proof}

\begin{remark} \label{Rem:LVA}
By repeating the same argument, one sees that a Lie vertex algebra is a vector space $V$ equipped with a linear operator $\del\in \End(V)$ and a linear map $\br{-_\lambda-}\colon V\otimes V\to V[\lambda]$ satisfying \eqref{Eq:A1}, \eqref{Eq:A2} and \eqref{Eq:A3}. We say that $\br{-_\lambda-}$ is a \emph{Lie vertex bracket}. 
\end{remark}

The category of Poisson vertex algebras over $\kk$ is denoted by $\PVA$. Its morphisms are the morphisms of differential algebras $\phi\colon (V_1,\del_1)\to (V_2,\del_2)$ (hence $\phi(\del_1(a)\!)=\del_2(\phi(a)\!)$) satisfying $\br{\phi(a)_\lambda\phi(b)}_2=\phi(\br{a_\lambda b}_1)$ for any $a,b\in V_1$. Equivalently, the later condition is $\phi(a)_{(n)_2}\phi(b)=\phi(a_{(n)_1}b)$ for any $n\geq 0$ if one defines $(V_i,\del_i,-_{(n)_i}-)$, $i=1,2$, using Definition \ref{Def:PVA-end}.


Write $a\mapsto \bar{a}$ for the linear map $V\to V/\del V$ from $V$ to the vector space obtained as the quotient of $V$ by $\operatorname{im}(\del)$. We denote in the same way the algebra homomorphism  $V\to V/\langle\del V\rangle$ where $\langle\del V\rangle$ is the ideal generated by $\operatorname{im}(\del)$ in $V$.

\begin{lemma} \label{Lem:PVAtoPA}
Assume that $(V,\del, \br{-_\lambda-})$ is a Poisson vertex algebra. 
Then $V/\langle\del V\rangle$ equipped with the bilinear map 
\begin{equation*}   
 \{-,-\} \colon  V/\langle\del V\rangle \times V/\langle\del V\rangle \to V/\langle\del V\rangle\,, \quad 
\{ \bar{a},\bar{b}\} :=\overline{\br{a_\lambda b}}|_{\lambda =0} =\overline{a_{(0)}b}\,,
\end{equation*}
is a Poisson algebra.     
Moreover, this construction extends to a functor $\mathtt{Q}\colon \PVA\to \PA$. 
\end{lemma}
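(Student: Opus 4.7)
The plan is to adopt the endomorphism formulation (Definition \ref{Def:PVA-end}) throughout, since the bracket on the quotient is literally defined by the $(0)$-product. The whole proof then amounts to extracting the ``$\del^0$-part'' of each PVA axiom.

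\smallskip

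\textbf{Step 1 (well-definedness).} I would first check that $a_{(0)} b$ descends to the quotient $V/\langle \del V\rangle$ in each argument. Sesquilinearity \eqref{Eq:A1-b} at $n=0$ gives $(\del a)_{(0)} b = 0$ and $a_{(0)}(\del b) = \del(a_{(0)} b)$, so the bracket is insensitive, modulo $\operatorname{im}\del$, to replacing $a$ or $b$ by a $\del$-image. To pass from $\operatorname{im}\del$ to the full ideal $\langle \del V\rangle$, I would use the right Leibniz rule \eqref{Eq:R-b} with $n=0$:
\begin{equation*}
(ab)_{(0)} c = a\,(b_{(0)} c) + (a_{(0)} c)\, b + \sum_{i\geq 1}\tfrac{1}{i!}\bigl(\del^i(a)(b_{(i)}c) + (a_{(i)}c)\del^i(b)\bigr),
\end{equation*}
which shows that the residual terms all lie in $\langle \del V\rangle$. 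Combined with the left Leibniz rule \eqref{Eq:L-b}, this shows that replacing $a$ or $b$ by an element of $\langle \del V\rangle$ sends $a_{(0)} b$ into $\langle \del V\rangle$. Hence $\{\bar a,\bar b\}$ is well defined, and the same computation already gives the Leibniz rule for $\{-,-\}$ on the quotient.

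\smallskip

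\textbf{Step 2 (Poisson axioms).} Skewsymmetry follows by taking $n=0$ in \eqref{Eq:A2-b}:
\begin{equation*}
a_{(0)} b = -\,b_{(0)} a + \sum_{i\geq 1}\tfrac{(-1)^{i+1}}{i!}\,\del^i(b_{(i)} a),
\end{equation*}
where the sum lies in $\operatorname{im}\del \subset \langle \del V\rangle$. Jacobi is the cleanest case: setting $m=n=0$ in \eqref{Eq:A3-b} yields exactly
\begin{equation*}
a_{(0)} b_{(0)} c - b_{(0)} a_{(0)} c = (a_{(0)} b)_{(0)} c,
\end{equation*}
which descends to \eqref{Eq:J1} for $\{-,-\}$. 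Together with Step 1, this shows $(V/\langle \del V\rangle,\{-,-\})$ is a Poisson algebra.

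\smallskip

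\textbf{Step 3 (functoriality).} A PVA morphism $\phi\colon V_1\to V_2$ intertwines the derivations, hence maps $\langle \del V_1\rangle$ into $\langle \del V_2\rangle$ and induces a commutative algebra morphism $\mathtt{Q}(\phi)\colon V_1/\langle \del V_1\rangle \to V_2/\langle \del V_2\rangle$. The compatibility $\phi(a_{(0)_1} b) = \phi(a)_{(0)_2}\phi(b)$ (equivalent to preservation of the $\lambda$-bracket, by Proposition \ref{Pr:CorPVA}) gives that $\mathtt{Q}(\phi)$ is a Poisson morphism. Functoriality (identity and composition) is then immediate.

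\smallskip

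The only non-routine step is the well-definedness in Step 1, and specifically the passage from $\operatorname{im}\del$ to the full ideal $\langle \del V\rangle$: one has to convince oneself that although \eqref{Eq:R-b} involves infinitely many higher-order terms, each of them lands in $\langle \del V\rangle$ because it carries an explicit factor $\del^i(a)$ or $\del^i(b)$ with $i\geq 1$. Everything else is a direct specialization of the PVA axioms at $n=0$ (or, equivalently, extracting the coefficient of $\lambda^0$ in the $\lambda$-bracket formulation).
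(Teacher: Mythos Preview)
Your proof is correct and is exactly the standard argument the paper has in mind: the paper does not actually give a proof but simply says it is ``similar to showing that the vector space $V/\del V$ equipped with $[-,-]$ is a Lie algebra'' with references to \cite{Ka,FBZ}, leaving the details to the reader. Your Steps~1--3 are precisely those details, carried out in the endomorphism formulation, so there is no meaningful difference in approach.
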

\begin{proof} 
The proof is similar to showing that the vector space $V/\del V$ equipped with $ [-,-]$ is a Lie algebra, see \cite[\S5.3]{Ka} or  \cite[\S4.1]{FBZ}. The details are left to the reader.  
\end{proof}

\subsection{Non-commutative reminder: double Poisson (vertex) algebras} 
\label{ss:NCRemind}

Let $\AA$ be a (unital associative) algebra of finite type over $\kk$.

\begin{definition}[\cite{VdB}]
 \label{def:DBr}
  A \emph{double bracket}  (or \emph{2-fold bracket}) on $\AA$ is a linear map 
\begin{equation*}
\dgal{-,-} \colon   \AA \otimes \AA \longrightarrow \AA\otimes \AA\,, \quad a \otimes b \longmapsto \dgal{a, b}\,,
\end{equation*}
such that for all $a,b,c\in \AA$
\begin{subequations}
  \begin{align}
& \quad \dgal{a,b}=- \dgal{b,a}^\sigma\,, \quad & \text{(cyclic skewsymmetry)} \label{Eq:DA} \\
& \quad \dgal{a, bc}=\dgal{a, b}c+b \dgal{a, c}\,, 
\quad & \text{(left Leibniz rule)} \label{Eq:Dl} \\
& \quad \dgal{ab, c}=\dgal{a,c}\ast_1 b 
+a \ast_1 \dgal{b,c}\,.  
\quad & \text{(right Leibniz rule)} \label{Eq:Dr}
  \end{align}
\end{subequations}
\end{definition}
Given a double bracket, we introduce the maps 
\begin{equation}
\begin{aligned}
    &\dgal{a,b' \otimes b''}_L=\dgal{a,b'}\otimes b''\,, \quad 
\dgal{a,b' \otimes b''}_R=b' \otimes \dgal{a,b''}\,, \\
&\dgal{a' \otimes a'',b}_L=\dgal{a',b}\otimes_1 a''\,, \quad 
\dgal{a' \otimes a'',b}_R=a' \otimes_1\dgal{a'',b}\,.
\end{aligned}
\end{equation}

\begin{definition}[\cite{VdB}]
\label{def:DPA}
  A \emph{double Poisson algebra} is an algebra $\AA$ endowed with a double bracket such that for all $a,b,c\in \AA$
\begin{equation}
    \quad \dgal{a, \dgal{b,c}}_L-\dgal{b,\dgal{a,c}}_R-\dgal{\dgal{a,b},c}_L=0
\,. \quad \text{(Jacobi identity)} \label{Eq:DJ} 
\end{equation}
In that case, we say that $\dgal{-,-}$ is a \emph{double Poisson bracket}.
\end{definition}
\begin{remark}
In Definition \ref{def:DPA}, we chose the condition \eqref{Eq:DJ} which is given in \cite{DSKV} and is equivalent to the original condition of Van den Bergh: 
\begin{equation*}
    \quad \dgal{a, \dgal{b,c}}_L+(\dgal{b,\dgal{c,a}}_L)^\sigma
+(\dgal{c,\dgal{a,b}}_L)^{\sigma^2}=0
\,.
\end{equation*}
\end{remark}

Double Poisson brackets can be seen as a noncommutative version of Poisson brackets due to the next result. 

\begin{theorem}[\cite{VdB}]
\label{Thm:RepdP}
Assume that $\dgal{-,-}$ is a double bracket on $\AA$. Then there is a unique skewsymmetric biderivation on $\AA_N$ which satisfies for any $a,b \in \AA$, $1 \leq i,j \leq N$, 
\begin{equation} \label{Eq:relPA}
  \br{a_{ij},b_{kl}}= \dgal{a, b}'_{kj} \dgal{a, b}''_{il}\,.
\end{equation}
Furthermore, if $\dgal{-,-}$ is a double Poisson bracket, then $(\AA_N,\br{-,-})$ is a  Poisson algebra. 
\end{theorem}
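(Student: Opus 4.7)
The plan is to follow the standard scheme: define the bracket on generators via the prescribed formula, extend by biderivation, check that this extension is well-defined on $\AA_N$, and finally verify the Jacobi identity when $\dgal{-,-}$ is a double Poisson bracket.

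Uniqueness is immediate. Since $\AA_N$ is generated as a commutative algebra by $\{a_{ij}\}_{a\in\AA,\,1\leq i,j\leq N}$, a biderivation is determined by its values on pairs of such generators, and \eqref{Eq:relPA} prescribes exactly these values. For existence, I would first define a biderivation on the free commutative algebra $F$ generated by symbols $\{a_{ij}\}$ by taking the formula on generators and extending via Leibniz in each argument, and then check that this biderivation descends to the quotient $\AA_N = F/\mathcal{I}$, where $\mathcal{I}$ is the ideal generated by the relations \eqref{Eq:RepRel}. The linearity relations cause no trouble since the map $a\otimes b\mapsto\dgal{a,b}$ is bilinear. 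For the unit relation $1_{ij}-\delta_{ij}$, one uses that $\dgal{a,1}=0=\dgal{1,a}$, which follows by applying the right and left Leibniz rules to $\dgal{a,1\cdot1}$ and $\dgal{1\cdot 1,a}$; hence $\br{a_{ij},1_{kl}}=0=\br{1_{kl},a_{ij}}$. For the multiplicativity relation $(bc)_{kl}-\sum_m b_{km}c_{ml}$, one computes $\br{a_{ij},(bc)_{kl}}$ from the formula using the left Leibniz rule \eqref{Eq:Dl} for $\dgal{a,bc}$, and matches it with the expansion of $\sum_m\br{a_{ij},b_{km}c_{ml}}$ by the commutative Leibniz rule; a careful index matching shows both expressions coincide. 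The symmetric verification on the left slot of $\br{-,-}$ is identical, now using the right Leibniz rule \eqref{Eq:Dr}.

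Skewsymmetry on generators is a direct consequence of cyclic skewsymmetry \eqref{Eq:DA}: writing $\dgal{a,b}=-\dgal{b,a}^\sigma$ as $\dgal{a,b}'\otimes\dgal{a,b}''=-\dgal{b,a}''\otimes\dgal{b,a}'$ and substituting into \eqref{Eq:relPA} gives $\br{a_{ij},b_{kl}}=-\dgal{b,a}''_{kj}\dgal{b,a}'_{il}=-\br{b_{kl},a_{ij}}$. Since $\br{-,-}$ is a biderivation on $\AA_N$, the skewsymmetry propagates to arbitrary elements by the standard identity $\br{xy,z}+\br{z,xy}=0$ derived term by term on generators.

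For the second assertion, assuming \eqref{Eq:DJ}, I would check the Jacobi identity \eqref{Eq:J1} on triples of generators $(a_{ij},b_{kl},c_{mn})$; by the biderivation property in each slot, this suffices to get it on all of $\AA_N$. Each of the three terms in the Jacobiator expands via \eqref{Eq:relPA} into a product of matrix entries of an iterated double bracket: $\br{a_{ij},\br{b_{kl},c_{mn}}}$ involves entries of $\dgal{a,\dgal{b,c}}_L$, $\br{b_{kl},\br{a_{ij},c_{mn}}}$ involves entries of $\dgal{b,\dgal{a,c}}_R$, and $\br{\br{a_{ij},b_{kl}},c_{mn}}$ involves entries of $\dgal{\dgal{a,b},c}_L$. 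Here the labels $L$ and $R$ arise precisely because the outer bracket acts on either the leftmost or the rightmost tensor factor of an element in $\AA\otimes\AA$, exactly matching the conventions \eqref{Eq:DelLRExtend} used to define $\dgal{-,-}_L$ and $\dgal{-,-}_R$. The main obstacle is the index bookkeeping to identify which tensor slot of each iterated double bracket carries each matrix index; once this dictionary is set up, the commutative Jacobiator becomes the $(a_{ij})$--$(c_{mn})$ matrix component of the left-hand side of \eqref{Eq:DJ}, which vanishes by hypothesis.
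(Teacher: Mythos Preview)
The paper does not actually give a proof of this theorem (it is cited from Van den Bergh), but your overall strategy is the standard one and matches how the paper handles the vertex analogue, Theorem~\ref{Thm:RepdLVA}. Your verification of the Jacobi identity, however, is oversimplified and has a real gap.

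When you expand $\br{a_{ij},\br{b_{kl},c_{mn}}}$ by the Leibniz rule, the outer bracket hits \emph{each} tensor factor of $\dgal{b,c}$, so you obtain a term involving $\dgal{a,\dgal{b,c}}_L$ \emph{and} a term involving $\dgal{a,\dgal{b,c}}_R$, carrying different index patterns. The same happens for the other two summands of the Jacobiator. The six resulting pieces do not collapse to a single component of~\eqref{Eq:DJ}; they split into two triples. One triple is the $(mj;il;kn)$-component of
\[
\dgal{a,\dgal{b,c}}_L-\dgal{b,\dgal{a,c}}_R-\dgal{\dgal{a,b},c}_L,
\]
which vanishes by~\eqref{Eq:DJ}. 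The other is the $(ml;kj;in)$-component of
\[
\dgal{a,\dgal{b,c}}_R-\dgal{b,\dgal{a,c}}_L-\dgal{\dgal{a,b},c}_R,
\]
which vanishes by~\eqref{Eq:DJ} applied with $a\leftrightarrow b$ together with cyclic skewsymmetry~\eqref{Eq:DA} (to rewrite $\dgal{\dgal{b,a},c}_L$ as $-\dgal{\dgal{a,b},c}_R$). This two-piece splitting is exactly what the paper carries out in detail in the proof of Theorem~\ref{Thm:RepdLVA}; see the pair of expressions \eqref{Eq:pf-Rep2}--\eqref{Eq:pf-Rep3} there, the second of which is dispatched via the alternative form~\eqref{Eq:DA3-Equiv}. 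Your assertion that each Jacobiator term ``involves entries of'' a single $L$- or $R$-extended double bracket, and that the total is one matrix component of~\eqref{Eq:DJ}, misses this second triple entirely.
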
 
Note that the Poisson bracket obtained through \eqref{Eq:relPA} 
is invariant under the $\Gl_N$ action given in \S\ref{sss:RepAlg}. 
That is, for any $g\in \Gl_N$, the map $g\cdot (-) \in \Aut(\AA_N)$ is a Poisson morphism. 
We shall denote by $\gl_N$ the Lie algebra of $\Gl_N$ which is, as a vector space, 
the set $\Mat_N:=\Mat_N(\kk)$ of $\kk$-valued $N\times N$ matrices.

\begin{example}[\cite{Po,VdB}]  \label{Ex:ku}
Let $\AA=\kk[a]$. Then a double bracket $\dgal{-,-}$ on $\AA$ is a double Poisson bracket if and only if there exists $\alpha,\beta,\gamma\in \kk$ satisfying $\beta^2=\alpha \gamma$ so that   
\begin{equation*}
  \dgal{a,a}=\alpha(a \otimes 1 - 1 \otimes a) + \beta (a^2 \otimes 1 - 1 \otimes a^2) 
+ \gamma (a^2 \otimes a - a \otimes a^2)\,.
\end{equation*}    
For any $N\geq 1$, we have  $\AA_N=\kk[a_{ij} \mid 1\leq i,j\leq N]$. 
Assuming that $\beta=\gamma=0$, the Poisson bracket on $\AA_N$ induced by Theorem \ref{Thm:RepdP} is uniquely determined by 
\begin{equation} \label{Eq:KKS}
    \br{a_{ij},a_{kl}}=a_{kj}  \delta_{il} - \delta_{kj} a_{il}\,.
\end{equation}
Under the identification $\AA_N \simeq \kk[\Mat_N] \simeq \kk[\gl_N^\ast]$ 
through the trace pairing, we can see $a_{ij}$ as a linear map $\xi\mapsto \xi_{ij}$ returning the $(i,j)$-entry of the $N\times N$ matrix $\xi$.  
Therefore the Poisson bracket on $\AA_N$ satisfying \eqref{Eq:KKS} is the Kirillov-Kostant-Souriau Poisson bracket on $\kk[\gl_N^\ast]$.  
\end{example}

\begin{example}   
\label{Exmp:Symp}
Let $\AA=\kk\langle a,b \rangle$. 
Then   $\dgal{a,a}=0=\dgal{b,b}$, $\dgal{b,a}=1 \otimes 1$ defines a unique double Poisson bracket on $\AA$, 
  and the induced Poisson structure on 
  $$\AA_N= \kk[\Mat_N \times \Mat_N]\cong 
  \kk[\gl_N] \otimes \kk[\gl_N^\ast]\cong \kk[T^\ast \gl_N],$$  
 is that coming from the symplectic structure on $T^\ast \gl_N$.  
  \end{example}
  
\begin{example}  
\label{Exmp:SympGL}
Let $\AA=\kk\langle a,b^{\pm1}\rangle$ 
with the unique double Poisson bracket such that 
\begin{equation*}
 \dgal{b,b}=0,\quad \dgal{a,b}=b\otimes 1, \quad \dgal{a,a}=a\otimes 1 - 1\otimes a. 
\end{equation*} 
The induced Poisson structure 
on $\AA_N\cong \kk[\Gl_N] \otimes \kk[\gl_N^\ast]$ is 
that coming from the symplectic structure on 
of the cotangent bundle $T^\ast \Gl_N$. 
\end{example}

Let $(\VV,\del)$ be a differential (unital associative) algebra.

\begin{definition}[\cite{DSKV}] \label{def:DlamBr} 
  A \emph{double $\lambda$-bracket} (or \emph{$2$-fold $\lambda$-bracket}) on $\VV$ is a linear map 
\begin{equation*}
\dgal{-_\lambda -} \colon  \VV \otimes \VV \longrightarrow (\VV\otimes \VV)[\lambda]\,, 
\quad a \otimes b \longmapsto \dgal{a_\lambda b}
\end{equation*}
such that for all $a,b,c\in \VV$
\begin{subequations}
  \begin{align}
&  \dgal{\del(a)_\lambda b}=-\lambda \dgal{a_\lambda b}\,, \quad 
\dgal{a_\lambda \del(b)}=(\del+\lambda)\dgal{a_\lambda b}\,,  & \text{(sesquilinearity)} \label{Eq:DA1} \\
& \dgal{a_\lambda bc}=\dgal{a_\lambda b}c+b \dgal{a_\lambda c}\,,  & \text{(left Leibniz rule)} \label{Eq:DL} \\
& \dgal{ab_\lambda c}=\dgal{a_{\lambda+x}c}\ast_1\restriction{\Big(}{x=\del} b\Big) 
+\restriction{\Big(}{x=\del} a\Big) \ast_1  \dgal{b_{\lambda+x}c}\,.   & \text{(right Leibniz rule)} \label{Eq:DR}
  \end{align}
\end{subequations}
\end{definition}

Given a double $\lambda$-bracket, we introduce the maps 
\begin{equation}
\begin{aligned}
   & \dgal{a_\lambda b' \otimes b''}_L=\dgal{a_\lambda b'}\otimes b''\,, \quad 
\dgal{a_\lambda b' \otimes b''}_R=b' \otimes \dgal{a_\lambda b''} \\
&\dgal{a' \otimes a''{}_\lambda b}_L=\dgal{a'_{\lambda+x} b}\otimes_1 \restriction{\Big(}{x=\del} a''\Big) \,, \\ 
&\dgal{a' \otimes a''{}_\lambda b}_R=\restriction{\Big(}{x=\del} a'\Big)  \otimes_1\dgal{a''_{\lambda+x}b}\,.
\end{aligned}
\end{equation}

\begin{definition}[\cite{DSKV}]
 \label{def:DPVA} 
  A \emph{double Poisson vertex algebra} is a differential algebra $\VV$ endowed with a double $\lambda$-bracket such that 
\begin{subequations}
  \begin{align}
&\quad \dgal{a_\lambda b}=-\big|_{x=\del} \dgal{b_{-\lambda-x}a}^\sigma
\,, & \text{(skewsymmetry)} \label{Eq:DA2} \\
&\quad \dgal{a_\lambda \dgal{b_\mu c}}_L-\dgal{b_\mu \dgal{a_\lambda c}}_R-\dgal{\dgal{a_\lambda b}_{\lambda+\mu}c}_L=0
\,.  & \text{(Jacobi identity)} \label{Eq:DA3} 
  \end{align}
\end{subequations}
\end{definition}

\begin{remark}
The original definition in \cite{DSKV} (considered in \cite{AFH}) uses the equivalent form of the right Leibniz rule \eqref{Eq:DR} given by ($a,b,c\in \VV$)
\begin{equation}
    \dgal{ab_\lambda c}=\dgal{a_{\lambda+\del}c}_{\to} \ast_1 b
+ (e^{\del \frac{d}{d\lambda}} a ) \ast_1  \dgal{b_\lambda c}\,, 
\end{equation}
for  $(e^{\del \frac{d}{d\lambda}} a ) \ast_1  (b'\otimes b'')\lambda^j:=\sum_{k=0}^j \frac{1}{k!} b'\otimes \del^k(a)b''\, \lambda^{j-k}$.  
\end{remark}

The next result is the analogue of Theorem \ref{Thm:RepdP}.

\begin{theorem} \emph{(\cite{DSKV})} \label{Thm:RepdPV}
Assume that $\dgal{-_\lambda-}$ is a double  $\lambda$-bracket on $\VV$. Then there is a unique $\lambda$-bracket on $\VV_N$ which satisfies for any $a,b \in \VV$, $1 \leq i,j \leq N$, 
\begin{equation} \label{Eq:relDPVA}
  \br{a_{ij\, \lambda}b_{kl}}=\sum_{n \geq 0} \dgal{a_{(n)} b}'_{kj} \dgal{a_{(n)} b}''_{il} \lambda^n, \, 
\text{ for }\dgal{a_\lambda b}=\sum_{n \geq 0} (\dgal{a_{(n)} b}' \otimes \dgal{a_{(n)} b}'')\, \lambda^n\,.
\end{equation}
Furthermore, if $(\VV,\dgal{-_\lambda -})$ is a double Poisson vertex algebra, then $(\VV_N,\br{-_\lambda-})$ is a Poisson vertex algebra on which $\Gl_N$ acts by Poisson vertex automorphisms. 
\end{theorem}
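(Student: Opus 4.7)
The plan is to follow the same strategy as in Theorem~\ref{Thm:RepdP}, with additional bookkeeping required by the formal variable $\lambda$. I would first construct $\br{-_\lambda -}$ on $\VV_N \otimes \VV_N$ by declaring \eqref{Eq:relDPVA} on pairs of generators $(a_{ij},b_{kl})$ and extending by imposing sesquilinearity \eqref{Eq:A1} together with the left and right Leibniz rules \eqref{Eq:lL}--\eqref{Eq:rL}. Uniqueness is automatic because $\VV_N$ is generated as a differential algebra by the $a_{ij}$'s. For existence, one checks that this extension is compatible with the defining relations \eqref{Eq:RepRel} of $\VV_N$ and with the differential \eqref{Eq:DerRep}: the multiplication relation $(ab)_{ij}=\sum_k a_{ik}b_{kj}$ in either slot reduces, coefficient by coefficient in $\lambda$, to \eqref{Eq:DL} or \eqref{Eq:DR} for the original double $\lambda$-bracket, while the compatibility with $\del$ is precisely \eqref{Eq:DA1}. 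A short coefficient-by-coefficient check also shows that the extension stays inside $\VV_N[\lambda]$ and not a formal power series, since each Leibniz step produces only finite sums of $\lambda$-polynomials.

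Assume now that $(\VV,\dgal{-_\lambda-})$ is a double Poisson vertex algebra. By repeated use of sesquilinearity and both Leibniz rules, each of skewsymmetry \eqref{Eq:A2} and Jacobi \eqref{Eq:A3} reduces to its statement on generators. For skewsymmetry on a pair $(a_{ij},b_{kl})$, expanding via \eqref{Eq:relDPVA} yields an identity that matches \eqref{Eq:DA2} provided one aligns the index pattern $(kj,il)$ on one side with the pattern $(il,kj)$ coming from the cyclic permutation $\sigma$ on the other side, the action of $\del$ being tracked via the substitution $-\lambda-x \leftrightarrow -\lambda-\del$ in \eqref{Eq:A2}. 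The Jacobi identity on a triple $(a_{ij},b_{kl},c_{mn})$ is the most delicate step and is the main obstacle: expanding the three nested brackets via \eqref{Eq:relDPVA} produces monomials in the entries of $\dgal{a_\lambda\dgal{b_\mu c}}$, $\dgal{b_\mu\dgal{a_\lambda c}}$ and $\dgal{\dgal{a_\lambda b}_{\lambda+\mu}c}$, whose six indices pair up in a specific cyclic pattern. The key observation is that this pattern is exactly the one encoded by the auxiliary maps $\dgal{-_\lambda -}_L$ and $\dgal{-_\lambda -}_R$ in \eqref{Eq:DA3}, so the statement reduces term by term to the double Jacobi identity; the new work compared to the non-vertex setting of Theorem~\ref{Thm:RepdP} is the management of the two formal variables $\lambda,\mu$ via \eqref{Eq:DA1}.

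Finally, $\Gl_N$-equivariance is transparent from \eqref{Eq:relDPVA}: each $\lambda$-coefficient of the right-hand side is a sum of products $\dgal{a_{(n)}b}'_{kj}\dgal{a_{(n)}b}''_{il}$ whose index structure is manifestly invariant under the simultaneous conjugation \eqref{Eq:ActRep}, by the same argument as for Theorem~\ref{Thm:RepdP}. Since \eqref{Eq:DerRep} is natural in $a$, the $\Gl_N$-action also commutes with $\del$, so it preserves the $\lambda$-bracket throughout $\VV_N$.
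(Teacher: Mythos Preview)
Your proposal is correct and follows the standard strategy (as in \cite[\S3.7]{DSKV}). The paper itself does not reprove the main statement but simply cites \cite{DSKV}; it only spells out the $\Gl_N$-equivariance, which it verifies on generators exactly as you do. One small point worth flagging: when reducing the Jacobi identity on a triple $(a_{ij},b_{kl},c_{mn})$, the expansion via \eqref{Eq:relDPVA} actually produces \emph{two} families of terms with different index patterns, one matching \eqref{Eq:DA3} directly and the other matching its skew-equivalent (cyclically rotated) form; both vanish, but for slightly different reasons. This is the same phenomenon as in Van den Bergh's proof of Theorem~\ref{Thm:RepdP}, and the paper makes this split explicit in its proof of the parallel Theorem~\ref{Thm:RepdLVA} (see \eqref{Eq:pf-Rep2}--\eqref{Eq:pf-Rep3} and Lemma~\ref{Lem:dA3-Equiv}). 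Your phrase ``reduces term by term to the double Jacobi identity'' is correct in spirit but elides this two-piece decomposition.
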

\begin{proof}
The main statement is proved in \cite[\S3.7]{DSKV}. 
The fact that $\Gl_N$ acts through \eqref{Eq:ActRep} by Poisson vertex automorphisms amounts to requiring  
\begin{equation*}
   \del(g\cdot F_1) = g\cdot \del(F_1)\,, \quad 
 \br{(g\cdot F_1){}_\lambda (g\cdot F_2)}=g\cdot   \br{F_1{}_\lambda F_2}\,, \qquad g\in\Gl_N,\,\, F_1,F_2\in \VV_N\,.
\end{equation*}
These equalities follow by checking them when $F_1$ and $F_2$ are generators of $\VV_N$; this is readily obtained from \eqref{Eq:DerRep} and \eqref{Eq:relDPVA} after plugging \eqref{Eq:ActRep}. 
\end{proof}

\begin{example}   
\label{Ex:kuPVA} 
  Let $\VV=\kk\langle u^{(0)},u^{(1)},u^{(2)},\ldots\rangle$ with $\del u^{(i)}=u^{(i+1)}$. Set $u=u^{(0)}$. If 
\begin{equation} \label{Eq:DPVAuu}
  \dgal{u_\lambda u}=1\otimes u - u \otimes 1 + \epsilon \, (1 \otimes 1) \lambda \in \VV^{\otimes 2}[\lambda]\,,
\end{equation}
for some fixed $\epsilon \in \kk$, we can uniquely define a double Poisson vertex algebra structure on $\VV$ satisfying \eqref{Eq:DPVAuu}. On $\VV_N$, the induced $\lambda$-bracket is given by 
\begin{equation}
    \br{u_{ij}{}_\lambda u_{kl}} = \delta_{kj}u_{il} -\delta_{il}u_{kj} + \epsilon\, \delta_{kj}\delta_{il} \lambda\,.
\end{equation}
\end{example}


\section{Alternative definition of double Poisson vertex algebras} 
\label{S:Alt-PVA}

\begin{definition} \label{def:DPVA-b}
A vector space $\VV$ is a \emph{double Lie vertex algebra} if it is equipped with a linear operator $\del\in \End(\VV)$ and $\kk$-bilinear double products  
$$\VV \times \VV \longrightarrow \VV\otimes \VV\,, \quad (a,b) \longmapsto a_{(\!(n)\!)}b\,, \quad n\in \Z_{\geq0}\,,$$
such that for fixed $a,b\in \VV$, $a_{(\!(n)\!)}b=0$  for $n$ sufficiently large, 
and for all $a,b,c\in \VV$, $m,n\in \Z_{\geq0}$: 
\begin{subequations}
  \begin{align}
&  \del(a)_{(\!(n)\!)}b=-n a_{(\!(n-1)\!)}b\,, \quad 
a_{(\!(n)\!)}\del(b)=\del(a_{(\!(n)\!)}b)+n a_{(\!(n-1)\!)}b \,, & \text{(sesquilinearity)} \label{Eq:DA1-b}  \\
&  a_{(\!(n)\!)}b =\sum_{i\geq 0}\frac{(-1)^{n+i+1}}{i!} \del^i(b_{(\!(n+i)\!)}a)^\sigma \,,   & \text{(skewsymmetry)}  \label{Eq:DA2-b}  \\
&  a_{(\!(m;L)\!)}b_{(\!(n)\!)} c - b_{(\!(n;R)\!)}a_{(\!(m)\!)}c=\sum_{i=0}^m \binom{m}{i} (a_{(\!(i)\!)}b)_{(\!(m+n-i;L)\!)}c\,,   & \text{(Jacobi identity)}  \label{Eq:DA3-b} 
  \end{align}
\end{subequations}
where we use the extended maps 
\begin{align*}
&a_{(\!(n;L)\!)}-\colon \VV^{\otimes 2}\to \VV^{\otimes 3},   \qquad  a_{(\!(n;L)\!)} (b'\otimes b''):= a_{(\!(n)\!)}b' \otimes b''\,, \\ 
&a_{(\!(n;R)\!)}-\colon \VV^{\otimes 2}\to \VV^{\otimes 3},   \qquad  a_{(\!(n;R)\!)} (b'\otimes b''):= b'\otimes a_{(\!(n)\!)} b'' \,, \\ 
&(a'\otimes a'')_{(\!(n;L)\!)}-\colon \VV\to \VV^{\otimes 3},   \qquad  (a'\otimes a'')_{(\!(n;L)\!)}b := \sum_{j\geq 0}\frac{1}{j!}  (a'_{(\!(n+j)\!)} b) \otimes_1 \del^j(a'') \,.
\end{align*}
If in addition to the double Lie vertex algebra structure, $(\VV,\del)$ is a differential algebra, we say that it is a \emph{double Poisson vertex algebra} when  
\begin{subequations}
  \begin{align}
& \quad a_{(\!(n)\!)} bc=(a_{(\!(n)\!)} b)\,c+b \, (a_{(\!(n)\!)} c)\,,  & \text{(left Leibniz rule)}   \label{Eq:DL-b} \\
& \quad (ab)_{(\!(n)\!)}c=
\sum_{i \geq 0} \frac{1}{i!} \del^i(a)\ast_1 \, (b_{(\!(n+i)\!)}c) 
 + \sum_{i \geq 0} \frac{1}{i!}  (a_{(\!(n+i)\!)}c)\, \ast_1 \del^i(b)   \,. 
 & \text{(right Leibniz rule)}   \label{Eq:DR-b} 
  \end{align}
\end{subequations}
\end{definition}

\begin{remark}
Due to skewsymmetry \eqref{Eq:DA2-b}, the two sesquilinearity rules in \eqref{Eq:DA1-b} are equivalent, and the left/right Leibniz rules \eqref{Eq:DL-b}--\eqref{Eq:DR-b} are equivalent.  Hence, as part of the definition, we can simply pick the two easier identities $\del(a)_{(\!(n)\!)}b=-n a_{(\!(n-1)\!)}b$ and \eqref{Eq:DL-b}. 
A similar remark holds for the original Definition \ref{def:DPVA}; this can also be observed for Poisson vertex algebras (cf. Definitions \ref{Def:PVA-br} and \ref{Def:PVA-end}) and double Poisson algebras (cf. Definition \ref{def:DBr}).  
\end{remark}

\begin{lemma} \label{Lem:dA3-Equiv}
In a double Lie vertex algebra, the Jacobi identity \eqref{Eq:DA3-b} for arbitrary $a,b,c\in \VV$ and $m,n\in \Z_{\geq0}$ can be replaced by 
\begin{equation}
    a_{(\!(m;L)\!)}b_{(\!(n)\!)} c - b_{(\!(n;R)\!)}a_{(\!(m)\!)}c=-\sum_{j=0}^n \binom{n}{j} (b_{(\!(j)\!)}a)^\sigma_{(\!(m+n-j;L)\!)}c\,.  \label{Eq:DA3-Equiv}
\end{equation}
\end{lemma}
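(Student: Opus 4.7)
The plan is to show that, given the sesquilinearity \eqref{Eq:DA1-b} and skewsymmetry \eqref{Eq:DA2-b}, the right-hand sides of \eqref{Eq:DA3-b} and \eqref{Eq:DA3-Equiv} coincide as elements of $\VV^{\otimes 3}$; since their left-hand sides are literally identical, this yields the equivalence of the two formulations of the Jacobi identity.

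First, I would establish the auxiliary formula
$$(\del^k d)_{(\!(N;L)\!)}c = (-1)^k \frac{N!}{(N-k)!}\, d_{(\!(N-k;L)\!)}c, \qquad d\in \VV^{\otimes 2},\ k,N\in\Z_{\geq 0},$$
with the convention that the right-hand side vanishes when $k>N$. By iteration, this reduces to the case $k=1$. Writing $d=d'\otimes d''$, expanding $(\del(d')\otimes d'')_{(\!(N;L)\!)}c$ via the sesquilinearity $\del(a)_{(\!(n)\!)}b=-na_{(\!(n-1)\!)}b$, together with $(d'\otimes \del(d''))_{(\!(N;L)\!)}c$ via a shift of the inner summation index, one checks that the two contributions combine into $-N\,d_{(\!(N-1;L)\!)}c$.

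Next, I would substitute the skewsymmetry expansion
$$a_{(\!(i)\!)}b=\sum_{k\geq 0}\frac{(-1)^{i+k+1}}{k!}\del^k\bigl(b_{(\!(i+k)\!)}a\bigr)^\sigma$$
into the right-hand side of \eqref{Eq:DA3-b} and apply the previous step. After the signs combine as $(-1)^{i+k+1}(-1)^k=-(-1)^i$, the reindexing $j:=i+k$ produces
$$-\sum_{j\geq 0}\Bigg(\sum_{i=0}^{\min(m,j)}(-1)^i\binom{m}{i}\binom{m+n-i}{j-i}\Bigg)\bigl((b_{(\!(j)\!)}a)^\sigma\bigr)_{(\!(m+n-j;L)\!)}c.$$
The inner binomial sum equals $\binom{n}{j}$, as can be seen by extracting $[x^j]$ from the identity $(1+x)^{m+n}(1+x)^{-m}=(1+x)^n$; this matches the right-hand side of \eqref{Eq:DA3-Equiv}.

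The main obstacle will be the auxiliary identity of the first step: one must carefully track how the derivation $\del$ extended to $\VV^{\otimes 2}$ interacts with the operator $(-)_{(\!(N;L)\!)}-$, in particular how the two sums arising from the Leibniz rule on $\del d$ recombine into a single shifted bracket. The subsequent manipulation involving signs and the Vandermonde-type binomial identity is routine, and the converse implication $\eqref{Eq:DA3-Equiv}\Rightarrow\eqref{Eq:DA3-b}$ is automatic once the equality of the two right-hand sides has been verified.
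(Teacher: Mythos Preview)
Your proposal is correct and follows essentially the same route as the paper's proof. Both arguments (i) establish the sesquilinearity rule $\del(d)_{(\!(N;L)\!)}c=-N\,d_{(\!(N-1;L)\!)}c$ for $d\in\VV^{\otimes 2}$, (ii) substitute the skewsymmetry expansion of $a_{(\!(i)\!)}b$, and (iii) reduce to the same binomial identity---the paper phrases it via the two-variable expansion $(x+y)^n y^m=\sum_{i=0}^m(-1)^i\binom{m}{i}x^i(x+y)^{m+n-i}$, while you extract $[x^j]$ from $(1+x)^{m+n}(1+x)^{-m}=(1+x)^n$, which is the same identity in homogenized versus dehomogenized form.
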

\begin{proof}
We first note using \eqref{Eq:DA1-b} that we have the following sesquilinearity rule for the left extended map  
$\del(a'\otimes a'')_{(\!(m;L)\!)}c=-m\, (a'\otimes a'')_{(\!(m-1;L)\!)}c$.  
Combining this observation with skewsymmetry \eqref{Eq:DA2-b}, 
we easily prove the statement. 
\end{proof}


\begin{lemma} \label{Lem:DLVA}
Assume that $\VV$ is a vector space endowed with a linear operator $\del \in \End( \VV)$. 
A structure of double Lie vertex algebra on $\VV$ is equivalent to having a linear map 
\begin{equation*}
\dgal{-_\lambda -} \colon  \VV \otimes \VV \longrightarrow (\VV\otimes \VV)[\lambda]\,, \quad 
a \otimes b \longmapsto \dgal{a_\lambda b}
\end{equation*}
such that \eqref{Eq:DA1}, \eqref{Eq:DA2} and \eqref{Eq:DA3} hold. 
\end{lemma}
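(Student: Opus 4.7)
I would mimic the commutative equivalence (see Proposition~\ref{Pr:CorPVA} and Remark~\ref{Rem:LVA}) via the standard generating-function correspondence
\[
\dgal{a_\lambda b} := \sum_{n\geq 0}\frac{\lambda^n}{n!}\,a_{(\!(n)\!)}b\,,\qquad a_{(\!(n)\!)}b := n!\,\res_\lambda \lambda^{-n-1}\dgal{a_\lambda b}\,.
\]
Because the finiteness condition ``$a_{(\!(n)\!)}b = 0$ for $n$ large'' is equivalent to polynomiality $\dgal{a_\lambda b}\in(\VV\otimes\VV)[\lambda]$, this identifies collections of double products subject to the vanishing hypothesis with polynomial double $\lambda$-brackets on $\VV$. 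The proof then reduces to showing that, under this correspondence, axioms \eqref{Eq:DA1-b}, \eqref{Eq:DA2-b}, \eqref{Eq:DA3-b} translate into \eqref{Eq:DA1}, \eqref{Eq:DA2}, \eqref{Eq:DA3} respectively.

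\textbf{Sesquilinearity and skewsymmetry.} Multiplying the first equation in \eqref{Eq:DA1-b} by $\lambda^n/n!$ and summing over $n \geq 0$ produces $\dgal{\del(a)_\lambda b}=-\lambda\dgal{a_\lambda b}$, while the second collapses to $\dgal{a_\lambda\del(b)}=(\del+\lambda)\dgal{a_\lambda b}$ after reindexing the $n\,a_{(\!(n-1)\!)}b$ term. For skewsymmetry \eqref{Eq:DA2-b}, the sum-and-reindex trick $m=n+i$ gives
\[
\dgal{a_\lambda b}\,=\,-\sum_{m\geq 0}\frac{(-1)^m}{m!}(\lambda+\del)^m (b_{(\!(m)\!)}a)^\sigma\,,
\]
which upon expanding $\dgal{b_{-\lambda-x}a}^\sigma$ in $\mu=-\lambda-x$ coincides with $-\big|_{x=\del}\dgal{b_{-\lambda-x}a}^\sigma$, i.e.~\eqref{Eq:DA2}. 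Each of these computations is a term-by-term rewriting that is reversible, so these two axioms transfer in both directions.

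\textbf{The main obstacle} is the equivalence of the Jacobi identities \eqref{Eq:DA3} and \eqref{Eq:DA3-b}, because of the non-trivial $L$-extension $\dgal{a'\otimes a''{}_\lambda b}_L$ applied to the tensor-valued argument $\dgal{a_\lambda b}$. My approach is to expand each of the three building blocks of \eqref{Eq:DA3} as a generating series in $\lambda$ and $\mu$, so that the coefficient of $\lambda^m\mu^n/(m!n!)$ reproduces the three summands of \eqref{Eq:DA3-b}. Writing $\dgal{b_\mu c}=\sum_j (\mu^j/j!)(Q_j'\otimes Q_j'')$, the outer two terms unfold directly from the definitions of the $L$-- and $R$--extensions to give
\[
\dgal{a_\lambda\dgal{b_\mu c}}_L=\sum_{m,n\geq 0}\frac{\lambda^m\mu^n}{m!n!}\,a_{(\!(m;L)\!)}b_{(\!(n)\!)}c\,,\qquad \dgal{b_\mu\dgal{a_\lambda c}}_R=\sum_{m,n\geq 0}\frac{\lambda^m\mu^n}{m!n!}\,b_{(\!(n;R)\!)}a_{(\!(m)\!)}c\,.
\]
The delicate term is the inner one: setting $a_{(\!(i)\!)}b=P_i'\otimes P_i''$, one has
\[
\dgal{\dgal{a_\lambda b}_{\lambda+\mu}c}_L=\sum_{i\geq 0}\frac{\lambda^i}{i!}\,\dgal{(P_i')_{\lambda+\mu+x}c}\otimes_1\big|_{x=\del}P_i''\,.
\]
Expanding $(\lambda+\mu+x)^k$ by the multinomial theorem, extracting the $\lambda^m\mu^n/(m!n!)$ coefficient, and comparing with the definition $(P_i'\otimes P_i'')_{(\!(n';L)\!)}c=\sum_{j\geq 0}(j!)^{-1}((P_i')_{(\!(n'+j)\!)}c)\otimes_1\del^j(P_i'')$ with $n'=m+n-i$, one sees that the combinatorial factor collapses precisely to the binomial weight $\binom{m}{i}$ appearing in \eqref{Eq:DA3-b}. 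This coefficient-wise matching establishes \eqref{Eq:DA3}$\Leftrightarrow$\eqref{Eq:DA3-b} and finishes the proof; the only step requiring genuine care is the multinomial bookkeeping in this last identity, where one must track the interplay between the formal variables $\lambda$, $\mu$, $x$ and the $\otimes_1$ insertion.
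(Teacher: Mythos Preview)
Your proposal is correct and follows essentially the same approach as the paper's proof: both set up the generating-function dictionary $\dgal{a_\lambda b}=\sum_{n}\frac{\lambda^n}{n!}a_{(\!(n)\!)}b$, verify sesquilinearity and skewsymmetry by direct reindexing, and handle the Jacobi identity by matching the three $\lambda$-bracket terms to the three summands of \eqref{Eq:DA3-b}. The only cosmetic difference is directional: the paper starts from \eqref{Eq:DA3-b}, substitutes it into the generating series, and \emph{repackages} the resulting sum as $\dgal{\dgal{a_\lambda b}_{\lambda+\mu}c}_L$, whereas you \emph{expand} $\dgal{\dgal{a_\lambda b}_{\lambda+\mu}c}_L$ via the multinomial theorem and extract the $\lambda^m\mu^n/(m!n!)$ coefficient to recover the $\binom{m}{i}$ weight---the same combinatorial identity read in opposite directions.
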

\begin{proof}
We claim that the equivalence is realised by defining  
\begin{equation} \label{Eq:DLVA-1}
\dgal{-_\lambda -} \colon  \VV \otimes \VV \longrightarrow (\VV\otimes \VV)[\lambda]\,, a \otimes b \longmapsto \dgal{a_\lambda b}:=\sum_{n\geq 0} \frac{\lambda^n}{n!} a_{(\!(n)\!)}b\,,
\end{equation}
from a structure of double Lie vertex algebra on $\VV$. To go backwards it suffices to define the $\kk$-bilinear double products  
\begin{equation} \label{Eq:DLVA-2}
\VV \times \VV \longrightarrow \VV\otimes \VV\,, \quad (a,b) \mapsto a_{(\!(n)\!)}b:=n!\,\res_{\lambda} \lambda^{-n-1} \dgal{a_\lambda b} \,, \quad n\in \Z_{\geq0}\,.
\end{equation}
Then we leave to the reader the verifications.  
\end{proof}

The following proposition is a direct consequence of Lemma \ref{Lem:DLVA}. 

\begin{proposition} \label{Pr:CorDPVA}
If $(\VV,\del)$ is a differential algebra, the definitions of double Poisson vertex algebras according to Definition \ref{def:DPVA} and Definition \ref{def:DPVA-b} are equivalent. 
\end{proposition}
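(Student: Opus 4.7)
The plan is to reduce the proposition to verifying the two Leibniz rules, since the other axioms are already handled by Lemma \ref{Lem:DLVA}. Under the correspondence
\[
\dgal{a_\lambda b}=\sum_{n\geq 0}\frac{\lambda^n}{n!}\,a_{(\!(n)\!)}b,\qquad a_{(\!(n)\!)}b=n!\,\res_\lambda\lambda^{-n-1}\dgal{a_\lambda b},
\]
established in the proof of Lemma \ref{Lem:DLVA}, the sesquilinearity \eqref{Eq:DA1}$\Leftrightarrow$\eqref{Eq:DA1-b}, skewsymmetry \eqref{Eq:DA2}$\Leftrightarrow$\eqref{Eq:DA2-b}, and Jacobi identity \eqref{Eq:DA3}$\Leftrightarrow$\eqref{Eq:DA3-b} all come for free. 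Thus it suffices to prove that \eqref{Eq:DL}$\Leftrightarrow$\eqref{Eq:DL-b} and \eqref{Eq:DR}$\Leftrightarrow$\eqref{Eq:DR-b} once $(\VV,\del)$ is upgraded to a differential algebra.

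The left Leibniz rule is the easy direction. Expanding each side of \eqref{Eq:DL} as a polynomial in $\lambda$ with coefficients in $\VV\otimes\VV$, both sides are determined by their coefficient of $\lambda^n/n!$, and equating these coefficients gives exactly the identity \eqref{Eq:DL-b}. Since the correspondence is bijective in each $n$, the equivalence follows without any further computation.

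The right Leibniz rule is the main technical point. The plan is to unpack the notation $\dgal{a_{\lambda+x}c}\ast_1(\big|_{x=\del}b)$ by writing
\[
\dgal{a_{\lambda+x}c}=\sum_{m\geq 0}\frac{(\lambda+x)^m}{m!}\,a_{(\!(m)\!)}c=\sum_{m\geq 0}\sum_{i=0}^{m}\frac{\lambda^{m-i}x^{i}}{(m-i)!\,i!}\,a_{(\!(m)\!)}c,
\]
then substituting $x^i\mapsto \del^i(b)$ with the $\ast_1$-action on the rightmost component, and re-indexing by $n=m-i$ to obtain
\[
\dgal{a_{\lambda+x}c}\ast_1\!\bigl(\big|_{x=\del}b\bigr)=\sum_{n\geq 0}\frac{\lambda^n}{n!}\sum_{i\geq 0}\frac{1}{i!}\,(a_{(\!(n+i)\!)}c)\ast_1\del^i(b).
\]
A symmetric rearrangement treats $(\big|_{x=\del}a)\ast_1\dgal{b_{\lambda+x}c}$ and produces the first sum in \eqref{Eq:DR-b}. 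Matching the resulting coefficient of $\lambda^n/n!$ with $(ab)_{(\!(n)\!)}c$ from the expansion of $\dgal{ab_\lambda c}$ yields \eqref{Eq:DR-b}; conversely, assembling \eqref{Eq:DR-b} for each $n$ into a generating function recovers \eqref{Eq:DR}.

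The main obstacle is purely the combinatorial bookkeeping in the right Leibniz rule: correctly handling the expansion of $(\lambda+x)^m$, the substitution $\big|_{x=\del}$, and the inner bimodule action $\ast_1$ so that the infinite sums over $i$ in \eqref{Eq:DR-b} match the finite binomial expansions appearing in \eqref{Eq:DR}. Once this reindexing is done carefully (with attention to the fact that $a_{(\!(m)\!)}c=0$ for $m$ large, so all sums are in fact finite), no new structural ingredient beyond Lemma \ref{Lem:DLVA} is required.
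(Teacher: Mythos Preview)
Your proof is correct and follows the same overall strategy as the paper: invoke Lemma~\ref{Lem:DLVA} for sesquilinearity, skewsymmetry, and Jacobi, then handle the Leibniz rules separately. The only difference is that the paper observes that, once skewsymmetry \eqref{Eq:DA2}/\eqref{Eq:DA2-b} is in hand, the left and right Leibniz rules are equivalent to one another (as noted in the remark following Definition~\ref{def:DPVA-b}), so it suffices to check only the easy equivalence \eqref{Eq:DL}$\Leftrightarrow$\eqref{Eq:DL-b}. Your direct verification of \eqref{Eq:DR}$\Leftrightarrow$\eqref{Eq:DR-b} via the binomial expansion of $(\lambda+x)^m$ and the re-indexing $n=m-i$ is perfectly valid and makes the correspondence completely explicit; it is simply more work than needed.
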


\subsection{Relation to representation spaces}

\begin{theorem}  \label{Thm:RepdLVA}
Assume that $(\VV,\del)$ is a double Poisson vertex algebra according to  Definition~\ref{def:DPVA-b}. 
Then there is a unique structure of Poisson vertex algebra in the sense of Definition \ref{Def:PVA-end} on $(\VV_N,\del)$  
which satisfies for any $a,b \in \VV$, $1 \leq i,j \leq N$, 
\begin{equation} \label{Eq:relLVA}
  a_{ij}{}_{(n)}b_{kl}=(a_{(\!(n)\!)} b)'_{kj} (a_{(\!(n)\!)} b)''_{il}\,.
\end{equation} 
Moreover, the action of $\Gl_N$ on $\VV_N$ is by Poisson vertex automorphisms. 
\end{theorem}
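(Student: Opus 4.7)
The strategy is to reduce the statement to its $\lambda$-bracket analogue (Theorem \ref{Thm:RepdPV}) by translating back and forth through the two equivalences between the $\lambda$-bracket picture and the product picture, namely Proposition \ref{Pr:CorPVA} in the commutative setting and Proposition \ref{Pr:CorDPVA} in the associative one. In this way, almost nothing new has to be proven; only a compatibility of normalisations needs to be verified.

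Concretely, starting from $(\VV,\del)$ with double products $a_{(\!(n)\!)}b$, Proposition \ref{Pr:CorDPVA} produces a double $\lambda$-bracket $\dgal{a_\lambda b}=\sum_{n\geq 0}\frac{\lambda^n}{n!}\, a_{(\!(n)\!)}b$ turning $(\VV,\del)$ into a double Poisson vertex algebra in the sense of Definition \ref{def:DPVA}. Theorem \ref{Thm:RepdPV} then endows $(\VV_N,\del)$ with a unique $\lambda$-bracket satisfying \eqref{Eq:relDPVA} and equips it with a Poisson vertex algebra structure on which $\Gl_N$ acts by Poisson vertex automorphisms. Applying Proposition \ref{Pr:CorPVA} in the reverse direction, this $\lambda$-bracket on $\VV_N$ corresponds to products $F_{(n)}G := n!\,\res_\lambda \lambda^{-n-1}\br{F_\lambda G}$, and this collection of products fulfils all the axioms of Definition \ref{Def:PVA-end}.

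It remains to check that the defining formula \eqref{Eq:relLVA} holds. With the notation $\dgal{a_\lambda b}=\sum_n \bigl(\dgal{a_{(n)}b}'\otimes\dgal{a_{(n)}b}''\bigr)\lambda^n$ used in \eqref{Eq:relDPVA}, the identification coming from Proposition \ref{Pr:CorDPVA} gives $\dgal{a_{(n)}b}'\otimes\dgal{a_{(n)}b}'' = \tfrac{1}{n!}\,a_{(\!(n)\!)}b$. Plugging this into \eqref{Eq:relDPVA} and applying Proposition \ref{Pr:CorPVA} on $\VV_N$, one obtains
\begin{equation*}
a_{ij}{}_{(n)}b_{kl} \;=\; n!\,\res_\lambda \lambda^{-n-1}\br{a_{ij}{}_\lambda b_{kl}} \;=\; n!\cdot \tfrac{1}{n!}\,(a_{(\!(n)\!)}b)'_{kj}\,(a_{(\!(n)\!)}b)''_{il}\,,
\end{equation*}
which is exactly \eqref{Eq:relLVA}. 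Uniqueness is immediate: the $a_{ij}$ generate $\VV_N$ as a differential algebra, so any family of products making $\VV_N$ into a PVA is determined on generators by \eqref{Eq:relLVA} and then on the whole of $\VV_N$ by sesquilinearity \eqref{Eq:A1-b} and the Leibniz rules \eqref{Eq:L-b}--\eqref{Eq:R-b}. Finally, the $\Gl_N$-equivariance of the $\lambda$-bracket on $\VV_N$ provided by Theorem \ref{Thm:RepdPV} translates, via the same coefficient-by-coefficient dictionary, into the equality $g\cdot (F_{(n)}G)=(g\cdot F)_{(n)}(g\cdot G)$ for all $F,G\in \VV_N$ and $n\geq 0$, so $\Gl_N$ acts on $\VV_N$ by Poisson vertex automorphisms in the sense of Definition \ref{Def:PVA-end}.

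The only step demanding care is the bookkeeping of the factorials between the two equivalences (i.e.\ that the factor $n!$ appearing when passing from $\br{-_\lambda-}$ to $-_{(n)}-$ cancels exactly the $\tfrac{1}{n!}$ appearing when passing from $-_{(\!(n)\!)}-$ to $\dgal{-_\lambda-}$), but this is precisely what makes \eqref{Eq:relLVA} the correct non-normalised analogue of \eqref{Eq:relDPVA}.
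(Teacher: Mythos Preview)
Your proof is correct and takes a genuinely different, more economical route than the paper's. You reduce the result to its $\lambda$-bracket analogue (Theorem~\ref{Thm:RepdPV}) via the two dictionaries (Propositions~\ref{Pr:CorPVA} and~\ref{Pr:CorDPVA}), and the only honest computation left is the normalisation check, which you carry out correctly. By contrast, the paper gives an independent direct verification entirely in the product language: it checks well-definedness with respect to the relations~\eqref{Eq:RepRel}, then sesquilinearity, skewsymmetry, and Jacobi on generators, the last requiring both the Jacobi identity~\eqref{Eq:DA3-b} and its alternative form from Lemma~\ref{Lem:dA3-Equiv}. Your argument is shorter and avoids redoing these computations; the paper's argument has the merit of being self-contained in the product formulation (not invoking~\cite{DSKV} as a black box) and of exhibiting concretely how Lemma~\ref{Lem:dA3-Equiv} is used. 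In fact, your route is essentially the content of the Corollary immediately following Theorem~\ref{Thm:RepdLVA} in the paper (commutativity of Figure~\ref{Fig:A}), read as a proof of the theorem rather than as a consequence of it. One minor point: the $a_{ij}$ for $a\in\VV$ already generate $\VV_N$ as an \emph{algebra}, so uniqueness follows from the Leibniz rules alone; you do not need to invoke sesquilinearity there.
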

\begin{proof} 
The second part of the statement is proved as in Theorem \ref{Thm:RepdPV}. 
For the first part, we adapt the strategy of \cite[\S3.7]{DSKV}. 
Details are left to the reader, noting that Lemma \ref{Lem:dA3-Equiv} is needed to check Jacobi identity. 
\end{proof}

\begin{corollary}
The equivalence of definitions of double Poisson vertex algebras given in Proposition \ref{Pr:CorDPVA} induces on representation spaces the equivalence of the corresponding Poisson vertex algebras as in Proposition \ref{Pr:CorPVA}. 
\end{corollary}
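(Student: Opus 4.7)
The plan is to verify that the two routes in the square produce the same Poisson vertex algebra structure on $\VV_N$. Starting from a double Poisson vertex algebra $(\VV,\del,\dgal{-_\lambda-})$ in the sense of Definition~\ref{def:DPVA}, each functor in the diagram fixes the underlying commutative differential algebra $\VV_N$, so the task reduces to comparing the products $-_{(n)}-$ on $\VV_N$ obtained by: (Route~A) first applying Theorem~\ref{Thm:RepdPV} to form the $\lambda$-bracket $\br{-_\lambda-}$ on $\VV_N$, then converting via Proposition~\ref{Pr:CorPVA}; and (Route~B) first applying Proposition~\ref{Pr:CorDPVA} to obtain double products $-_{(\!(n)\!)}-$ on $\VV$, then applying Theorem~\ref{Thm:RepdLVA}.

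By the left and right Leibniz rules \eqref{Eq:L-b}--\eqref{Eq:R-b} satisfied by both PVA structures, agreement need only be verified on pairs of generators $a_{ij},b_{kl}$. I would then simply unwind the definitions. Writing $\dgal{a_\lambda b}=\sum_{n\geq 0}(\dgal{a_{(n)}b}'\otimes \dgal{a_{(n)}b}'')\lambda^n$ as in \eqref{Eq:relDPVA}, the correspondence of Proposition~\ref{Pr:CorDPVA} (established through Lemma~\ref{Lem:DLVA}) identifies the coefficient $\dgal{a_{(n)}b}$ with $\tfrac{1}{n!}\,a_{(\!(n)\!)}b$. Substituting into \eqref{Eq:relDPVA} gives
\begin{equation*}
\br{a_{ij\,\lambda}b_{kl}}=\sum_{n\geq 0}\frac{1}{n!}(a_{(\!(n)\!)}b)'_{kj}(a_{(\!(n)\!)}b)''_{il}\,\lambda^n,
\end{equation*}
so that the Route~A product $a_{ij\,(n)}b_{kl}=n!\,\res_\lambda\lambda^{-n-1}\br{a_{ij\,\lambda}b_{kl}}$ matches exactly the Route~B product $(a_{(\!(n)\!)}b)'_{kj}(a_{(\!(n)\!)}b)''_{il}$ prescribed by \eqref{Eq:relLVA}.

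There is no serious obstacle: the only care required is to track consistently the factor $n!$ distinguishing the Taylor-coefficient notation used in Theorem~\ref{Thm:RepdPV} from the double-product notation introduced in Definition~\ref{def:DPVA-b}. Functoriality in morphisms is automatic since each of the four functors in the square acts as the identity on the underlying algebra maps, so commutativity on objects suffices.
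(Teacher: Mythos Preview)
Your proposal is correct and follows essentially the same approach as the paper: both reduce to checking equality on generators by unwinding the definitions and tracking the $n!$ factor between the Taylor-coefficient notation $\dgal{a_{(n)}b}$ of \eqref{Eq:relDPVA} and the double products $a_{(\!(n)\!)}b$ of \eqref{Eq:DLVA-1}. The only cosmetic difference is that the paper starts from Definition~\ref{def:DPVA-b} (double products) and derives the $\lambda$-bracket, while you start from Definition~\ref{def:DPVA} (double $\lambda$-bracket) and derive the products; the paper explicitly notes that the computation from your starting point is analogous.
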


\section{Relating double Poisson algebras 
and double Poisson vertex algebras}

\label{Sec:JQ}


If $\theta\colon \AA_1\to\AA_2$ is a morphism of algebras, let $\theta^{\otimes 2}\colon  \AA_1^{\otimes 2}\to\AA_2^{\otimes 2}$ be the unique map obtained from the universal property of $\AA_1^{\otimes 2}$ that makes the following diagram commutes 
\begin{center}
\begin{tikzpicture}
 \node  (TopLeft) at (-2,1) {$\AA_1\times \AA_1$};
 \node  (TopRight) at (2,1) {$\AA_1\otimes \AA_1$};
 \node  (BotLeft) at (-2,-1) {$\AA_2\times \AA_2$};
 \node  (BotRight) at (2,-1) {$\AA_2\otimes \AA_2$};
\path[->,>=angle 90,font=\small]  
   (TopLeft) edge node[above] {$- \otimes -$}  (TopRight) ;
   \path[->,>=angle 90,font=\small]  
   (BotLeft) edge node[above] {$- \otimes -$}  (BotRight) ;
\path[->,>=angle 90,font=\small]  
   (TopLeft) edge node[left] {$\theta^{\times 2}$}  (BotLeft) ;
\path[->,dashed,>=angle 90,font=\small]  
   (TopRight) edge node[right] {$\theta^{\otimes 2}$}  (BotRight) ;
   \end{tikzpicture}
\end{center}

\begin{definition}
For $i=1,2$, let $(\AA_i,\dgal{-,-}_i)$  be a double Poisson algebra. A morphism of algebras $\theta\colon \AA_1\to \AA_2$ is said to be a \emph{morphism of double Poisson algebras} if for any $a,b\in \AA_1$, 
\begin{equation}
    \dgal{\theta(a),\theta(b)}_2=\theta^{\otimes 2}(\dgal{a,b}_1)\,.
\end{equation}
We denote by $\DPA$ the category of double Poisson algebras (over $\kk$). 
\end{definition}

We can introduce the above notation for a morphism $\theta\colon \VV_1\to\VV_2$ of differential algebras. 
We denote in the same way its extension as a $\kk[\lambda]$-linear map $\theta\colon \VV_1[\lambda]\to\VV_2[\lambda]$.

\begin{definition}
For $i=1,2$, let $(\VV_i,\del_i,\dgal{-_\lambda-}_i)$  be a double Poisson vertex algebra. A morphism of differential algebras $\theta\colon \VV_1\to \VV_2$ is said to be a \emph{morphism of double Poisson vertex algebras} if for any $a,b\in \VV_1$, 
\begin{equation}\label{defmorphdpvas}
    \dgal{\theta(a)_\lambda\theta(b)}_2=\theta^{\otimes 2}(\dgal{a_\lambda b}_1)\,.
\end{equation}
Equivalently, if we see $(\VV_i,\del_i,-_{(\!(n)\!)_i}-)$ as a double Poisson vertex algebra according to Definition \ref{def:DPVA-b}, we require that for any $a,b\in \VV_1$ and $n\in \Z_{\geq0}$, 
\begin{equation}
    \theta(a)_{(\!(n)\!)_2}\theta(b)=\theta^{\otimes 2}(a_{(\!(n)\!)_1} b)\,.
\end{equation}
We denote by $\DPVA$ the category of double Poisson vertex algebras (over $\kk$). 
\end{definition}

\subsection{From Poisson vertex to Poisson}
Denote by $\mathtt{(Comm)Alg}$, respectively $\mathtt{(Comm)DiffAlg}$, the category of unital associative (commutative), respectively differential, algebras over $\kk$.
Given a differential algebra $\VV$, denote by $\langle\del \VV\rangle$ the two-sided ideal generated by $\operatorname{im}(\del)$ in $\VV$. We have the following universal property, where we omit the reference to $\kk$.

\begin{lemma} \label{Lem:Univ-Del} 
The algebra $\VV/\langle\del \VV\rangle$ is unique up to isomorphism such that 
$$
\Hom_\mathtt{Alg}(\VV/\langle\del \VV\rangle,\AA)\simeq\Hom_\mathtt{DiffAlg}(\VV,\AA)
$$
for any algebra $\AA$, endowed with the trivial differential on the right hand side. 

We denote by $a\mapsto \bar{a}$ the algebra homomorphisms  $\VV\to \VV/\langle\del \VV\rangle$ and $\VV^{\otimes 2}\to (\VV/\langle\del \VV\rangle)^{\otimes 2}$ where we quotient out by the ideal generated by 
$\operatorname{im}(\del_L)+\operatorname{im}(\del_R)$ in $\VV^{\otimes 2}$. 
If $(\VV, \dgal{-_\lambda-})$ is a double Poisson vertex algebra, 
 then  $\VV/\langle\del \VV\rangle$ equipped with the linear map 
\begin{equation*}   
 \dgal{-,-}\colon  \VV/\langle\del \VV\rangle \times \VV/\langle\del \VV\rangle \to \VV/\langle\del \VV\rangle\otimes \VV/\langle\del \VV\rangle\,, \quad 
 \dgal{\bar{a},\bar{b}}:=\overline{\dgal{a_\lambda b}}|_{\lambda =0} \,,
\end{equation*}
is a double Poisson algebra.        
This construction extends to a functor $\mathtt{Q}\colon \DPVA\to \DPA$.  
\end{lemma}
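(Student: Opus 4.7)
The plan is to split the lemma into three independent pieces: the universal property of $\VV/\langle\del\VV\rangle$, the well-definedness of the induced double bracket on the quotient, and the verification of the double Poisson axioms together with functoriality.

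For the universal property, I would argue that any differential algebra morphism $\phi\colon\VV\to\AA$ into an algebra $\AA$ endowed with the trivial differential must satisfy $\phi(\del(v))=0$ for every $v\in\VV$, so that $\operatorname{im}(\del)\subset\ker\phi$; since $\ker\phi$ is a two-sided ideal, this forces $\langle\del\VV\rangle\subset\ker\phi$, so $\phi$ factors uniquely through the canonical projection $\pi\colon\VV\to\VV/\langle\del\VV\rangle$. Conversely, every algebra morphism out of the quotient lifts to a differential morphism (with $\AA$ given the zero differential). Uniqueness of $\VV/\langle\del\VV\rangle$ follows from Yoneda. In parallel, I will record that the ideal $K\subset\VV^{\otimes2}$ generated by $\operatorname{im}(\del_L)+\operatorname{im}(\del_R)$ coincides with $\langle\del\VV\rangle\otimes\VV+\VV\otimes\langle\del\VV\rangle$, which is precisely $\ker(\pi^{\otimes 2})$, so the notation $\overline{\cdot}\colon\VV^{\otimes 2}\to(\VV/\langle\del\VV\rangle)^{\otimes 2}$ is consistent.

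The crux is to show that $\dgal{a_\lambda b}|_{\lambda=0}\in K$ whenever $a$ or $b$ lies in $\langle\del\VV\rangle$. I would first handle the right slot: for $b=u\,\del(c)\,v$, expanding $\dgal{a_\lambda u\del(c)v}$ by the left Leibniz rule \eqref{Eq:DL} gives three summands, of which the outer two lie in $\VV\otimes\langle\del\VV\rangle$ respectively $\langle\del\VV\rangle\otimes\VV$ (hence in $K$) for every $\lambda$, while the middle summand is $u\,\dgal{a_\lambda\del(c)}\,v=u(\lambda+\del)\dgal{a_\lambda c}\,v$ which, evaluated at $\lambda=0$, becomes $u(\del_L+\del_R)\dgal{a_0 c}\,v\in K$. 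For the left slot, I would use skewsymmetry \eqref{Eq:DA2}: at $\lambda=0$,
\begin{equation*}
\dgal{a_0 b}=-\bigl(\dgal{b_0 a}\bigr)^\sigma-\sum_{n\geq 1}(-1)^n\del^n\bigl(\dgal{b_n a}^\sigma\bigr),
\end{equation*}
and for $a\in\langle\del\VV\rangle$ the first term is already in $K$ by the right-slot analysis applied with roles swapped, while each summand of the tail lies in $K$ because $\del^n(\VV^{\otimes 2})\subset\operatorname{im}(\del_L)+\operatorname{im}(\del_R)\subset K$ for $n\geq 1$. The hard step here — and the main obstacle — is precisely this last manipulation: unlike in the commutative case, one cannot just "set $\lambda=0$" because the non-skewsymmetric form \eqref{Eq:DA2} of the axiom injects derivatives at every order; one has to exploit that every such derivative contribution is automatically absorbed by $K$.

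Once well-definedness is secured, the double Poisson axioms on $\tilde{\VV}:=\VV/\langle\del\VV\rangle$ follow by evaluating the corresponding axioms of the double $\lambda$-bracket at $\lambda=0$ (and $\mu=0$ for Jacobi) and projecting via $\overline{\cdot}$. The cyclic skewsymmetry is exactly the $\lambda=0$ reduction of the computation above. The left Leibniz rule \eqref{Eq:DL} descends verbatim, while for the right Leibniz rule \eqref{Eq:DR}, writing $\dgal{a_\mu c}=\sum_n p_n\mu^n$ gives $\dgal{ab_0c}=\sum_n p_n\ast_1\del^n(b)+\sum_n\del^n(a)\ast_1\dgal{b_n c}$; for $n\geq 1$ both kinds of summands lie in $K$ (since $\del^n(b),\del^n(a)\in\langle\del\VV\rangle$), so modulo $K$ only the $n=0$ terms survive and yield the required $\dgal{\bar a\bar b,\bar c}=\dgal{\bar a,\bar c}\ast_1\bar b+\bar a\ast_1\dgal{\bar b,\bar c}$. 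The Jacobi identity \eqref{Eq:DA3} at $\lambda=\mu=0$ descends directly to \eqref{Eq:DJ} on $\tilde{\VV}$ since the extended maps $\dgal{-_0-}_L,\dgal{-_0-}_R$ become the operations from Definition \ref{def:DPA}. Finally, functoriality is immediate: a morphism $\theta\colon\VV_1\to\VV_2$ of double Poisson vertex algebras is in particular a differential algebra morphism, so the universal property provides a unique algebra morphism $\mathtt{Q}(\theta)\colon\tilde{\VV}_1\to\tilde{\VV}_2$; that it intertwines the induced double brackets follows by applying $\overline{\cdot}$ to \eqref{defmorphdpvas} at $\lambda=0$.
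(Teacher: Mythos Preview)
Your proof is correct. The paper does not actually give a proof of this lemma; it simply says ``The second part of the lemma is an analogue to Lemma \ref{Lem:PVAtoPA}. The proof does not pose any major difficulty, so we omit it.'' So there is nothing to compare against, and your detailed verification --- especially the well-definedness argument using skewsymmetry to handle the left slot and the observation that all higher-derivative correction terms land in $K=\langle\del\VV\rangle\otimes\VV+\VV\otimes\langle\del\VV\rangle$ --- fills in exactly what the authors left to the reader.

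One minor remark: for the left slot you could alternatively bypass skewsymmetry and argue directly from the first sesquilinearity $\dgal{\del(c)_\lambda b}=-\lambda\dgal{c_\lambda b}$ (which vanishes at $\lambda=0$) combined with the right Leibniz rule in the form \eqref{Eq:DR-b}, where again all $i\geq 1$ terms involve $\del^i(a)$ or $\del^i(b)$ and hence lie in $K$. Either route works; yours is arguably cleaner because it avoids unpacking the right Leibniz rule twice.
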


The second part of the lemma is an analogue to Lemma \ref{Lem:PVAtoPA}. 
The proof does not pose any major difficulty, 
so we omit it.

\begin{remark}
The result generalises the observation from \cite{DSKV} that for an associative algebra $\AA$ equipped with the zero differential $\del:= 0_{\AA}$, a structure of double Poisson vertex algebra with $\lambda$-bracket $\dgal{-_\lambda-}$ turns $(\AA,\dgal{-_\lambda-}|_{\lambda=0})$ into a double Poisson algebra. 
\end{remark}

\subsection{From Poisson to Poisson vertex}

Let $A$ be a unital commutative algebra. 
There exists a unique (up to isomorphism)
differential algebra $\Jinf A$, called the {\em jet algebra} of $A$ 
such that:  
\begin{align*}
{\rm Hom}_{\mathtt{CommDiffAlg}} (\Jinf A, B)
\cong 
{\rm Hom}_{\mathtt{CommAlg}} (A,B) 
\end{align*}
for any differential algebra $(B,\partial)$. 
We refer to \cite[Lemma 1.1]{AM} for a proof. 
An explicit construction of $\Jinf A$ is presented below %
(where we do not require the algebra to be commutative).

\begin{lemma}[{\cite[\S2.3]{A12},\cite[\S4.2]{AM}}] 
\label{Lem:PAtoPVA}
    Assume that $(A, \br{-,-})$ is a Poisson algebra. Then  there exists a unique $\lambda$-bracket $\br{-_\lambda -}\colon  \Jinf A \otimes \Jinf A \to \Jinf A[\lambda]$ on the jet algebra $\Jinf A$ which satisfies for any $a,b\in A$, 
\begin{equation}  \label{Eq:PA-PVA}
\br{a_\lambda b}:= \br{a,b}  
\, 
\text{ i.e. } \, 
a_{(n)} b := \delta_{n,0}\,\br{a,b} \,, \quad \forall\,n \geq 0\,. 
\end{equation}   
Moreover, this construction extends to a functor $\mathtt{J}\colon \PA\to \PVA$.
\end{lemma}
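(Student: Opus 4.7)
The plan is to prove uniqueness by a direct argument, construct the $\lambda$-bracket explicitly using the universal property of the jet algebra, and then reduce the Poisson vertex algebra axioms to the corresponding Poisson axioms on $A$.

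For uniqueness, recall that $\Jinf A$ can be presented with generators $a^{(n)}$ for $a\in A$, $n\ge 0$, with $a^{(0)}=a$ and $\partial(a^{(n)})=a^{(n+1)}$, modulo the differentiated relations coming from those of $A$. The prescription $\br{a_\lambda b}=\br{a,b}$ for $a,b\in A$ combined with sesquilinearity \eqref{Eq:A1} forces
\begin{equation*}
\br{a^{(m)}{}_\lambda b^{(n)}} = (-\lambda)^m (\lambda+\partial)^n\br{a,b}, \qquad a,b\in A,\; m,n\ge 0,
\end{equation*}
and the left and right Leibniz rules \eqref{Eq:lL}--\eqref{Eq:rL} then propagate the bracket uniquely to all products.

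For existence, the strategy is two-stage. First, for each $a\in A$, I would define a $\kk$-linear map $\Psi_a\colon\Jinf A\to\Jinf A[\lambda]$ by setting $\Psi_a(b^{(n)}) := (\lambda+\partial)^n\br{a,b}$ on the generators and extending as a $\kk[\lambda]$-linear derivation (viewing $\Jinf A[\lambda]$ as a $\Jinf A$-module through the canonical embedding). For $\Psi_a$ to descend to the jet algebra, it must respect the defining relations, which reduces to verifying the commutation identity $\Psi_a\circ\partial=(\lambda+\partial)\circ\Psi_a$; this holds on generators by construction and propagates to all of $\Jinf A$ via the Leibniz rule. Combined with the fact that $\br{a,-}$ is a derivation on $A$, this yields a well-defined $\Psi_a$. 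Setting $\br{a_\lambda g}:=\Psi_a(g)$ for $a\in A$ and $g\in\Jinf A$, I would then extend the $\lambda$-bracket to $\br{f_\lambda g}$ for arbitrary $f\in\Jinf A$ by imposing sesquilinearity $\br{(\partial f)_\lambda g}=-\lambda\br{f_\lambda g}$ together with the right Leibniz rule \eqref{Eq:rL}, the well-definedness of this symmetric extension following by the same kind of argument. This two-step well-definedness check is the main technical obstacle: it uses in an essential way the universal property of $\Jinf A$ to lift derivations of $A$ to derivations of $\Jinf A$ compatible with $\partial$.

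With the $\lambda$-bracket in hand, the remaining Poisson vertex algebra axioms reduce to their Poisson counterparts on $A\subset\Jinf A$. Indeed, for $a,b,c\in A$ both skewsymmetry \eqref{Eq:A2} and Jacobi \eqref{Eq:A3} specialise, once the trivial $\lambda,\mu$-dependence is accounted for, to the skewsymmetry and Jacobi identity of $\br{-,-}$ on $A$; the identities then propagate to all of $\Jinf A$ by a standard argument using only sesquilinearity and the Leibniz rules (identical to the extension of the corresponding axioms for $\lambda$-brackets in \cite{Ka, AM}). Functoriality is equally formal: a Poisson morphism $\phi\colon A_1\to A_2$ lifts uniquely to a differential algebra morphism $\mathtt{J}(\phi)\colon\Jinf A_1\to\Jinf A_2$ by the universal property of the jet construction, and the equality $\br{\mathtt{J}(\phi)(a)_\lambda \mathtt{J}(\phi)(b)}_2=\mathtt{J}(\phi)(\br{a_\lambda b}_1)$ holds on $a,b\in A_1$ by the Poisson morphism property of $\phi$ and extends to all of $\Jinf A_1$ by sesquilinearity and Leibniz.
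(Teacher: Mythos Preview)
Your proposal is correct and follows the standard approach. The paper does not actually include a proof of this lemma (it cites \cite{A12,AM}), but it does prove the noncommutative analogue, Lemma~\ref{Lem:DPAtoDPVA}, using essentially the same strategy you outline: sesquilinearity forces the formula $\br{\partial^k a_\lambda \partial^l b}=(-\lambda)^k(\lambda+\partial)^l\br{a,b}$ on generators, well-definedness is checked against the jet-algebra relations, skewsymmetry is verified on generators, and Jacobi reduces to $A$ (there via \cite[Lemma 3.4]{DSKV}). Your two-stage construction of $\Psi_a$ via the universal property is a minor repackaging of the paper's direct relation-check, but the substance is the same.
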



We are going to derive an associative analogue of Lemma \ref{Lem:PAtoPVA}. 
Fix an associative unital algebra $\AA$. 
We form $\Jinf\AA$ as the algebra generated by symbols 
\begin{equation*}
    \del^k (a)\,, \qquad a\in \AA,\,\, k\in \Z_{\geq0}\,,
\end{equation*}
subject to the following relations for any $a,b\in \AA$, $\alpha,\beta\in \kk$ and $k\in \Z_{\geq0}$, 
\begin{equation} \label{Eq:Jinf-Rel}
\del^k(\alpha a + \beta b)=\alpha \del^k(a) + \beta \del^k(b)\,, \quad 
\del^k(ab)=\sum_{j=0}^k \binom{k}{j}\, \del^j(a) \,\del^{k-j}(b)\,.
\end{equation}
When $k=0$, the relation $\del^0(ab)=\del^0(a)\del^0(b)$ for any $a,b\in \AA$ yields an embedding of $\AA$ into $\Jinf\AA$ through $a\mapsto a:=\del^0(a)$ for each $a\in \AA$. 
By construction, if $\sum_{i\in I}a_i=0$ in $\AA$ for some elements $a_i$ of $\AA$, we remark that $\sum_{i\in I}\del^k(a_i)=0$ in $\Jinf\AA$ for each $k\in \Z_{\geq0}$. We also see that $\del^k (\alpha)=0$ for any $\alpha\in \kk\subset \AA$ whenever $k\geq 1$.  
There naturally exists a derivation $\del\in \Der(\Jinf \AA)$ which is uniquely determined by 
\begin{equation}
 \del(\del^k(a)\!):=\del^{k+1}(a)  \,, \quad \text{ for all }a\in \AA,\,\, k\in \Z_{\geq0}\,.
\end{equation}
Its expression on an arbitrary element of $\Jinf\AA$ is obtained recursively using the derivation rule 
$\del(fg)=f\del(g)+\del(f)g$ for any $f,g\in \Jinf\AA$. 
The differential algebra $(\Jinf\AA,\del)$ is called the \emph{jet algebra} of $\AA$. 

\begin{example} \label{Ex:Jet-Free}
Consider the free algebra $F_\ell=\kk\langle x_1,\ldots,x_\ell\rangle$ on $\ell\geq 1$ generators. Then the jet algebra of $F_\ell$ is 
$\Jinf F_\ell=\langle \del^k(x_1),\ldots,\del^k(x_\ell) \mid k\in \Z_{\geq0}  \rangle$ equipped with the derivation satisfying $\del\colon \del^k(x_i)\mapsto \del^{k+1}(x_i)$ for any $1\leq i \leq \ell$. Thus $\Jinf F_\ell$ is the differential algebra freely generated (as a differential algebra) by $\ell$ generators. 
\end{example}

\begin{example}  \label{Ex:Jet-Gen}
Consider $\AA=\kk\langle x_1,\ldots,x_\ell\rangle/\langle f_1,\ldots,f_r\rangle$, where we take the quotient of $F_\ell$ by the (2-sided) ideal generated by $f_1,\ldots,f_r\in F_\ell$. 
By construction, the jet algebra of $\AA$ is 
$$\Jinf \AA=\langle \del^k(x_1),\ldots,\del^k(x_\ell) \mid k\in \Z_{\geq0}  \rangle / \langle \del^k(f_1),\ldots,\del^k(f_r)\mid k \in \Z_{\geq0} \rangle$$
which is obtained by quotienting out $\Jinf F_\ell$ by the ideal generated by the elements $\del^k(f_s)\in \Jinf F_\ell$ with $k\in \Z_{\geq0}$ and $1\leq s \leq r$. For $k\geq 1$, $\del^k(f_s)$ is understood as being obtained by applying $k$ times the derivation $\del \in \Der(\Jinf F_\ell)$ to the image $\del^0(f_s)$ of $f_s$ under the embedding $F_\ell \hookrightarrow \Jinf F_\ell$.  
\end{example}

We now establish the universal character of $\Jinf\AA$. Recall that a  morphism of differential algebras $\theta\colon (\VV_1,\del_1)\to (\VV_2,\del_2)$ is such that $\theta(\del_1(v)\!)=\del_2(\theta(v)\!)$ for any $v\in \VV_1$.

\begin{lemma} [Analogue of Lemma 1.1 in \cite{AM}]
For any unital associative algebra 
$\AA$, there exists a unique (up to isomorphism) differential algebra $\Jinf\AA$ such that
\begin{equation}
    \Hom_\mathtt{DiffAlg}(\Jinf\AA,\VV)  = \Hom_\mathtt{Alg}(\AA,\VV)\,,
\end{equation} 
for any (associative) differential algebra $\VV$.    
\end{lemma}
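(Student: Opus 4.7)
The plan is to verify that the explicit construction of $\Jinf\AA$ given above (generators $\partial^k(a)$ modulo the relations \eqref{Eq:Jinf-Rel}, together with the shift derivation $\partial\colon \partial^k(a)\mapsto \partial^{k+1}(a)$) satisfies the stated universal property; uniqueness up to isomorphism will then be the usual formal consequence.

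First, I would define the canonical map $\iota\colon \AA\to \Jinf\AA$ by $a\mapsto \partial^0(a)$ and check it is an algebra homomorphism, which is immediate from the $k=0$ case of \eqref{Eq:Jinf-Rel}. This gives a restriction map
\[
\mathrm{res}\colon \Hom_{\mathtt{DiffAlg}}(\Jinf\AA,\VV)\longrightarrow \Hom_{\mathtt{Alg}}(\AA,\VV),\qquad \Phi\longmapsto \Phi\circ\iota,
\]
and the goal is to exhibit a two-sided inverse.

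Second, given any algebra morphism $\phi\colon \AA\to \VV$, I would build its extension $\tilde\phi\colon \Jinf\AA\to \VV$ by prescribing, on generators,
\[
\tilde\phi(\partial^k(a)) := \partial_\VV^{\,k}(\phi(a)),\qquad a\in \AA,\ k\in \Z_{\geq 0}.
\]
The main thing to verify is that this prescription respects the defining relations \eqref{Eq:Jinf-Rel}. Linearity is clear, and the Leibniz-type relation
\[
\partial^k(ab)=\sum_{j=0}^{k}\binom{k}{j}\,\partial^j(a)\,\partial^{k-j}(b)
\]
is sent to the identity
\[
\partial_\VV^{\,k}(\phi(a)\phi(b))=\sum_{j=0}^{k}\binom{k}{j}\,\partial_\VV^{\,j}(\phi(a))\,\partial_\VV^{\,k-j}(\phi(b)),
\]
which is the iterated Leibniz rule in the differential algebra $(\VV,\partial_\VV)$; this is the only non-formal step, and it is what forces the relations \eqref{Eq:Jinf-Rel} to have been imposed in the first place. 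Hence $\tilde\phi$ is a well-defined algebra morphism, and by construction it commutes with the differentials on generators, so by the derivation property it commutes with them everywhere; thus $\tilde\phi\in \Hom_{\mathtt{DiffAlg}}(\Jinf\AA,\VV)$.

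Third, I would check the two composites. By definition $\tilde\phi\circ\iota=\phi$, giving $\mathrm{res}(\tilde\phi)=\phi$. Conversely, if $\Phi\colon \Jinf\AA\to \VV$ is any morphism of differential algebras with $\Phi\circ\iota=\phi$, then for all $a\in\AA$ and $k\geq 0$ we have $\Phi(\partial^k(a))=\Phi(\partial^k(\iota(a)))=\partial_\VV^{\,k}(\Phi(\iota(a)))=\partial_\VV^{\,k}(\phi(a))=\tilde\phi(\partial^k(a))$, and since the $\partial^k(a)$ generate $\Jinf\AA$ as an algebra we get $\Phi=\tilde\phi$. This establishes the bijection, hence the universal property. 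Uniqueness of $(\Jinf\AA,\partial)$ up to a unique isomorphism is then the standard Yoneda-style argument applied to the representing object of the functor $\VV\mapsto \Hom_{\mathtt{Alg}}(\AA,\VV)$ on $\mathtt{DiffAlg}$. I do not expect a genuine obstacle: the only subtle point is the compatibility of $\tilde\phi$ with the multiplicative relations in \eqref{Eq:Jinf-Rel}, which is handled by the iterated Leibniz rule in $\VV$.
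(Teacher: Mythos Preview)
Your proof is correct and follows essentially the same approach as the paper: both verify the universal property of the explicit construction by showing that an algebra morphism $\phi\colon\AA\to\VV$ extends uniquely to a differential algebra morphism $\tilde\phi\colon\Jinf\AA\to\VV$ via $\tilde\phi(\partial^k(a))=\partial_\VV^k(\phi(a))$, and both invoke Yoneda for uniqueness. Your version is in fact slightly more detailed, since you explicitly verify compatibility with the relations \eqref{Eq:Jinf-Rel} via the iterated Leibniz rule in $\VV$, whereas the paper leaves this implicit.
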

\begin{proof}
The uniqueness of $\Jinf\AA$ follows by applying Yoneda's lemma. It remains to check that the definition of $\Jinf\AA$ given above satisfies the desired universal property. 
 
For a fixed differential algebra $(\VV,\del_\VV)$, a  morphism of algebras $\theta_0\colon \AA\to \VV$ uniquely extends to a morphism of differential algebras $\theta\colon (\Jinf\AA,\del)\to (\VV,\del_\VV)$. Indeed, as $\AA$ embeds into $\Jinf\AA$, compatibility with the derivations completely determines $\theta$ from its value on $\AA$ by requiring $\theta(\del^k(a)\!)=\del_\VV^k(\theta(a)\!)$ for any $a\in \AA$.  
Given $\theta\colon (\Jinf\AA,\del)\to (\VV,\del_\VV)$, we get a morphism of algebras $\theta_0\colon \AA\to \VV$ by considering the restriction of $\theta$ to the subalgebra $\AA\subset \Jinf\AA$. It is direct that $\theta$ and $\theta_0$ correspond to one another. 
\end{proof}

Recall that we embed $\AA$ in the jet algebra $\Jinf\AA$ through $a\mapsto a:=\del^0(a)$.

\begin{lemma} \label{Lem:DPAtoDPVA}
Assume that $(\AA, \dgal{-,-})$ is a double Poisson algebra. The jet algebra  $\Jinf\AA$ admits a unique structure of double Poisson vertex algebra satisfying for any $a,b\in \AA$ 
\begin{equation} \label{Eq:PtoPA-a}
   \dgal{a_\lambda b}=\dgal{a,b} \, 
\text{ i.e. } \, 
a_{(\!(n)\!)} b := \delta_{n,0}\,\dgal{a,b} \,, \quad \forall\,n \geq 0\,. 
\end{equation} 
\end{lemma}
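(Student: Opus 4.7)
My plan has three parts: uniqueness via the generation property of $\Jinf\AA$, existence via presentation and descent from the free case, and verification of the axioms via the equivalent formulation of Definition~\ref{def:DPVA-b}.

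\emph{Uniqueness.} Since $\Jinf\AA$ is generated as an algebra by $\{\del^k(a) \mid a \in \AA,\, k \in \Z_{\geq 0}\}$, iterating the two sesquilinearity rules \eqref{Eq:DA1} from the prescription $\dgal{a_\lambda b} = \dgal{a,b}$ forces
\[
\dgal{\del^k(a)_\lambda \del^l(b)} = (-\lambda)^k (\del+\lambda)^l \dgal{a,b} \in (\Jinf\AA)^{\otimes 2}[\lambda]
\]
for $a,b \in \AA$, where the right-hand side is interpreted via the inclusion $\AA \hookrightarrow \Jinf\AA$ and the extension of $\del$ to $(\Jinf\AA)^{\otimes 2}$ as in \eqref{Eq:DelExtend}. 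The left and right Leibniz rules \eqref{Eq:DL}--\eqref{Eq:DR} then determine $\dgal{f_\lambda g}$ uniquely for all $f, g \in \Jinf\AA$.

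\emph{Existence.} I would write $\AA = F_\ell / I$ as a quotient of a free algebra $F_\ell = \kk\langle x_1, \ldots, x_\ell\rangle$, so that $\Jinf\AA \cong \Jinf F_\ell / J$ with $J = \langle \del^k(f) \mid k \geq 0,\, f \in I\rangle$ (cf.\ Example~\ref{Ex:Jet-Gen}). Lift $\dgal{-,-}_\AA$ arbitrarily to a double bracket $\dgal{-,-}_{F_\ell}$ on $F_\ell$ and define a double $\lambda$-bracket on $\Jinf F_\ell$ on pairs of generators by the displayed formula, extending it by the Leibniz rules. Since $\Jinf F_\ell$ is freely generated as a differential algebra (Example~\ref{Ex:Jet-Free}), this extension is automatically consistent and satisfies sesquilinearity and the Leibniz rules. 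To descend to $\Jinf\AA$, it is enough to verify that
\[
\dgal{\del^k(f)_\lambda g},\ \dgal{g_\lambda \del^k(f)} \in \bigl(J \otimes \Jinf F_\ell + \Jinf F_\ell \otimes J\bigr)[\lambda]
\]
for all $f \in I$, $g \in \Jinf F_\ell$ and $k \geq 0$. By sesquilinearity and Leibniz, this reduces to the case $k = 0$ and $g \in F_\ell$, which is precisely the well-definedness of the descended double Poisson bracket on $\AA = F_\ell/I$.

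\emph{Axioms and main obstacle.} The main difficulty is the verification of skewsymmetry \eqref{Eq:DA2} and the Jacobi identity \eqref{Eq:DA3}. Here I would switch to the equivalent formulation of Definition~\ref{def:DPVA-b} (justified by Proposition~\ref{Pr:CorDPVA}), in which the prescription \eqref{Eq:PtoPA-a} reads $a_{(\!(n)\!)} b = \delta_{n,0}\dgal{a,b}$ for $a, b \in \AA$. A direct computation shows that on triples $(a, b, c) \in \AA^3$, the Jacobi identity \eqref{Eq:DA3-b} is trivial when $(m,n) \neq (0,0)$ (both sides vanish because all terms involve some $-_{(\!(r)\!)} -$ with $r \geq 1$ applied to an element of $\AA$) and reduces at $(m,n) = (0,0)$ exactly to the double Poisson Jacobi identity \eqref{Eq:DJ}; similarly, skewsymmetry \eqref{Eq:DA2-b} on $\AA$-pairs reduces to \eqref{Eq:DA}. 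A bookkeeping induction using sesquilinearity \eqref{Eq:DA1-b} and the Leibniz rules \eqref{Eq:DL-b}--\eqref{Eq:DR-b} then propagates these identities by $\del$-degree and through products, extending skewsymmetry and Jacobi from $\AA$-pairs/triples to arbitrary pairs/triples in $\Jinf\AA$. This is the noncommutative counterpart of the construction used for Lemma~\ref{Lem:PAtoPVA}, following \cite{A12,AM}.
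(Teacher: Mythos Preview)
Your proof is correct and arrives at the same conclusion as the paper, but the route differs in two places worth noting.

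For well-definedness, the paper works intrinsically on $\Jinf\AA$: it writes down the forced formula $\dgal{\del^k(a)_\lambda\del^l(b)}=(-\lambda)^k(\del+\lambda)^l\dgal{a,b}$ on generators and then checks by hand that the Leibniz extension is compatible with the defining relations~\eqref{Eq:Jinf-Rel} of the jet algebra (the computation of $\dgal{v_\lambda\del^l(ab)}$ in two ways). Your presentation-and-descent argument trades that explicit computation for a structural step: lift the double bracket to $F_\ell$, define the $\lambda$-bracket freely on $\Jinf F_\ell$, then descend modulo $J$. This is conceptually clean but tacitly uses the master-formula fact that on a free differential algebra a double $\lambda$-bracket is determined by, and extends uniquely from, its values on generator pairs; you should also flag that the lifted double bracket on $F_\ell$ need not satisfy Jacobi, which is harmless because you only claim Jacobi after descent.

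For the axioms, the paper stays in the $\lambda$-bracket formulation throughout: skewsymmetry is verified directly on all generator pairs $(\del^k a,\del^l b)$ via the explicit formula, and the reduction of the Jacobi identity to $\AA$-triples is obtained by invoking \cite[Lemma~3.4]{DSKV}. Your detour through the $(\!(n)\!)$-formulation of Definition~\ref{def:DPVA-b} is valid but adds an unnecessary translation layer; more importantly, the ``bookkeeping induction'' you invoke for propagating skewsymmetry and Jacobi through $\del$ and products is precisely the content of that DSKV lemma (together with the standard fact that skewsymmetry on generators plus Leibniz gives global skewsymmetry), so it would strengthen your argument to cite it rather than leave the propagation as a gesture.
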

\begin{proof}
By Proposition \ref{Pr:CorDPVA}, it suffices to establish the result for the double $\lambda$-bracket  
before writing the double products using  \eqref{Eq:DLVA-2}. 
By sesquilinearity \eqref{Eq:DA1}, if the double $\lambda$-bracket is well-defined, it is given on generators of $\Jinf\AA$ by 
\begin{equation} \label{Eq:PtoPA-c}
   \dgal{\del^k(a)_\lambda \del^l(b)}=(-\lambda)^k(\del+\lambda)^l(\dgal{a,b})\,, 
\quad a,b\in \AA,\quad k,l\in \Z_{\geq0},
\end{equation}
and then it is extended to $\Jinf\AA$ through the left and right Leibniz rules \eqref{Eq:DL}--\eqref{Eq:DR}. In order to have a well-defined $\lambda$-bracket, we have to check compatibility of the expression \eqref{Eq:PtoPA-c} with the defining relations \eqref{Eq:Jinf-Rel} of $\Jinf\AA$. It is clearly compatible with the first relation of linearity since the double bracket $\dgal{-,-}$ on $\AA$ is linear. 
For the second relation, we compute for any $v\in \Jinf(\AA)$, $a,b\in \AA$ and $l\geq0$,
\begin{equation} \label{Eq:PtoPA-d}
    \begin{aligned} 
\dgal{v_\lambda \del^l(ab)} 
&=\sum_{j=0}^l \binom{l}{j}  \dgal{v_\lambda \del^j(a) \del^{l-j}(b)} \\ 
&=\sum_{j=0}^l \binom{l}{j} \del^{l-j}(a) (\lambda+\del)^{j}(\dgal{v_\lambda b}) 
+ \sum_{j=0}^l \binom{l}{j} (\lambda+\del)^j(\dgal{v_\lambda a }) \del^{l-j}(b)  \\
&=\sum_{i=0}^{l} \sum_{j=i}^l\binom{l}{j}\binom{j}{i} \left(\del^{l-j}(a) \del^{j-i}(\dgal{v_\lambda b})  
+ \del^{j-i}(\dgal{v_\lambda a }) \del^{l-j}(b) \right) \lambda^i \,,
\end{aligned}
\end{equation}
after successively using the second relation in \eqref{Eq:Jinf-Rel}, the left Leibniz rule \eqref{Eq:DL},  then sesquilinearity \eqref{Eq:DA1}.  Meanwhile, we compute 
\begin{equation} \label{Eq:PtoPA-e}
   \begin{aligned}
\dgal{v_\lambda \del^l(ab)}     
&=\sum_{i=0}^l \binom{l}{i} \del^{l-i}(a\dgal{v_\lambda b}+\dgal{v_\lambda a}b)  \lambda^i \\
&=\sum_{i=0}^{l} \sum_{j'=0}^{l-i}\binom{l}{i}\binom{l-i}{j'} \left(\del^{j'}(a) \del^{l-j'-i}(\dgal{v_\lambda b})  
+ \del^{l-j'-i}(\dgal{v_\lambda a }) \del^{j'}(b) \right) \lambda^i\,, 
\end{aligned} 
\end{equation}
after successively using sesquilinearity \eqref{Eq:DA1}, the left Leibniz rule \eqref{Eq:DL}, then the fact that $\del\in \Der(\AA^{\otimes 2})$. 
After setting $j=l-j'$ in the last expression of \eqref{Eq:PtoPA-e}, we recover the one of  \eqref{Eq:PtoPA-d}. 
An easier computation yields that  $\dgal{ \del^l(ab)_\lambda v}$ is independent of the way we compute it; 
this also follows from our previous computation and the skewsymmetry property.

We prove that skewsymmetry \eqref{Eq:DA2} and the Jacobi identity \eqref{Eq:DA3} hold. 
Thanks to the Leibniz rules, it suffices to check skewsymmetry on generators of $\Jinf\AA$. 
This is direct by looking at \eqref{Eq:PtoPA-c} as this identity implies 
\begin{align*}
-\big|_{x=\del}\dgal{\del^l(b)_{-\lambda-x} \del^k(a)}^\circ  
=-(-\lambda)^k(\lambda+\del)^l(\dgal{b,a})^\circ
=(-\lambda)^k(\lambda+\del)^l(\dgal{a,b})
\end{align*} 
by the skewsymmetry \eqref{Eq:DA} of the double Poisson bracket. 
For the Jacobi identity \eqref{Eq:DA3}, we remark by \cite[Lemma 3.4]{DSKV} that we only need to check this identity on the elements of $\AA\subset \Jinf\AA$. 
Due to \eqref{Eq:PtoPA-a}, it is easy to see that 
\begin{align*}
&\dgal{a_\lambda \dgal{b_\mu c}}_L
-\dgal{b_\mu \dgal{a_\lambda c}}_R
-\dgal{\dgal{a_\lambda b}_{\lambda+\mu}c}_L \\
=&\dgal{a, \dgal{b, c}}_L
-\dgal{b, \dgal{a, c}}_R
-\dgal{\dgal{a, b},c}_L\,,
\end{align*}
and this is zero due to the Jacobi identity \eqref{Eq:DJ} of the double Poisson bracket. 
\end{proof}

\begin{proposition} \label{Pr:DPAtoDPA}
There exists a functor $\mathtt{J}\colon \DPA\to \DPVA$ which associates to a double Poisson algebra $\AA$ its jet algebra $\Jinf\AA$ equipped with the structure from Lemma \ref{Lem:DPAtoDPVA}.   
\end{proposition}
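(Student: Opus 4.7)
The plan is to complete the functorial assignment by first defining $\mathtt{J}$ on morphisms via the universal property of jet algebras, and then verifying that the resulting morphism of differential algebras is automatically compatible with the double $\lambda$-bracket structures produced by Lemma \ref{Lem:DPAtoDPVA}.

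Concretely, given a morphism of double Poisson algebras $\theta\colon (\AA_1,\dgal{-,-}_1)\to (\AA_2,\dgal{-,-}_2)$, I would compose with the canonical inclusion $\AA_2\hookrightarrow \Jinf\AA_2$ to obtain an algebra map $\AA_1\to \Jinf\AA_2$, and then invoke the universal property of $\Jinf\AA_1$ (proved just before Lemma \ref{Lem:DPAtoDPVA}) to obtain a unique morphism of differential algebras $\mathtt{J}(\theta)\colon \Jinf\AA_1\to \Jinf\AA_2$ extending $\theta$. Concretely, $\mathtt{J}(\theta)(\del^k(a)\!)=\del^k(\theta(a)\!)$ for any $a\in \AA_1$ and $k\in \Z_{\geq0}$. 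Functoriality $\mathtt{J}(\id)=\id$ and $\mathtt{J}(\theta\circ \theta')=\mathtt{J}(\theta)\circ \mathtt{J}(\theta')$ then follows from the uniqueness in the universal property.

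The main content is to check that $\mathtt{J}(\theta)$ is a morphism of double Poisson vertex algebras, i.e.\ that \eqref{defmorphdpvas} holds for the double $\lambda$-brackets produced in Lemma \ref{Lem:DPAtoDPVA}. Write $\Theta:=\mathtt{J}(\theta)$. Since both sides of \eqref{defmorphdpvas} are sesquilinear in each argument and satisfy the same left and right Leibniz rules, a standard induction reduces the verification to the case of generators $a,b\in \AA_1\subset \Jinf\AA_1$; more precisely, one first checks the identity for $a,b\in \AA_1$, then uses sesquilinearity \eqref{Eq:DA1} to extend it to generators of the form $\del^k(a),\del^l(b)$, and finally uses the Leibniz rules \eqref{Eq:DL}--\eqref{Eq:DR} together with the fact that $\Theta$ is an algebra morphism to extend it to arbitrary elements of $\Jinf\AA_1$. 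On generators $a,b\in \AA_1$, the identity reads
\begin{equation*}
\dgal{\Theta(a)_\lambda \Theta(b)}_2 = \dgal{\theta(a),\theta(b)}_2 = \theta^{\otimes 2}(\dgal{a,b}_1) = \Theta^{\otimes 2}(\dgal{a_\lambda b}_1),
\end{equation*}
where the first and last equality follow from \eqref{Eq:PtoPA-a} in Lemma \ref{Lem:DPAtoDPVA} and the middle equality is exactly the hypothesis that $\theta$ is a morphism of double Poisson algebras.

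I expect no serious obstacle: the hard work was already done in Lemma \ref{Lem:DPAtoDPVA} (compatibility with the relations of $\Jinf\AA$, skewsymmetry, and the Jacobi identity). The only mildly delicate point is the passage from generators to arbitrary elements, but this is routine because both $\Theta^{\otimes 2}\circ \dgal{-_\lambda -}_1$ and $\dgal{\Theta(-)_\lambda \Theta(-)}_2$ are biderivations with respect to the outer bimodule structure and both satisfy sesquilinearity, so agreement on a generating set forces agreement everywhere.
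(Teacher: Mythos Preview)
Your proposal is correct and follows essentially the same approach as the paper: define $\mathtt{J}(\theta)$ via the universal property of the jet algebra, then verify \eqref{defmorphdpvas} on generators using \eqref{Eq:PtoPA-a} and the double Poisson morphism hypothesis, with sesquilinearity and the Leibniz rules handling the extension to all of $\Jinf\AA_1$. The paper's proof is slightly more compressed---it checks the identity directly on pairs $\del^k a,\del^l b$ in a single line using \eqref{Eq:PtoPA-c}---but the content is identical.
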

\begin{proof} 
Clearly there is a unique way to extend a double Poisson morphism $\theta\colon \AA_1\to\AA_2$ as a morphism of differential algebras $\Jinf\AA_1\to\Jinf\AA_2$, that we still abusively denote by $\theta$. 
We need to prove that it is actually a morphism of double Poisson vertex algebras with respect to the structures $\dgal{-_\lambda-}_i$ on $\AA_i$ defined in Lemma~\ref{Lem:DPAtoDPVA}. Consider $a,b\in\AA_1$ and $k,l\geq0$. Since $\theta$ commutes with $\del$ and $\lambda$, and extends a morphism of double Poisson algebras, we have
\begin{align*}
    \theta^{\otimes2}\dgal{\del^ka_\lambda\del^lb}_1 
    =\theta^{\otimes2}(-\lambda)^k(\del+\lambda)^l\dgal{a,b}_1  
    =\dgal{\theta\del^k a_\lambda\theta\del^l b}_2
\end{align*}
as wished.
\end{proof}

\subsection{Going to representation spaces}

Denote by $\PA$ and $\DPA$ the categories of Poisson and Poisson vertex algebras over $\kk$. 
For every $N\geq0$ we have representation functors
\begin{equation} \label{Eq:RepFunct}
    (-)_N\colon \DPA\to\PA\qquad\text{and}\qquad(-)_N\colon \DPVA\to\PVA\,.
\end{equation}
The functors $\mathtt{J}\colon \DPA\to \DPVA$ and $\mathtt{Q}\colon \DPVA\to \DPA$ have commutative analogues, see \S\ref{ss:Remind}. 

\begin{theorem} \label{Thm:QJ-Rep}
The following diagrams commute: 
\begin{center}
\begin{tikzpicture}
 \node  (TopLeft) at (-1,1) {$\DPVA$};
 \node  (TopRight) at (1,1) {$\DPA$};
 \node  (BotLeft) at (-1,-1) {$\PVA$};
 \node  (BotRight) at (1,-1) {$\PA$};
\path[->,>=angle 90,font=\small]  
   (TopLeft) edge node[above] {$\mathtt{Q}$}  (TopRight) ;
   \path[->,>=angle 90,font=\small]  
   (BotLeft) edge node[above] {$\mathtt{Q}$}  (BotRight) ;
\path[->,>=angle 90,font=\small]  
   (TopLeft) edge node[left] {$(-)_N$}  (BotLeft) ;
\path[->,>=angle 90,font=\small]  
   (TopRight) edge node[right] {$(-)_N$}  (BotRight) ;
   \end{tikzpicture}
   \qquad
   \begin{tikzpicture}
   \node at (0,0) {and};
   \node at (0,-1.2) {~};
      \end{tikzpicture}
\qquad
   \begin{tikzpicture}
 \node  (TopLeft) at (-1,1) {$\DPVA$};
 \node  (TopRight) at (1,1) {$\DPA$};
 \node  (BotLeft) at (-1,-1) {$\PVA$};
 \node  (BotRight) at (1,-1) {$\PA$};
\path[->,>=angle 90,font=\small]  
   (TopRight) edge node[above] {$\mathtt{J}$}  (TopLeft) ;
   \path[->,>=angle 90,font=\small]  
   (BotRight) edge node[above] {$\mathtt{J}$}  (BotLeft) ;
\path[->,>=angle 90,font=\small]  
   (TopLeft) edge node[left] {$(-)_N$}  (BotLeft) ;
\path[->,>=angle 90,font=\small]  
   (TopRight) edge node[right] {$(-)_N$}  (BotRight) ;
   \end{tikzpicture}
\end{center}
\end{theorem}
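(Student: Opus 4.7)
The plan is to prove each square by first identifying the underlying (differential) commutative algebras at the two composite vertices via a universal-property argument, and then checking on generators that the Poisson (vertex) brackets transported along the identification coincide.

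\textbf{The $\mathtt{Q}$-square.} Starting from $(\VV,\del,\dgal{-_\lambda-})\in\DPVA$, I will first check that the natural algebra homomorphism
\[
\Phi \colon (\VV/\langle\del \VV\rangle)_N \longrightarrow \VV_N/\langle\del \VV_N\rangle,\qquad \bar a_{ij}\longmapsto \overline{a_{ij}},
\]
is an isomorphism of commutative algebras. This is straightforward from the defining relations \eqref{Eq:RepRel}: on the one hand $(\VV/\langle\del \VV\rangle)_N$ is generated by the $\bar a_{ij}$ modulo the relations $(\del v)_{ij}=0$; on the other hand $\langle\del \VV_N\rangle$ is the two-sided ideal generated by $\del(a_{ij})=(\del a)_{ij}$ for $a\in\VV$, and since $\VV_N$ is commutative and generated by these symbols, the two ideals correspond. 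To compare brackets, I compute on generators:
\[
\{\bar a_{ij},\bar b_{kl}\}_{\mathtt{Q}\circ(-)_N}
=\overline{\{a_{ij}{}_\lambda b_{kl}\}}\big|_{\lambda=0}
=\overline{\dgal{a_{(0)}b}'_{kj}\dgal{a_{(0)}b}''_{il}}
\]
by \eqref{Eq:relDPVA} and Lemma \ref{Lem:PVAtoPA}, while on the other side
\[
\{\bar a_{ij},\bar b_{kl}\}_{(-)_N\circ \mathtt{Q}}
=\dgal{\bar a,\bar b}'_{kj}\dgal{\bar a,\bar b}''_{il}
=\overline{\dgal{a_\lambda b}'|_{\lambda=0}}{}_{kj}\,\overline{\dgal{a_\lambda b}''|_{\lambda=0}}{}_{il}
\]
by Theorem \ref{Thm:RepdP} and Lemma \ref{Lem:Univ-Del}. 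Since $\dgal{a_\lambda b}|_{\lambda=0}=\dgal{a_{(0)}b}$, both coincide, and $\Phi$ is a morphism of Poisson algebras. Functoriality of the identification is automatic.

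\textbf{The $\mathtt{J}$-square.} Starting from $(\AA,\dgal{-,-})\in\DPA$, the heart of the argument is a canonical isomorphism
\[
\Psi \colon \Jinf(\AA_N) \xrightarrow{\;\sim\;} (\Jinf\AA)_N
\]
of commutative differential algebras. I will obtain this by a Yoneda calculation: for any commutative differential algebra $(B,\del_B)$, the functor $\Mat_N(B)$ is again a differential algebra, and combining the universal property of the jet functor (\S\ref{ss:JetAlg} and \S\ref{sss:JetAlg}) with that of the representation functor, one has natural bijections
\[
\begin{aligned}
\Hom_{\mathtt{CommDiffAlg}}\bigl(\Jinf(\AA_N),B\bigr)
&\cong\Hom_{\mathtt{CommAlg}}(\AA_N,B)
\cong\Hom_{\mathtt{Alg}}\bigl(\AA,\Mat_N(B)\bigr)\\
&\cong\Hom_{\mathtt{DiffAlg}}\bigl(\Jinf\AA,\Mat_N(B)\bigr)
\cong\Hom_{\mathtt{CommDiffAlg}}\bigl((\Jinf\AA)_N,B\bigr).
\end{aligned}
\]
Yoneda then produces $\Psi$, and by construction it sends the generator $(a_{ij})\in\Jinf(\AA_N)$ to $a_{ij}\in(\Jinf\AA)_N$ and intertwines the two derivations (both are determined on generators by $a_{ij}\mapsto(\del a)_{ij}$).

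It remains to check that $\Psi$ is a morphism of Poisson vertex algebras. By the Leibniz and sesquilinearity axioms, it suffices to verify the identity
\[
\Psi\bigl(\{(a_{ij})_\lambda b_{kl}\}_{\Jinf(\AA_N)}\bigr)=\{a_{ij}{}_\lambda b_{kl}\}_{(\Jinf\AA)_N}
\qquad \text{for }a,b\in\AA.
\]
The left-hand side, via Lemma \ref{Lem:PAtoPVA}, equals the constant polynomial $\{a_{ij},b_{kl}\}_{\AA_N}=\dgal{a,b}'_{kj}\dgal{a,b}''_{il}$. The right-hand side, via Theorem \ref{Thm:RepdPV}, equals $\sum_{n\geq 0}\dgal{a_{(n)}b}'_{kj}\dgal{a_{(n)}b}''_{il}\lambda^n$ where $\dgal{a_\lambda b}$ is the double $\lambda$-bracket on $\Jinf\AA$; by Lemma \ref{Lem:DPAtoDPVA} this reduces to the constant term $\dgal{a,b}$, yielding the same expression.

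\textbf{Main obstacle.} The routine part is the computation on generators once the underlying (differential) algebra isomorphisms are in place. The part requiring care is ensuring that the isomorphism $\Psi\colon\Jinf(\AA_N)\to(\Jinf\AA)_N$ is canonical and compatible with the derivations; this is precisely what the universal-property/Yoneda argument packages cleanly, by passing through matrix algebras over differential rings. I expect no genuine difficulty beyond bookkeeping, since each of the four functors involved is defined by a universal property that commutes naturally with the others.
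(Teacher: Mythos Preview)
Your proposal is correct and follows essentially the same approach as the paper: both establish the underlying (differential) algebra isomorphisms $\Jinf(\AA_N)\cong(\Jinf\AA)_N$ and $(\VV/\langle\del\VV\rangle)_N\cong \VV_N/\langle\del\VV_N\rangle$ via the same chain of universal properties (the adjunction $\Hom(\AA_N,B)\cong\Hom(\AA,\Mat_N(B))$ combined with the jet adjunction), and then verify bracket compatibility on generators using Theorems~\ref{Thm:RepdP}, \ref{Thm:RepdPV} and Lemmas~\ref{Lem:PAtoPVA}, \ref{Lem:DPAtoDPVA}. The paper additionally spells out that a morphism $\theta\colon\AA_1\to\AA_2$ is sent to the same map under both composites, which you only mention in passing; this is indeed automatic from naturality of the Yoneda identification, so the omission is harmless.
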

The theorem will be proved in \S\ref{Proof_Thm:QJ-Rep}. 
As a preparation, we check that $(-)_N\colon \DPVA\to\PVA$ is a functor;  
one gets the functoriality of $(-)_N\colon \DPA\to\PA$ similarly. 

The functor is defined on objects through Theorem \ref{Thm:RepdPV}. (Below we use the language of $\lambda$-brackets, but the reader could start with Theorem \ref{Thm:RepdLVA} instead.) 
Given a morphism in $\DPVA$ 
$$\theta\colon (\VV_1,\del_1,\dgal{-_\lambda-}_1)\longrightarrow (\VV_2,\del_2,\dgal{-_\lambda-}_2)\,,$$
we directly get a unique morphism of algebras $\theta_N\colon (\VV_1)_N \to (\VV_2)_N$ satisfying $\theta_N(a_{ij})=\theta(a)_{ij}$ for any $a\in \VV_1$ and $1\leq i,j\leq N$. 
This is a morphism of differential algebras since 
$$\theta_N(\del_1(a_{ij})\!)=\theta_N(\!(\del_1(a)\!)_{ij})
=(\theta\circ \del_1(a)\!)_{ij}=(\del_2\circ \theta(a)\!)_{ij}=\del_2(\theta(a)_{ij})=\del_2(\theta_N(a_{ij})\!).$$ 
Finally, 
setting 
\begin{equation*}  
(a\otimes b)_{ij;kl}:=a_{ij} b_{kl}\in (\VV_1)_N\,, 
\end{equation*}
we have for generators $a_{ij},b_{kl}\in (\VV_1)_N$ that \eqref{Eq:relDPVA} reads 
$\br{a_{ij}{}_\lambda b_{kl}}_1 =(\dgal{a_\lambda b}_1)_{kj;il}$. Thus  
\begin{align*}
\theta_N(\br{a_{ij}{}_\lambda b_{kl}}_1)    
&=\theta_N(\!(\dgal{a_\lambda b}_1)_{kj;il})  
=(\theta^{\otimes 2}(\dgal{a_\lambda b}_1)\!)_{kj;il}  \\
&=(\dgal{\theta(a)_\lambda \theta(b)}_2)_{kj;il}  
=\br{\theta(a)_{ij}{}_\lambda \theta(b)_{kl}}_2  
=\br{\theta_N(a_{ij}){}_\lambda \theta_N(b_{kl})}_2\,, 
\end{align*}
where in the second and fifth equalities we used the definition of $\theta_N$ while in the third we used that $\theta$ is a morphism in $\DPVA$. 
By the Leibniz rules, we get $\theta_N(\br{u_\lambda v}_1)=\br{\theta_N(u)_\lambda \theta_N(v)}_2$ for any two elements $u,v\in (\VV_1)_N$.

\subsection{Proof of Theorem \ref{Thm:QJ-Rep}}
\label{Proof_Thm:QJ-Rep}
Consider a commutative differential algebra $V$. 
Recall from~\cite[(2.15)]{BKR} and references therein that $(-)_N\colon \kk\mathtt{Alg}\to\kk\mathtt{CommAlg}$ is left adjoint to $-\otimes_\kk M_N(\kk)$, where $M_N$ denotes the algebra of square matrices of size $N$.
We omit the reference to $\kk$ in the sequel. We have by universality\begin{align*}
   \Hom_{\mathtt{CommDiffAlg}}(\Jinf(\AA_N),V)&=\Hom_{\mathtt{CommAlg}}(\AA_N,V)\\
   &=\Hom_{\mathtt{Alg}}(\AA,V\otimes M_N)\\
   &=\Hom_{\mathtt{DiffAlg}}(\Jinf\AA,V\otimes M_N)\,,
\end{align*}
thus we have an isomorphism $\Phi_\AA\colon \Jinf(\AA_N)\simeq (\Jinf(\AA)\!)_N$ of differential algebras. 
After picking a presentation of $\Jinf(\AA_N)$ with generators $\del^k(a_{ij})$ where $k\geq 0$, $a\in A$, $1\leq i,j\leq N$, and similarly a presentation of $(\Jinf(\AA)\!)_N$ with generators $(\del^k(a)\!)_{ij}$, the identification then becomes $\Phi_\AA(\del^k(a_{ij})\!)=(\del^k(a)\!)_{ij}$. 
Next, we have to check that $\Phi_\AA$ is compatible with the induced $\lambda$-brackets. This directly follows by applying $\Phi_\AA$ to 
\begin{align*}
\br{\del^k(a_{ij})_\lambda \del^\ell(b_{kl})}
=&(-\lambda)^k (\lambda+\del)^\ell \br{a_{ij}, b_{kl}} && (\text{by Lemma \ref{Lem:PAtoPVA}}), \\
=&(-\lambda)^k (\lambda+\del)^\ell \dgal{a,b}_{kj;il}  && (\text{by Theorem \ref{Thm:RepdP}}),
\end{align*}
and comparing with 
\begin{align*}
\br{(\del^k(a)\!)_{ij}{}_\lambda (\del^\ell(b)\!)_{kl}}
=& \Big(\dgal{\del^k(a){}_\lambda \del^\ell(b)}\Big)_{kj;il}&& (\text{by Theorem \ref{Thm:RepdPV}}), \\
=&\Big(\!(-\lambda)^k (\lambda+\del)^\ell \dgal{a,b}\Big)_{kj;il}  && (\text{by Lemma \ref{Lem:DPAtoDPVA}}).
\end{align*}
In particular, it means that $\Phi_\AA\colon \Jinf(\AA_N)\simeq (\Jinf(\AA)\!)_N$ is an isomorphism of Poisson vertex algebras.
To conclude that the right hand square commutes, it remains to check that a morphism $\theta\colon \AA_1\to \AA_2$ in $\DPA$ induces isomorphic morphisms, under $\mathtt{J}\circ (-)_N$ and $(-)_N\circ \mathtt{J}$. Denoting both morphisms $\theta_N$ by an abuse of notations, this follows from the fact that 
$$\Phi_{\AA_2}\circ \theta_N =  \theta_N \circ \Phi_{\AA_1}\colon  \Jinf(\!(\AA_1)_N)\longrightarrow (\Jinf(\AA_2)\!)_N$$ 
is a morphism in $\PVA$ which can be obtained from our previous computations. 


Commutativity of the left-hand square is obtained in the exact same way. 
Using universality from Lemma \ref{Lem:Univ-Del}, we get for any differential algebra $\VV$ an isomorphism $\VV_N/\langle \del \rangle \simeq (\VV /\langle \del \rangle)_N$ in $\mathtt{CommAlg}$. 
To get the compatibility with the Poisson algebra structures, we need to apply Theorem \ref{Thm:RepdPV} then Lemma~\ref{Lem:PVAtoPA} to define the Poisson bracket on $\VV_N/\langle \del \rangle$, while we need Lemma~\ref{Lem:Univ-Del} and then Theorem \ref{Thm:RepdP} to define the Poisson bracket on $(\VV /\langle \del \rangle)_N$.

\begin{example}
Consider the double Poisson algebra $\AA=\kk[a]$ as in Example \ref{Ex:ku} 
with $\alpha=1$ and $\beta=\gamma=0$. 
After taking jets, $\Jinf \AA_N\simeq (\Jinf \AA)_N$ is 
$\kk[a_{ij}^{(\ell)} \mid \ell \geq 0,\, 1\leq i,j\leq N]$ equipped with the derivation $\del\colon  a_{ij}^{(\ell)}\mapsto a_{ij}^{(\ell+1)}$. 
Using \eqref{Eq:KKS}, the Poisson vertex algebra structure on $\Jinf \AA_N$ is obtained from the $\lambda$-bracket uniquely determined by 
\begin{equation} \label{Eq:KKS-J}
    \br{ a_{ij} {}_\lambda a_{kl} }
=a_{kj}  \delta_{il} - \delta_{kj} a_{il}\,.
\end{equation}
This $\lambda$-bracket is induced by Theorem \ref{Thm:RepdPV} from the double Poisson vertex algebra structure on $\Jinf \AA$ given in Example \ref{Ex:kuPVA} with $\epsilon=0$. 
(Note that applying the functor $\mathtt{Q}$ to Example \ref{Ex:kuPVA} for any $\epsilon\in \kk$ yields Example \ref{Ex:ku} with $\alpha=1$ and $\beta=\gamma=0$.) 
As described in \S\ref{sss:RepAlg}, both $\AA_N$ and $\Jinf \AA_N$ are equipped with a $\Gl_N$-action given on the latter by 
\begin{equation*}  
  g\cdot  a_{ij}^{(\ell)} =\sum_{k,l=1}^N (g^{-1})_{ik} a_{kl}^{(\ell)} g_{lj}\,,\qquad 
  g\in \Gl_N,\,\, 1\leq i,j\leq N,\,\, \ell \geq 0\,.
\end{equation*}
As it will be observed in 
in \S\ref{ss:RedPVA}, 
it is more appropriate for our purpose 
to consider the action of $\Jinf\Gl_N$ instead of that of $\Gl_N$.   
\end{example}


\section{Invariants and Poisson vertex algebras} 
\label{Sec:Inv_PVA}

In this section, we assume that $\kk$ is algebraically closed and of characteristic $0$.

\subsection{Jets of an action and their invariants} 
\label{ss:JetInv}
We start by reviewing some constructions related to jet algebras/schemes that can be found e.g. in \cite[Chapter~1]{AM}. 
Let $A$ be a unital commutative algebra of finite type over $\kk$, and write $Y=\Spec(A)$. 
Let $\GG$ be an affine algebraic group with a left action $\GG\times Y\to Y$. 
Dually, we have a morphism of algebras 
\begin{equation} \label{Eq:rhoV}
  \rho\colon   A\longrightarrow \kk[\GG]\otimes A\,,
\end{equation}
which uniquely gives rise to a morphism of differential algebras  
\begin{equation} \label{Eq:rhoInf}
  \rho_\infty \colon   \Jinf(A)\longrightarrow \Jinf(\kk[\GG])\otimes \Jinf(A)\,.
\end{equation}
I.e., the (ind-)group scheme $\Jinf(\GG):=\Spec \Jinf(\kk[\GG])$ acts on $\Jinf (Y)$.  
We get a left action of $\GG$ on $A$ by $(g\cdot \tilde{F})(y):=\tilde{F}(g^{-1}\cdot y)$ for $g\in \GG$, $\tilde{F}\in A$, $y\in Y$, and similarly we have a left action of $\Jinf(\GG)$ on $\Jinf(A)$.
Hence, we can define the subalgebras $A^\GG\subset A$ 
and $\Jinf(A)^{\Jinf(\GG)}\subset \Jinf(A)$ of $\GG$- and $\Jinf(\GG)$-invariant elements, respectively. 
Denoting 
$\iota\colon  A \stackrel{\sim}{\longrightarrow} \kk \otimes A \longhookrightarrow  \kk[\GG]\otimes A$, 
we can see that  $A^\GG=\ker(\rho-\iota)$ while $\Jinf(A)^{\Jinf(\GG)}=\ker(\rho_\infty-\iota_\infty)$.  
Since jets of morphisms respect the differential algebra structure on the jet spaces, 
\begin{equation}
 (\rho_\infty-\iota_\infty)(\del^k \tilde{F})  
 =\del^k \big(\!(\rho-\iota)(\tilde{F})\big) =0\,, \qquad \forall\, \tilde{F}\in A^{\GG},\,\, k\geq 0\,.
\end{equation}
Hence the morphism $\Jinf(A^{\GG}) \to \Jinf(A)$ induced by the 
inclusion $A^{\GG} \hookrightarrow A$ factors through the following  natural morphism  of differential algebras 
\begin{equation} \label{Eq:InvMorph}
    j_{A} \colon  \Jinf(A^\GG)\longrightarrow (\Jinf(A)\!)^{\Jinf(\GG)}\,.
\end{equation}
Note that $j_{A}$ is functorial in $A$, seen as an element of the category of differential algebras 
with a $\GG$-action. 
In general, the morphism $j_A$ is neither injective, nor surjective.  
For example, 
if $\GG$ is finite nontrivial and $A$ is a $\GG$-module, then $j_A$ is not surjective 
(\cite[Theorem 3.13.]{LSS});  
if $\GG={\rm SL}_n$ 
and $Y=(\kk^n)^{\oplus p}$ is the direct sum of six copies
of the standard representation, then $j_{\kk[Y]}$ is not injective 
(\cite[Example 6.6]{LSS})\footnote{In this example, the kernel 
of $j_{\kk[Y]}$ 
is precisely the nilradical of $\Jinf (Y/\!\!/\GG)$.}. 
See \cite{LS3} for a generalization, and 
\cite{Ishii} for other interesting counter-examples.


There are, however, interesting examples where $j_A$ 
is an isomorphism. 

\begin{example}  
\label{ex:LSS_quiver}
If $Y = (\kk^{N})^{\oplus p} \oplus ((\kk^{N})^*)^{\oplus q}$ 
 is a sum of $p$ copies of the standard module and $q$ 
 copies of its dual,
then $j_{\kk[Y]}$ is an isomorphism when  $\GG=\Gl_{N}$, due to the main result of \cite{LSS}. 
We will see in \S\ref{ss:SemiS-example}
an interpretation 
in term of quivers of this example.
\end{example}

We refer to \cite{LSS,LS1,LS2,LS3} for other examples where the morphism 
\eqref{Eq:InvMorph} is an isomorphism  
in the case where 
$\GG={\rm SL}_N, \Gl_N, {\rm SO}_N$ or ${\rm Sp}_{2N}$, 
and $Y$ is a finite-dimensional $\GG$-module.
Note that $Y=(\kk^{2N})^{\oplus p}$, $\GG={\rm Sp}_{2N}$ 
provides examples where $Y/\!\!/ \GG$ is neither smooth nor a 
complete intersection.

\begin{example}
\label{Ex:inv_adjoint}
Let $\GG$ be a connected reductive group with Lie algebra $\g$. 
The group $\GG$ acts on $\g$ and its dual by 
the adjoint and the coadjoint action, respectively. 
It is well known since the works of Kostant 
\cite{Kos} that $\g^*/\!\!/\GG$ is an affine space 
of dimension the rank of $\g$, that is, 
$\kk[\g^*]^\GG$ is a polynomial algebra in the rank of $\g$ variables. 
By a deep result of Ra\"{i}s and Tauvel \cite{RT}, 
obtained independently by 
Beilinson and Drinfeld \cite{BD1} (see also \cite[Appendix]{Mu}), 
we have 
$$(\Jinf \kk[\g^*])^{\Jinf \GG} \cong \Jinf \kk[\g^*]^{\GG}.$$ 
\end{example}

\subsection{Structure of invariants for Poisson vertex algebras}
Assume from now on that $A$ is equipped with a Poisson bracket and 
that $\GG$ acts by Poisson automorphisms.  
Note that the jet algebra $\Jinf(A^\GG)$ is a Poisson vertex subalgebra of $\Jinf(A)$. 
Indeed, we compute that   
$\br{\del^r(\tilde{F}) _\lambda \del^s(\tilde{G}) }  \in \Jinf(A^\GG)[\lambda]$
for any $\tilde{F},\tilde{G}\in A^{\GG}$ and $r,s\geq 0$ due to the Leibniz rules and the fact that $\br{\tilde{F}_\lambda \tilde{G} }=\br{\tilde{F},  \tilde{G} }$ is $\GG$-invariant. 
The following theorem guarantees that $(\Jinf A)^{\Jinf \GG}$ is also a Poisson vertex subalgebra of $\Jinf(A)$, and therefore the morphism $j_A$ \eqref{Eq:InvMorph} intertwines the $\lambda$-brackets.

\begin{theorem}
\label{Th:invariants_are_PVA}
Let $A$ be a unital commutative Poisson algebra of finite type over $\kk$,  
and $\GG$ an affine algebraic group  
which acts by Poisson automorphisms on $A$. 
Then the invariant algebra $(\Jinf A)^{\Jinf \GG}$ 
is a Poisson vertex subalgebra of $\Jinf A$. 
Moreover, the morphism $j_A \colon \Jinf (A^{\GG}) \to (\Jinf A)^{\Jinf \GG}$ 
is a  Poisson vertex algebra morphism. 
\end{theorem}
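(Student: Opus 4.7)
The plan is to reduce everything to functoriality of the jet construction $\mathtt{J}\colon\PA\to\PVA$ applied to the coaction map. First I equip $\kk[\GG]$ with the zero Poisson bracket and endow $\kk[\GG]\otimes A$ with the unique Poisson structure determined by
\begin{equation*}
    \br{\phi\otimes F,\psi\otimes G}_{\kk[\GG]\otimes A}:=\phi\psi\otimes\br{F,G}_A,
\end{equation*}
so that $\kk[\GG]$ is central in the Poisson sense. A routine verification shows that the assumption that $\GG$ acts on $A$ by Poisson automorphisms is exactly equivalent to the coaction $\rho\colon A\to\kk[\GG]\otimes A$ of \eqref{Eq:rhoV} being a morphism of Poisson algebras; obviously the same holds for $\iota\colon A\hookrightarrow\kk[\GG]\otimes A$, $F\mapsto 1\otimes F$.

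Next I apply the functor $\mathtt{J}$. Using the universal property, one checks that the natural map $\Jinf A_1\otimes\Jinf A_2\to\Jinf(A_1\otimes A_2)$ is an isomorphism of differential algebras (both sides corepresent $\Hom_{\mathtt{CommAlg}}(A_1,-)\times\Hom_{\mathtt{CommAlg}}(A_2,-)$ in $\mathtt{CommDiffAlg}$). Transporting the PVA structure from $\Jinf(\kk[\GG]\otimes A)$ along this isomorphism, Lemma~\ref{Lem:PAtoPVA} applied on generators shows that the resulting $\lambda$-bracket on $\Jinf(\kk[\GG])\otimes\Jinf A$ again makes $\Jinf(\kk[\GG])$ central, i.e. has vanishing $\lambda$-bracket between elements of $\Jinf(\kk[\GG])\otimes 1$ and anything. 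Functoriality of $\mathtt{J}$ then guarantees that both
\begin{equation*}
    \rho_\infty,\ \iota_\infty\colon\Jinf A\longrightarrow\Jinf(\kk[\GG])\otimes\Jinf A
\end{equation*}
are morphisms of Poisson vertex algebras.

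To conclude, fix $F,G\in(\Jinf A)^{\Jinf\GG}$, so that $\rho_\infty F=\iota_\infty F$ and $\rho_\infty G=\iota_\infty G$. For every $n\geq 0$,
\begin{equation*}
    \rho_\infty(F_{(n)}G)=(\rho_\infty F)_{(n)}(\rho_\infty G)=(\iota_\infty F)_{(n)}(\iota_\infty G)=\iota_\infty(F_{(n)}G),
\end{equation*}
so $F_{(n)}G\in\ker(\rho_\infty-\iota_\infty)=(\Jinf A)^{\Jinf\GG}$. This exhibits $(\Jinf A)^{\Jinf\GG}$ as a Poisson vertex subalgebra of $\Jinf A$. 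For $j_A$, the inclusion $A^\GG\hookrightarrow A$ is a Poisson morphism, hence by functoriality of $\mathtt{J}$ it extends to a PVA morphism $\Jinf(A^\GG)\to\Jinf A$; since it factors set-theoretically through the PVA subalgebra $(\Jinf A)^{\Jinf\GG}$ (this being the whole point of \eqref{Eq:InvMorph}), the factorization $j_A$ is itself a PVA morphism.

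The main technical point where care is needed is the compatibility of $\Jinf$ with tensor products together with the correct identification of the induced PVA structure on $\Jinf(\kk[\GG])\otimes\Jinf A$; once this is in place, the Poisson-morphism property of $\rho$ propagates automatically through the functor $\mathtt{J}$ and the rest of the argument is purely formal. The key conceptual observation is that the trivial Poisson structure on $\kk[\GG]$ is preserved by $\mathtt{J}$, so that no spurious $\lambda$-bracket appears between $\Jinf(\kk[\GG])$ and $\Jinf A$ and the coaction can be compared termwise with $\iota_\infty$.
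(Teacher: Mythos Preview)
Your argument is correct and takes a genuinely different route from the paper. The paper proceeds by direct computation at the Lie-algebra level: it first treats the connected case by proving, via an induction on the number of factors in a monomial $a=\partial^{i_1}a_1\cdots\partial^{i_r}a_r$, the explicit commutation formula
\[
x_{(k)}(a_{(n)}b)=a_{(n)}(x_{(k)}b)+\sum_{\ell=0}^{k}\binom{k}{\ell}(x_{(k-\ell)}a)_{(n+\ell)}b
\]
for $x\in\g$ (this is equation \eqref{eq:main_ind}), after establishing a family of auxiliary identities (Lemma~\ref{Lem:identities}); it then patches in the finite component group $\Gamma=\GG/\GG^0$ separately. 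Your approach bypasses all of this by recognising that, once $\kk[\GG]$ is given the zero Poisson bracket, the coaction $\rho$ is itself a morphism in $\PA$, so functoriality of $\mathtt{J}$ and the kernel description $(\Jinf A)^{\Jinf\GG}=\ker(\rho_\infty-\iota_\infty)$ finish the job in one line.

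The trade-off is this: your proof is much shorter and conceptually transparent, and it treats connected and disconnected $\GG$ uniformly without any case split. On the other hand, the paper's explicit formula \eqref{eq:main_ind} is not just a means to an end here---it is reused verbatim in the proof of Proposition~\ref{Pr:HamRedV} (Hamiltonian reduction for jets), where one needs to know how $x_{(k)}$ interacts with $(n)$-products even for elements that are only invariant \emph{modulo} an ideal, a situation your equaliser argument does not directly address. So the paper's longer computation earns its keep later, while your argument gives the cleanest proof of Theorem~\ref{Th:invariants_are_PVA} taken in isolation.
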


 \begin{remark}
 	We want to point out that it is of current interest to determine when arc spaces are reduced, and if not, to describe the reduced scheme structure 
	-- see for example \cite{FM,Mak,DFMM}. Under the hypotheses of the above theorem, the nilradical $\mathcal N$ of $\Jinf (A^{\GG})$ is a Poisson vertex ideal, and thus the reduced ring $\Jinf (A^{\GG})/\mathcal N$ is a Poisson vertex algebra. If $\Jinf A$ is reduced, then so is $(\Jinf A)^{\Jinf \GG}$, and $\mathcal N$ lies in the kernel of $j_A$.
 \end{remark}

The rest of the section is devoted to the proof of Theorem \ref{Th:invariants_are_PVA}. 

\subsubsection{} 
Assume first that $\GG$ is connected, and let $\g$ be the Lie algebra 
of $\GG$. 
Then $\Jinf \g= \g\[[t\]]$ is the Lie algebra of $\Jinf \GG$ 
(see e.g.~\cite{AM}), and connectedness implies that 
$$(\Jinf A)^{\Jinf \GG} = (\Jinf A)^{\Jinf \g},$$
where 
$$(\Jinf A)^{\Jinf \g} = (\Jinf A)^{\g\[[t\]]} 
= \{a \in \Jinf A \mid x_{(k)} a = 0 \text{ for all } x \in \g, \, k \in \Z_{\geq 0} \},$$
with $x_{(k)} a := (xt^k).a$ given by the infinitesimal action of $xt^k\in \g\[[t\]]$ on $\Jinf A$. 
Since $\GG$ acts as Poisson automorphisms on $A$, 
the Lie algebra $\g$ acts on $A$ 
by derivations for both the commutative and the Poisson algebra 
structures on $A$. 
Let $Y:=\Spec A$. The left action 
$\GG\times Y\to Y$ induces 
a left action,  
$\Jinf \GG \times \Jinf Y \to \Jinf Y,$ 
and so an algebra morphism 
$$ \Jinf (A) \longrightarrow \Jinf (\kk[\GG]) \otimes \Jinf (A).$$
Therefore, the Lie algebra $\g\[[t\]]$ acts on $\Jinf A$ by derivations 
for the commutative algebra structure. 
The action of $\g\[[t\]]$  
on $\Jinf A$ 
is entirely determined by the action of $\g$ on $A$ as follows: 
\begin{align}
\label{eq:Id0}
x_{(k)}(\partial^l a) = \begin{cases}
\dfrac{l!}{(l-k)!} \partial^{l-k} (x.a) &\text{if } k\leq l,\\
0 & \text{otherwise,}
\end{cases}
\end{align}
for $k,l\geq 0$ and $a \in A$. In particular, 
$x_{(0)}a=x.a$.
We start by establishing a number of identities. 

\begin{lemma} 
\label{Lem:identities} 
Let $a,b \in \Jinf A$, $x \in \g$ and 
$n,k,i \in \Z_{\geq 0}$. 
\begin{enumerate}  
\item $(\partial^{i} a)_{(n)} b  = \delta_{i \leq n} 
(-1)^{i} \dfrac{n!}{(n-i)!} a_{(n-i)} b,$ 
\item  $x_{(k)} (\partial a) = \partial (x_{(k)}a) + k x_{(k-1)}a,$   
\item  $x_{(k)}( \partial^{i} a) = \partial^{i} (x_{(k)} a) + 
\sum\limits_{\ell=1}^{i}  
\begin{pmatrix} i \\
\ell 
\end{pmatrix}    
\dfrac{k!}{(k-\ell)!}
\partial^{i-\ell }x_{(k-\ell)} a,$
\item $x_{(k)}( (\partial^{i} a)_{(n)} b)  = 
(\partial^{i} a)_{(n)} (x_{(k)} b) 
+ \delta_{i \leq n} (-1)^{i} \dfrac{n!}{(n-i)!} (x_{(0)}a)_{(n-i+k)}b,$ $(a \in A)$,  
\end{enumerate}
where $\delta_{i \leq j} =1$ if $i \leq j$ and is zero otherwise. 
As a rule, our convention is that $x_{(n)}b=0$, or $a_{(n)}b=0$, whenever $n<0$. 
So for instance, one has omitted $\delta_{\ell \leq k}$ 
in the identity (3). 
\end{lemma}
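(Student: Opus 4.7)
The plan is to treat the four identities in sequence, noting that (1)--(3) are formal consequences of the sesquilinearity axiom \eqref{Eq:A1-b} for the PVA structure on $\Jinf A$ provided by Lemma \ref{Lem:PAtoPVA}, together with the explicit $\g\[[t\]]$-action formula \eqref{eq:Id0}, while (4) additionally exploits the hypothesis that $\GG$ acts on $A$ by Poisson automorphisms. For (1), I would iterate the first half of sesquilinearity, $(\partial c)_{(n)} b = -n\, c_{(n-1)} b$, exactly $i$ times: the resulting product telescopes to the falling factorial $n!/(n-i)!$, produces the sign $(-1)^i$, and vanishes once $i > n$.

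For (2), I would first verify the commutator identity $[\partial, x_{(k)}] = -k\, x_{(k-1)}$ on the generating set $\{\partial^{\ell} a : a \in A,\ \ell \geq 0\}$ of $\Jinf A$; on such an element this is a direct binomial manipulation of \eqref{eq:Id0}, handling separately the cases $k \leq \ell$, $k = \ell + 1$, and $k > \ell + 1$. Since both $[\partial, x_{(k)}]$ and $-k\, x_{(k-1)}$ are derivations of the commutative algebra $\Jinf A$, agreement on a generating set propagates to agreement on the whole algebra. Identity (3) will then follow from (2) by an induction on $i$, using Pascal's rule to absorb the new boundary contribution into the binomial sum.

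For (4), I would use (1) to factor out $\delta_{i \leq n}(-1)^{i}\, \tfrac{n!}{(n-i)!}$ on both sides, so that (setting $m = n - i$ and $x.a = x_{(0)} a$) the claim is reduced to
\begin{equation*}
x_{(k)}(a_{(m)} b) \;=\; a_{(m)}(x_{(k)} b) + (x.a)_{(m+k)} b, \qquad a \in A,\ b \in \Jinf A.
\end{equation*}
Both sides are derivations in the variable $b$ (by the left Leibniz rule \eqref{Eq:L-b} for $a_{(m)}$ and $(x.a)_{(m+k)}$, and because $\g\[[t\]]$ acts by commutative derivations for $x_{(k)}$), so it suffices to check equality on generators $b = \partial^{\ell} c$ with $c \in A$. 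Lemma \ref{Lem:PAtoPVA} together with sesquilinearity yields $a_{(m)}(\partial^{\ell} c) = \delta_{m \leq \ell}\,\tfrac{\ell!}{(\ell - m)!}\,\partial^{\ell - m}\{a, c\}$; substituting this together with \eqref{eq:Id0} into both sides, the equality collapses to the infinitesimal Poisson derivation rule $x.\{a, c\} = \{x.a, c\} + \{a, x.c\}$, which is precisely the Poisson automorphism hypothesis. The main difficulty is bookkeeping the several factorial and binomial factors and systematically invoking the uniqueness of a derivation by its values on a generating set; the Poisson hypothesis on $A$ is used only in this last step of (4), the rest being valid for any commutative differential algebra equipped with a compatible $\g\[[t\]]$-action.
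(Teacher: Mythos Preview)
Your argument is correct and matches the paper's proof in overall structure for (1), (3), and (4). The one genuine difference is in (2): the paper writes a general element as a product $a=\partial^{i_1}a_1\cdots\partial^{i_r}a_r$ and computes $x_{(k)}(\partial a)$ and $\partial(x_{(k)}a)$ explicitly, with a somewhat lengthy case analysis depending on whether $k$ coincides with one of the $i_j+1$. Your route---observe that $[\partial,x_{(k)}]$ and $-k\,x_{(k-1)}$ are both derivations of the commutative algebra $\Jinf A$, hence equal once they agree on the generators $\partial^{\ell}a$ with $a\in A$---is shorter and conceptually cleaner, and buys you exactly what the paper's longer computation does.

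One small imprecision in your treatment of (4): neither side of $x_{(k)}(a_{(m)}b)=a_{(m)}(x_{(k)}b)+(x.a)_{(m+k)}b$ is individually a derivation in $b$ (a composition of derivations is not a derivation). What is true is that their \emph{difference} is a derivation in $b$, so vanishing on generators $b=\partial^{\ell}c$ suffices. The paper phrases this step in the same spirit (``$x_{(k)}$, $a_{(n)}$ and $(x_{(0)}a)_{(n+k)}$ act as derivations on the product''), and the fix is immediate; I mention it only so the write-up is precise.
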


\begin{proof}
In order to prove the above identities, one can assume that  
\begin{align}
\label{eq:ab_form}
a = \partial^{i_1}a_1\ldots \partial^{i_r}a_r
\quad \text{ and }\quad b = \partial^{j_1}b_1 \ldots \partial^{j_s}b_s
\end{align}
with $i_\ell,j_m \geq 0$ and $a_i, b_j \in A$. 
Part (1) is a direct induction from the identity \eqref{Eq:A1-b}.

(2)  
Since $x_{(k)}$ acts as a derivation, we have by \eqref{eq:Id0}: 
\begin{align*} 
x_{(k)} a & =
 \sum_{\ell=1}^r \partial^{i_1}a_1\ldots x_{(k)}(\partial^{i_\ell}a_\ell)\ldots \partial^{i_r}a_r 
  = \sum_{\ell=1}^r \partial^{i_1}a_1\ldots \delta_{k \leq i_{\ell}} 
\dfrac{i_\ell !}{(i_\ell-k)!} \partial^{i_\ell-k} (x.a_\ell)\ldots \partial^{i_r}a_r.&
\end{align*}
Differentiating the above identity, we get 
\begin{align*} 
\partial (x_{(k)} a) & = \sum_{\ell=1}^r 
\sum_{1\leq m \not=\ell \leq r}  \partial^{i_1}a_1\ldots \partial^{i_m+1}a_m 
\ldots \delta_{k \leq i_{\ell}} 
\dfrac{i_\ell !}{(i_\ell-k)!} \partial^{i_\ell-k} (x.a_\ell)\ldots \partial^{i_r}a_r & \\
& \qquad \quad + \sum_{\ell=1}^r \partial^{i_1}a_1\ldots \delta_{k \leq i_{\ell}} 
\dfrac{i_\ell !}{(i_\ell-k)!} \partial^{i_\ell+1-k} (x.a_\ell)\ldots \partial^{i_r}a_r .&
\end{align*} 
On the other hand, we have: 
\begin{align*} 
x_{(k)} (\partial a) &= \sum_{\ell=1}^r 
\sum_{1\leq m \not=\ell \leq r}  \partial^{i_1}a_1\ldots \partial^{i_m+1}a_m 
\ldots \delta_{k \leq i_{\ell}} 
\dfrac{i_\ell !}{(i_\ell-k)!} \partial^{i_\ell-k} (x.a_\ell)\ldots \partial^{i_r}a_r & \\
& \qquad \quad + \sum_{\ell=1}^r \partial^{i_1}a_1\ldots \delta_{k \leq i_{\ell}+1} 
\dfrac{(i_\ell+1) !}{(i_\ell+1-k)!} \partial^{i_\ell+1-k} (x.a_\ell)\ldots \partial^{i_r}a_r.&
\end{align*}
We can now prove the identity (2).
For $k=0$, 
\begin{align*}
x_{(0)} (\partial a)= & \sum_{\ell=1}^r 
\sum_{1\leq m \not=\ell \leq r}  \partial^{i_1}a_1\ldots \partial^{i_m+1}a_m 
\ldots  \partial^{i_\ell} (x.a_\ell)\ldots \partial^{i_r}a_r & \\
& \qquad + \sum_{\ell=1}^r \partial^{i_1}a_1\ldots 
 \partial^{i_\ell+1} (x.a_\ell)\ldots \partial^{i_r}a_r=\partial (x_{(0)} a).&
\end{align*}
whence (2) for $k=0$. For $k \geq 1$, 
\begin{align*}
x_{(k)} (\partial a)= & \quad 
\sum_{\ell=1}^r 
\sum_{1\leq m \not=\ell \leq r}  \partial^{i_1}a_1\ldots \partial^{i_m+1}a_m 
\ldots \delta_{k \leq i_{\ell}} 
\dfrac{i_\ell !}{(i_\ell-k)!} \partial^{i_\ell-k} (x.a_\ell)\ldots \partial^{i_r}a_r & \\
& \qquad \quad + \sum_{\ell=1}^r 
(i_\ell +1) \partial^{i_1}a_1\ldots \delta_{k \leq i_{\ell}+1} 
\dfrac{ i_\ell  !}{(i_\ell+1-k)!} \partial^{i_\ell+1-k} (x.a_\ell)\ldots \partial^{i_r}a_r &
\end{align*}
Hence, 
\begin{align*}
x_{(k)} (\partial a)= & \quad 
\sum_{\ell=1}^r 
\sum_{1\leq m \not=\ell \leq r}  \partial^{i_1}a_1\ldots \partial^{i_m+1}a_m 
\ldots \delta_{k \leq i_{\ell}} 
\dfrac{i_\ell !}{(i_\ell-k)!} \partial^{i_\ell-k} (x.a_\ell)\ldots \partial^{i_r}a_r & \\
& \qquad \quad + \sum_{\ell=1}^r 
(i_\ell +1-k) \partial^{i_1}a_1\ldots \delta_{k \leq i_{\ell}+1} 
\dfrac{ i_\ell  !}{(i_\ell+1-k)!} \partial^{i_\ell+1-k} (x.a_\ell)\ldots \partial^{i_r}a_r &\\
& \qquad \quad + 
k  \sum_{\ell=1}^r  \partial^{i_1}a_1\ldots \delta_{k \leq i_{\ell}+1} 
\dfrac{ i_\ell  !}{(i_\ell+1-k)!} \partial^{i_\ell+1-k} (x.a_\ell)\ldots \partial^{i_r}a_r  
.&
\end{align*}
Note that the last term of the sum is nothing but $x_{(k-1)} a$. 
If $k \not\in\{i_1+1,\ldots,i_r+1\}$, we get 
\begin{align*}
x_{(k)} (\partial a)= & \quad 
\sum_{\ell=1}^r 
\sum_{1\leq m \not=\ell \leq r}  \partial^{i_1}a_1\ldots \partial^{i_m+1}a_m 
\ldots \delta_{k \leq i_{\ell}} 
\dfrac{i_\ell !}{(i_\ell-k)!} \partial^{i_\ell-k} (x.a_\ell)\ldots \partial^{i_r}a_r & \\
& \qquad  + \sum_{\ell=1}^r 
\partial^{i_1}a_1\ldots \delta_{k \leq i_{\ell}} 
\dfrac{ i_\ell  !}{(i_\ell-k)!} \partial^{i_\ell+1-k} (x.a_\ell)\ldots \partial^{i_r}a_r 
+ k (x_{(k-1)} a) & \\
& = \partial (x_{(k)} a) + k (x_{(k-1)} a).&  
\end{align*}
If $k =i_j+1$ for some $j \in \{1,\ldots,r\}$, then 
\begin{align*}
x_{(k)} (\partial a) & 
=  \quad  
\sum_{\ell=1}^r 
\sum_{1\leq m \not=\ell \leq r}  \partial^{i_1}a_1\ldots \partial^{i_m+1}a_m 
\ldots \delta_{k \leq i_{\ell}} 
\dfrac{i_\ell !}{(i_\ell-k)!} \partial^{i_\ell-k} (x.a_\ell)\ldots \partial^{i_r}a_r & \\
& \;  + \sum_{1\leq \ell \leq r \atop 
i_\ell \not= i_j}  
\partial^{i_1}a_1\ldots \delta_{k \leq i_{\ell}} 
\dfrac{ i_\ell  !}{(i_\ell-k)!} \partial^{i_\ell+1-k} (x.a_\ell)\ldots \partial^{i_r}a_r 
+ k (x_{(k-1)} a) &\\
& \;  +\sum_{1\leq \ell \leq r \atop 
i_\ell=i_j} 
\underbrace{(i_\ell +1-k)}_{=0} \partial^{i_1}a_1\ldots \delta_{k \leq i_{\ell}+1} 
\dfrac{ i_\ell  !}{(i_\ell+1-k)!} \partial^{i_\ell+1-k} (x.a_\ell)\ldots \partial^{i_r}a_r &\\
& 
= \partial (x_{(k)} a) + k (x_{(k-1)} a)   - \sum_{1 \leq \ell \leq r\atop 
i_\ell=i_j} \partial^{i_1}a_1\ldots \delta_{k \leq i_{\ell}}
\dfrac{i_\ell !}{(i_\ell-k)!} \partial^{i_\ell+1-k} (x.a_\ell)\ldots \partial^{i_r}a_r &\\
& = \partial (x_{(k)} a) + k (x_{(k-1)} a).&
\end{align*}
Indeed, $\delta_{k \leq i_{\ell}}=0$ for $i_\ell=i_j$ since $k=i_j+1$, 
whence the identity (2) for $k\geq 1$.

(3) 
We argue by induction on $i$, the case $i=1$ being proved by (2).
We have: 
\begin{align*}
x_{(k)}( \partial^{i+1} a)   
& = 
\sum\limits_{\ell=0}^{i} 
\begin{pmatrix} i \\
\ell 
\end{pmatrix}   
\dfrac{k!}{(k-\ell)!}
\partial^{i-\ell} 
( \partial  x_{(k-\ell)} a + (k-\ell) x_{(k-\ell-1)} a )& \\
& = 
\sum\limits_{\ell=0}^{i} 
\begin{pmatrix} i \\
\ell 
\end{pmatrix}   
\dfrac{k!}{(k-\ell)!}
\partial^{i-\ell+1} 
  x_{(k-\ell)} a 
+ 
\sum\limits_{\ell=1}^{i+1} 
\begin{pmatrix} i \\
\ell -1 
\end{pmatrix}   
\dfrac{k!}{(k-\ell)!}
\partial^{i+1-\ell} 
x_{(k-\ell)} a &\\
& = 
\sum\limits_{\ell=0}^{i+1} 
\begin{pmatrix} i+1 \\
\ell 
\end{pmatrix}   
\dfrac{k!}{(k-\ell)!}
\partial^{i+1-\ell} 
  x_{(k-\ell)} a 
&
\end{align*}
by Pascal's triangle formula, whence the statement by induction.

(4) 
We start with the case $a=a_1 \in A$, $b=\partial^{j_1}b_1$, that is, $r=s=1$ and $i_1=0$ 
in \eqref{eq:ab_form}.
We have 
\begin{align*}
& x_{(k)} (a_{(n)} \partial^{j_1} b_{1} )
 = \delta_{k \leq j_1-n} \delta_{n \leq j_1} \dfrac{j_1!}{(j_1-n-k)!} \partial^{j_1-n-k} x.\{a,b_1\}& \\
& a_{(n)} (x_{(k)} \partial^{j_1} b_{1} ) 
 = \delta_{n \leq j_1-k} \delta_{k \leq j_1} \dfrac{j_1!}{(j-n-k)!} \partial^{j_1-n-k} \{a,x.b_1\}& \\
& (x_{(k)}a)_{(n)}( \partial^{j_1} b_{1} ) 
 =\delta_{k=0} \delta_{n \leq j_1} \dfrac{j_1!}{(j_1-n)!} \partial^{j_1-n} \{x.a,b_1\}.&
\end{align*}
Since $x.\{a,b_1\} = \{x.a,b_1\} + \{a,x.b_1\}$, 
we get 
$$x_{(k)} (a_{(n)} \partial^{j_1} b_{1} ) = a_{(n)} (x_{(k)} \partial^{j_1} b_{1} ) 
+ (x_{(0)} a)_{(n+k)} (\partial^{j_1}b_1).$$
Assume now that $b=\partial^{j_1}b_1 \ldots \partial^{j_s}b_s$. 
Since $x_{(k)}$, $a_{(n)}$ and $(x_{(0)} a)_{(n+k)}$ 
act as derivations on the product, 
we get by the first case: 
\begin{align*}
x_{(k)} (a_{(n)} b ) & =
\sum_{\ell=1}^{s} \sum_{1 \leq m\not= \ell \leq s} 
\partial^{j_1} b_1 \ldots a_{(n)} \partial^{j_\ell} b_{\ell} 
\ldots  x_{(k)} \partial^{j_m} b_{m}  \ldots \partial^{j_s} b_s&\\ 
& \qquad \quad + \sum_{\ell=1}^s \partial^{j_1} b_1 \ldots x_{(k)} (a_{(n)} \partial^{j_\ell} b_{\ell} ) 
\ldots \partial^{j_s} b_s,& \\
& =
\sum_{\ell=1}^{s} \sum_{1 \leq m\not= \ell \leq s} 
(\partial^{j_1} b_1) \ldots a_{(n)} \partial^{j_\ell} b_{\ell} 
\ldots  x_{(k)} \partial^{j_m} b_{m}  \ldots \partial^{j_s} b_s&\\ 
& \qquad \quad + \sum_{\ell=1}^s
 \partial^{j_1} b_1  \ldots 
\left(a_{(n)} x_{(k)} (\partial^{j_\ell} b_{\ell} ) + (x_{(0)} a)_{(n+k)} (\partial^{j_\ell}b_\ell) 
\right)
\ldots \partial^{j_s} b_s &\\
&  =  a_{(n)} (x_{(k)} b ) 
+(x_{(0)} a)_{(n+k)} b. &
\end{align*}
We can now prove the identity (4) for any $a\in A$ and $i\geq 0$. 
By the above case and Lemma~\ref{Lem:identities}, (1), we get: 
\begin{align*}
x_{(k)} ((\partial^{i}a)_{(n)} b ) 
& =\delta_{i\leq n} (-1)^{i} \dfrac{n!}{(n-i)!} x_{(k)} (a_{(n-i)} b) & \\
& =\delta_{i\leq n} (-1)^{i} \dfrac{n!}{(n-i)!} a_{(n-i)} (x_{(k)} b) 
+  \delta_{i\leq n} (-1)^{i} \dfrac{n!}{(n-i)!} (x_{(0)}a)_{(n-i+k)} b &\\
& =  (\partial^{i}a)_{(n)} (x_{(k)}  b ) 
+ \delta_{i\leq n} (-1)^{i} \dfrac{n!}{(n-i)!} (x_{(0)}a)_{(n-i+k)} b,&
\end{align*}
whence the identity (4). 
\end{proof}

We are now in a position to prove Theorem \ref{Th:invariants_are_PVA} 
in the case where $\GG$ is connected. 

\begin{lemma}
\label{Lem:connected}
If $\GG$ is connected, then $(\Jinf A)^{\Jinf \GG}$ 
is a Poisson vertex subalgebra of $\Jinf A$ under the hypothesis of  
Theorem \ref{Th:invariants_are_PVA}. 
\end{lemma}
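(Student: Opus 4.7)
Since $\GG$ is assumed connected, we use the characterization $V := (\Jinf A)^{\Jinf \GG} = (\Jinf A)^{\g\[[t\]]}$ recalled before Lemma \ref{Lem:identities}. Closure of $V$ under the commutative product is automatic, because each operator $x_{(k)}$ acts on $\Jinf A$ as a derivation, so the common kernel of the family $\{x_{(k)}\}$ is a subalgebra. Closure under $\partial$ follows directly from Lemma \ref{Lem:identities}(2): for $a \in V$ and any $k \geq 0$ one has $x_{(k)}(\partial a) = \partial(x_{(k)} a) + k\, x_{(k-1)} a = 0$. The heart of the proof is closure of $V$ under the $\lambda$-bracket, i.e.\ showing $x_{(k)}(a_{(n)} b) = 0$ whenever $a, b \in V$, for every $x \in \g$ and $k, n \geq 0$.

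The plan is to establish the general identity
\begin{equation}
x_{(k)}(a_{(n)} b) = a_{(n)}(x_{(k)} b) + \sum_{\ell=0}^{k}\binom{k}{\ell}(x_{(\ell)} a)_{(n+k-\ell)} b, \tag{$\star$}
\end{equation}
valid for arbitrary $a, b \in \Jinf A$, $x \in \g$, and $k, n \geq 0$. Granting $(\star)$, the conclusion is immediate: if $a, b \in V$ then $x_{(k)} b = 0$ and $x_{(\ell)} a = 0$ for every $\ell \geq 0$, so every term on the right-hand side vanishes. Equivalently, packaging the operators into the formal series $\Phi(t) := \sum_{k \geq 0} \tfrac{t^k}{k!} x_{(k)}$, one can rewrite $(\star)$ as the shifted derivation identity
\[
\Phi(t)\br{a_\lambda b} = \br{a_\lambda \Phi(t) b} + \br{\Phi(t) a_{\lambda + t} b},
\]
which is the natural $\lambda$-bracket companion to the commutation rule $\Phi(t)\partial = (\partial + t)\Phi(t)$ supplied by Lemma \ref{Lem:identities}(2).

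To prove $(\star)$, I would proceed by induction on the structure of $a$, viewed as a polynomial in the generators $\partial^i a_0$ with $a_0 \in A$. The base case $a \in A$ is precisely Lemma \ref{Lem:identities}(4) with $i = 0$: since $x_{(\ell)} a = 0$ for $\ell \geq 1$, only the $\ell = 0$ summand survives on the right-hand side of $(\star)$, yielding $x_{(k)}(a_{(n)} b) = a_{(n)}(x_{(k)} b) + (x_{(0)} a)_{(n+k)} b$. The extension of $(\star)$ to $a = \partial^i a_1$ with $a_1 \in A$ combines Lemma \ref{Lem:identities}(4) in its stated form with Lemma \ref{Lem:identities}(1) applied to the outer $\partial^i$, and matches this with the $\ell$-sum on the right-hand side of $(\star)$ after evaluating $x_{(\ell)}(\partial^i a_1) = \binom{i}{\ell}\ell!\, \partial^{i-\ell}(x_{(0)} a_1)$ via Lemma \ref{Lem:identities}(3) and re-applying Lemma \ref{Lem:identities}(1); after the dust settles this reduces to the elementary combinatorial identity $\sum_{\ell=0}^{\min(i,k)}(-1)^{\ell}\binom{k}{\ell}\binom{i}{\ell}\ell!\frac{(n+k-\ell)!}{(n+k-i)!} = \delta_{i \leq n}\frac{n!}{(n-i)!}$ (for $i \leq n+k$). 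The induction step from $a$ to $\partial a$ then follows from sesquilinearity \eqref{Eq:A1-b} together with Lemma \ref{Lem:identities}(2), while the step from $c, d$ to the product $cd$ uses the right Leibniz rule \eqref{Eq:R-b} for the $\lambda$-bracket, combined with Lemma \ref{Lem:identities}(3) to expand $x_{(k)}(\partial^s c)$, and the inductive hypothesis applied to $x_{(k)}(c_{(n+s)} b)$ and $x_{(k)}(d_{(n+s)} b)$.

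The main technical hurdle I expect is this last inductive step for products, where several sums of contributions arising from the right Leibniz rule applied to $(cd)_{(n)} b$ must be aligned. This bookkeeping is routine but intricate; it becomes substantially more transparent in the $\Phi(t)$-formulation, where the shifted derivation rule interacts uniformly with sesquilinearity, both Leibniz rules, and the commutation $\Phi(t)\partial = (\partial + t)\Phi(t)$, at the mild cost of working with coefficients in $\kk\[[t\]]$.
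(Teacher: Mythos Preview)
Your proposal is correct and follows essentially the same route as the paper: your identity $(\star)$ is exactly the paper's formula \eqref{eq:main_ind} (after the reindexing $\ell \leftrightarrow k-\ell$), and both proofs proceed by induction on the length of $a$ as a product of generators $\partial^{i_j}a_j$, using the right Leibniz rule for the inductive step. The one substantive difference is the base case $a=\partial^{i}a_1$: you reduce to the combinatorial identity directly (it is indeed elementary --- dividing by $i!$ it becomes the Vandermonde-type identity $\sum_\ell(-1)^\ell\binom{k}{\ell}\binom{n+k-\ell}{i-\ell}=\binom{n}{i}$, immediate from $(1+x)^{n+k}(1-\tfrac{x}{1+x})^k=(1+x)^n$), whereas the paper avoids computing it by observing that the same two expressions \eqref{eq:xab_1}--\eqref{eq:xab_2} hold with $x\in\g$ replaced by any $c\in A$, and for such $c$ the Jacobi identity \eqref{Eq:A3-b} forces them to agree, so the scalar prefactor in \eqref{eq:xab_3} must vanish identically. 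Your generating-series reformulation via $\Phi(t)$ is a nice organizing device that does not appear in the paper.
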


\begin{proof}
Assume that $a \in (\Jinf A)^{\g\[[t\]]}$. 
One can assume that $a$ is of the form \eqref{eq:ab_form}. 
Then $x_{(k)}a=0$ for all $k\geq 0$, and so, 
by Lemma \ref{Lem:identities}, (2), we see that $x_{(k)}(\partial a)=0$ 
for any $k \geq 0$ as well, that is $\partial a$ is in $(\Jinf A)^{\g\[[t\]]}$. 
Next, we have to show that $a_{(n)}b$ is in $(\Jinf A)^{\g\[[t\]]}$ for any $n \geq 0$ 
if it is the case for both $a$ and $b$. 
Once again, one can assume that $a,b$ are of the form \eqref{eq:ab_form}. 
Set 
$$
\mathcal{P}_k^{n,a} := x_{(k)}( a_{(n)}b), 
\qquad 
\mathcal{S}_k^{n,a}  :=  a_{(n)} ( x_{(k)} b), 
\qquad 
\mathcal{R}_k^{n,a}  := (x_{(k)}a)_{(n)} b .
$$
Our aim is to prove the following formula for any $a,b \in \Jinf A$ 
as in \eqref{eq:ab_form} (not necessarily $\g\[[t\]]$-invariant): 
\begin{align}
\label{eq:main_ind}
\mathcal{P}_k^{n,a} = \mathcal{S}_k^{n,a} +  
\sum\limits_{\ell=0}^{k} 
\begin{pmatrix}
k \\
\ell
\end{pmatrix} \mathcal{R}_{k-\ell}^{n+\ell,a}.
\end{align} 
(The dependence in $b$ will not play any role in the induction so we 
omit it in the notation.)
If moreover $a,b$ are $\g\[[t\]]$-invariant, then so is  
$a_{(n)}b$ according to \eqref{eq:main_ind}.
We show  the formula \eqref{eq:main_ind} by induction on $r$. 
Remember here that $a$ is of the form \eqref{eq:ab_form} with $r\geq 1$. 

\smallskip

\noindent
$\bullet$ $r=1$. 
By Lemma \ref{Lem:identities}, (4), we have 
\begin{align} 
\label{eq:xab_1}
x_{(k)} ((\partial^{i_1}a_1)_{(n)}b)
=(\partial^{i_1}a_1)_{(n)} (x_{(k)}b) + \delta_{i_1\leq n} (-1)^{i_1} 
\dfrac{n!}{(n-i_1)!} (x_{(0)}a_1)_{(n-i_1+k)} b. 
\end{align}
We can also compute 
\begin{equation}
\label{eq:xab_2}
\begin{aligned}
& \sum\limits_{\ell=0}^{k} 
\begin{pmatrix}
k \\
\ell
\end{pmatrix} (x_{(k-\ell)} \partial^{i_1}a_1)_{(n+\ell)}b 
& \\
& =  
\sum\limits_{\ell=0}^{k} 
\begin{pmatrix}
k \\
\ell
\end{pmatrix} 
\dfrac{ \delta_{k-\ell \leq i_1} \, i_1 !}{(i_1-k+\ell)!}
(-1)^{i_1-k+\ell} 
\dfrac{
\delta_{i_1-k \leq n} \, (n+\ell)!}{(n-i_1+k)!} 
(x_{(0)}a_1)_{(n-i_1+k)}b& \\
& = 
(-1)^{i_1}
 \dfrac{\delta_{i_1\leq k+ n}\, i_1 !}{(n-i_1+k)!}
\sum\limits_{\ell=k-i_1}^{k} 
\begin{pmatrix}
k \\
\ell
\end{pmatrix} 
 \dfrac{(n+\ell)!  \, (-1)^{k-\ell}}{(i_1-k+\ell)!}
(x_{(0)}a_1)_{(n-i_1+k)}b.& 
\end{aligned}
\end{equation}

We claim that \eqref{eq:xab_1} and \eqref{eq:xab_2} also hold if we replace $x\in \g$ by some $c\in A$. Indeed, to establish these identities, the only way $\g$ was involved is through \eqref{eq:Id0} and the fact that $\g$ acts by derivations on the Poisson algebra $A$.   
We can derive \eqref{eq:Id0} for $x=c$ (we write $\br{c,a}$ instead of $x_{(0)}a=x.a$) by induction, and replace the action of $\g$ by derivations of the Poisson bracket of $A$ with the Jacobi identity in $A$; this proves our claim.

By the Jacobi identity \eqref{Eq:A3-b}, we have for $a_1,c\in A$, $b\in \Jinf(A)$ and $i_1\geq 0$ 
\begin{equation}
 c_{(k)}((\del^{i_1}a_1)_{(n)}b) = (\del^{i_1}a_1)_{(n)}(c_{(k)}b)
 +\sum\limits_{\ell=0}^{k} 
\begin{pmatrix} k \\ \ell \end{pmatrix} 
(c_{(k-\ell)} \partial^{i_1}a_1)_{(n+\ell)}b \,,
\end{equation}
which yields after plugging \eqref{eq:xab_2} and comparing with \eqref{eq:xab_1}:
\begin{align} 
\label{eq:xab_3}
&
\left( 
 \dfrac{\delta_{i_1\leq n} \,n !}{(n-i_1)!}
-
 \dfrac{\delta_{i_1\leq k+ n} \,i_1 !}{(n-i_1+k)!}
\sum\limits_{\ell=k-i_1}^{k} 
\begin{pmatrix}
k \\
\ell
\end{pmatrix} 
 \dfrac{(n+\ell)!  \, (-1)^{k-\ell}}{(i_1-k+\ell)!}
\right) 
(c_{(0)}a_1)_{(n-i_1+k)}b=0.& 
\end{align}
Note that this identity holds for any such $a_1,b,c$ and starting with \emph{any} Poisson algebra $A$. 
Hence we can assume that $(c_{(0)}a_1)_{(n-i_1+k)}b\neq 0$, and the leading factor in \eqref{eq:xab_3} vanishes.  

Going back to \eqref{eq:xab_2} in our case of interest, we deduce    
\begin{align*}
 & \sum\limits_{\ell=0}^{k} 
\begin{pmatrix}
k \\
\ell
\end{pmatrix} (x_{(k-\ell)} \partial^{i_1}a_1)_{(n+\ell)}b 
= \delta_{i_1\leq n} (-1)^{i_1} 
\dfrac{n!}{(n-i_1)!} (x_{(0)}a_1)_{(n-i_1+k)} b \,.&  
\end{align*} 
We conclude that 
\begin{align*}
x_{(k)} ((\partial^{i_1}a_1)_{(n)}b) 
= (\partial^{i_1}a_1)_{(n)} (x_{(k)}b)+ 
\sum\limits_{\ell=0}^{k} 
\begin{pmatrix}
k \\
\ell
\end{pmatrix} (x_{(k-\ell)} \partial^{i_1}a_1)_{(n+\ell)}b,
\end{align*}
that is, 
$
\mathcal{P}_k^{n,a_1} = \mathcal{S}_k^{n,a_1} + 
\sum\limits_{\ell=0}^{k} 
\begin{pmatrix}
k \\
\ell
\end{pmatrix} \mathcal{R}_{k-\ell}^{n+\ell,a_1},
$
as expected.

\smallskip

\noindent
$\bullet$ $r \geq 2$. 
Set 
$\partial^{\underline{i}_2} \underline{a}_2 := 
 \partial^{i_2} a_2 \ldots \partial^{i_r} a_r.$ 
Using \eqref{Eq:R-b}, we get 
\begin{align*}
\mathcal{P}_k^{n,a} 
& = 
\sum\limits_{i \geq 0} \dfrac{1}{i!}  
\left(
(x_{(k)}( \partial^{i}(\partial^{i_1} a_1)) ( (\partial^{\underline{i}_2} \underline{a}_{2})_{(n+i)}b) 
+ (\partial^{i}(\partial^{i_1} a_1)) (x_{(k)}( (\partial^{\underline{i}_2} \underline{a}_{2})_{(n+i)}b )) \right. & \\
&\left. \qquad \quad + \;  (x_{(k)}( (\partial^{i}(\partial^{\underline{i}_2} \underline{a}_{2})) ( (\partial^{i_1} a_1)_{(n+i)}b) 
+ (\partial^{i}(\partial^{\underline{i}_2} \underline{a}_{2})) (x_{(k)}( (\partial^{i_1} a_1)_{(n+i)}b )) \right).&
\end{align*}
Thus,  
$\mathcal{P}_k^{n,a}   = \mathcal{P}_{1,k}^{n,a} + \mathcal{P}_{2,k}^{n,a},$ 
where 
\begin{align*} 
\mathcal{P}_{1,k}^{n,a}
& := \sum\limits_{i \geq 0} \dfrac{1}{i!}   
\left( 
(x_{(k)}( \partial^{i}(\partial^{i_1} a_1)) ( (\partial^{\underline{i}_2} \underline{a}_{2})_{(n+i)}b) + 
(\partial^{i}(\partial^{i_1} a_1)) \mathcal{P}_k^{n+i,\underline{a}_{2}} \right) , & \\
 \mathcal{P}_{2,k}^{n,a}
& := \sum\limits_{i \geq 0} \dfrac{1}{i!}  
\left( (x_{(k)}( (\partial^{i}(\partial^{\underline{i}_2} \underline{a}_{2})) ( (\partial^{i_1} a_1)_{(n+i)}b) + 
(\partial^{i}(\partial^{\underline{i}_2} \underline{a}_{2})) \mathcal{P}_k^{n+i,a_1} 
\right).&
\end{align*}
Similarly,  
$\mathcal{S}_k^{n,a}
 = \mathcal{S}_{1,k}^{n,a} + \mathcal{S}_{2,k}^{n,a}$, 
where 
\begin{align*}
\mathcal{S}_{1,k}^{n,a} & := \sum\limits_{i \geq 0} \dfrac{1}{i!}   
\left(  \partial^{i}(\partial^{i_1} a_1) \mathcal{S}_k^{n+i,\underline{a}_{2}} \right), & 
\quad \mathcal{S}_{2,k}^{n,a} & :=\sum\limits_{i \geq 0} \dfrac{1}{i!}   
\left(   \partial^{i}(\partial^{\underline{i}_2} \underline{a}_{2}) \mathcal{S}_k^{n+i,a_1} 
\right) ,&
\end{align*}
and  
$\mathcal{R}_k^{n,a} 
= \mathcal{R}_{1,k}^{n,a} + \mathcal{R}_{2,k}^{n,a}$, 
where 
\begin{align*}
\mathcal{R}_{1,k}^{n,a} 
& : = \sum\limits_{i \geq 0} \dfrac{1}{i!}  
\left( 
(\partial^{i} x_{(k)} (\partial^{i_1}a_1)) ((\partial^{\underline{i}_2}\underline{a}_{2})_{(n+i)}b) 
+ (\partial^{i}(\partial^{i_1}a_1)) \mathcal{R}_k^{n+i,\underline{a}_{2}} \right) ,  & \\
\mathcal{R}_{2,k}^{n,a}
& : = \sum\limits_{i \geq 0} \dfrac{1}{i!}  
\left( (\partial^{i} x_{(k)} (\partial^{\underline{i}_2}\underline{a}_{2})) ((\partial^{i_1}a_1)_{(n+i)}b) 
+ (\partial^{i}(\partial^{\underline{i}_2}\underline{a}_{2}))  \mathcal{R}_k^{n+i,a_1} \right) .&
\end{align*}
Clearly, 
it is enough to show:
\begin{align}
\label{eq:induction1}
&\mathcal{P}_{1,k}^{n,a} = \mathcal{S}_{1,k}^{n,a}+ \sum\limits_{\ell=0}^{k} 
\begin{pmatrix}
k \\
\ell
\end{pmatrix} \mathcal{R}_{1,k-\ell}^{n+\ell,a},&  
&\mathcal{P}_{2,k}^{n,a} = \mathcal{S}_{2,k}^{n,a} + \sum\limits_{\ell=0}^{k} 
\begin{pmatrix}
k \\
\ell
\end{pmatrix} \mathcal{R}_{2,k-\ell}^{n+\ell,a}.&
\end{align} 

\smallskip

\noindent 
$\bullet$ $k=0$.
Using Lemma \ref{Lem:identities}, (2), and the induction hypothesis, we get:
\begin{align*}
\mathcal{P}_{1,0}^{n,a} 
 & =\sum\limits_{i \geq 0} \dfrac{1}{i!} 
\Big( \partial^{i} (x_{(0)} (\partial^{i_1} a_1)) ( (\partial^{\underline{i}_2} \underline{a}_2)_{(n+i)} b )  &\\
& \qquad   
+ \; \partial^{i}(\partial^{i_1} a_1) \left( 
(\partial^{\underline{i}_2} \underline{a}_{2})_{(n+i)} (x_{(0)}b) 
+ \mathcal{R}_0^{n+i,\underline{a}_{2}} \right)
\Big)  = \mathcal{S}_{1,0}^{n,a}  + \mathcal{R}_{1,0}^{n,a}. &
\end{align*}
Similarly, $\mathcal{P}_{2,0}^{n,a} = \mathcal{S}_{2,0}^{n,a}  + \mathcal{R}_{2,0}^{n,a}$.

\smallskip

\noindent 
$\bullet$ $k \geq 1$. 
Using Lemma \ref{Lem:identities}, (3), (4), and the induction hypothesis, we get: 
\begin{align*} 
& \mathcal{P}_{1,k}^{n,a} 
 = \sum\limits_{i \geq 0} \dfrac{1}{i!}  
\left[ 
\Big( \partial^{i} x_{(k)}( \partial^{i_1} a_1) 
+  \sum\limits_{\ell=1}^k 
\begin{pmatrix} 
k\\
\ell
\end{pmatrix}
\dfrac{i!}{ (i-\ell)!}  
 \partial^{i-\ell} 
(x_{(k-\ell)}  (\partial^{i_1} a_1) ) 
\Big)
 (\partial^{\underline{i}_2} \underline{a}_{2})_{(n+i)}b  \right. &\\
& \qquad \qquad \qquad \qquad 
+ \left. \partial^{i}(\partial^{i_1} a_1) 
\left(\mathcal{S}_{k}^{n+i,\underline{a}_{2}} +
\sum\limits_{\ell=0}^k \begin{pmatrix} 
k\\
\ell
\end{pmatrix} \mathcal{R}_{k-\ell}^{n+i+\ell,\underline{a}_{2}} \right) \right]&\\
= & 
\sum\limits_{i \geq 0} \dfrac{1}{i!}  
\left[
\sum\limits_{\ell=0}^k 
\begin{pmatrix} 
k\\
\ell
\end{pmatrix} 
\left(
 \partial^{i} 
x_{(k-\ell)}  (\partial^{i_1} a_1) \, 
 (\partial^{\underline{i}_2} \underline{a}_{2})_{(n+i+\ell)}b   
 + \partial^i(\partial^{i_1} a_1) \mathcal{R}_{k-\ell}^{n+i+\ell,\underline{a}_{2}} 
\right) 
 + \partial^{i}(\partial^{i_1} a_1) 
\mathcal{S}_{k}^{n+i,\underline{a}_{2}} 
\right]  . &
 \end{align*}
Similarly, 
\begin{align*} 
\mathcal{P}_{2,k}^{n,a}  
 = &
\sum\limits_{i \geq 0} \dfrac{1}{i!}  
\left[
\sum\limits_{\ell=0}^k 
\begin{pmatrix} 
k\\
\ell
\end{pmatrix} 
\left(
 \partial^{i} 
x_{(k-\ell)}  (\partial^{\underline{i}_2} \underline{a}_2) \, 
 (\partial^{i_1} a_1)_{(n+i+\ell)}b   
 + \partial^i(\partial^{\underline{i}_2} \underline{a}_2) \mathcal{R}_{k-\ell}^{n+i+\ell,a_1} 
\right) \right]  
\\
&\qquad \qquad \qquad  + 
\sum\limits_{i \geq 0} \dfrac{1}{i!}   \partial^{i}(\partial^{\underline{i}_2} \underline{a}_2) 
\mathcal{S}_{k}^{n+i,a_1} 
 . &
 \end{align*}
Hence, we obtain:
\begin{align*}
\mathcal{P}_{k}^{n,a}
& =  \sum\limits_{\ell=0}^k 
\begin{pmatrix} 
k\\
\ell
\end{pmatrix}
\sum\limits_{i \geq 0} \dfrac{1}{i!} 
\left(
 \partial^{i} 
x_{(k-\ell)}  (\partial^{i_1} a_1)  \, 
 (\partial^{\underline{i}_2} \underline{a}_{2})_{(n+i+\ell)}b   
 +\partial^{i}(\partial^{i_1} a_1)  \mathcal{R}_{k-\ell}^{n+i+\ell,\underline{a}_{2}} \right. & \\
& \qquad \qquad\qquad \qquad+ 
\left. 
 \partial^{i} 
x_{(k-\ell)}  (\partial^{\underline{i}_2} \underline{a}_2)  \,
 (\partial^{i_1} a_1)_{(n+i+\ell)}b  +  \partial^{i}(\partial^{\underline{i}_2} \underline{a}_2) 
 \mathcal{R}_{k-\ell}^{n+i+\ell,a_1}\right)  & \\
& \quad 
+ \sum\limits_{i \geq 0} \dfrac{1}{i!} 
\left( \partial^{i}(\partial^{i_1} a_1) 
\mathcal{S}_{k}^{n+i,\underline{a}_{2}}
+ \partial^{i}(\partial^{\underline{i}_2} \underline{a}_2) 
\mathcal{S}_{k}^{n+i,a_1} \right)  = \sum\limits_{\ell=0}^k 
\begin{pmatrix} 
k\\
\ell
\end{pmatrix} \mathcal{R}_{k-\ell}^{n+\ell,a} + \mathcal{S}_{k}^{n,a}. &
\end{align*} 
This shows the formula \eqref{eq:main_ind} by induction. 
\end{proof}

\begin{remark}
Recall from Example \ref{Ex:inv_adjoint}  
that for the algebra $A=\kk[\g^*]$ equipped with the induced 
coadjoint action of $\GG$, 
the morphism \eqref{Eq:InvMorph} is an isomorphism. 
The Poisson vertex algebra structure on $\Jinf \kk[\g^*]$ can be understood in two ways: 
from the Kirillov-Kostant-Souriau Poisson structure on  
$A=\kk[\g^*]$, 
or from the $\Jinf \GG$-action on its Lie algebra $\Jinf \g$, see \cite{AM}.
\end{remark}

\subsection{
Proof of Theorem \ref{Th:invariants_are_PVA}}
First of all, it suffices to show that $(\Jinf A)^{\Jinf \GG}$ is a Poisson vertex subalgebra 
of $\Jinf A$ to ensure that  
$j_A$ is a Poisson vertex algebra morphism.  
The case where $\GG$ is connected has been solved in Lemma \ref{Lem:connected}. 
If $\GG=\Gamma$ is finite, then $\Jinf \Gamma=\Gamma$ and, hence, 
$\Jinf \Gamma$ acts by Poisson vertex algebra automorphisms  
on $\Jinf A$. 

For the general case, since $\GG$ is an affine algebraic group, 
the component groups $\Gamma:=\GG/\GG^0$ is finite. 
Let $\{g_1,\ldots,g_\ell\}$ 
be a finite set of representatives of $\Gamma$ in $\GG$ 
so that any element $g$ of $\GG$ is uniquely written 
as $g=g^0 g_i $, with $g^0 \in \GG^0$ and $i\in\{1,\ldots,\ell\}$. 
Because $\Jinf \GG \cong \GG \times \g\[[t\]]$ as topological space, 
any element $g$ of $\Jinf \GG$ is also uniquely written 
as $g=g^0 g_i $, with $g^0 \in \Jinf \GG^0$ and $i\in\{1,\ldots,\ell\}$.  
Let now $a,b \in (\Jinf A)^{\Jinf \GG}$, $n \in \Z_{\geq 0}$.  
Write $g=g^0 g_i$ as above. 
Then 
$$g(a_{(n)}b) = g^0 (( g_i a)_{(n)} (g_i b)) 
= g^0 ( a_{(n)} b) = a_{(n)} b.$$
The first equality holds because 
$g_i$ acts as a Poisson vertex algebra automorphism; 
the last  
because $(\Jinf A)^{\Jinf \GG^0}$ is a Poisson vertex subalgebra by 
Lemma \ref{Lem:connected}. 
Similarly, we get 
$$g(\partial a) = g^0  g_i (\partial a)  
= g^0 (\partial a) = \partial a.$$ 
This concludes the proof of the theorem. 


\section{Poisson (vertex) reduction}   \label{S:PoiRed}

We continue to assume that $\kk$ is algebraically closed and of characteristic $0$.
Our goal is to show the commutativity of the diagram presented in Figure \ref{Fig:PoiRed}. 
The different categories appearing in Figure \ref{Fig:PoiRed} (from top to bottom and left to right) are defined as follows: 
 
\begin{itemize}
\item $\Jinf\DPA$ is the image of the the category of double Poisson algebras $\DPA$ inside the category of double Poisson vertex algebras $\DPVA$ under the jet functor of Proposition~\ref{Pr:DPAtoDPA}. 
\item $\HoP$ is the category of $H_0$-Poisson structures (see \S\ref{ss:PoiRedNC}), while  $\mathtt{V}_\infty\HoP$ is a suitable subcategory of that of relative $H_0$-Poisson vertex structures $\widehat{\HoPV}$   (see \S\ref{ss:HOPVrel} and \S\ref{ss:Interpret}). 
\item $\PA^\GG$ is the category of Poisson algebras equipped with a compatible action by a group $\GG$, which is a subcategory of $\PA$ (see \S\ref{ss:PoiRedNC}); $\Jinf\PA^\GG$ is the image of $\PA^{\GG}$ inside $\PVA$ under the jet functor of Lemma \ref{Lem:PAtoPVA}. 
\item $\PA_{\Gl_N;0}$ and $\Jinf \PA_{\Gl_N;0}$ are the analogues of $\PA^{\GG}$ and $\Jinf\PA^\GG$ after taking $\GG$-invariants (see \S\ref{ss:RedPVA} for a precise definition). 
\end{itemize} 
We have constructed the functors appearing at the top of the diagram in Section \ref{Sec:JQ}, and the other functors are introduced below.

\subsection{Poisson reduction and its noncommutative version} 
\label{ss:PoiRedNC}

Let $\GG$ be a group.    
We form the subcategory $\PA^\GG$ of $\PA$ as follows. 
Its objects are Poisson algebras equipped with a group action of $\GG$ by Poisson automorphisms.  
Given two objects $A_1,A_2$, a morphism $\phi\colon A_1\to A_2$ is a $\GG$-equivariant Poisson homomorphism.  

Fix a Poisson algebra $A$ in $\PA^\GG$. 
Write $A^\GG$ 
for the subalgebra of $\GG$-invariant elements in $A$. 
Since $\GG$ acts by Poisson automorphisms, then it is plain that $A^{\GG}$ is itself a Poisson algebra, 
the \emph{Poisson reduction} of $A$ by $\GG$. 
The choice of morphisms in $\PA^\GG$ guarantees that we have a functor $\mathtt{R}\colon \PA^\GG\to \PA$ when performing Poisson reduction. 
To fix ideas, if $(\AA,\dgal{-,-})$ is a double Poisson algebra and $N\geq 1$, we are in the above situation for $\GG=\Gl_N$ and $A=\AA_N$  equipped with the Poisson bracket induced by 
Theorem~\ref{Thm:RepdP}. 
In particular, the functor $(-)_N\colon \DPA\to \PA$ restricts to $(-)_N\colon \DPA\to \PA^{\Gl_N}$. 
We review the noncommutative analogue of the Poisson reduction $\AA_N^{\Gl_N}$ hereafter.

Let $\AA$ be a (unital associative) algebra of finite type over $\kk$. We consider the vector space $H_0(\AA):=\AA/[\AA,\AA]$ for $[\AA,\AA]$ the $\kk$-linear subspace of $\AA$ generated by commutators 
$ab-ba$ ($a,b \in \AA$), 
which is the $0$-th Hochschild homology group of $\AA$. We denote by $a_\sharp\in H_0(\AA)$ the image of $a\in \AA$ under the natural projection $\AA\to H_0(\AA)$. 

\begin{definition}[\cite{CB}]  \label{Def:HOP}
An \emph{$H_0$-Poisson structure} on $\AA$ (or on $H_0(\AA)$) is a Lie bracket $$[-,-]_\sharp\colon H_0(\AA)\times H_0(\AA)\to H_0(\AA), \qquad (a_\sharp,b_\sharp) \mapsto [a_\sharp,b_\sharp]_\sharp\,,$$ 
such that for any $a\in \AA$, the $\kk$-linear map $[a_\sharp,-]_\sharp\colon  H_0(\AA)\to H_0(\AA)$ is induced by a map $\del_a\in \Der(\AA)$.     
\end{definition}
\begin{definition}[\cite{F22}]
For $i=1,2$, let $\AA_i$ be endowed with an $H_0$-Poisson structure $[-,-]_{\sharp,i}$. 
A morphism of algebras $\varphi\colon \AA_1\to \AA_2$ is a \emph{$H_0$-Poisson morphism} if the induced map $\bar{\varphi}\colon H_0(\AA_1)\to H_0(\AA_2)$ is a morphism of Lie algebras with respect to the Lie brackets $[-,-]_{\sharp,i}$.  

\noindent We denote by $\HoP$ the category of $H_0$-Poisson structures, whose objects are given by algebras equipped with an $H_0$-Poisson structure, and whose morphisms are $H_0$-Poisson morphisms. 
\end{definition} 

For any $N\geq 1$ and $a\in \AA$, define the element $\tr(a):=\sum_{j=1}^N a_{jj}\in \AA_N$ which is invariant under the action of $\Gl_N$ by \eqref{Eq:ActRep}; hence we get a morphism of algebras $\tr_N\colon \AA\mapsto \AA_N^{\Gl_N}$, the \emph{trace map}. We write $\tr:=\tr_N$ if it is clear from the context that we work with the $N$-th representation algebra. 
We see that $[\AA,\AA]\subset \ker(\tr)$, hence we get a linear map $H_0(\AA)\to \AA_N^{\Gl_N}$ which we also denote by $\tr$. The subalgebra of $\AA_N^{\Gl_N}$ generated by the image of the trace map coincides with $\AA_N^{\Gl_N}$ as $\AA$ is of finite type, see \cite[Remark 2.3]{CB}. We can then summarize the role of Crawley-Boevey's $H_0$-Poisson structures as follows. 

\begin{proposition}[\cite{CB}] \label{Pr:HOPtoPA}
Assume that $\AA$ is equipped with a $H_0$-Poisson structure $[-,-]_\sharp$. 
Then there exists a unique Poisson bracket on $\AA_N^{\Gl_N}$ which satisfies for any $a,b\in \AA$, 
\begin{equation} \label{Eq:relHoP}
   \br{\tr(a),\tr(b)} = \tr( [a_\sharp,b_\sharp]_\sharp)\,.
\end{equation}
Furthermore, this construction defines a functor $\mathtt{tr}_N\colon \HoP\to \PA$.  
\end{proposition}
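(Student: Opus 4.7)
The plan, following \cite{CB}, is to handle uniqueness immediately and construct the Poisson bracket via a Hamiltonian assignment built from the derivations $\partial_a$ of the $H_0$-Poisson structure. Uniqueness is straightforward: since $\AA$ is of finite type, the image of $\tr$ generates $\AA_N^{\Gl_N}$ as a commutative algebra (as recalled just before the statement), so any Poisson bracket satisfying \eqref{Eq:relHoP} is pinned down by \eqref{Eq:relHoP} and the Leibniz rule.

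For existence, the first step is to produce, for each $a_\sharp \in H_0(\AA)$, a derivation $\delta_{a_\sharp} \in \Der(\AA_N^{\Gl_N})$. By functoriality of $(-)_N$ each $\partial_a \in \Der(\AA)$ extends to $\partial_a^{(N)} \in \Der(\AA_N)$ with $\partial_a^{(N)}(b_{ij}) = (\partial_a b)_{ij}$, which is $\Gl_N$-equivariant by a direct computation from \eqref{Eq:ActRep} and hence restricts to a derivation of $\AA_N^{\Gl_N}$. This restriction depends only on $a_\sharp$: if $\partial_a, \partial_a'$ are two lifts of $[a_\sharp,-]_\sharp$, their difference $D$ satisfies $D(\AA) \subset [\AA,\AA]$, and since $\tr$ vanishes on commutators one has $D^{(N)}(\tr b) = \tr(Db) = 0$ for all $b$; thus $D^{(N)}$ kills a generating family of $\AA_N^{\Gl_N}$ and vanishes on the whole invariant algebra. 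This defines $\delta_{a_\sharp}$ and yields a linear map $\delta\colon H_0(\AA) \to \Der(\AA_N^{\Gl_N})$.

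Next, I build a Hamiltonian assignment $F \mapsto X_F \in \Der(\AA_N^{\Gl_N})$ by writing $F = P(\tr(a_1), \ldots, \tr(a_k))$ and setting $X_F := \sum_i (\partial_i P)(\tr(a_1),\ldots,\tr(a_k)) \cdot \delta_{a_{i,\sharp}}$. The key well-definedness check: evaluating on a generator $\tr b$ gives
\[ X_F(\tr b) = \sum_i \partial_i P \cdot \tr([a_{i,\sharp}, b_\sharp]_\sharp) = -\sum_i \partial_i P \cdot \delta_{b_\sharp}(\tr(a_i)) = -\delta_{b_\sharp}(F), \]
which depends only on $F$. Hence $X_F$ is determined on generators and is independent of the chosen presentation. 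The Poisson bracket is then $\{F, G\} := X_F(G)$; equation \eqref{Eq:relHoP} is immediate. Skewsymmetry is inherited on pairs of traces from that of $[-,-]_\sharp$ and extends to $\AA_N^{\Gl_N}$ via the identity $X_F(\tr b) = -\delta_{b_\sharp}(F) = -X_{\tr b}(F)$ together with the derivation property of $X_F$; Leibniz in both arguments is built into the construction. The Jacobi identity reduces on triples of traces to the Jacobi identity of $[-,-]_\sharp$ in $H_0(\AA)$ (using $\delta_{a_\sharp} \delta_{b_\sharp}(\tr c) = \tr([a_\sharp, [b_\sharp, c_\sharp]_\sharp]_\sharp)$) and propagates to $\AA_N^{\Gl_N}$ by the derivation properties of all three arguments.

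Functoriality of $\mathtt{tr}_N$ is direct: an $H_0$-Poisson morphism $\varphi\colon \AA_1 \to \AA_2$ induces $\varphi_N\colon (\AA_1)_N^{\Gl_N} \to (\AA_2)_N^{\Gl_N}$ commuting with traces, and the hypothesis that $\bar\varphi$ respects $[-,-]_\sharp$ implies that brackets of pairs of traces are preserved, whence the whole bracket by Leibniz. The main technical obstacle throughout is the well-definedness of $X_F$: the identity $X_F(\tr b) = -\delta_{b_\sharp}(F)$ is precisely what makes the construction consistent despite the many possible presentations of an element of $\AA_N^{\Gl_N}$ as a polynomial in traces.
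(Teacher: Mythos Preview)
Your proof is correct and follows essentially the same approach as \cite{CB}, which is what the paper cites; in fact, the paper later adapts exactly this argument to the vertex setting in Theorem~\ref{THM:HoPV-tr}, so your write-up matches the paper's own template. The key steps---lifting $\partial_a$ to a derivation of $\AA_N$, checking independence of the lift via $\tr([\AA,\AA])=0$, and establishing well-definedness of $X_F$ through the identity $X_F(\tr b)=-\delta_{b_\sharp}(F)$---are all handled correctly.
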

\begin{proof}
The first part is \cite[Theorem 1.6]{CB}. 
The second part follows from \cite[Proposition 5.9]{F22}. Explicitly, a $H_0$-Poisson morphism $\varphi\colon \AA_1\to \AA_2$ induces a unique morphism of algebras 
\begin{equation} \label{Eq:HoP-PA}
  \tr_N(\varphi) \colon  (\AA_1)_N^{\Gl_N}\to  (\AA_2)_N^{\Gl_N}  
\end{equation}
defined on generators by $\tr_N(\varphi)(\tr_N(a)\!) := \tr_N\left( \varphi(a) \right)$, $a\in \AA_1$, and it respects the induced Poisson brackets. 
\end{proof}

Following Van den Bergh, let $(\AA,\dgal{-,-})$ be a double Poisson algebra. Composition with the multiplication on $\AA$ yields a linear map $\mathrm{m} \circ \dgal{-,-}\colon  \AA\otimes \AA\to \AA$. 
Furthermore, by \cite[\S2.4]{VdB} this map descends to $H_0(\AA)$ through 
\begin{equation} \label{Eq:HO-ind}
[-,-]_\sharp\colon H_0(\AA)\times H_0(\AA)\longrightarrow H_0(\AA), \qquad 
[a_\sharp,b_\sharp]_\sharp := (\mathrm{m} \circ \dgal{a,b} )_\sharp \,,   
\end{equation}
where on the right-hand side $a,b\in \AA$ are arbitrary lifts of $a_\sharp,b_\sharp\in H_0(\AA)$. 
The following result follows from \cite[Lemma 2.6.2]{VdB} and \cite[Proposition 5.3]{F22}.
\begin{proposition}
 The map $[-,-]_\sharp$ on $H_0(\AA)$ defined through \eqref{Eq:HO-ind} is a Lie bracket, hence it endows $\AA$ with an $H_0$-Poisson structure. 
 Furthermore, if $\AA_1,\AA_2$ are equipped with double Poisson brackets and $\theta\colon \AA_1\to \AA_2$ is a morphism of double Poisson algebras, then $\theta$ is a $H_0$-Poisson morphism for the corresponding $H_0$-Poisson structures obtained through \eqref{Eq:HO-ind}. 
\end{proposition}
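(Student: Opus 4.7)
The plan is to split the proof in two parts: first, showing that \eqref{Eq:HO-ind} yields a well-defined $H_0$-Poisson structure; second, checking functoriality under double Poisson morphisms. Throughout, I write $\mathrm{m}\colon\AA\otimes\AA\to\AA$ for the multiplication.

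First I would verify that $\mathrm{m}\circ\dgal{-,-}$ descends to a well-defined bilinear map $H_0(\AA)\times H_0(\AA)\to H_0(\AA)$, by checking that $(\mathrm{m}\circ\dgal{a,b})_\sharp$ vanishes whenever $a$ or $b$ lies in $[\AA,\AA]$. For $a=cd-dc$, the right Leibniz rule \eqref{Eq:Dr} produces four terms which, after applying $\mathrm{m}$, already cancel in $\AA$ itself; for $b=cd-dc$, the left Leibniz rule \eqref{Eq:Dl} produces four terms whose sum after $\mathrm{m}$ lies in $[\AA,\AA]$. Skew-symmetry of $[-,-]_\sharp$ then follows from cyclic skew-symmetry \eqref{Eq:DA}: if $\dgal{a,b}=u\otimes v$, then $\mathrm{m}(\dgal{b,a})=-vu$, which equals $-uv=-\mathrm{m}(\dgal{a,b})$ in $H_0(\AA)$ by cyclic invariance of the product.

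To complete the $H_0$-Poisson structure, for each $a\in\AA$ I would define $\del_a\colon\AA\to\AA$ by $\del_a(b):=\mathrm{m}(\dgal{a,b})$; the left Leibniz rule \eqref{Eq:Dl} immediately shows $\del_a\in\Der(\AA)$, and by construction $\del_a$ descends to $[a_\sharp,-]_\sharp$ on $H_0(\AA)$. The Jacobi identity for $[-,-]_\sharp$ can then be reformulated as $\bigl([\del_a,\del_b]-\del_{\mathrm{m}(\dgal{a,b})}\bigr)(c)\in[\AA,\AA]$ for all $a,b,c\in\AA$, which I would deduce from the double Jacobi identity \eqref{Eq:DJ} by applying the iterated multiplication $\AA^{\otimes 3}\to\AA$ to $\dgal{a,\dgal{b,c}}_L-\dgal{b,\dgal{a,c}}_R-\dgal{\dgal{a,b},c}_L=0$ and projecting to $H_0(\AA)$. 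The hard part will be tracking the positions of the tensor components: the $L$-insertion places $\dgal{a,-}$ in slots $1,2$ while the $R$-insertion places it in slots $2,3$, so one must reorder the factors via the cyclic invariance of the triple product in $H_0(\AA)$ before the three contributions can be matched against the three terms of the desired Jacobi relation.

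For the second assertion, let $\theta\colon\AA_1\to\AA_2$ be a morphism of double Poisson algebras. Applying $\mathrm{m}_2$ to the defining relation $\dgal{\theta(a),\theta(b)}_2=\theta^{\otimes 2}(\dgal{a,b}_1)$ and using $\mathrm{m}_2\circ\theta^{\otimes 2}=\theta\circ\mathrm{m}_1$, then passing to $H_0$, yields $[\theta(a)_\sharp,\theta(b)_\sharp]_{\sharp,2}=\bar{\theta}([a_\sharp,b_\sharp]_{\sharp,1})$, so that $\theta$ is an $H_0$-Poisson morphism.
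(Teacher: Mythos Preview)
Your proposal is correct and is in fact more detailed than the paper's own treatment, which simply cites \cite[Lemma 2.6.2]{VdB} and \cite[Proposition 5.3]{F22} without further argument. Your sketch recovers precisely the content of those references: well-definedness via the Leibniz rules, skew-symmetry from \eqref{Eq:DA}, the derivation property of $\del_a=\mathrm{m}\circ\dgal{a,-}$ from \eqref{Eq:Dl}, and functoriality from the morphism condition exactly as you write it.

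One sharpening worth noting for the Jacobi step: the Loday identity $\del_a\del_b(c)=\del_{\mathrm{m}(\dgal{a,b})}(c)+\del_b\del_a(c)$ actually holds in $\AA$ itself, not merely modulo $[\AA,\AA]$. A single application of $\mathrm{m}^{(2)}\colon\AA^{\otimes 3}\to\AA$ to \eqref{Eq:DJ} only produces ``half'' of each term (for instance $\sum_i\{a,u_i\}v_i$ rather than the full $\{a,\{b,c\}\}$), so cyclic reordering in $H_0(\AA)$ alone will not close the argument. The standard fix is to also apply $\mathrm{m}^{(2)}$ to the instance of \eqref{Eq:DJ} with $a$ and $b$ interchanged and add the two resulting identities: the $L$- and $R$-contributions then combine to $\{a,\{b,c\}\}$ and $\{b,\{a,c\}\}$, while the two third terms, after using $\dgal{b,a}=-\dgal{a,b}^\sigma$, assemble into $\{\{a,b\},c\}$. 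This is exactly Van den Bergh's argument, and your outline is compatible with it once this point is made explicit.
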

As a consequence of this proposition, we get a functor $\sharp\colon \DPA\to \HoP$. 


\begin{proposition} \label{Pr:ComFront} 
Fix $N\geq 1$. 
 The following diagram is commutative: 
 \begin{center}
    \begin{tikzpicture}
 \node  (TopLeft) at (-2,1) {$\DPA$};
\node  (TopRight) at (2,1) {$\PA^{\Gl_N}$};
\node  (BotLeft) at (-2,-1) {$\HoP$}; 
\node  (BotRight) at (2,-1) {$\PA$}; 
\path[->,>=angle 90,font=\small]  
   (TopLeft) edge node[above] {$(-)_N$} (TopRight) ;
\path[->,>=angle 90,font=\small]  
    (BotLeft) edge node[above] {$\tr_N$} (BotRight) ; 
\path[->,>=angle 90,font=\small]  
    (TopLeft) edge node[left] {$\sharp$}  (BotLeft) ;
\path[->,>=angle 90,font=\small]  
    (TopRight) edge node[right] {$\mathtt{R}$} (BotRight) ;  
 \node  (TopLeft2) at (4,1) {$\AA$};
\node  (TopRight2) at (7,1) {$\AA_N$};
\node  (BotLeft2) at (4,-1) {$H_0(\AA)$}; 
\node  (BotRight2) at (7,-1) {$\AA_N^{\Gl_N}$};  
\path[->,>=angle 90,font=\small]  
   (TopLeft2) edge (TopRight2) ;
\path[->,>=angle 90,font=\small]  
    (BotLeft2) edge (BotRight2) ; 
\path[->,>=angle 90,font=\small]  
    (TopLeft2) edge (BotLeft2) ;
\path[->,>=angle 90,font=\small]  
    (TopRight2) edge (BotRight2) ;     
   \end{tikzpicture}
\end{center}
\end{proposition}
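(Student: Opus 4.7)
The plan is to verify commutativity of the diagram on both objects and morphisms, reducing everything to a computation on trace generators of $\AA_N^{\Gl_N}$. Given a double Poisson algebra $(\AA,\dgal{-,-})$, the top--right path equips $\AA_N$ with a $\Gl_N$-invariant Poisson bracket by Theorem \ref{Thm:RepdP} and then $\mathtt{R}$ restricts it to the subalgebra of invariants $\AA_N^{\Gl_N}$. The left--bottom path endows $\AA$ with the $H_0$-Poisson structure $[a_\sharp,b_\sharp]_\sharp = (\mathrm{m}\circ\dgal{a,b})_\sharp$ via \eqref{Eq:HO-ind}, and then $\tr_N$ produces a Poisson bracket on $\AA_N^{\Gl_N}$ via \eqref{Eq:relHoP}. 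Because $\AA$ is of finite type, the image of the trace map generates $\AA_N^{\Gl_N}$ as an algebra (cf.\ \cite{CB}), so by the Leibniz rule the two Poisson brackets will coincide on all of $\AA_N^{\Gl_N}$ as soon as they agree on pairs $(\tr(a),\tr(b))$ with $a,b\in \AA$.

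For such a pair, along the top--right path, I would compute using Theorem \ref{Thm:RepdP}:
\begin{align*}
\br{\tr(a),\tr(b)}
= \sum_{j,l=1}^N \br{a_{jj},b_{ll}}
= \sum_{j,l=1}^N \dgal{a,b}'_{lj}\dgal{a,b}''_{jl}
= \tr(\dgal{a,b}'\dgal{a,b}'')
= \tr(\mathrm{m}\circ\dgal{a,b}),
\end{align*}
where the third equality is the matrix--trace identity $\tr(uv) = \sum_{i,j} u_{ij}v_{ji}$ in $\AA_N$ (which holds at the level of generators thanks to the relation $(uv)_{il}=\sum_j u_{ij}v_{jl}$). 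Along the left--bottom path, Proposition \ref{Pr:HOPtoPA} and \eqref{Eq:HO-ind} give
\begin{align*}
\br{\tr(a),\tr(b)} = \tr([a_\sharp,b_\sharp]_\sharp) = \tr((\mathrm{m}\circ\dgal{a,b})_\sharp) = \tr(\mathrm{m}\circ\dgal{a,b}),
\end{align*}
since $\tr\colon \AA\to \AA_N^{\Gl_N}$ factors through $H_0(\AA)$ as $[\AA,\AA]\subset\ker(\tr)$. The two brackets therefore agree on generators, hence on all of $\AA_N^{\Gl_N}$, settling commutativity on objects.

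For morphisms, a morphism $\theta\colon \AA_1\to\AA_2$ in $\DPA$ induces $\theta_N\colon (\AA_1)_N\to(\AA_2)_N$, which is $\Gl_N$-equivariant and Poisson, hence restricts to a Poisson morphism between the invariant subalgebras. On the other path, $\theta$ is an $H_0$-Poisson morphism, and $\tr_N$ assigns to it the map $\tr_N(\theta)$ determined by $\tr(a)\mapsto \tr(\theta(a))$ as in \eqref{Eq:HoP-PA}. On a trace generator one readily checks
\[
\theta_N(\tr(a)) = \sum_{j=1}^N \theta(a)_{jj} = \tr(\theta(a)) = \tr_N(\theta)(\tr(a)),
\]
so the two induced morphisms agree on generators and hence everywhere. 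The only mildly delicate step is the matrix--trace identity invoked in the first displayed computation, which is purely formal once Sweedler's notation is unpacked.
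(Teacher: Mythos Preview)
Your proof is correct and follows essentially the same approach as the paper: both reduce to comparing the two Poisson brackets on trace generators via the computation $\sum_{j,l}\br{a_{jj},b_{ll}}=\tr(\mathrm{m}\circ\dgal{a,b})=\tr([a_\sharp,b_\sharp]_\sharp)$, and then check that the two induced morphisms agree on trace generators. The paper's proof is essentially identical, differing only in notation.
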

\begin{proof} 
Given $(\AA,\dgal{-,-})$, we end up with the commutative algebra $\AA_N^{\Gl_N}$ if we use both sides of the square. In fact, the two Poisson brackets obtained on $\AA_N^{\Gl_N}$  coincide by \cite[\S7]{VdB}. Indeed, computing the Poisson bracket of elements $\tr(a),\tr(b)\in \AA_N^{\Gl_N}$, $a,b\in \AA$, with respect to these two approaches gives, respectively,  
\begin{equation}
    \begin{aligned}
\br{\tr(a),\tr(b)} &= \sum_{j,k=1}^N \br{ a_{jj}, b_{kk}} 
= \sum_{j,k=1}^N \dgal{a,b}'_{kj} \dgal{a,b}''_{jk} = \tr(\dgal{a,b}'\dgal{a,b}'')  \,, \\
\br{\tr(a),\tr(b)} &= \tr([a_\sharp,b_\sharp]_\sharp) = \tr(\mathrm{m}\circ \dgal{a,b})\,,
    \end{aligned}
\end{equation}
after using \eqref{Eq:relPA} in the first line, or \eqref{Eq:relHoP} and \eqref{Eq:HO-ind} in the second line. 

It remains to check that the two compositions of functors act in the same way on morphisms. Fix a morphism of double Poisson algebras $\theta\colon \AA_1\to \AA_2$. By spelling out the functors, we obtain the morphisms  $\mathtt{R}(\theta_N), \tr_N(\sharp(\theta)\!) \colon (\AA_1)_N^{\Gl_N}\to (\AA_2)_N^{\Gl_N}$ given on $\tr(a)$, $a\in \AA_1$,  by 
\begin{equation}
 \begin{aligned} \label{Eq:ComFrom}
     \mathtt{R}(\theta_N)(\tr(a)\!) &= \sum_{1\leq j\leq N} \theta_N(a_{jj})=\sum_{1\leq j\leq N}(\theta(a)\!)_{jj} = \tr_N(\theta(a)\!)\,, \\
\tr_N(\sharp(\theta)\!)(\tr(a)\!) &= \tr_N(\sharp(\theta)(a)\!) = \tr_N(\theta(a)\!)\,.
 \end{aligned}
\end{equation}
Since a morphism $(\AA_1)_N^{\Gl_N}\to (\AA_2)_N^{\Gl_N}$ is uniquely determined by its value on generators, the two images of $\theta$ coincide as expected.  
\end{proof}

\subsection{Reduction in the Poisson vertex setting}
\label{ss:RedPVA}
We prove the commutativity of the right face of Figure \ref{Fig:PoiRed}. 
In full generalities, we work with an affine algebraic group $\GG$. 

\medskip 

\subsubsection{} 
In analogy with the construction of $\PA^\GG$, we could be considering a Poisson vertex algebra $(V,\del,\br{-_\lambda-})$ endowed with a left action of the group $\GG$ by Poisson vertex automorphisms. This means that for any $g\in \GG$, the automorphism $g\cdot - \colon  V\to V$ satisfies 
\begin{equation} \label{Eq:PVAaut}
g\cdot \del (\tilde{F})=\del(g\cdot \tilde{F})\,, \quad 
g\cdot \br{\tilde{F}_\lambda \tilde{G}}=  \br{(g\cdot \tilde{F})_\lambda (g\cdot \tilde{G})}\,, \qquad \tilde{F},\tilde{G}\in V\,.
\end{equation}
By restriction, the subalgebra of $\GG$-invariant elements $V^\GG$  
is equipped with the differential and the $\lambda$-bracket from $V$, hence it is a Poisson vertex algebra. 

Assume that $A$ is an object in $\PA^{\GG}$. 
Consider $V:=\Jinf(A)$ with the Poisson vertex algebra structure described in Lemma \ref{Lem:PAtoPVA}. 
We can extend the $\GG$-action of $A$ to $V$ by requiring that the first equality in \eqref{Eq:PVAaut} is satisfied; the second equality in \eqref{Eq:PVAaut} will then also hold as it is satisfied for $\tilde{F},\tilde{G}\in A\subset V$. 
As in the previous paragraph, we can consider the Poisson vertex subalgebra $V^\GG\subset V$. 
Nevertheless,  $V^\GG=\Jinf(A)^\GG$ will be far too big compared to $\Jinf(A^\GG)$ in most situations, as can be seen e.g. in Example  \ref{Ex:invMat} below.

\begin{example} \label{Ex:invMat}
Let $A=\kk[u_{ij} \mid 1\leq i,j\leq N]$ be the algebra of functions on $\gl_N$ endowed with the adjoint action of $\Gl_N$ by conjugation. (The choice of Poisson bracket on $A$ is irrelevant.)
It is well-known that $A^{\Gl_N}$ is generated by $\tr(u),\ldots,\tr(u^N)$, 
where $\tr(u^\ell):=\sum_{i_1,\ldots,i_\ell} u_{i_1i_2} \cdots u_{i_\ell i_1}$. 
Meanwhile, we can realise $V=\Jinf(A)$ as $\kk[u_{ij}^{(r)} \mid 1\leq i,j\leq N,\,r\geq 0]$ endowed with the differential $\del\colon u_{ij}^{(r)}\mapsto u_{ij}^{(r+1)}$. Then $V$ inherits a $\Gl_N$-action compatible with $\del$ by conjugation of each `matrix' $u^{(r)}:=(u_{ij}^{(r)})_{1\leq i,j\leq N}$. 
Any function $\tr(\!(u^{(r)})^2):=\sum_{i,j} u^{(r)}_{ij} u^{(r)}_{ji}$, $r\geq1$, belongs to $V^{\Gl_N}$ but not to $\Jinf(A^{\Gl_N})$. 
\end{example}

For this reason, we consider a different construction of the algebra of invariants, 
which is more natural from the point of view of jet algebras and arc spaces. 
Assume from now on that $A$ is equipped with a Poisson bracket and $\GG$ acts by Poisson automorphisms as in \S\ref{ss:PoiRedNC} (i.e.~$A$ is an element of $\PA^\GG$). Equip $\Jinf(A)$ with the Poisson vertex algebra structure induced from $A$ under the functor $\mathtt{J}\colon \PA\to \PVA$ of Lemma \ref{Lem:PAtoPVA}.  
We remark that $\Jinf\PA^\GG:=\mathtt{J}(\PA^\GG)$ is a subcategory of $\PVA$. Its objects inherit an action of $\Jinf(\GG)$ such that its morphisms are  $\Jinf(\GG)$-equivariant. Indeed, if $\phi\colon A_1\to A_2$ in $\PA^\GG$, then its $\GG$-equivariance amounts to $\rho_2\circ \phi=(\phi\otimes \Id_{\kk[\GG]})\circ \rho_1$ where $\rho_i$ is the coaction of $A_i$ from \eqref{Eq:rhoV}, hence we have an analogous equality for $\phi_\infty\colon  \Jinf(A_1)\to \Jinf(A_2)$.  

As a consequence of Theorem \ref{Th:invariants_are_PVA}, 
we get a functor $\mathtt{R}_\infty\colon \Jinf \PA^\GG\to \PVA$. 


\subsubsection{} 
We form the category $\PA_{\GG;0}$ as follows. 
An object in $\PA_{\GG;0}$ is a pair $(A,A^\GG)$ where
$A$ is a commutative algebra equipped with a left action of $\GG$ and  
the subalgebra $A^\GG$ of $\GG$-invariant elements is endowed with a Poisson bracket.  
A morphism $\phi:(A_1,A_1^\GG)\to (A_2,A_2^\GG)$ is given by a $\GG$-equivariant morphism $\phi:A_1\to A_2$ of commutative algebras that restricts to a morphism $\phi:A_1^\GG\to A_2^\GG$ of Poisson algebras. We can obviously rewrite the Poisson reduction functor $\mathtt{R}:\PA^\GG\to \PA$ from \S\ref{ss:PoiRedNC} as a functor $\mathtt{R}:\PA^\GG\to \PA_{\GG;0}$. 
We can also rewrite the functor $\tr_N:\HoP\to \PA$ from Proposition \ref{Pr:HOPtoPA} as a functor $\tr_N:\HoP\to \PA_{\Gl_N;0}$ defined on objects by $\tr_N: \AA\mapsto (\AA_N,\AA_N^{\Gl_N})$ because we only need to see $\AA_N$ as a commutative algebra. 
In particular, we can rephrase the commutativity of the front face from 
Proposition \ref{Pr:ComFront}  as the equality of the  functors 
\begin{equation*}
\DPA \stackrel{(-)_N}{\longrightarrow} \PA^{\Gl_N} \stackrel{\mathtt{R}}{\longrightarrow} \PA_{\Gl_N;0}\,, \qquad 
\DPA \stackrel{\sharp}{\longrightarrow} \HoP \stackrel{\tr_N}{\longrightarrow} \PA_{\Gl_N;0}\,.
\end{equation*}

Next, we define the category $\Jinf \PA_{\GG;0}$ having objects $(\Jinf(A),\Jinf(A^\GG))$ for any $(A,A^\GG)\in \PA_{\GG;0}$ and whose morphisms are jets of morphisms in $\PA_{\GG;0}$. Note that we see $\Jinf(A)$ as a differential algebra while $\Jinf(A^\GG)$ is seen as a Poisson vertex algebra. 
We can then rewrite the jet functor $\mathtt{J}:\PA\to \PVA$ as a functor $\mathtt{J}:\PA_{\GG;0}\to \Jinf \PA_{\GG;0}$.

We shall abuse notation from now on and we simply write an object $(A,A^\GG)$ in $\PA_{\GG;0}$ as $A^\GG$, and do the same in $\Jinf \PA_{\GG;0}$. 

The introduction of the categories $\PA_{\GG;0}$ and $\Jinf \PA_{\GG;0}$ has the following advantage : the morphism $j_A$ \eqref{Eq:InvMorph} can be lifted to 
a functor $j_{(-)}:\Jinf \PA_{\GG;0}\to \PVA$. 
Indeed, if $\phi_\infty:\Jinf(A_1^\GG)\to \Jinf(A_2^\GG)$ is a morphism in $\Jinf \PA_{\GG;0}$, it must be the restriction of the jet $\phi_\infty:\Jinf(A_1)\to \Jinf(A_2)$ of a morphism $\phi:A_1\to A_2$ in $\PA^\GG$. 
Since the $\GG$-invariance of $\phi$ implies that $\phi_\infty$ is $\Jinf(\GG)$-equivariant, we obtain trivially a commutative diagram 
\begin{center}
    \begin{tikzpicture}
 \node  (TopLeft) at (-2,1) {$\Jinf(A_1^\GG)$};
 \node  (TopRight) at (2,1) {$\Jinf(A_1)^{\Jinf(\GG)}$};
 \node  (BotLeft) at (-2,-1) {$\Jinf(A_2^\GG)$};
 \node  (BotRight) at (2,-1) {$\Jinf(A_2)^{\Jinf(\GG)}$}; 
\path[->,>=angle 90,font=\small]  
   (TopLeft) edge node[above] {$j_{A_1}$} (TopRight) ;
   \path[->,>=angle 90,font=\small]  
   (BotLeft) edge node[above] {$j_{A_2}$} (BotRight) ;
\path[->,>=angle 90,font=\small]  
   (TopLeft) edge node[left] {$\phi_\infty$}  (BotLeft) ;
\path[->,>=angle 90,font=\small]  
   (TopRight) edge node[right] {$\phi_\infty$} (BotRight) ;  
   \end{tikzpicture}
\end{center}
from which we see that $j_{(-)}$ sends the restriction $\Jinf(A_1^\GG)\to \Jinf(A_2^\GG)$ of $\phi_\infty:\Jinf(A_1)\to \Jinf(A_2)$ onto the restriction $\Jinf(A_1)^{\Jinf(\GG)}\to \Jinf(A_2)^{\Jinf(\GG)}$.  

\subsubsection{} 
We are now in position to prove the commutativity of the right face of Figure \ref{Fig:PoiRed}. 
It follows by specializing the next lemma to the case $\GG=\Gl_N$. 

\begin{lemma}[Commutativity of the right face] \label{Lem:RightF}
 The following diagram is commutative 
\begin{center}
    \begin{tikzpicture}
 \node  (TopLeft) at (-2,1) {$\PA^\GG$};
\node  (TopRight) at (2,1) {$ \Jinf\PA^\GG$};
\node  (BotLeft) at (-2,-1) {$\PA_{\GG;0}$};
\node (BotMid) at (0,-1) {$\Jinf \PA_{\GG;0}$};
\node  (BotRight) at (2,-1) {$\PVA$}; 
\path[->,>=angle 90,font=\small]  
   (TopLeft) edge node[above] {$\mathtt{J}$} (TopRight) ;
\path[->,>=angle 90,font=\small]  
    (BotLeft) edge node[above] {$\mathtt{J}$} (BotMid) ;
\path[->,>=angle 90,font=\small]  
    (BotMid) edge node[above] {$j_{(-)}$} (BotRight) ;
\path[->,>=angle 90,font=\small]  
    (TopLeft) edge node[left] {$\mathtt{R}$}  (BotLeft) ;
\path[->,>=angle 90,font=\small]  
    (TopRight) edge node[right] {$\mathtt{R}_\infty$} (BotRight) ;  
 \node  (TopLeft2) at (4,1) {$A$};
\node  (TopRight2) at (9,1) {$ \Jinf(A)$};
\node  (BotLeft2) at (4,-1) {$A^\GG$};
\node (BotMid2) at (6,-1) {$\Jinf(A^\GG)$};
\node  (BotRight2) at (9,-1) {$\Jinf(A)^{\Jinf(\GG)}$};  
\path[->,>=angle 90,font=\small]  
   (TopLeft2) edge (TopRight2) ;
\path[->,>=angle 90,font=\small]  
    (BotLeft2) edge (BotMid2) ;
\path[->,>=angle 90,font=\small]  
    (BotMid2) edge (BotRight2) ;
\path[->,>=angle 90,font=\small]  
    (TopLeft2) edge (BotLeft2) ;
\path[->,>=angle 90,font=\small]  
    (TopRight2) edge (BotRight2) ;     
   \end{tikzpicture}
\end{center}
\end{lemma}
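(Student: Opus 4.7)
The plan is to verify commutativity separately on objects and on morphisms; both checks should reduce to near-tautologies once Theorem~\ref{Th:invariants_are_PVA} is invoked, and indeed the whole lemma is essentially a repackaging of that theorem together with the functoriality of the jet construction. No serious obstacle is expected.

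First I would fix an object $A$ of $\PA^\GG$ and unfold both compositions. Along the top--right route, $\mathtt{J}$ produces the Poisson vertex algebra $\Jinf(A)$ with the induced $\Jinf(\GG)$-action (coming from the action of $\GG$ on $A$ through the morphism $\rho_\infty$ of \S\ref{ss:JetInv}), and $\mathtt{R}_\infty$ returns the Poisson vertex subalgebra $\Jinf(A)^{\Jinf(\GG)}$, well-defined by Theorem~\ref{Th:invariants_are_PVA} since $\GG$ acts on $A$ by Poisson automorphisms. Along the bottom--left route, $\mathtt{R}$ yields the pair $(A,A^\GG)$ in $\PA_{\GG;0}$, the next $\mathtt{J}$ produces $(\Jinf(A),\Jinf(A^\GG))$ in $\Jinf\PA_{\GG;0}$, and finally $j_{(-)}$ sends this pair to $\Jinf(A)^{\Jinf(\GG)}$ by the very definition of the functor $j_{(-)}$ given in \S\ref{ss:RightF}. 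Hence the two outputs coincide as objects of $\PVA$.

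Next I would fix a morphism $\phi\colon A_1\to A_2$ in $\PA^\GG$, i.e.\ a $\GG$-equivariant Poisson morphism. Its jet $\phi_\infty\colon \Jinf(A_1)\to\Jinf(A_2)$ is automatically $\Jinf(\GG)$-equivariant, so it restricts to a morphism
\begin{equation*}
\phi_\infty^{\mathrm{inv}}\colon \Jinf(A_1)^{\Jinf(\GG)}\longrightarrow \Jinf(A_2)^{\Jinf(\GG)},
\end{equation*}
which is a Poisson vertex algebra morphism by Theorem~\ref{Th:invariants_are_PVA}. This is exactly $\mathtt{R}_\infty(\mathtt{J}(\phi))$. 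Going the other way, $\mathtt{R}(\phi)$ is the pair consisting of $\phi$ itself together with its restriction $\phi|_{A_1^\GG}\colon A_1^\GG\to A_2^\GG$; taking jets gives the pair $(\phi_\infty, (\phi|_{A_1^\GG})_\infty)$ in $\Jinf \PA_{\GG;0}$, and then $j_{(-)}$ returns precisely $\phi_\infty^{\mathrm{inv}}$ according to the commutative square displayed just before the statement of the lemma (which itself expresses the naturality of $j_A$ in $A$).

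Thus on objects and on morphisms the two compositions agree, establishing the lemma. The only genuine content is Theorem~\ref{Th:invariants_are_PVA}; everything else is bookkeeping inherent to the definitions of $\mathtt{R}$, $\mathtt{R}_\infty$, $\mathtt{J}$, and $j_{(-)}$. In particular, the potential subtlety that $j_A$ need not be an isomorphism plays no role here, since both compositions land at the target $\Jinf(A)^{\Jinf(\GG)}$ of $j_A$ rather than at its source.
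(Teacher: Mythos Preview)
Your proposal is correct and follows essentially the same approach as the paper's own proof: both verify commutativity on objects and morphisms separately, invoking Theorem~\ref{Th:invariants_are_PVA} as the only substantive input, and both observe that the morphism check reduces to the fact that both compositions send $\phi$ to the restriction of $\phi_\infty$ to $\Jinf(\GG)$-invariants. The paper's version is marginally more explicit in noting that the $\lambda$-brackets along both routes agree because each is obtained by restricting the bracket on $\Jinf(A)$ (equivalently, by the jet construction of Lemma~\ref{Lem:PAtoPVA} applied to $A$ or $A^\GG$), a point you leave implicit when you say the outputs ``coincide as objects of $\PVA$''; but this is a difference of emphasis, not of substance.
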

\begin{proof}
Starting with $A\in \PA^{\GG}$, we end up with the differential algebra $\Jinf(A)^{\Jinf(\GG)}$ under both functors. 
At the level of the Poisson (vertex) bracket, note that under $\mathtt{R}$ (or $\mathtt{R}_\infty$ and $j_{(-)}$) we restrict the bracket to an invariant subalgebra of $A$ (or $\Jinf(A)$). 
Thus comparing the $\lambda$-brackets obtained under the composite of functors given above is direct since they are obtained from $A$ or the subalgebra $A^\GG$ by the jet construction of Lemma \ref{Lem:PAtoPVA}. 

Let us now check that both sides of the diagram send a morphism to the same morphism in $\PVA$. 
The functor  $\mathtt{R}$ (or $\mathtt{R}_\infty$) sends a morphism to its restriction to invariant elements. 
Thus, given a morphism $\phi\colon A_1\to A_2$ in $\PA^{\GG}$, we end up with the map 
$\phi_\infty \colon \Jinf(A_1)^{\Jinf(\GG)}\to \Jinf(A_2)^{\Jinf(\GG)}$ in both cases. 
In particular the two composites of functors $\mathtt{R}_\infty \circ \mathtt{J}$ and $j_{(-)}\circ \mathtt{J}\circ \mathtt{R}$ commute.  
\end{proof}

\subsection{\texorpdfstring{$H_0$}{H0}-Poisson vertex structures} \label{ss:HOPconf}

We introduce a suitable `vertex' analogue of Crawley-Boevey's $H_0$-Poisson structures \cite{CB} that were considered in \S\ref{ss:PoiRedNC}. The rough idea consists in replacing algebras with differential algebras, and Lie brackets with Lie vertex brackets. 

We denote by $\VV$ a (unital associative) differential algebra over $\kk$. Its differential  $\del\in \Der(\VV)$ is a derivation, hence it descends to a linear map $H_0(\VV)\to H_0(\VV)$ also denoted by $\del$.   

\begin{definition}  \label{Def:HOPV}
An \emph{$H_0$-Poisson vertex structure} on $\VV$ (or on $H_0(\VV)$) is a Lie vertex bracket $$[-_\lambda-]_\sharp\colon H_0(\VV)\times H_0(\VV)\longrightarrow H_0(\VV)[\lambda], \qquad (a_\sharp,b_\sharp) \longmapsto [a_\sharp{}_\lambda b_\sharp]_\sharp\,,$$ 
(in the sense of Remark \ref{Rem:LVA}) 
such that the $\kk$-linear map 
$[a_\sharp{}_\lambda -]_\sharp\colon  H_0(\VV)\to H_0(\VV)[\lambda]$ is induced for any $a\in \VV$ by a map $\del_a\in \Der(\VV,\VV[\lambda])$ satisfying 
\begin{equation} \label{Eq:HoP-cond}
   \del_a(\del(b)\!) = (\del+\lambda)\del_a(b)\,, \qquad  \forall\, b\in \VV\,.
\end{equation}   
\end{definition}
\begin{definition} 
For $i=1,2$, let $\VV_i$ be endowed with an $H_0$-Poisson vertex structure $[-{}_\lambda-]_{\sharp,i}$. 
A morphism of differential algebras $\varphi\colon \VV_1\to \VV_2$ is a \emph{$H_0$-Poisson vertex morphism} if the induced map $\bar{\varphi}\colon H_0(\VV_1)\to H_0(\VV_2)$ is a morphism of Lie vertex algebras with respect to the Lie vertex brackets $[-{}_\lambda-]_{\sharp,i}$.  

\noindent We denote by $\HoPV$ the category of $H_0$-Poisson vertex structures, whose  objects are given by differential algebras equipped with an $H_0$-Poisson vertex structure, and whose morphisms are $H_0$-Poisson vertex morphisms. 
\end{definition} 

Following \cite[\S3.2]{DSKV}, let $(\VV,\del,\dgal{-_\lambda-})$ be a double Poisson vertex algebra. Using the multiplication on $\VV$, we get a linear map $\mathrm{m}\circ \dgal{-_\lambda-} \colon  \VV\otimes \VV\to \VV[\lambda]$. 
It descends to $H_0(\VV)$ through 
\begin{equation} \label{Eq:HOvert-ind}
[-_\lambda-]_\sharp\colon H_0(\VV)\times H_0(\VV)\longrightarrow H_0(\VV)[\lambda], \qquad 
[a_\sharp{}_\lambda b_\sharp]_\sharp := (\mathrm{m} \circ \dgal{a_\lambda b} )_\sharp \,,   
\end{equation}
where on the right-hand side $a,b\in \VV$ are arbitrary lifts of $a_\sharp,b_\sharp\in H_0(\VV)$.  
\begin{proposition}  \label{Pr:DPVA-HOPV}
 The map $[-{}_\lambda-]_\sharp$ on $H_0(\VV)$ defined through \eqref{Eq:HOvert-ind} is a Lie vertex bracket, hence it endows $\VV$ with an $H_0$-Poisson vertex structure. 
 Furthermore, if $\VV_1,\VV_2$ are double Poisson vertex algebras and $\theta\colon \VV_1\to \VV_2$ is a morphism of double Poisson vertex algebras, then $\theta$ is a $H_0$-Poisson vertex morphism for the corresponding $H_0$-Poisson vertex structures obtained through \eqref{Eq:HOvert-ind}. 
\end{proposition}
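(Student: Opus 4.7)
The plan is to construct, for each $a \in \VV$, a $\kk$-linear map $\del_a\colon \VV \to \VV[\lambda]$ by the formula
\[
\del_a(b) \,:=\, \mathrm{m}\circ \dgal{a_\lambda b},
\]
where $\mathrm{m}\colon \VV\otimes\VV \to \VV$ is the multiplication, and to show that this map is both the derivation lift required by Definition \ref{Def:HOPV} and the source of the Lie vertex bracket via $[a_\sharp{}_\lambda b_\sharp]_\sharp := \overline{\del_a(b)}$. The left Leibniz rule \eqref{Eq:DL} forces $\del_a(bc) = \del_a(b)\,c + b\,\del_a(c)$, so $\del_a\in \Der(\VV,\VV[\lambda])$; and since $\mathrm{m}$ intertwines $\del_L+\del_R$ on $\VV\otimes\VV$ with $\del$ on $\VV$, the sesquilinearity \eqref{Eq:DA1} immediately gives $\del_a(\del b) = (\del+\lambda)\del_a(b)$, which is condition \eqref{Eq:HoP-cond}. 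As a derivation, $\del_a$ stabilises $[\VV,\VV]$ and so descends to a map $H_0(\VV)\to H_0(\VV)[\lambda]$.

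The next step is to check that $[a_\sharp{}_\lambda b_\sharp]_\sharp$ is well-defined in its first argument, which reduces to showing $\del_{ab}(c) = \del_{ba}(c)$ in $\VV[\lambda]$ for all $a,b,c$: a direct expansion of the right Leibniz rule \eqref{Eq:DR}, writing $\dgal{a_\lambda c} = \sum_n A_n'\otimes A_n''\,\lambda^n$ and similarly for $b$, shows that both expressions equal $\sum_{n,k}\binom{n}{k}\lambda^{n-k}\bigl[A_n'\,\del^k(b)\,A_n'' + B_n'\,\del^k(a)\,B_n''\bigr]$. With this in hand, I will verify the three axioms of a Lie vertex bracket (in the sense of Remark \ref{Rem:LVA}) on $H_0(\VV)$: sesquilinearity is inherited from the corresponding property of $\del_a$ together with the descent of $\del$ to $H_0(\VV)$; skewsymmetry follows from \eqref{Eq:DA2} via the basic observation $\mathrm{m}(d^\sigma) = d''d' \equiv d'd'' = \mathrm{m}(d) \pmod{[\VV,\VV]}$ for $d\in \VV\otimes\VV$, so that $[a_\sharp{}_\lambda b_\sharp]_\sharp = -\bigl|_{x=\del}[b_\sharp{}_{-\lambda-x}a_\sharp]_\sharp$; and the Jacobi identity is obtained by applying the triple multiplication $\mathrm{M}\colon \VV^{\otimes 3}\to\VV$ to the double Jacobi identity \eqref{Eq:DA3}, using \eqref{Eq:DL}--\eqref{Eq:DR} to re-express $\mathrm{m}(\dgal{a_\lambda \mathrm{m}(\dgal{b_\mu c})})$ as $\mathrm{M}(\dgal{a_\lambda\dgal{b_\mu c}}_L) + \mathrm{M}(\dgal{a_\lambda\dgal{b_\mu c}}_R)$, and then exploiting the cyclic invariance of products in $H_0(\VV)$ to match terms.

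For the functoriality statement, a morphism $\theta\colon \VV_1\to \VV_2$ of double Poisson vertex algebras satisfies $\theta\circ\mathrm{m}_1 = \mathrm{m}_2\circ\theta^{\otimes 2}$ and, by \eqref{defmorphdpvas}, intertwines the double $\lambda$-brackets; combining these identities gives $\theta\bigl(\del_{a,1}(b)\bigr) = \del_{\theta(a),2}\bigl(\theta(b)\bigr)$, and passing to the $H_0$-quotients shows that the induced map $\bar\theta\colon H_0(\VV_1)\to H_0(\VV_2)$ respects the brackets $[-{}_\lambda-]_{\sharp,i}$. The main technical obstacle will be the Jacobi identity: although the overall strategy is the natural vertex-algebra analogue of Van den Bergh's derivation of an $H_0$-Poisson structure from a double Poisson bracket, the presence of the auxiliary decorations $\dgal{-_\lambda -}_L, \dgal{-_\lambda -}_R$ in \eqref{Eq:DA3}, combined with the $\lambda,\mu$-dependence produced by the sesquilinearity and Leibniz rules, makes the term-by-term matching modulo $[\VV,\VV][\lambda,\mu]$ noticeably more delicate than in the non-vertex setting.
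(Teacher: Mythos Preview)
Your proposal is correct and follows essentially the same approach as the paper: the paper defines $\del_a := \mathrm{m}\circ\dgal{a_\lambda -}$, notes it is a derivation by \eqref{Eq:DL} and satisfies \eqref{Eq:HoP-cond} by \eqref{Eq:DA1}, and then simply cites Theorem~3.6(a)--(d) of \cite{DSKV} for the Lie vertex bracket axioms rather than reproving them as you do; your functoriality argument is identical to the paper's. Your direct verification (well-definedness in the first argument via the right Leibniz rule, skewsymmetry via $\mathrm{m}(d^\sigma)\equiv \mathrm{m}(d)$ modulo $[\VV,\VV]$, and Jacobi via the triple multiplication applied to \eqref{Eq:DA3}) is precisely the content of the cited result in \cite{DSKV}, so your proof is self-contained where the paper defers to the literature.
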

\begin{proof}
The map \eqref{Eq:HO-ind} is a Lie vertex bracket by Theorem 3.6 (a)-(d) in  \cite{DSKV}.  Therefore it is a $H_0$-Poisson vertex structure since for any $a\in \VV$,  $[a_\sharp{}_\lambda -]_\sharp$  is induced by $\del_a:=\mathrm{m} \circ \dgal{a_\lambda -}$ which is a derivation by \eqref{Eq:DL} compatible with $\del$ by  \eqref{Eq:DA1}.  

For the second part, we only need to check that $\bar{\theta}\colon H_0(\VV_1)\to H_0(\VV_2)$ is a morphism of Lie vertex algebras. If $a_\sharp,b_\sharp\in H_0(\VV_1)$ admit lifts $a,b\in\VV_1$, then $\mathrm{m} \circ \dgal{a_\lambda b}_1$ is a lift of $[a_\sharp{}_\lambda b_\sharp]_{\sharp,1}$ while $\mathrm{m} \circ \dgal{\theta(a)_\lambda \theta(b)}_2$ is a lift of $[(\theta(a)\!)_\sharp{}_\lambda (\theta(b)\!)_\sharp]_{\sharp,2}$. This yields
\begin{equation}
\begin{aligned}
      \bar{\theta} ([a_\sharp{}_\lambda b_\sharp]_{\sharp,1})
  &= (\theta(\mathrm{m} \circ \dgal{a_\lambda b}_1)\!)_\sharp
 = (\mathrm{m} \circ \theta^{\otimes 2}\dgal{a_\lambda b}_1)_\sharp
 = (\mathrm{m} \circ \dgal{\theta(a)_\lambda \theta(b)}_2)_\sharp \,,\\
[\bar{\theta}(a_\sharp){}_\lambda \bar{\theta}(b_\sharp)]_{\sharp,2} 
&= [(\theta(a)\!)_\sharp{}_\lambda (\theta(b)\!)_\sharp]_{\sharp,2}
= (\mathrm{m} \circ \dgal{\theta(a)_\lambda \theta(b)}_2)_\sharp\,,
\end{aligned}
\end{equation}
where we used that $\theta$ is a morphism of double Poisson vertex algebras in the first line. Hence  $\bar{\theta}$ intertwines the two Lie vertex brackets, as desired. 
\end{proof}
As a consequence of this proposition, we get a functor $\sharp_\mathtt{V}\colon \DPVA\to \HoPV$.

\begin{example} \label{Ex:TrivHoPV}
Any (commutative) Poisson vertex algebra $(V,\del,\br{-_\lambda-})$ has a $H_0$-Poisson vertex structure defined by $[a {}_\lambda b]_{\sharp}:=\br{a_\lambda b}_\sharp$ for any $a,b\in H_0(V)=V$. 
\end{example}
\begin{example}
    Consider the commutative algebra $V=\kk[x,y]$ with differential $\del \equiv 0_V$. 
By reproducing the end of Example 2.4 in \cite{CB}, 
we can show that if $(V,\del,\dgal{-_\lambda-})$ is a double Poisson vertex algebra, the $H_0$-Poisson vertex structure induced by \eqref{Eq:HOvert-ind} is identically zero. 
In particular, applying Example \ref{Ex:TrivHoPV} to $V$ equipped with the Poisson vertex bracket satisfying 
$$\br{x_\lambda x}=0, \quad \br{y_\lambda y}=0, \quad  \br{x_\lambda y}=1\,,$$
yields an object in $\HoPV$ which is not in $\sharp_{\mathtt{V}}(\DPVA)$.  
\end{example}
\begin{remark}
Fix $\ell \geq 2$. 
It is proved by Powell \cite{Po} that if the polynomial algebra $\kk[x_1,\ldots,x_\ell]$ has a double Poisson bracket, then it is identically zero. 
It is an open problem to know whether a similar vanishing holds for a double $\lambda$-bracket on $\kk[x_1^{(r)} ,\ldots,x_\ell^{(r)} \mid r\geq 0]$ with differential  $\del(x_j^{(r)})=x_j^{(r+1)}$, $1\leq j \leq \ell$. 
\end{remark}

\subsection{Relative \texorpdfstring{$H_0$}{H0}-Poisson vertex structures and jets}
\label{ss:HOPVrel}

We let $\VV$ be a differential algebra. 
Consider a vector space $\hat{\VV}\subset \VV$ stable under the differential, i.e., $\del(\hat{\VV})\subset \hat{\VV}$. 
We denote by $H_0(\hat{\VV})$ the image of the composite map $\hat{\VV}\hookrightarrow \VV \twoheadrightarrow H_0(\VV)$ which is the subspace spanned by $\hat{\VV}$ in $H_0(\VV)$.
By assumption, $\del$ descends to a linear map on  $H_0(\hat{\VV})$. 

\begin{definition}  \label{Def:HOPVsub}
An \emph{$H_0$-Poisson vertex structure} on the pair $\hat{\VV}\subset \VV$ (or on $H_0(\hat{\VV})$) is a Lie vertex bracket $$[-_\lambda-]_\sharp\colon H_0(\hat{\VV})\times H_0(\hat{\VV})\longrightarrow H_0(\hat{\VV})[\lambda], \qquad (a_\sharp,b_\sharp) \longmapsto [a_\sharp{}_\lambda b_\sharp]_\sharp\,,$$ 
(in the sense of Remark \ref{Rem:LVA}) 
such that the $\kk$-linear map 
$[a_\sharp{}_\lambda -]_\sharp\colon  H_0(\hat{\VV})\to H_0(\hat{\VV})[\lambda]$ is induced for any $a\in \hat{\VV}$ by a map $\del_a\in \Der(\VV,\VV[\lambda])$ satisfying \eqref{Eq:HoP-cond}. 
\end{definition}
Let us emphasize that $\hat{\VV}$ may not inherit the multiplication from $\VV$, hence each linear map $\del_a$ is a derivation on $\VV$ that sends $\hat{\VV}$ into $\hat{\VV}[\lambda]$. We obviously recover Definition \ref{Def:HOPV}  when $\hat{\VV}= \VV$.

\begin{definition} 
For $i=1,2$, let $\hat{\VV}_i \subset \VV_i$ be endowed with an $H_0$-Poisson vertex structure $[-{}_\lambda -]_{\sharp,i}$. 
A morphism of differential algebras $\varphi\colon \VV_1\to \VV_2$ is a \emph{$H_0$-Poisson vertex morphism (relative to $\hat{\VV}_1$ and $\hat{\VV}_2$)} if $\varphi(\hat{\VV}_1)\subset \hat{\VV}_2$ and the induced map $\bar{\varphi}\colon H_0(\hat{\VV}_1)\to H_0(\hat{\VV}_2)$ is a morphism of Lie vertex algebras with respect to the Lie vertex brackets $[-{}_\lambda-]_{\sharp,i}$.  

\noindent We denote by $\widehat{\HoPV}$ the category of $H_0$-Poisson vertex structures, whose  objects are given by $H_0$-Poisson vertex structures on pairs $\hat{\VV}\subset \VV$ (as in Definition \ref{Def:HOPVsub}), and whose morphisms are (relative) $H_0$-Poisson vertex morphisms. 
\end{definition} 
We see $\HoPV$ as a subcategory of $\widehat{\HoPV}$ consisting of the pairs $\hat{\VV}\subset \VV$ where $\hat{\VV}= \VV$. We now explain the importance of the category $\widehat{\HoPV}$ when considering jets, i.e. when $\VV=\Jinf(\AA)$.

\begin{proposition} \label{Pr:HoP-HoPV}
 Let $\AA$ be endowed with a $H_0$-Poisson structure denoted $[-,-]_\sharp$. Consider the following vector space inside $\Jinf(\AA)$: 
 \begin{equation}  \label{Eq:VectInf}
     \Vect_\infty(\AA):=\operatorname{span}_\kk \{ \del^r(a) \mid a\in \AA,\,\, r\geq 0 \} \,.
 \end{equation}
 There exists a unique $H_0$-Poisson vertex structure on $\Vect_\infty(\AA)\subset \Jinf(\AA)$, denoted $[-_\lambda -]_\sharp$, that satisfies 
 \begin{equation}  \label{Eq:VectId}
[a_\sharp {}_\lambda b_\sharp]_\sharp :=  [a_\sharp , b_\sharp]_\sharp \, \lambda^0\,,   
 \end{equation}
 with $a:=\del^0(a)$ and $b:=\del^0(b)$ taken in $\Vect_\infty(\AA)$ 
 for any $a,b\in \AA$. 
 Furthermore, this construction extends to a functor $\Vect_\infty\colon \HoP \to \widehat{\HoPV}$. 
\end{proposition}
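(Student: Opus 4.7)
The plan is to follow the pattern used in Lemma~\ref{Lem:DPAtoDPVA}: sesquilinearity forces a unique definition on a spanning set, and the remaining axioms reduce to their counterparts on $H_0(\AA)$. Uniqueness is immediate: any Lie vertex bracket on $H_0(\Vect_\infty(\AA))$ extending \eqref{Eq:VectId} must, by the two sesquilinearity relations of \eqref{Eq:A1}, satisfy
$$[\del^r(a)_\sharp{}_\lambda\del^s(b)_\sharp]_\sharp=(-\lambda)^r(\lambda+\del)^s[a_\sharp,b_\sharp]_\sharp$$
for all $a,b\in\AA$ and $r,s\geq 0$, and this determines the $\lambda$-bracket on the spanning set of $H_0(\Vect_\infty(\AA))$.

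For existence, the crucial step is to construct, for each $a\in\AA$, a derivation $\widehat{\del}_a\colon\Jinf(\AA)\to\Jinf(\AA)[\lambda]$ extending the given $\del_a\in\Der(\AA)$ and satisfying the sesquilinearity~\eqref{Eq:HoP-cond}. The cleanest approach combines the universal property of the jet algebra with a dual-number trick: equip the algebra $R=\Jinf(\AA)[\lambda][\epsilon]/(\epsilon^2)$ with the $\kk$-derivation $D$ determined by $D|_{\Jinf(\AA)}=\del$, $D(\lambda)=0$, $D(\epsilon)=\lambda\epsilon$ (well-defined since $D(\epsilon^2)=2\lambda\epsilon^2=0$). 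The algebra morphism $\AA\to R$, $b\mapsto b+\epsilon\del_a(b)$, lifts uniquely to a morphism of differential $\kk$-algebras $\Jinf(\AA)\to R$, and writing this lift as $x\mapsto x+\epsilon\widehat{\del}_a(x)$, the multiplicativity and $\del$-compatibility conditions yield at the $\epsilon$-level that $\widehat{\del}_a$ is a derivation satisfying $\widehat{\del}_a(\del x)=(\del+\lambda)\widehat{\del}_a(x)$. A direct computation using $\widehat{\del}_a(\del^r(b_0))=(\del+\lambda)^r\del_a(b_0)$ then shows that $\widehat{\del}_a$ sends $\Vect_\infty(\AA)$ into $\Vect_\infty(\AA)[\lambda]$, as required by Definition~\ref{Def:HOPVsub}.

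I would then define $[a_\sharp{}_\lambda c_\sharp]_\sharp:=(\widehat{\del}_a(c))_\sharp$ for $a\in\AA$ and $c\in\Vect_\infty(\AA)$, and extend to $a\in\Vect_\infty(\AA)$ by the rule $[\del^r(a)_\sharp{}_\lambda c_\sharp]_\sharp:=(-\lambda)^r[a_\sharp{}_\lambda c_\sharp]_\sharp$ together with $\kk$-bilinearity. Well-definedness splits in two: in the second argument, if $c-c'\in[\Jinf(\AA),\Jinf(\AA)]$ then $\widehat{\del}_a(c-c')\in[\Jinf(\AA),\Jinf(\AA)][\lambda]$, since any derivation of $\Jinf(\AA)[\lambda]$ preserves the commutator subspace; in the first argument, two lifts $\del_a,\del_{a'}\in\Der(\AA)$ of $[a_\sharp,-]_\sharp$ differ by a derivation $D$ of $\AA$ with image in $[\AA,\AA]$, and the corresponding $\widehat{\del}_a,\widehat{\del}_{a'}$ therefore induce the same map on $H_0(\Vect_\infty(\AA))$ because $(\del+\lambda)^r$ stabilises $[\Jinf(\AA),\Jinf(\AA)][\lambda]$. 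The Lie vertex axioms then reduce cleanly: sesquilinearity is built in; skewsymmetry \eqref{Eq:A2} and the Jacobi identity~\eqref{Eq:A3} are polynomial identities in $\lambda,\mu$ whose coefficients, after pulling all powers of $\del$ outside via sesquilinearity, become exactly the skewsymmetry and Jacobi identity of $[-,-]_\sharp$ on $H_0(\AA)$.

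Functoriality is then routine: a morphism $\varphi\colon\AA_1\to\AA_2$ in $\HoP$ extends uniquely to a differential algebra morphism $\varphi_\infty\colon\Jinf(\AA_1)\to\Jinf(\AA_2)$, which preserves $\Vect_\infty$ by construction, and compatibility with the $\lambda$-brackets is checked on pairs $a,b\in\AA_1$, where it amounts to $\bar\varphi$ being a Lie algebra morphism for $[-,-]_\sharp$. The main obstacle in the whole argument is the well-definedness of the bracket: without the dual-number construction of $\widehat{\del}_a$, producing a derivation $\Jinf(\AA)\to\Jinf(\AA)[\lambda]$ with the correct restriction to $\AA$ and satisfying sesquilinearity would be awkward, and without the stability of $[\Jinf(\AA),\Jinf(\AA)][\lambda]$ under $(\del+\lambda)^r$ the construction would fail to descend to $H_0(\Vect_\infty(\AA))$.
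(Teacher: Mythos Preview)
Your proof is correct and follows the same overall architecture as the paper's: force the formula by sesquilinearity, build a derivation $\widehat{\del}_a\in\Der(\Jinf(\AA),\Jinf(\AA)[\lambda])$ lifting $\del_a$ and satisfying \eqref{Eq:HoP-cond}, check independence of the lift, then reduce skewsymmetry and Jacobi to the Lie axioms on $H_0(\AA)$, and finally verify functoriality on jets of morphisms.

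The one genuine methodological difference is your construction of $\widehat{\del}_a$. The paper simply \emph{declares} $\widehat{\del}_a(\del^r(c)):=(\lambda+\del)^r\del_a(c)$ on generators and extends as a derivation, checking compatibility with the defining relations \eqref{Eq:Jinf-Rel} of $\Jinf(\AA)$ by an induction (mirroring the computation \eqref{Eq:PtoPA-d}--\eqref{Eq:PtoPA-e} in Lemma~\ref{Lem:DPAtoDPVA}). Your dual-number trick packages this more conceptually: equipping $R=\Jinf(\AA)[\lambda][\epsilon]/(\epsilon^2)$ with the derivation $D$ satisfying $D(\epsilon)=\lambda\epsilon$ and invoking the universal property of $\Jinf(\AA)$ produces $\widehat{\del}_a$ automatically, with the Leibniz and sesquilinearity identities falling out of the $\epsilon$-component of ``$\Phi$ is a differential algebra morphism''. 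This buys you a cleaner existence argument with no relation-checking, at the cost of a slightly more abstract setup; the paper's approach is more hands-on but requires the explicit verification. Both routes handle the delicate point---independence of the bracket from the chosen lift $\del_a$, via the observation that a derivation with values in $[\AA,\AA]$ extends to one with values in $[\Jinf(\AA),\Jinf(\AA)][\lambda]$---in the same way.
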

\begin{proof}
   By Definition \ref{Def:HOP}, for any $a\in \AA$  there exists $\del_a\in \Der(\AA)$ that induces $[a_\sharp , -]_\sharp \in \End(H_0(\AA)\!)$. 
We can extend $\del_a$ uniquely as a derivation $\Jinf(\AA)\to  \Jinf(\AA)[\lambda]$ by requiring that 
\begin{equation} \label{Eq:VectPf1}
    \del_a(\del^r(c)\!):=(\lambda+\del)^r\, \del_a(c)\,, \qquad \forall c\in \AA,\,\, r\geq 0\,.
\end{equation}
The element $\del_a\in \Der(\Jinf(\AA), \Jinf(\AA)[\lambda])$ hence obtained is well-defined, and it satisfies \eqref{Eq:HoP-cond} for any $b\in \Jinf(\AA)$. Indeed, \eqref{Eq:HoP-cond} is checked on $b=\del^{r_1}(c_1)\cdots \del^{r_\ell}(c_\ell)$, where $r_j\geq 0$ and $c_j\in \AA$ for $1\leq j\leq \ell$, by induction on $\ell\geq1$, noting that the case $\ell=1$ holds by \eqref{Eq:VectPf1}.  

For any $a_\sharp \in H_0(\AA)\subset  H_0(\Vect_\infty(\AA)\!)$,  
we set 
 \begin{equation}  \label{Eq:VectPf2}
[a_\sharp {}_\lambda b_\sharp]_\sharp :=  (\del_a(b)\!)_\sharp\,,   \quad \text{for any }b \in \Vect_\infty(\AA)\,.
 \end{equation}
Clearly,  \eqref{Eq:VectId} is satisfied. We also note that \eqref{Eq:VectPf2} is independent of the chosen $\del_a\in \Der(\AA)$: if $\tilde{\del}_a\in \Der(\AA)$ is another lift of  $[a_\sharp , -]_\sharp \in \End(H_0(\AA)\!)$, 
then $\del_a - \tilde{\del}_a\in \Der(\AA)$ takes value in $[\AA,\AA]$ and therefore its extension $\del_a - \tilde{\del}_a\in \Der(\Jinf(\AA), \Jinf(\AA)[\lambda])$ constructed as above takes value in  $[\Jinf(\AA),\Jinf(\AA)]\otimes \kk[\lambda]$, so it restricts to the zero map $H_0(\Vect_\infty(\AA)\!)\to H_0(\Vect_\infty(\AA)\!)[\lambda]$. 

Next, we fix $\del^r(a)\in \Vect_\infty(\AA)$ for $a\in \AA$ and $r\geq 0$. 
We let 
 \begin{equation}  \label{Eq:VectPf3}
[(\del^r(a)\!)_\sharp {}_\lambda b_\sharp]_\sharp := (-\lambda)^k (\del_a(b)\!)_\sharp\,,   \quad \text{for any }b \in \Vect_\infty(\AA)\,,
 \end{equation}
which is induced by $(-\lambda^k)\,\del_a\in \Der(\Jinf(\AA), \Jinf(\AA)[\lambda])$ that satisfies \eqref{Eq:HoP-cond}. We recover \eqref{Eq:VectPf2} for $r=0$. By extending \eqref{Eq:VectPf3} linearly in the first argument, we have defined a bilinear map 
\begin{equation*}
[-_\lambda-]_\sharp\colon H_0(\Vect_\infty(\AA)\!)\times H_0(\Vect_\infty(\AA)\!) \longrightarrow H_0(\Vect_\infty(\AA)\!)[\lambda]\,.
\end{equation*}
The sesquilinearity rules \eqref{Eq:A1} hold by construction. 
For skewsymmetry \eqref{Eq:A2}, we compute for any $r,s\geq 0$ and $a,c\in \Vect_\infty(\AA)$ 
\begin{equation*}
[(\del^r(a)\!)_\sharp {}_\lambda (\del^s c)_\sharp]_\sharp 
= (-\lambda)^r (\lambda+\del)^s [a_\sharp {}_\lambda c_\sharp]_\sharp    
= (-\lambda)^r (\lambda+\del)^s [a_\sharp , c_\sharp]_\sharp  \,. 
\end{equation*}
Using that we have a $H_0$-Poisson structure, $[a_\sharp , c_\sharp]_\sharp=-[c_\sharp , a_\sharp]_\sharp$. in terms of the operator $\mu:=-\lambda-\del$, we can write 
\begin{equation*}
[(\del^r(a)\!)_\sharp {}_\lambda (\del^s c)_\sharp]_\sharp 
=-\Big|_{\mu=-\lambda-\del} (-\mu)^s (\mu+\del)^r  [c_\sharp , a_\sharp]_\sharp 
=-\Big|_{\mu=-\lambda-\del} [(\del^s c)_\sharp {}_{\mu} (\del^r(a)\!)_\sharp ]_\sharp 
\,,
\end{equation*}
as desired. 

Finally, we have to check Jacobi identity \eqref{Eq:A3}. For any $r,s,t\geq 0$ and $a,b,c\in \AA$, a standard computation using sesquilinearity \eqref{Eq:A1} yields  
\begin{equation}
\begin{aligned}
  &[(\del^r(a)\!)_\sharp {}_\lambda [(\del^s(b)\!)_\sharp {}_\mu (\del^t c)_\sharp]_\sharp ]_\sharp  
  - [(\del^s(b)\!)_\sharp {}_\mu [(\del^r(a)\!)_\sharp {}_\lambda (\del^t c)_\sharp]_\sharp ]_\sharp   
  - [[ (\del^r(a)\!)_\sharp {}_\lambda (\del^s(b)\!)_\sharp]_{\lambda + \mu} (\del^t c)_\sharp]_\sharp ]_\sharp   \\
&=(-\lambda)^r (-\mu)^s (\lambda+\mu+\del)^t 
\Big([a_\sharp {}_\lambda [b_\sharp {}_\mu c_\sharp]_\sharp ]_\sharp  
- [b_\sharp {}_\mu [a_\sharp {}_\lambda c_\sharp]_\sharp ]_\sharp   
- [[ a_\sharp {}_\lambda b_\sharp]_{\lambda + \mu} c_\sharp]_\sharp ]_\sharp\Big)\,.   
\end{aligned}    
\end{equation}
Hence we are left to check Jacobi identity in $H_0(\AA)$ where, by \eqref{Eq:VectId}, it becomes the Jacobi identity of the Lie bracket $[-,-]_\sharp$. 

For the functoriality, consider a $H_0$-Poisson morphism $\varphi\colon \AA_1\to\AA_2$. Passing to the jets, we get a morphism of differential algebras $\varphi_\infty \colon \Jinf(\AA_1)\to\Jinf(\AA_2)$, so that $\varphi_\infty(\Vect_\infty(\AA_1)\!)\subset \Vect_\infty(\AA_2)$. This last morphism induces the map 
$\overline{\varphi}_\infty\colon H_0(\Vect_\infty(\AA_1)\!)\to H_0(\Vect_\infty(\AA_2)\!)$, whose restriction to $H_0(\AA_1)$ equals $\overline{\varphi}$ because the restriction of $\varphi_\infty$ to $\AA_1$ is $\varphi$. We can then compute  for any $r,s\geq 0$ and $a,b\in \AA_1$
\begin{equation}
\begin{aligned}
   \overline{\varphi}_\infty( [(\del^r(a)\!)_\sharp {}_\lambda (\del^s b)_\sharp]_{\sharp,1}  ) 
&=(-\lambda)^r (\lambda+\del)^s \,\overline{\varphi}( [a_\sharp , b_\sharp]_{\sharp,1}  ) \\
&=(-\lambda)^r (\lambda+\del)^s \,[\overline{\varphi}(a_\sharp) , \overline{\varphi}(b_\sharp)]_{\sharp,2} \\
&= \,[\overline{\varphi}_\infty(\!(\del^r(a)\!)_\sharp) \, {}_\lambda  \, \overline{\varphi}_\infty(\!(\del^s(b)\!)_\sharp)]_{\sharp,2} \,,
\end{aligned}
\end{equation}
after using sesquilinearity, \eqref{Eq:VectId} and that $\overline{\varphi}$ is a morphism of Lie algebras. 
In particular, $\overline{\varphi}_\infty$ is a morphism of Lie vertex algebras and in turn $\varphi_\infty \colon \Jinf(\AA_1)\to\Jinf(\AA_2)$ is a $H_0$-Poisson vertex morphism. 
\end{proof}

Recall from Proposition \ref{Pr:DPAtoDPA} the functor $\mathtt{J}\colon \DPA\to \DPVA$. The image of this functor, denoted $\Jinf\DPA$, defines a subcategory of $\DPVA$, hence we can consider $\mathtt{J}\colon \DPA\to \Jinf\DPA$ as a fully faithful functor. 
Let $(\Jinf(\AA),\del,\dgal{-,-})$ be an object in $\Jinf\DPA$ defined from a double Poisson algebra $(\AA,\dgal{-,-})$. Consider the vector space $\Vect_\infty(\AA) \subset \Jinf(\AA)$ defined by \eqref{Eq:VectInf}. 
Combining \eqref{Eq:PtoPA-a} and sesquilinearity \eqref{Eq:DA1}, the double $\lambda$-bracket on $\Jinf(\AA)$ restricts to a bilinear map
\begin{equation*}
\mathrm{m} \circ \dgal{-_\lambda -} \colon  \Vect_\infty(\AA) \times \Vect_\infty(\AA) \longrightarrow \Vect_\infty(\AA)[\lambda]\,, \quad 
(a,b) \longmapsto \mathrm{m} \circ \dgal{a_\lambda b}\,.
\end{equation*}
This operation descends to $H_0(\Vect_\infty(\AA)\!)$ where it is the restriction of \eqref{Eq:HOvert-ind}, which we also denote by $[-_\lambda-]_\sharp$. 
The following result is a direct consequence of Proposition \ref{Pr:DPVA-HOPV} by restriction from $H_0(\Jinf(\AA)\!)$  to $H_0(\Vect_\infty(\AA)\!)$. 
\begin{corollary}  \label{Cor:JDPA-HOPV}
 The map $[-{}_\lambda-]_\sharp$ on $H_0(\Vect_\infty(\AA)\!)$ endows $\Vect_\infty(\AA)\subset \Jinf(\AA)$ with an $H_0$-Poisson vertex structure. 
 Furthermore, if $\theta\colon \AA_1\to \AA_2$ is a morphism in $\Jinf\DPA$, then $\theta$ is a $H_0$-Poisson vertex morphism relative to $\Vect_\infty(\AA_1)$ and $\Vect_\infty(\AA_2)$ endowed with the $H_0$-Poisson vertex structures obtained through \eqref{Eq:HOvert-ind}. 
\end{corollary}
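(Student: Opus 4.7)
The plan is to derive this corollary as a restriction of Proposition \ref{Pr:DPVA-HOPV}, applied to the double Poisson vertex algebra $\Jinf(\AA)$ coming from Lemma \ref{Lem:DPAtoDPVA}. That proposition already endows $H_0(\Jinf(\AA))$ with a Lie vertex bracket $[-_\lambda-]_\sharp$ defined by $[a_\sharp {}_\lambda b_\sharp]_\sharp = (\mathrm{m}\circ \dgal{a_\lambda b})_\sharp$, together with the requisite derivation lifts. The core issue is therefore to check that this entire structure restricts to the subspace $H_0(\Vect_\infty(\AA))$ in the sense of Definition \ref{Def:HOPVsub}.

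First I would verify the basic stability properties. The subspace $\Vect_\infty(\AA)$ is stable under $\del$ by definition, since $\del\bigl(\del^r(a)\bigr)=\del^{r+1}(a)$. Next, for arbitrary generators $\del^r(a),\del^s(b)\in \Vect_\infty(\AA)$ with $a,b\in\AA$, combining \eqref{Eq:PtoPA-a} with sesquilinearity \eqref{Eq:DA1} yields
\begin{equation*}
\dgal{\del^r(a)_\lambda \del^s(b)}=(-\lambda)^r(\lambda+\del)^s\dgal{a,b}\in (\AA\otimes\AA)[\lambda,\del],
\end{equation*}
so applying $\mathrm{m}$ gives an element of $\Vect_\infty(\AA)[\lambda]$, because $\mathrm{m}\circ\dgal{a,b}\in\AA\subset\Vect_\infty(\AA)$ and $\del$ preserves $\Vect_\infty(\AA)$. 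Passing to $H_0$, the bracket $[-_\lambda-]_\sharp$ indeed restricts to a map $H_0(\Vect_\infty(\AA))\times H_0(\Vect_\infty(\AA))\to H_0(\Vect_\infty(\AA))[\lambda]$, and the Lie vertex axioms \eqref{Eq:A1}--\eqref{Eq:A3} hold on the restricted space because they already hold on the larger $H_0(\Jinf(\AA))$.

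To satisfy the derivation condition of Definition \ref{Def:HOPVsub}, I would take $\del_a:=\mathrm{m}\circ\dgal{a_\lambda -}\in \Der(\Jinf(\AA),\Jinf(\AA)[\lambda])$ for any $a\in\Vect_\infty(\AA)$; this is a derivation by the left Leibniz rule \eqref{Eq:DL} and satisfies \eqref{Eq:HoP-cond} by sesquilinearity \eqref{Eq:DA1}, and by construction it induces $[a_\sharp{}_\lambda -]_\sharp$ on $H_0(\Vect_\infty(\AA))$. For the functorial part, any morphism $\theta\colon \Jinf(\AA_1)\to\Jinf(\AA_2)$ in $\Jinf\DPA$ is the jet of a double Poisson morphism $\AA_1\to\AA_2$, hence commutes with $\del$ and sends each $\del^r(a)$ to $\del^r(\theta(a))$; in particular $\theta(\Vect_\infty(\AA_1))\subset \Vect_\infty(\AA_2)$. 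The Lie vertex algebra morphism $\bar\theta$ between the full $H_0(\Jinf(\AA_i))$ provided by Proposition \ref{Pr:DPVA-HOPV} then restricts to a morphism between the $H_0(\Vect_\infty(\AA_i))$, which is exactly the relative $H_0$-Poisson vertex morphism condition.

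No serious obstacle is expected: once the containment $\mathrm{m}\circ\dgal{\Vect_\infty(\AA)_\lambda \Vect_\infty(\AA)}\subset \Vect_\infty(\AA)[\lambda]$ is observed, the result is a straightforward transfer of structure from the ambient $H_0$-Poisson vertex algebra on $H_0(\Jinf(\AA))$ to the spanning subspace $H_0(\Vect_\infty(\AA))$.
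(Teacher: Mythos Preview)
Your proposal is correct and follows exactly the approach indicated by the paper: the corollary is stated as a direct consequence of Proposition \ref{Pr:DPVA-HOPV} by restriction from $H_0(\Jinf(\AA))$ to $H_0(\Vect_\infty(\AA))$, and the key containment $\mathrm{m}\circ\dgal{\Vect_\infty(\AA)_\lambda \Vect_\infty(\AA)}\subset \Vect_\infty(\AA)[\lambda]$ that you verify is precisely the observation the paper records just before stating the corollary. Your handling of the morphism part, via the fact that morphisms in $\Jinf\DPA$ are jets of double Poisson morphisms and hence preserve $\Vect_\infty$, is also what is implicitly intended.
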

Thus, the functor $\sharp_{\mathtt{V}}\colon \DPVA \to \HoPV$ restricts to a functor 
 $\sharp_{\mathtt{V}}\colon \Jinf\DPA \to \widehat{\HoPV}$.


\begin{proposition}[Commutativity of the left face] \label{Pr:ComLeft}  
 The following diagram is commutative: 
 \begin{center}
    \begin{tikzpicture}
 \node  (TopLeft) at (-2,1) {$\DPA$};
\node  (TopRight) at (2,1) {$\Jinf\DPA$};
\node  (BotLeft) at (-2,-1) {$\HoP$}; 
\node  (BotRight) at (2,-1) {$\widehat{\HoPV}$}; 
\path[->,>=angle 90,font=\small]  
   (TopLeft) edge node[above] {$\mathtt{J}$} (TopRight) ;
\path[->,>=angle 90,font=\small]  
    (BotLeft) edge node[above] {$\Vect_\infty$} (BotRight) ; 
\path[->,>=angle 90,font=\small]  
    (TopLeft) edge node[left] {$\sharp$}  (BotLeft) ;
\path[->,>=angle 90,font=\small]  
    (TopRight) edge node[right] {$\sharp_{\mathtt{V}}$} (BotRight) ;  
 \node  (TopLeft2) at (4,1) {$\AA$};
\node  (TopRight2) at (7.5,1) {$\Jinf\AA$};
\node  (BotLeft2) at (4,-1) {$H_0(\AA)$}; 
\node  (BotRight2) at (7.5,-1) {$H_0(\Vect_\infty(\AA))$};  
\path[->,>=angle 90,font=\small]  
   (TopLeft2) edge (TopRight2) ;
\path[->,>=angle 90,font=\small]  
    (BotLeft2) edge (BotRight2) ; 
\path[->,>=angle 90,font=\small]  
    (TopLeft2) edge (BotLeft2) ;
\path[->,>=angle 90,font=\small]  
    (TopRight2) edge (BotRight2) ;     
   \end{tikzpicture}
\end{center}
\end{proposition}

\begin{proof}  
Given a double Poisson algebra $(\AA,\dgal{-,-})$, we end up with the pair $\Vect_\infty(\AA)\subset \Jinf(\AA)$ in both cases, which is equipped with two $H_0$-Poisson vertex structures. Let us prove that these structures are the same; this amounts to show that the Lie vertex bracket takes the same value on arbitrary elements $(\del^r(a)\!)_\sharp, (\del^s(b)\!)_\sharp \in \Vect_\infty(\AA)$, where $r,s\geq 0$ and $a,b\in \AA$. 
On the one hand, we compute using \eqref{Eq:HOvert-ind} and Lemma \ref{Lem:DPAtoDPVA} (for $\sharp_{\mathtt{V}}$ and $\mathtt{J}$, respectively)
\begin{equation} \,
[(\del^r(a)\!)_\sharp{}_\lambda (\del^s(b)\!)_\sharp]_\sharp 
= (\mathrm{m} \circ \dgal{\del^r(a)_\lambda \del^s(b)})_\sharp 
=(\!(-\lambda)^r (\lambda+\del)^s\, \mathrm{m} \circ \dgal{a, b})_\sharp \,.
\end{equation}
On the other hand, we can apply Proposition \ref{Pr:HoP-HoPV} and \eqref{Eq:HO-ind} (for $\Vect_\infty$ and $\sharp$, respectively); we should note that $[a_\sharp,-]_\sharp$ is lifted by the derivation $\del_a:=\mathrm{m}\circ \dgal{a,-}\in \Der(\AA)$ which is extended to an element of $\Der(\Jinf(\AA),\Jinf(\AA)[\lambda])$ satisfying \eqref{Eq:HoP-cond}. This leads to 
\begin{equation} 
  \begin{aligned}
      \, 
[(\del^r(a)\!)_\sharp{}_\lambda (\del^s(b)\!)_\sharp]_\sharp &= 
(-\lambda)^r \, (\del_a (\del^s(b) )\!)_\sharp \\
&= (-\lambda)^r \, (\!(\lambda+\del)^s\, \del_a (b)\!)_\sharp=
 (\!(-\lambda)^r (\lambda+\del)^s\, \mathrm{m}\circ \dgal{a,b})_\sharp \,.
  \end{aligned}
\end{equation} 
The functors $\sharp,\sharp_{\mathtt{V}}$ send a morphism to itself while $\mathtt{J},\Vect_\infty$ send a morphism to its jet, if we only consider the algebra structure. 
Thus, for a morphism of double Poisson algebras $\theta\colon \AA_1\to \AA_2$, both approaches yield the morphism  $\theta_\infty\colon \Jinf(\AA_1)\to \Jinf(\AA_2)$ and its restrictions to $\Vect_\infty(\AA_1)$ and $H_0(\Vect_\infty(\AA_1)\!)$. 
\end{proof}


We explain the importance of relative $H_0$-Poisson vertex structures when going to representation algebras. 
Fix a differential algebra $\VV$. 
We denote by $\VV_N^{\tr}\subset \VV_N^{\Gl_N}$ the differential subalgebra of the representation algebra generated by the elements $\{\tr(a)  \mid a\in \VV \}$ as in \S\ref{ss:PoiRedNC}. (By assumption on the field, if $\VV$ is of finite type then $\VV_N^{\tr}= \VV_N^{\Gl_N}$). 
Consider a vector space  $\hat{\VV}\subset \VV$ stable under the differential. We can similarly define $\hat{\VV}_N^{\tr}\subset \VV_N^{\Gl_N}$ as the subalgebra generated by the elements $\{\tr(a)  \mid a\in \hat{\VV} \}$, which is itself a differential algebra with differential satisfying $\del(\tr(a)\!)=\tr(\del(a)\!)$.  
It is clear that the trace map $\tr\colon \hat{\VV}\to \hat{\VV}_N^{\tr}$ factors through $H_0(\hat{\VV})$. 
We have the following generalisation of Crawley-Boevey's theory \cite{CB}. 

\begin{theorem} \label{THM:HoPV-tr} 
Assume that the pair $\hat{\VV}\subset \VV$ has a $H_0$-Poisson vertex structure $[-_\lambda -]_\sharp$. 
Then $\hat{\VV}_N^{\tr}$ is a Poisson vertex algebra if it is endowed with the unique $\lambda$-bracket satisfying   
\begin{equation} \label{Eq:relHoPV}
   \br{\tr(a)_\lambda \tr(b)} = \tr( [a_\sharp{}_\lambda b_\sharp]_\sharp)\,, \qquad a,b\in \hat{\VV}\,.
\end{equation}
Furthermore, this construction defines a functor $\mathtt{tr}_N\colon \widehat{\HoPV}\to \PVA$.  
\end{theorem}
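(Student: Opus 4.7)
The plan is to adapt the proof of the non-vertex analogue (Proposition~\ref{Pr:HOPtoPA}, following Crawley-Boevey) by replacing derivations with $\lambda$-derivations $\VV \to \VV[\lambda]$ throughout. Uniqueness is immediate: since $\hat{\VV}_N^{\tr}$ is a differential algebra generated by $\{\tr(a) \mid a \in \hat{\VV}\}$, the prescription \eqref{Eq:relHoPV} determines the $\lambda$-bracket via sesquilinearity \eqref{Eq:A1} and the Leibniz rules \eqref{Eq:lL}--\eqref{Eq:rL}.

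For existence, the main ingredient is, for each $a \in \hat{\VV}$, a lift $\del_a \in \Der(\VV, \VV[\lambda])$ of $[a_\sharp{}_\lambda -]_\sharp$ satisfying \eqref{Eq:HoP-cond}, which exists by Definition~\ref{Def:HOPVsub}. Such a $\del_a$ extends coefficient-by-coefficient in $\lambda$ to a derivation $D_a \in \Der(\VV_N, \VV_N[\lambda])$, determined on generators by $D_a(b_{ij}) := \sum_n (b_n)_{ij}\, \lambda^n$ when $\del_a(b) = \sum_n b_n\, \lambda^n$; in particular $D_a(\tr(b)) = \tr(\del_a(b))$. I would first establish three properties: (i) $D_a$ restricts to $\hat{\VV}_N^{\tr} \to \hat{\VV}_N^{\tr}[\lambda]$, because $\del_a(\hat{\VV}) \subset \hat{\VV}[\lambda]$ and $\hat{\VV}_N^{\tr}$ is generated by traces of elements of $\hat{\VV}$; (ii) $D_a \circ \del = (\del + \lambda) \circ D_a$, a direct consequence of \eqref{Eq:HoP-cond}; (iii) the restriction $D_a|_{\hat{\VV}_N^{\tr}}$ depends only on $\tr(a)$, because two lifts of $[a_\sharp{}_\lambda -]_\sharp$ differ by a derivation valued in $[\VV,\VV] \otimes \kk[\lambda]$, which is killed by the trace on generators. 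I would then set $\{\tr(a)_\lambda F\} := D_a(F)$, extend using left sesquilinearity to $\{\del^k \tr(a)_\lambda F\} := (-\lambda)^k D_a(F)$, and finally extend by the right Leibniz rule \eqref{Eq:rL} to arbitrary first arguments.

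Next, I would verify the Poisson vertex axioms. Left sesquilinearity on generators, $\{\tr(\del a)_\lambda \tr(b)\} = -\lambda\, \{\tr(a)_\lambda \tr(b)\}$, reduces via \eqref{Eq:relHoPV} to sesquilinearity of the Lie vertex bracket on $H_0(\hat{\VV})$; right sesquilinearity is (ii); the Leibniz rules hold by construction; skewsymmetry on generators translates directly to skewsymmetry of $[-_\lambda-]_\sharp$. For the Jacobi identity I would invoke \cite[Lemma~3.4]{DSKV} to reduce to triples of generators $(\tr(a), \tr(b), \tr(c))$, expand each term via \eqref{Eq:relHoPV}, and conclude using the Jacobi identity of $[-_\lambda-]_\sharp$ on $H_0(\hat{\VV})$. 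For functoriality, a morphism $\varphi$ in $\widehat{\HoPV}$ satisfies $\varphi(\hat{\VV}_1) \subset \hat{\VV}_2$ and induces an algebra morphism $\tr_N(\varphi) \colon (\hat{\VV}_1)_N^{\tr} \to (\hat{\VV}_2)_N^{\tr}$ by $\tr(a) \mapsto \tr(\varphi(a))$, which is a morphism of Poisson vertex algebras by comparing $\lambda$-brackets on generators, using that $\bar{\varphi}$ is a morphism of Lie vertex algebras.

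The main obstacle is the Jacobi identity: expanding $\{\tr(a)_\lambda \{\tr(b)_\mu \tr(c)\}\}$ requires lifting the inner bracket $[b_\sharp{}_\mu c_\sharp]_\sharp \in H_0(\hat{\VV})[\mu]$ to an element of $\hat{\VV}[\mu]$ in order to apply the outer $\{\tr(a)_\lambda -\}$; property (iii) above is crucial to ensure independence from the lift. The resulting computation will be a $\lambda$-analogue of Crawley-Boevey's key identity $\del_{[a_\sharp, b_\sharp]_\sharp} \equiv [\del_a, \del_b] \pmod{[\VV, \VV]}$, and bookkeeping the $\lambda$ and $\mu$ shifts coming from \eqref{Eq:HoP-cond} will be the delicate part of the proof.
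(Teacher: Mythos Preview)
Your approach is the same as the paper's---both adapt \cite[Theorem~4.5]{CB} by replacing derivations with $\lambda$-derivations---but your justification for (iii) has a gap. The argument you give (two lifts of $[a_\sharp{}_\lambda-]_\sharp$ differ by a derivation into $[\VV,\VV][\lambda]$) only shows that $D_a|_{\hat{\VV}_N^{\tr}}$ is independent of the choice of lift $\del_a$, hence depends only on $a_\sharp\in H_0(\hat{\VV})$. It does \emph{not} show that $D_a$ depends only on $\tr(a)$: the linear map $\tr\colon H_0(\hat{\VV})\to \hat{\VV}_N^{\tr}$ need not be injective (e.g.\ Cayley--Hamilton type relations in $\VV_N$ can produce $a\in\hat{\VV}$ with $a_\sharp\neq 0$ but $\tr(a)=0$). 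Without this, the assignment $\{\tr(a)_\lambda F\}:=D_a(F)$ is not well-defined in the first argument. The paper closes this gap by writing down an explicit symmetric formula for $\{\tilde F_1{}_\lambda\tilde F_2\}$ in terms of chosen expressions of both arguments as products of traces, then proving well-definedness in the second argument directly from the derivation property and well-definedness in the \emph{first} argument from skewsymmetry of $[-_\lambda-]_\sharp$. In your framework this amounts to the identity $D_a(\tr(b))=-\big|_{x=\del}D_b^{(-\lambda-x)}(\tr(a))$, whose right-hand side visibly depends only on $\tr(a)$; since $D_a$ is a derivation this then gives (iii).

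Two smaller points. First, the obstacle you anticipate for Jacobi does not actually arise: once \eqref{Eq:relHoPV} is in hand, on generators one has $\{\tr(a)_\lambda\{\tr(b)_\mu\tr(c)\}\}=\tr([a_\sharp{}_\lambda[b_\sharp{}_\mu c_\sharp]_\sharp]_\sharp)$ directly, with no lifting ambiguity, so the Jacobi identity on generators is immediate from that of $[-_\lambda-]_\sharp$. Second, \cite[Lemma~3.4]{DSKV} is stated for \emph{double} $\lambda$-brackets; to reduce the Jacobi check to generators in the commutative setting the paper instead records the identities $\mathrm{Jac}_{\lambda,\mu}(a,b,cd)=c\,\mathrm{Jac}_{\lambda,\mu}(a,b,d)+d\,\mathrm{Jac}_{\lambda,\mu}(a,b,c)$ and $\mathrm{Jac}_{\lambda,\mu}(a,b,c)=\big|_{x=\del}\mathrm{Jac}_{\mu,-\lambda-\mu-x}(b,c,a)$, which follow from sesquilinearity and the right Leibniz rule.
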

\begin{proof}
The first part of the statement is essentially a $\lambda$-bracket version of \cite[Theorem 4.5]{CB}, whose proof we are adapting. 

Since a $\lambda$-bracket satisfies Leibniz rules, it is completely determined by its value on generators. Hence we get uniqueness since $\hat{\VV}_N^{\tr}$ is generated by the elements, $\tr(a)$, $a\in \hat{\VV}$. 

Let us prove that a $\lambda$-bracket satisfying \eqref{Eq:relHoPV} exists. 
Fix $a\in \hat{\VV}$, and let $\del_a\in \Der(\VV,\VV[\lambda])$ be a derivation inducing the linear map $[a_\sharp{}_\lambda -]_\sharp$. 
We denote by\footnote{Instead of abusively using $\del_a$ for the induced derivation, the notation $D_a^{(\lambda)}$ will be important for 2 reasons: \texttt{1)} we shall change the indeterminate $\lambda$; \texttt{2)} it emphasizes the independence that we are going to prove of $D_a^{(\lambda)}$ with respect to the lift $\del_a$ after restriction to $\hat{\VV}_N^{\tr}$.} $D_a^{(\lambda)}\in \Der(\VV_N,\VV_N[\lambda])$ the induced derivation satisfying $D_a^{(\lambda)}(b_{ij})=(\del_a(b)\!)_{ij}$ for any $b\in \VV$. 
We note that for any $b\in \hat{\VV}$, 
\begin{equation*}
    D_a^{(\lambda)}(\tr(b)\!)=\tr(\del_a(b)\!)=\tr([a_\sharp{}_\lambda b_\sharp]_\sharp) \in \hat{\VV}_N^{\tr}[\lambda]\,.
\end{equation*}
Since $D_a^{(\lambda)}$ is a derivation on the whole of $\VV_N$, this gives for any $b_1,\ldots,b_\ell \in \hat{\VV}$
\begin{equation} \label{Eq:HoPV-pf1}
   D_a^{(\lambda)}(\prod_{\ell_0=1}^\ell \tr(b_{\ell_0})\!)
   =\sum_{1\leq \ell_0\leq \ell}\tr([a_\sharp{}_\lambda (b_{\ell_0})_\sharp]_\sharp) \, \,
   \prod_{\ell_1\neq \ell_0} \tr(b_{\ell_1}) \in \hat{\VV}_N^{\tr}[\lambda]  \,.  
\end{equation}
Hence by linearity $D_a^{(\lambda)}\in \Der(\hat{\VV}_N^{\tr}, \hat{\VV}_N^{\tr}[\lambda])$, and it is readily seen from \eqref{Eq:HoPV-pf1} that $D_a^{(\lambda)}$ is independent of the lift $\del_a$ that we picked (when seen as a map on $\hat{\VV}_N^{\tr}$). Since $\del_a$ satisfies \eqref{Eq:HoP-cond}, we also get 
\begin{equation} \label{Eq:HoPV-pf2}
    D_a^{(\lambda)} \circ \del = (\del+\lambda) \circ D_a^{(\lambda)}\,,
\end{equation}
when evaluated on generators of $\hat{\VV}_N^{\tr}$, hence this holds on $\hat{\VV}_N^{\tr}$.

Let us express two elements $\tilde{F}_1,\tilde{F}_2 \in \hat{\VV}_N^{\tr}$ in terms of generators as
\begin{equation*}
\tilde{F}_1=\sum_{\underline{a}=(a_1,\ldots,a_k)} \gamma_{\underline{a}}^{(1)}\,\prod_{k_0=1}^k \tr(a_{k_0})  \,, \qquad 
\tilde{F}_2=\sum_{\underline{b}=(b_1,\ldots,b_\ell)} \gamma_{\underline{b}}^{(2)}\,\prod_{\ell_0=1}^\ell \tr(b_{\ell_0})\,,   
\end{equation*}
where we sum over a finite number of tuples of elements in $\hat{\VV}$ with $\gamma_\bullet^{(i)}\in \kk$. 
With this notation, we define a bilinear map $\br{-_\lambda-}\colon \hat{\VV}_N^{\tr}\times \hat{\VV}_N^{\tr}\to \hat{\VV}_N^{\tr}[\lambda]$ by 
\begin{equation}
\begin{aligned} \label{Eq:HoPV-pf3}
     \br{\tilde{F}_1 {}_\lambda \tilde{F}_2}= 
\sum_{\underline{a},\underline{b}} \gamma_{\underline{a}}^{(1)} \gamma_{\underline{b}}^{(2)} 
\sum_{ \substack{1\leq k_0\leq k\\ 1\leq \ell_0\leq\ell} } 
& \left(\prod_{\ell_1\neq \ell_0} \tr(b_{\ell_1})\right) \\
&\quad \tr([(a_{k_0})_\sharp{}_{\lambda+x} (b_{\ell_0})_\sharp]_\sharp) \, 
\left(\Big|_{x=\del} \prod_{k_1\neq k_0} \tr(a_{k_1})\right) \, . 
\end{aligned}
\end{equation}
This map is well-defined in the second argument because we can write from \eqref{Eq:HoPV-pf1} 
\begin{equation} \label{Eq:HoPV-pf4}
     \br{\tilde{F}_1 {}_\lambda \tilde{F}_2}= 
\sum_{\underline{a}} \gamma_{\underline{a}}^{(1)} 
\sum_{ 1\leq k_0\leq k }  
D_{a_{k_0}}^{(\lambda+x)}\big(\tilde{F}_2 \big)  
\left(\Big|_{x=\del} \prod_{k_1\neq k_0} \tr(a_{k_1})\right) \, , 
\end{equation}
which is independent of the expression for $\tilde{F}_2$. If we can show that this map satisfies the skewsymmetry \eqref{Eq:A2}, then it will be independent of the expression for $\tilde{F}_1$ as well, and therefore it will be well-defined. 

In order to show skewsymmetry, we compute for any  $\tilde{G},\tilde{H} \in \hat{\VV}_N^{\tr}$ and $a,b\in \hat{\VV}$ 
\begin{equation*}
\begin{aligned}
     \tilde{G} \tr([a_\sharp{}_{\lambda+x} b_\sharp]_\sharp) \, 
\Big|_{x=\del} \tilde{H} 
&=-\tilde{G} \left(\Big|_{y=\del} \tr([b_\sharp{}_{-\lambda-x-y} a_\sharp]_\sharp) \right)\, 
\Big|_{x=\del} \tilde{H}  \\
&=-\Big|_{y=\del}  \left(\tilde{H} \, \tr([b_\sharp{}_{-\lambda-y+z} a_\sharp]_\sharp) \, \Big|_{z=\del} \tilde{G} \right)\, ,
\end{aligned}
\end{equation*}
where we used skewsymmetry of the Lie vertex bracket $[-_\lambda-]_\sharp$ in the first equality, 
then the identity $\alpha \,\del^k(\beta)=(\del-z)^k(\beta\, \big|_{z=\del}\alpha)$ which holds for any $k\geq 0$ on elements $\alpha,\beta$ in a differential algebra. 
Applying this computation to \eqref{Eq:HoPV-pf3} yields the expression for $-\big|_{x=\del} \br{\tilde{F}_2 \, {}_{-\lambda-x} \tilde{F}_1}$, hence skewsymmetry \eqref{Eq:A2} holds. 

It is plain from \eqref{Eq:HoPV-pf4} that $\dgal{-_\lambda-}$ satisfies the left Leibniz rule \eqref{Eq:lL}; by skewsymmetry it must satisfy the right Leibniz rule \eqref{Eq:rL}. 
Combining \eqref{Eq:HoPV-pf2} and \eqref{Eq:HoPV-pf4}, we also get sesquilinearity \eqref{Eq:A1} for  $\dgal{-_\lambda-}$. Thus, it remains to check Jacobi identity \eqref{Eq:A3}. 

In a differential algebra $(V,\del)$ with a $\lambda$-bracket $\br{-_\lambda-}$, a  standard computation using the sesquilinearity \eqref{Eq:A1} and the right Leibniz rule \eqref{Eq:rL} yields:
\begin{equation*}
\begin{aligned}
    \mathrm{Jac}_{\lambda,\mu}(a,b,c)&=
    \Big|_{x=\del}  \mathrm{Jac}_{\mu,-\lambda-\mu-x}(b,c,a) \,,\\
    \mathrm{Jac}_{\lambda,\mu}(a,b,cd)&=c\, \mathrm{Jac}_{\lambda,\mu}(a,b,d) + d\, \mathrm{Jac}_{\lambda,\mu}(a,b,c)\,,
\end{aligned}
\end{equation*}
with $a,b,c,d\in V$ and where $\mathrm{Jac}_{\lambda,\mu}(a,b,c)$ denotes the left-hand side of \eqref{Eq:A3}. 
These two equalities guarantee that Jacobi identity is equivalent to the vanishing of $\mathrm{Jac}_{\lambda,\mu}\colon (\hat{\VV}_N^{\tr})^{\times 3}\to \hat{\VV}_N^{\tr}[\lambda,\mu]$ on generators of $\hat{\VV}_N^{\tr}$. We can conclude as for any  $a,b,c\in \hat{\VV}$, we have in $\hat{\VV}_N^{\tr}$
\begin{equation*} 
  \mathrm{Jac}_{\lambda,\mu}(\tr(a),\tr(b),\tr(c)\!)  
 =\tr\Big(
[a_\sharp{}_{\lambda} [b_\sharp{}_{\mu} c_\sharp]_\sharp]_\sharp 
- [b_\sharp{}_{\mu} [a_\sharp{}_{\lambda} c_\sharp]_\sharp]_\sharp 
-[[a_\sharp{}_{\lambda} b_\sharp]_\sharp {}_{\lambda+\mu} c_\sharp]_\sharp
 \Big)\,, 
\end{equation*}
and this is zero since $[-_{\lambda} -]_\sharp$ is a Lie vertex bracket on $H_0(\hat{\VV})$. 

Consider two pairs $\hat{\VV}_i\subset \VV_i$, $i=1,2$, with $H_0$-Poisson vertex structures, and assume that $\varphi\colon \VV_1\to \VV_2$ is a $H_0$-Poisson vertex morphism relative to those.
The unique induced morphism $\varphi\colon (\VV_1)_N\to (\VV_2)_N$ that satisfies $\varphi(a_{ij})=(\varphi(a)\!)_{ij}$ for $a\in \VV_1$ and $1\leq i,j\leq N$ restricts to the morphism 
$\tr_N(\varphi)\colon (\hat{\VV}_1)_N^{\tr}\to (\hat{\VV}_2)_N^{\tr}$, $\tr_N(\varphi)(\tr_N(a)\!):=\tr_N(\varphi(a)\!)$, because $\varphi(\hat{\VV}_1)\subset  \hat{\VV}_2$ by definition. 
Moreover, we can compute that for any $a,b\in \hat{\VV}_1$, (omitting the dimension index~$N$)
\begin{equation*} 
    \tr(\varphi) (\br{\tr(a)_\lambda \tr(b)}_1) 
    = \tr( \varphi([a_\sharp{}_\lambda b_\sharp]_{\sharp,1})\!)     
=\br{\tr(\varphi)(\tr(a)\!)_\lambda \tr(\varphi)(\tr(b)\!)}_2\,, 
\end{equation*}
where we used \eqref{Eq:relHoPV} and the fact that $\varphi$ intertwines the Lie vertex brackets. Hence $\tr_N(\varphi)$ is a morphism of Poisson vertex algebras. 
\end{proof}

\begin{corollary}    \label{Cor:HoPV-tr} 
Assume that $\VV$ has a $H_0$-Poisson vertex structure $[-_\lambda -]_\sharp$. 
Then $\VV_N^{\tr}$ is a Poisson vertex algebra if it is endowed with the unique $\lambda$-bracket satisfying   
\begin{equation} 
   \br{\tr(a)_\lambda \tr(b)} = \tr( [a_\sharp{}_\lambda b_\sharp]_\sharp)\,, \qquad a,b\in\VV\,.
\end{equation}
Furthermore, this construction defines a functor $\mathtt{tr}_N\colon \HoPV\to \PVA$ which is the restriction of $\mathtt{tr}_N\colon \widehat{\HoPV}\to \PVA$.   
\end{corollary}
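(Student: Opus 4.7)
The plan is to deduce this corollary as the special case $\hat{\VV} = \VV$ of Theorem \ref{THM:HoPV-tr}. First I would observe that an $H_0$-Poisson vertex structure on $\VV$ in the sense of Definition \ref{Def:HOPV} coincides with an $H_0$-Poisson vertex structure on the pair $\VV \subset \VV$ in the sense of Definition \ref{Def:HOPVsub}: the two definitions impose exactly the same conditions, namely a Lie vertex bracket on $H_0(\VV)$ whose adjoint action $[a_\sharp{}_\lambda -]_\sharp$ is induced, for every $a \in \VV$, by a derivation $\del_a \in \Der(\VV,\VV[\lambda])$ satisfying \eqref{Eq:HoP-cond}. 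Consequently, $\HoPV$ identifies with the full subcategory of $\widehat{\HoPV}$ whose objects satisfy $\hat{\VV} = \VV$; the morphism condition $\varphi(\hat{\VV}_1)\subset \hat{\VV}_2$ is automatic once $\hat{\VV}_i = \VV_i$, since $\varphi$ is an algebra morphism.

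Under this identification, $\hat{\VV}_N^{\tr} = \VV_N^{\tr}$ by the very definition of both subalgebras, and the formula \eqref{Eq:relHoPV} of Theorem \ref{THM:HoPV-tr} specialises verbatim to the formula of the corollary. Both the existence and uniqueness of the Poisson vertex algebra structure on $\VV_N^{\tr}$, as well as the functoriality of $\mathtt{tr}_N \colon \HoPV \to \PVA$, then follow directly from the corresponding assertions of Theorem \ref{THM:HoPV-tr}, and the last sentence of the corollary asserting that the new functor is the restriction of the one from the theorem is built into the construction. There is no real obstacle here — all of the substantive work (construction of the $\lambda$-bracket, verification of sesquilinearity, skewsymmetry, Leibniz rules and Jacobi identity, well-definedness, and compatibility with morphisms) has already been carried out in the relative setting. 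The proof of the corollary therefore reduces to pointing out these identifications.
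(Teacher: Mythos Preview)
Your proposal is correct and follows exactly the paper's approach: the paper's proof is a single sentence stating that one views $\HoPV$ as the subcategory of $\widehat{\HoPV}$ consisting of pairs $\hat{\VV}\subset\VV$ with $\hat{\VV}=\VV$ and then applies Theorem~\ref{THM:HoPV-tr}. You have simply spelled out this identification in more detail.
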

\begin{proof}
We see $\HoPV$ as a subcategory of $\widehat{\HoPV}$ made of pairs $\hat{\VV}\subset \VV$ with $\hat{\VV}=\VV$ and then apply Theorem \ref{THM:HoPV-tr}. 
\end{proof}

\begin{remark}
Consider the Definition \ref{Def:HOPV} of a $H_0$-Poisson vertex structure, and assume that $\del=0$ on $\VV$. 
Letting $\lambda=0$ in the Lie vertex bracket and in \eqref{Eq:HoP-cond}, we recover Definition \ref{Def:HOP} of a $H_0$-Poisson structure. 
Repeating the above proofs in that case, Corollary \ref{Cor:HoPV-tr}  yields that $\VV_N^{\tr}$ (with the zero differential and $\lambda$-bracket evaluated at $\lambda=0$) is in fact a Poisson bracket. In other words, this turns $\VV_N^{\tr}$ into a Poisson algebra. We recover from this case Crawley-Boevey's result  \cite[Theorem 4.5]{CB} (over a base field $\kk$).

Similarly, consider the case of a $H_0$-Poisson vertex structure on the pair $\hat{\VV}\subset \VV$ with $\del=0$ in which we set $\lambda=0$. 
Then the same reasoning based on Theorem \ref{THM:HoPV-tr} yields that $\hat{\VV}_N^{\tr}$ is equipped with a Poisson bracket. This turns $\hat{\VV}_N^{\tr}$ into a Poisson algebra, which is a relative version of  \cite[Theorem 4.5]{CB} (over a base field $\kk$). 
This relative version of $H_0$-Poisson structures was not explicitly spelled out as part of \S\ref{ss:PoiRedNC} because we will not be using this observation in the remainder of the paper. 
\end{remark}

\subsection{Interpretation of the functors in the presence of jets} \label{ss:Interpret}
Given a $H_0$-Poisson structure on some $\AA$, recall from Proposition \ref{Pr:HoP-HoPV} that we get a pair $\Vect_\infty(\AA)\subset \Jinf(\AA)$ in $\widehat{\HoPV}$. 
Since $\AA$ is assumed to be of finite type, we have $(\Vect_\infty(\AA)\!)_N^{\tr}\simeq \Jinf(\AA_N^{\Gl_N})$ as differential algebras under the identification $\tr(\del^r(a)\!)\stackrel{\sim}{\longrightarrow} \del^r (\tr(a)\!)$ for any $r\geq 0$ and $a\in \AA$. 
We then get the map $j_{\AA_N}:(\Vect_\infty(\AA)\!)_N^{\tr}\to (\Jinf(\AA)\!)^{\Jinf(\Gl_N)}$, which allows to complete the back face of the cube from Figure \ref{Fig:PoiRed}. 
Note that, to define this map, we need to ensure that we end up in $\Jinf \PA_{\Gl_N;0}$ after applying the functor $\tr_N:\widehat{\HoPV}\to \PVA$ from Theorem \ref{THM:HoPV-tr}. 
Hence we need to define a suitable category of jets inside $\widehat{\HoPV}$. 

Consider the image of $\HoP$ under the functor $\Vect_\infty:\HoP\to \widehat{\HoPV}$ from Proposition \ref{Pr:HoP-HoPV}. It is not hard to see that we can compose morphisms in the image since they are just jets of morphisms from $\HoP$. Therefore the image of $\Vect_\infty$ defines a subcategory of $\widehat{\HoPV}$, which we label $\mathtt{V}_\infty \HoP$. 
By construction, we can restrict the functor from Proposition \ref{Pr:HoP-HoPV} to $\Vect_\infty:\HoP\to \mathtt{V}_\infty \HoP$. We shall now see that the other two functors involving $\widehat{\HoPV}$ that we have constructed can be reformulated using $\mathtt{V}_\infty \HoP$.

\begin{lemma} \label{Lem:SharpV-res}
The functor $\sharp_{\mathtt{V}}:\Jinf\DPA\to \widehat{\HoPV}$ from Corollary \ref{Cor:JDPA-HOPV} restricts to a functor 
$\sharp_{\mathtt{V}}:\Jinf\DPA\to \mathtt{V}_\infty \HoP$.  
\end{lemma}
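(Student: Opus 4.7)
The claim is essentially a corollary of Proposition \ref{Pr:ComLeft} together with the definition of $\mathtt{V}_\infty\HoP$ as the image of $\Vect_\infty\colon \HoP\to\widehat{\HoPV}$. My plan is therefore to check, on objects and on morphisms separately, that the image of $\sharp_{\mathtt{V}}$ restricted to $\Jinf\DPA$ lands in this essential image subcategory, by identifying it with the image under $\Vect_\infty\circ\sharp$.

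First, I would handle objects. An object of $\Jinf\DPA$ is, by definition, of the form $\Jinf\AA$ for some double Poisson algebra $(\AA,\dgal{-,-})$ in $\DPA$, equipped with the double Poisson vertex bracket of Lemma \ref{Lem:DPAtoDPVA}. By Corollary \ref{Cor:JDPA-HOPV}, applying $\sharp_{\mathtt{V}}$ yields the pair $\Vect_\infty(\AA)\subset \Jinf(\AA)$ endowed with the induced $H_0$-Poisson vertex structure \eqref{Eq:HOvert-ind}. Proposition \ref{Pr:ComLeft} asserts precisely that this structure coincides with the one obtained by first applying $\sharp$ to $\AA$, producing a $H_0$-Poisson structure on $\AA$, and then applying the functor $\Vect_\infty\colon \HoP\to \widehat{\HoPV}$ of Proposition \ref{Pr:HoP-HoPV}. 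In other words $\sharp_{\mathtt{V}}(\Jinf\AA)=\Vect_\infty(\sharp(\AA))$ as objects of $\widehat{\HoPV}$, so the right-hand side lies in $\mathtt{V}_\infty\HoP$ by definition.

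Second, I would address morphisms. A morphism in $\Jinf\DPA$ is, by definition, of the form $\mathtt{J}(\theta)=\theta_\infty$ where $\theta\colon \AA_1\to\AA_2$ is a morphism of double Poisson algebras. The functor $\sharp_{\mathtt{V}}$ acts on such a morphism by sending it to $\theta_\infty$ viewed as a relative $H_0$-Poisson vertex morphism between the pairs $\Vect_\infty(\AA_i)\subset\Jinf(\AA_i)$. By the second half of Proposition \ref{Pr:ComLeft}, this coincides with $\Vect_\infty(\sharp(\theta))$, which is manifestly a morphism in $\mathtt{V}_\infty\HoP$.

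Combining these two points, every object and morphism in the image of $\sharp_{\mathtt{V}}|_{\Jinf\DPA}$ lies in $\mathtt{V}_\infty\HoP$, so the functor factors through this subcategory as claimed. There is no real obstacle: the only subtlety to watch is the precise convention on what constitutes a morphism in the essential-image subcategories $\Jinf\DPA$ and $\mathtt{V}_\infty\HoP$, but in both cases we are comparing jets of morphisms of the underlying algebraic structures, and Proposition \ref{Pr:ComLeft} exactly equates the two ways of producing such jets.
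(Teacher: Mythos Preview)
Your proposal is correct and follows essentially the same approach as the paper's proof: both check objects and morphisms separately, use that morphisms in $\Jinf\DPA$ are jets $\theta_\infty$ of morphisms $\theta$ in $\DPA$, and invoke Proposition~\ref{Pr:ComLeft} to identify $\sharp_{\mathtt{V}}(\theta_\infty)$ with $\Vect_\infty(\sharp(\theta))$. Your treatment of the object level is slightly more explicit than the paper's, which simply refers to Corollary~\ref{Cor:JDPA-HOPV}; your invocation of Proposition~\ref{Pr:ComLeft} for objects as well is indeed what makes the argument complete.
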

\begin{proof}
 Observe from Corollary \ref{Cor:JDPA-HOPV} that the functor $\sharp_{\mathtt{V}}:\Jinf\DPA\to \widehat{\HoPV}$ takes value in $\mathtt{V}_\infty \HoP$ on objects. 
Consider a morphism $\theta_\infty:\Jinf(\AA_1)\to \Jinf(\AA_2)$ in  $\Jinf\DPA$. 
By definition, it is the jet of a morphism $\theta:\AA_1\to \AA_2$ in $\DPA$. 
Meanwhile, the morphism that $\theta_\infty$ induces in $\widehat{\HoPV}$ is given by $\theta_\infty$ (seen as a morphism of differential algebras) which restricts to a morphism of Lie vertex algebras $\bar{\theta}_\infty:H_0(\Vect_\infty(\AA_1))\to H_0(\Vect_\infty(\AA_2))$. 
Our aim is to show that the later morphism is of the form $\Vect_\infty(\theta')$ for some morphism $\theta':\AA_1\to \AA_2$ in $\HoP$. 
This is true by taking $\theta'=\sharp(\theta)$ thanks to Proposition \ref{Pr:ComLeft}. 
\end{proof}

\begin{lemma} \label{Lem:HoPV-PVA-res}
The functor $\mathtt{tr}_N\colon \widehat{\HoPV}\to \PVA$ from Theorem \ref{THM:HoPV-tr} restricts to a functor 
$\mathtt{tr}_N\colon \mathtt{V}_\infty \HoP\to \Jinf\PA_{\Gl_N;0}$ by sending  
$\Vect_\infty(\AA)\subset \Jinf(\AA)$ to $(\Jinf(\AA)_N,(\Vect_\infty(\AA))_N^{\tr})$.  
\end{lemma}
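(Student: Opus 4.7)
The plan is to verify the lemma by explicitly identifying $(\Vect_\infty(\AA))_N^{\tr}$ with $\Jinf(\AA_N^{\Gl_N})$ as differential algebras, then matching both the Poisson vertex structures and the action on morphisms.

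First I would establish the identification on objects. Since $\AA$ is of finite type, we have $\AA_N^{\Gl_N}=\AA_N^{\tr}$, generated as a commutative algebra by the elements $\tr(a)$ for $a\in \AA$; hence $\Jinf(\AA_N^{\Gl_N})$ is generated as a differential algebra by these same elements. On the other side, $(\Vect_\infty(\AA))_N^{\tr}$ is by definition generated by $\tr(\del^r a)$ for $a\in\AA$ and $r\geq 0$, and the relation $\tr(\del^r a)=\sum_i (\del^r a)_{ii}=\del^r(\sum_i a_{ii})=\del^r \tr(a)$ produces a canonical isomorphism of differential algebras $(\Vect_\infty(\AA))_N^{\tr}\xrightarrow{\sim}\Jinf(\AA_N^{\Gl_N})$, $\tr(\del^r a)\mapsto \del^r\tr(a)$. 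Together with the identification $\Jinf(\AA)_N\cong \Jinf(\AA_N)$ used in Theorem~\ref{Thm:QJ-Rep} and the fact that $\AA_N^{\Gl_N}$ carries the Poisson bracket $\{\tr(a),\tr(b)\}=\tr([a_\sharp,b_\sharp]_\sharp)$ from Proposition~\ref{Pr:HOPtoPA}, this presents the desired pair as an object of $\Jinf\PA_{\Gl_N;0}$.

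Next I would compare the two Poisson vertex structures on this algebra. On the one hand, Theorem~\ref{THM:HoPV-tr} applied to the pair $\Vect_\infty(\AA)\subset\Jinf(\AA)$ yields the $\lambda$-bracket $\{\tr(u)_\lambda\tr(v)\}=\tr([u_\sharp{}_\lambda v_\sharp]_\sharp)$ for $u,v\in\Vect_\infty(\AA)$. Taking $u=\del^r a$ and $v=\del^s b$ with $a,b\in\AA$, the explicit form of the $H_0$-Poisson vertex structure from Proposition~\ref{Pr:HoP-HoPV} gives $[(\del^r a)_\sharp{}_\lambda(\del^s b)_\sharp]_\sharp=(-\lambda)^r(\lambda+\del)^s[a_\sharp,b_\sharp]_\sharp$, whence
\begin{equation*}
\{\tr(\del^r a)_\lambda \tr(\del^s b)\}=(-\lambda)^r(\lambda+\del)^s\,\tr([a_\sharp,b_\sharp]_\sharp).
\end{equation*}
On the other hand, the jet Poisson vertex bracket on $\Jinf(\AA_N^{\Gl_N})$ given by Lemma~\ref{Lem:PAtoPVA}, combined with the Poisson bracket of Proposition~\ref{Pr:HOPtoPA}, produces by sesquilinearity $\{\del^r\tr(a)_\lambda \del^s\tr(b)\}=(-\lambda)^r(\lambda+\del)^s\tr([a_\sharp,b_\sharp]_\sharp)$. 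The two brackets agree under the identification, so $\mathtt{tr}_N$ sends the object $\Vect_\infty(\AA)\subset\Jinf(\AA)$ to an object of $\Jinf\PA_{\Gl_N;0}$ of the claimed form.

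Finally I would check compatibility on morphisms. A morphism in $\mathtt{V}_\infty\HoP$ is, by construction, of the form $\Vect_\infty(\varphi)=\varphi_\infty$ for some $H_0$-Poisson morphism $\varphi\colon \AA_1\to\AA_2$. Its image under $\mathtt{tr}_N$ sends $\tr(\del^r a)$ to $\tr(\varphi_\infty(\del^r a))=\tr(\del^r\varphi(a))=\del^r\tr(\varphi(a))$, which under the identification becomes $\del^r\tr(a)\mapsto \del^r\tr(\varphi(a))$; this is precisely the jet of the Poisson morphism $\tr_N(\varphi)\colon (\AA_1)_N^{\Gl_N}\to (\AA_2)_N^{\Gl_N}$ from Proposition~\ref{Pr:HOPtoPA}. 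Since $\tr_N(\varphi)$ is the restriction to invariants of the $\Gl_N$-equivariant algebra morphism $\varphi_N\colon (\AA_1)_N\to (\AA_2)_N$, it defines a morphism in $\PA_{\Gl_N;0}$, and its jet is therefore a morphism in $\Jinf\PA_{\Gl_N;0}$, as required. The main obstacle is really only bookkeeping: tracking the various identifications (trace subalgebras versus jet subalgebras, restriction-to-invariants versus the ambient $\Gl_N$-equivariant morphism) so as to fit the resulting data into the precise definition of $\Jinf\PA_{\Gl_N;0}$; no substantial calculation is required beyond the sesquilinearity comparison above.
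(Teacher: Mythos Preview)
Your proof is correct and follows essentially the same approach as the paper: identify $(\Vect_\infty(\AA))_N^{\tr}\cong\Jinf(\AA_N^{\Gl_N})$ via $\tr(\del^r a)\leftrightarrow\del^r\tr(a)$, and check that a morphism $\varphi_\infty$ in $\mathtt{V}_\infty\HoP$ is sent to the jet of the $\Gl_N$-equivariant morphism $\varphi_N$ restricted to invariants, hence to a morphism in $\Jinf\PA_{\Gl_N;0}$. You are in fact more thorough than the paper on the object side: you explicitly verify that the Poisson vertex bracket from Theorem~\ref{THM:HoPV-tr} agrees with the jet Poisson vertex bracket on $\Jinf(\AA_N^{\Gl_N})$ via the sesquilinearity computation, whereas the paper's proof of this lemma treats the object-level statement as essentially immediate (the differential-algebra identification having been recorded in \S\ref{ss:Interpret}) and only spells out the matching of $\lambda$-brackets later, when checking commutativity of the bottom face in \S\ref{subsubbottom}.
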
 
\begin{proof}
 The functor is simply defined on objects by adding the datum of the differential algebra $\Jinf(\AA)$ to 
the Poisson vertex algebra $\hat{\AA}_N^{\tr}$. 
Fix a $H_0$-Poisson vertex morphism $\varphi\colon \AA_1\to \AA_2$ relative to pairs $\Vect_\infty(\AA_i)\subset \AA_i$, $i=1,2$. On representation algebras, we get the $\Gl_N$-equivariant morphism $\varphi_N  \colon (\AA_1)_N\to (\AA_2)_N$ which defines the morphism of Poisson algebras $\tr_N(\varphi)  \colon (\AA_1)_N^{\Gl_N}\to (\AA_2)_N^{\Gl_N}$. 
Taking jets, we have the morphism $(\varphi_N)_\infty \colon \Jinf(\AA_1)_N\to \Jinf(\AA_2)_N$  which restricts to 
$\Jinf((\AA_1)_N^{\Gl_N}) \to  \Jinf((\AA_2)_N^{\Gl_N})$. 
To conclude, it suffices to remark that this is the morphism of Poisson vertex algebras 
$\tr_N(\varphi_\infty):(\Vect_\infty(\AA_1))_N^{\tr} \to  (\Vect_\infty(\AA_2))_N^{\tr}$ from the proof of Theorem \ref{THM:HoPV-tr} after using the identifications 
$\Jinf((\AA_i)_N^{\Gl_N})\simeq (\Vect_\infty(\AA_i))_N^{\tr}$, $i=1,2$. 
\end{proof}

\subsection{Main result}
Fix $N\geq 1$. We are in position to prove the next result. 
\begin{theorem}[Commutativity of the cube]  
\label{Thm:ComPoissonRed}
 The  diagram depicted in Figure \ref{Fig:PoiRed} is commutative. 
\end{theorem}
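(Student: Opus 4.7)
The cube in Figure \ref{Fig:PoiRed} has six faces, four of which have already been shown to commute: the top face is the content of Theorem \ref{Thm:QJ-Rep}, the front face is Proposition \ref{Pr:ComFront}, the left face is Proposition \ref{Pr:ComLeft}, and the right face is Lemma \ref{Lem:RightF} specialised to $\GG = \Gl_N$. My plan is therefore to establish commutativity of the remaining bottom and back faces, the latter being non-standard because of the intermediate node $(\Jinf(\AA)_N)^{\Jinf(\Gl_N)}$ and the Poisson vertex morphism $j_{\AA_N}$ supplied by Theorem \ref{Th:invariants_are_PVA}.

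For the bottom face, I will use the canonical identification $(\Vect_\infty(\AA))_N^{\tr} \simeq \Jinf(\AA_N^{\Gl_N})$ of differential algebras recalled in \S\ref{ss:Interpret}, which sends $\tr(\del^r(a)) \mapsto \del^r(\tr(a))$. Under this identification, both routes from $H_0(\AA)$ produce the same underlying differential algebra, and by the Leibniz and sesquilinearity rules it is enough to check that the two induced $\lambda$-brackets coincide on generators $\tr(\del^r a)$ and $\tr(\del^s b)$, $a,b \in \AA$, $r,s \geq 0$. The route through $\mathtt{V}_\infty \HoP \xrightarrow{\tr_N} \Jinf\PA_{\Gl_N;0}$ gives, via Theorem \ref{THM:HoPV-tr} and the $H_0$-Poisson vertex structure \eqref{Eq:VectPf3}, the $\lambda$-bracket $(-\lambda)^r (\lambda+\del)^s\, \tr([a_\sharp, b_\sharp]_\sharp)$; the route $\mathtt{J} \circ \tr_N$ produces the same expression after applying sesquilinearity \eqref{Eq:A1} to the Poisson bracket $\br{\tr(a), \tr(b)} = \tr([a_\sharp, b_\sharp]_\sharp)$ coming from Proposition \ref{Pr:HOPtoPA}.

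For the back face, both composites from $\Jinf(\AA)$ to $(\Jinf(\AA)_N)^{\Jinf(\Gl_N)}$ send a generator $\del^r(a) \in \Jinf(\AA)$ (with $a \in \AA$) to the $\Jinf(\Gl_N)$-invariant element $\del^r(\sum_j a_{jj}) \in (\Jinf(\AA)_N)^{\Jinf(\Gl_N)}$, so they agree on underlying differential algebras. Theorem \ref{Th:invariants_are_PVA} guarantees that $j_{\AA_N}$ intertwines the Poisson vertex brackets. Combining this with the already established commutativity of the front and bottom faces, one sees that both paths endow the common image with the same $\lambda$-bracket, namely the jet extension of $\tr([a_\sharp, b_\sharp]_\sharp)$ on $\AA_N^{\Gl_N}$. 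The main obstacle is precisely here: since $(\Jinf(\AA)_N)^{\Jinf(\Gl_N)}$ may strictly contain the image of $j_{\AA_N}$ (see the counter-examples discussed in \S\ref{ss:JetInv}), one has to verify that the top-right route — which goes through the full invariant algebra under $\mathtt{R}_\infty$ — does land in the image of $j_{\AA_N}$ and produces the same bracket there as the $j_{\AA_N}$-route; by the Leibniz rules this reduces to the generator calculation above.

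Finally, checking commutativity on morphisms in each of the six faces is a routine consequence of the explicit prescription of every functor involved (taking invariants, descending to $H_0$, jet extension, or passing to the representation algebra), together with the functoriality statements already proved in Propositions \ref{Pr:ComFront} and \ref{Pr:ComLeft}, Theorem \ref{Thm:QJ-Rep} and Lemma \ref{Lem:RightF}, so the cube commutes on objects and morphisms alike.
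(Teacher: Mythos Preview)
Your overall strategy matches the paper's: four faces are already done, so you check the bottom and back faces directly. Your bottom-face argument is correct and is exactly the paper's computation in \S\ref{subsubbottom}.

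For the back face, however, there is a confusion about what the functor $j_{(-)}\colon \Jinf\PA_{\Gl_N;0}\to \PVA$ does on objects. By the construction in \S\ref{ss:RightF} and Lemma \ref{Lem:RightF}, $j_{(-)}$ sends the pair $(\Jinf(\AA_N),\Jinf(\AA_N^{\Gl_N}))$ to the \emph{full} invariant algebra $(\Jinf(\AA)_N)^{\Jinf(\Gl_N)}$, endowed with the Poisson vertex structure restricted from $\Jinf(\AA)_N$ (this is precisely the content of Theorem \ref{Th:invariants_are_PVA}). It does \emph{not} send the object to the image of $j_{\AA_N}$. Hence your ``main obstacle'' --- that the top-right route through $\mathtt{R}_\infty$ might produce elements outside the image of $j_{\AA_N}$ --- is a non-issue: both routes produce the same Poisson vertex algebra $(\Jinf(\AA)_N)^{\Jinf(\Gl_N)}$ with the \emph{same} $\lambda$-bracket (namely the restricted one), essentially by definition of $\mathtt{R}_\infty$ and of $j_{(-)}$. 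Your attempted resolution (``by the Leibniz rules this reduces to the generator calculation above'') would not work if one really needed to match brackets on elements outside the image of $j_{\AA_N}$, since the elements $\tr(\partial^r a)$ need not generate all of $(\Jinf(\AA)_N)^{\Jinf(\Gl_N)}$.

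The paper's computation on generators $\tr(\partial^r a)$ is therefore an explicit verification that the various functorial definitions are consistent --- in particular, that the PVA structure produced on $(\Vect_\infty(\AA))_N^{\tr}\simeq \Jinf(\AA_N^{\Gl_N})$ via $\tr_N\circ\sharp_{\mathtt V}$ agrees under $j_{\AA_N}$ with the restricted bracket --- rather than a reduction-to-generators argument for the whole invariant algebra. Once you correct this point, your proof is the same as the paper's.
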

The proof consists in checking that all 6 faces of the cube are commuting. 
Commutativity of the top and right faces  
was respectively obtained in Theorem \ref{Thm:QJ-Rep} and in Lemma \ref{Lem:RightF} with $\GG=\Gl_N$.

\subsubsection*{Front face} 

In Proposition \ref{Pr:ComFront}, we proved that the front face commute if we end up in $\PA$. 
We noticed in \S\ref{ss:RedPVA} that $\PA$ can be replaced with $\PA_{\Gl_N;0}$.

\subsubsection*{Left face} 

In Proposition~\ref{Pr:ComLeft}, we proved that the front face commutes if we end up in $\widehat{\HoPV}$. 
It follows from Lemmas \ref{Lem:SharpV-res} and \ref{Lem:HoPV-PVA-res} that we can take $\mathtt{V}_\infty \HoP$ as the target category. 

\subsubsection*{Bottom face}\label{subsubbottom}
We need to compare the functors 
\begin{equation*}
\HoP \stackrel{\Vect_\infty}{\longrightarrow} \mathtt{V}_\infty \HoP  
\stackrel{\mathtt{tr}_N}{\longrightarrow} \Jinf \PA_{\Gl_N;0}\,, \qquad 
\HoP \stackrel{\mathtt{tr}_N}{\longrightarrow} \PA_{\Gl_N;0} 
\stackrel{\mathtt{J}}{\longrightarrow} \Jinf \PA_{\Gl_N;0}\,.
\end{equation*} 
We noticed that they are the restrictions of the following functors 
\begin{equation*}
\HoP \stackrel{\Vect_\infty}{\longrightarrow} \widehat{\HoPV}  
\stackrel{\mathtt{tr}_N}{\longrightarrow} \PVA\,, \qquad 
\HoP \stackrel{\mathtt{tr}_N}{\longrightarrow} \PA 
\stackrel{\mathtt{J}}{\longrightarrow} \PVA\,,
\end{equation*} 
so we shall check that these two compositions are equal. 

Fix $\AA$ with a $H_0$-Poisson structure, so that $H_0(\AA)$ is a Lie vertex algebra. 
Under these two functors, we end up with $(\Vect_\infty(\AA)\!)_N^{\tr}$ and $\Jinf(\AA_N^{\Gl_N})$, which are both isomorphic. We can identify these two algebras explicitly through 
\begin{equation} \label{Eq:IsoVinfRep}
    (\Vect_\infty(\AA)\!)_N^{\tr}\longrightarrow \Jinf(\AA_N^{\Gl_N})\,, \qquad 
    \tr(\del^r(a)\!)\mapsto \del^r(\tr(a)\!)\,, \quad a\in \AA,\,\, r \geq 0\,.
\end{equation}
Let us check that this is an isomorphism of Poisson vertex algebras. 
For $a,b\in \AA$ and $r,s\geq 0$, we have on the one hand  
\begin{equation} \,
\br{\tr(\del^r(a)\!) {}_\lambda \tr(\del^s(b)\!) }
=\tr([\del^r(a)_{\sharp} {}_\lambda \del^s(b)_\sharp]_\sharp) 
=\tr(\!(-\lambda)^r (\lambda+\del)^s\,[a_\sharp , b_\sharp]_\sharp) \,, 
\end{equation}
after using \eqref{Eq:relHoPV} and Proposition \ref{Pr:HoP-HoPV} (for $\mathtt{tr}_N$ and $\Vect_\infty$, respectively). 
On the other hand, we compute 
\begin{equation} \,
\br{\del^r \tr(a) {}_\lambda \del^s \tr(b) }
=(-\lambda)^r (\lambda+\del)^s  \, \br{\tr(a),\tr(b)}
=(-\lambda)^r (\lambda+\del)^s\,\tr([a_\sharp , b_\sharp]_\sharp) \,, 
\end{equation}
by applying Lemma \ref{Lem:PAtoPVA} and \eqref{Eq:relHoP} (for $\mathtt{J}$ and $\mathtt{tr}_N$, respectively). It is clear that the two $\lambda$-brackets coincide on generators under the identification \eqref{Eq:IsoVinfRep}, hence they are the same.  

Fix a morphism $\varphi\colon \AA_1\to \AA_2$ in $\HoP$. 
We see from the proof of Theorem \ref{THM:HoPV-tr} that $\varphi$ is sent under the first functor on the restriction of $(\varphi_\infty)_N\colon (\Jinf(\AA_1)\!)_N\to (\Jinf(\AA_2)\!)_N$ to $(\Vect_\infty(\AA_1)\!)_N^{\tr}$. in particular, the morphism hence obtained satisfies $\tr(\del^r(a)\!)\mapsto \tr(\del^r\,\varphi(a)\!)$ for any $a\in \AA_1$ and $r\geq 0$. 
Under the second functor, $\varphi$ is mapped on the jet of the restriction of the induced map $\varphi_N\colon (\AA_1)_N\to (\AA_2)_N$, which satisfies $\del^r(\tr(a)\!)\mapsto \del^r(\tr(\varphi(a)\!)$. The 2 morphisms obviously coincide under the identification \eqref{Eq:IsoVinfRep}.

\subsubsection*{Back face} 
  
We need to compare the functors 
\begin{equation*}
\Jinf\DPA \stackrel{(-)_N}{\longrightarrow} \Jinf\PA^{\Gl_N}
\stackrel{\mathtt{R}_\infty}{\longrightarrow} \PVA\,, \qquad 
\Jinf\DPA \stackrel{\sharp_\mathtt{V}}{\longrightarrow} \mathtt{V}_\infty \HoP 
\stackrel{\mathtt{tr}_N}{\longrightarrow} \Jinf \PA_{\Gl_N;0}
\stackrel{j_{(-)}}{\longrightarrow} \PVA\,.
\end{equation*} 
Fix $\Jinf(\AA)$ in $\Jinf\DPA$. 
Under these two functors, we end up with $(\Jinf(\AA)\!)^{\Jinf(\Gl_N)}$.  
We need to consider the natural morphism $j_{\AA_N}\colon \Jinf(\AA_N^{\Gl_N})\to (\Jinf(\AA)\!)^{\Jinf(\Gl_N)}$ given in \eqref{Eq:InvMorph} which is defined on generators by $\del^r(\tr(a)\!)\mapsto \tr(\del^r(a)\!)$ for any $a\in \AA$ and $r\geq 0$. 

To compare the induced $\lambda$-brackets, it suffices to do so on generators of $\Jinf(\AA_N^{\Gl_N})$ sent into $(\Jinf(\AA)\!)^{\Jinf(\Gl_N)}$ by $j_{\AA_N}$. 
For $a,b\in \AA$ and $r,s\geq 0$, we have from the first functor   
\begin{equation*} \,
\begin{aligned}
  \br{\tr(\del^r(a)\!) {}_\lambda \tr(\del^s(b)\!) }
=\sum_{i,j=1}^N 
\br{(\del^r(a)\!)_{ii} {}_\lambda (\del^s(b)\!)_{jj} }  
=\tr(\mathrm{m}\circ \dgal{\del^r(a) {}_\lambda \del^s(b) }) \,,
\end{aligned}
\end{equation*}
where we used \eqref{Eq:relDPVA}. 
The second functor yields 
\begin{equation*} \,
\begin{aligned}
  \br{\tr(\del^r(a)\!) {}_\lambda \tr(\del^s(b)\!) }
= \tr([(\del^r(a)\!)_\sharp{}_\lambda (\del^s(b)\!)_\sharp]_\sharp)
=\tr(\mathrm{m}\circ \dgal{(\del^r(a)\!) {}_\lambda (\del^s(b)\!)})\,, 
\end{aligned}
\end{equation*}
thanks to \eqref{Eq:relHoPV} and Corollary \ref{Cor:JDPA-HOPV} (for $\mathtt{tr}_N$ and $\sharp_\mathtt{V}$, respectively, after restriction to $\mathtt{V}_\infty\HoP$ according to \S\ref{ss:Interpret}).

Finally, we fix a morphism  $\theta\colon \Jinf(\AA_1)\to \Jinf(\AA_2)$ in $\Jinf \DPA$. By spelling out the functors, we obtain the morphisms  
$\mathtt{R}_\infty(\theta_N), \tr_N(\sharp(\theta)\!) \colon (\Jinf(\AA_1)\!)^{\Jinf(\Gl_N)}\to (\Jinf(\AA_2)\!)^{\Jinf(\Gl_N)}$ which coincide on any element $\tr(\del^r(a)\!)$, $a\in \AA_1$, $r\geq 0$. Indeed, this follows from a simple computation analogous to \eqref{Eq:ComFrom}. 
Since both morphisms are equal when restricted to generators, they are identically equal.


\section{Semisimple version of Poisson reduction} 
\label{Sec:SemiS}

We can derive an analogue of Theorem \ref{Thm:ComPoissonRed} in the presence of idempotents. 
The constructions needed for its proof are extremely close to the considerations of Section \ref{S:PoiRed}. Since we will be working with an idempotent decomposition when dealing with Hamiltonian reduction in Section \ref{Sec:Momap}, we shall leave all details to the reader and instead we will focus on describing the different nodes of this new commutative cube in an example as part of \S\ref{ss:SemiS-example}.

\subsection{General statement}\label{subsecgenstat}

Fix a $\kk$-algebra $B$. 
We denote by ${}_B\DPA$ the subcategory of $\DPA$ made of double Poisson algebras \emph{relative to} $B$. 
An object $\AA$ in ${}_B\DPA$ contains $B$ as a subalgebra and has a $B$-linear double bracket, 
which means that $\dgal{a,B}=0$ identically for any $a\in \AA$. 
A morphism $\theta:\AA_1\to \AA_2$ in ${}_B\DPA$ must restrict to the identity on $B$, i.e. $\theta\big|_B=\id_B$. 

By adapting the definition of ${}_B\DPA$ from $\DPA$ that we have just given, 
we can introduce the categories ${}_B\HoP$ and ${}_B\DPVA,{}_B\HoPV,{}_B\widehat{\HoPV}$ relative to $B$; in the last 3 cases, we require the differentials to be $B$-linear as well, i.e. $\del(B)=0$. 
We define $\Jinf {}_B\DPVA$ and $\mathtt{V}_\infty{}_B\HoP$ as images of the functors $\mathtt{J}:{}_B\DPA\to {}_B\DPVA$ and $\mathtt{V}_\infty:{}_B\HoP\to {}_B\widehat{\HoPV}$ introduced as $B$-linear versions of Propositions \ref{Pr:DPAtoDPA} and \ref{Pr:HoP-HoPV}. 

Let us now assume that $B=\oplus_{s\in S} \kk e_s$ is a finite-dimensional semisimple algebra where the $(e_s)_{s\in S}$ are orthogonal idempotents. 
Fix $\underline{n}=(n_s)\in \Z_{\geq0}^S$ and set $\Gl_{\underline{n}}:=\prod_{s\in S}\Gl_{n_s}$. 
To motivate the following relative version of Theorem \ref{Thm:ComPoissonRed}, note that we have the representation functor 
$(-)_{\underline{n}}\colon {}_B\mathtt{DP(V)A}\to ( \Jinf)\PA^{\Gl_{\underline{n}}}$, see \S\ref{ss:HamRedDPA}.

\begin{theorem}  \label{Thm:rel-ComPoissonRed}
Consider the  diagram depicted in Figure \ref{Fig:PoiRed} where the categories appearing on the left are taken to be relative to  $B=\oplus_{s\in S} \kk e_s$, 
and the categories appearing on the right are obtained by replacing $\Gl_n$ with $\Gl_{\underline{n}}$. 
Then this $B$-relative diagram is commutative. 
\end{theorem}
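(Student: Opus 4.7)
The plan is to mirror the proof of Theorem \ref{Thm:ComPoissonRed} face by face, observing that every construction used in Section \ref{S:PoiRed} respects $B$-linearity, and that replacing $N$ by $\underline{n}$ with $\Gl_N$ by $\Gl_{\underline{n}}$ does not affect any of the structural arguments. The first step is to verify that each of the eight functors appearing in Figure \ref{Fig:PoiRed} admits a $B$-relative analogue. This is automatic for $\mathtt{Q}$, $\mathtt{J}$, $\sharp$, $\sharp_\mathtt{V}$ and $\Vect_\infty$: if the input double (vertex) Poisson bracket or $H_0$-Poisson (vertex) structure is $B$-linear, the constructions defining these functors (quotienting by $\langle \del\VV\rangle$, formation of jet algebras, passing to $H_0$, extension to $\Vect_\infty$) clearly preserve $B$-linearity. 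Concretely, one checks $B \subset \Jinf\AA$ consists of constants, and that the induced double $\lambda$-bracket vanishes on $B$ because \eqref{Eq:PtoPA-c} reduces to the original double bracket which vanishes on $B$ by hypothesis.

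Next, I would address the right-hand side of the cube. The $B$-relative representation functor $(-)_{\underline{n}} \colon {}_B\mathtt{Alg} \to \mathtt{CommAlg}$ sends $\AA$ to the commutative algebra generated by matrix entries $a_{s,t;ij}$ indexed by the block decomposition forced by the idempotents $e_s$; it is left adjoint to $A \mapsto A \otimes \Mat_{\underline{n}}$ where $\Mat_{\underline{n}} = \prod_s \Mat_{n_s}$. The group $\Gl_{\underline{n}}$ acts by block conjugation, exactly as in the semisimple version of Van den Bergh's theory; the relevant analogues of Theorems \ref{Thm:RepdP} and \ref{Thm:RepdPV} (and Theorem \ref{Thm:RepdLVA}) for relative double Poisson (vertex) algebras follow verbatim from \S\ref{sss:RepAlg}, since the $B$-linearity assumption precisely ensures the bracket respects the idempotent decomposition. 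With $(-)_{\underline{n}}$ in hand, the right face of the cube and the top face reduce to Lemma \ref{Lem:RightF} and Theorem \ref{Thm:QJ-Rep}, whose proofs go through with $\GG = \Gl_{\underline{n}}$ unchanged, invoking the general invariants result Theorem \ref{Th:invariants_are_PVA} applied to the affine algebraic group $\Gl_{\underline{n}}$.

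For the front and left faces (and their bottom and back counterparts), the arguments in \S\ref{ss:ComFront}, \S\ref{ss:ComLeft}, and \S\ref{subsubbottom} compute both composites on generators of the invariant algebras (traces and their jets), and match them using \eqref{Eq:relPA}--\eqref{Eq:relHoP} and their $\lambda$-bracket analogues. In the $B$-relative setting these identities persist verbatim because the trace map $\tr_{\underline{n}} \colon \AA \to \AA_{\underline{n}}^{\Gl_{\underline{n}}}$ decomposes along idempotents as $\tr_{\underline{n}}(a) = \sum_{s \in S} \sum_{1 \leq j \leq n_s} (e_s a e_s)_{s,s;jj}$, and $B$-linearity of the brackets ensures all cross-idempotent terms in the bracket computations vanish. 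The commutativity verifications therefore reduce to the same residue and coefficient-matching arguments already performed, applied block by block.

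The main potential obstacle is bookkeeping rather than conceptual: one must check that the identification $\Jinf(\AA_{\underline{n}}) \simeq (\Jinf \AA)_{\underline{n}}$ used in \S\ref{Proof_Thm:QJ-Rep} and the isomorphism $(\Vect_\infty(\AA))_{\underline{n}}^{\tr} \simeq \Jinf(\AA_{\underline{n}}^{\Gl_{\underline{n}}})$ used in \S\ref{subsubbottom} both hold in the semisimple setting. Both follow from universal properties: the first from the adjunction between $(-)_{\underline{n}}$ and $- \otimes \Mat_{\underline{n}}$ combined with the universal property of $\Jinf$, and the second from the fact that $\AA_{\underline{n}}^{\Gl_{\underline{n}}}$ is generated by traces (by the $B$-relative version of the argument of \cite{CB}, which only requires finite generation of $\AA$ as a $B$-algebra and the Le Bruyn--Procesi theorem for $\Gl_{\underline{n}}$). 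Once these two identifications are secured, all six face-by-face checks of Theorem \ref{Thm:ComPoissonRed} transfer mutatis mutandis, completing the proof.
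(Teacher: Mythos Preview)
Your proposal is correct and takes essentially the same approach as the paper, which explicitly leaves the proof to the reader with only the hint that the right face is Lemma \ref{Lem:RightF} with $\GG=\Gl_{\underline{n}}$. You have simply supplied the face-by-face verification that the paper expects the reader to carry out, correctly identifying the two bookkeeping points (the adjunction giving $\Jinf(\AA_{\underline{n}})\simeq(\Jinf\AA)_{\underline{n}}$ and the trace-generation of invariants) as the only places requiring care.
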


We leave the proof of this result to the reader; e.g. the commutativity of the right face is simply Lemma \ref{Lem:RightF} with $\GG=\Gl_{\underline{n}}$. Let us note that we will make some of the $B$-relative functors from the $B$-relative diagram explicit as part of the Hamiltonian reduction treated in Section \ref{Sec:Momap}.  
When it is clear from the context that we work over $B$, we shall simply write ${}_B\DPA$ as $\DPA$ and do the same for all the other relative categories.

\subsection{Example} 
\label{ss:SemiS-example}

\begin{remark}[Convention for path algebras]
\label{Rem:ConvQuiver}
In this paper, we write  paths from left to right as in \cite{VdB}.
Hence the path algebra $\kk Q$ of a quiver $Q$ is the $\kk$-algebra generated by the elements in $Q\cup \{e_s \mid s\in S\}$ subject to the relations
\begin{equation} \label{Eq:ConvPath}
  e_r e_s=\delta_{rs}e_r\,, \quad 1=\sum_{s\in S} e_s\,,   \quad
  a=e_{t(a)}a e_{h(a)}\,, \quad \text{ for }r,s\in S,\,\, a\in Q.
\end{equation}
As in \S\ref{ss:double_quiver}, $\kk Q$ is regarded as an algebra over $B=\oplus_{s\in S} \kk e_s$.
\end{remark}

Fix integers\footnote{Although it can be considered, we omit the case $c=0$ because it yields trivial brackets in the construction presented below.} $c\geq 1$ and $p,q\geq c$. Assume that $\kk=\bar{\kk}$ is of characteristic $0$.
When we refer to an earlier result from the text, it will be assumed that we consider its $B$-relative version (for $B=\kk e_1\oplus \kk e_2$ as defined below).

Consider the quiver $Q_{p,q}$ made of the vertex set $\{1,2\}$ and arrows $v_1,\ldots,v_p:1\to 2$, $w_1,\ldots,w_q:2\to 1$. 

\begin{center}
   \begin{tikzpicture} 
\node[circle,draw] (vA)  at (-2,0) {$1$};
\node[circle,draw] (vB)  at (2,0) {$2$};
  \node  (dot1) at (0,0.95) {$\vdots$};
  \node  (dot2) at (0,-0.72) {$\vdots$};
   \path[->,>=stealth,font=\scriptsize]  
   (vA) edge [bend left=10,looseness=1] node[above] {$v_1$}  (vB) ;
   \path[->,>=stealth,font=\scriptsize]
   (vA) edge [bend left=50,looseness=1] node[above] {$v_p$} (vB) ;
      \path[->,>=stealth,font=\scriptsize]  
   (vB) edge [bend left=10,looseness=1] node[below] {$w_1$}  (vA) ;
   \path[->,>=stealth,font=\scriptsize]
   (vB) edge [bend left=50,looseness=1] node[below] {$w_q$} (vA) ;
   \end{tikzpicture}  
\end{center}

The path algebra $\kk Q_{p,q}$ is generated over $\kk$ by the orthogonal idempotents $e_1,e_2$ and the arrows $\{v_{p'},w_{q'} \mid 1\leq p'\leq p,\,\, 1\leq q'\leq q\}$ subject to the relations 
\begin{equation*}
 e_1+e_2=1\,, \quad v_{p'}=e_1v_{p'}e_2\,, \quad w_{q'}=e_2w_{q'}e_1\,.
\end{equation*}
It is clear that $\kk Q_{p,q}$ contains  $B=\kk e_1\oplus \kk e_2$ as a subalgebra. 
We note that $\kk Q_{p,q}$ has a $B$-linear double Poisson bracket given by 
\begin{equation} \label{Eq:SemiS-1}
 \begin{aligned}
\dgal{v_{p_1},w_{q_1}}&=\delta_{p_1,q_1} \delta_{(p_1\leq c)}\, e_2\otimes e_1 \,, \quad 
\dgal{v_{p_1},v_{p_2}}&=0=\dgal{w_{q_1},w_{q_2}}\,, \quad
 \end{aligned}
\end{equation}
for any $1\leq p_1,p_2\leq p$ and $1\leq q_1,q_2\leq q$. 
We see from \S\ref{ss:double_quiver}  that the case $p=q=c$ is a double Poisson algebra obtained from the double of a quiver by Van den Bergh \cite[\S6.3]{VdB}; taking $p>c$ and/or $q>c$ just amounts to add arrows having zero double bracket with all the other elements.


The double Poisson vertex algebra $\Jinf(\kk Q_{p,q})$ is isomorphic as a differential algebra to $\kk Q_{p,q}^\infty$, where $Q_{p,q}^\infty$ has arrows 
\begin{equation*}
 v_{p'}^{(\ell)}:1\to 2\,, \quad w_{q'}^{(\ell)}\,, \quad 
 1\leq p'\leq p,\,\, 1\leq q'\leq q,\,\, \ell\geq 0\,.
\end{equation*}
The $B$-linear differential $\del$ acts as $v_{p'}^{(\ell)}\mapsto v_{p'}^{(\ell+1)}$, $w_{q'}^{(\ell)}\mapsto w_{q'}^{(\ell+1)}$. 
The natural $B$-linear embedding $\kk Q_{p,q}\hookrightarrow \Jinf(\kk Q_{p,q})$ satisfies $v_{p'}\mapsto v_{p'}^{(0)}$, $w_{q'}\mapsto w_{q'}^{(0)}$. 
The double $\lambda$-bracket induced by Lemma \ref{Lem:DPAtoDPVA} is then determined by 
\begin{equation} \label{Eq:SemiS-2}
\dgal{v_{p'}^{(\ell)}  {}_\lambda w_{q'}^{(m)}}=(-1)^{\ell}\,\lambda^{\ell+m}\,
\delta_{p',q'} \delta_{(p'\leq c)}\, e_2\otimes e_1  \,,
\end{equation}
and it is zero on any other pair of generators. Indeed, this follows by applying \eqref{Eq:PtoPA-a} to the double Poisson bracket in \eqref{Eq:SemiS-1} before using sesquilinearity \eqref{Eq:DA1}. 
Note that \eqref{Eq:SemiS-2} is particularly simple because the double brackets in \eqref{Eq:SemiS-1} are valued in $B\otimes B$ so applying $\del$ gives zero. 


The $H_0$-Poisson structure on $H_0(\kk Q_{p,q})$ is obtained using \eqref{Eq:HO-ind}. 
Remark that $(e_1)_\sharp,(e_2)_\sharp$ and the elements 
\begin{equation} \label{Eq:SemiS-3}
 (v_{p_1}w_{q_1}v_{p_2}w_{q_2}\cdots v_{p_t}w_{q_t})_\sharp \,, \quad 
 t\geq1,\,\, 1\leq p_j\leq p,\,\, 1\leq q_j\leq q\,,
\end{equation}
span $H_0(\kk Q_{p,q})$. 
The Lie bracket on $H_0(\kk Q_{p,q})$ can be explicitly computed in a way similar to the necklace Lie bracket of \cite{BLB,Gi}, see \cite[\S6.4]{VdB} (we fall within that case if $p=q=c$). 
As an example of computation, note that \eqref{Eq:HO-ind} and \eqref{Eq:SemiS-1}  yield 
\begin{equation} \label{Eq:SemiS-4}
 \begin{aligned}
[(v_{p_1}w_{q_1})_\sharp , (v_{p_2}w_{q_2})_\sharp]_\sharp 
&=\big(\mathrm{m}\circ (v_{p_1}\ast_1 \dgal{w_{q_1},v_{p_2}}w_{q_2} + v_{p_2}\dgal{v_{p_1},w_{q_2}} \ast_1 w_{q_1} ) \big)_\sharp \\
&= \delta_{p_1,q_2} \, \delta_{(p_1\leq c)} (v_{p_2}w_{q_1})_\sharp 
- \delta_{p_2,q_1} \, \delta_{(p_2\leq c)} (v_{p_1}w_{q_2})_\sharp\,, 
 \end{aligned}
\end{equation}
for any $1\leq p_1,p_2\leq p$ and $1\leq q_1,q_2\leq q$. 


The relative $H_0$-Poisson vertex structure on $\Vect_\infty(\kk Q_{p,q})\subset \Jinf (\kk Q_{p,q})$ can be easily derived from the previous paragraph.
The vector space $H_0(\Vect_\infty(\kk Q_{p,q}))$ is spanned by $(e_1)_\sharp,(e_2)_\sharp$ and the elements $\del^\ell \nu$, $\ell \geq 0$, where $\nu$ is as in  \eqref{Eq:SemiS-3}.
The Lie vertex bracket can be computed from the Lie bracket on $H_0(\kk Q_{p,q})$ using Corollary \ref{Cor:JDPA-HOPV} and the formula \eqref{Eq:HOvert-ind}. We get for example ($\ell_i\geq 0$, $1\leq p_i\leq p$, $1\leq q_i \leq q$ for $i=1,2$)
\begin{equation} \label{Eq:SemiS-5}
[\del^{\ell_1}(v_{p_1}w_{q_1})_\sharp  {}_\lambda  \del^{\ell_2}(v_{p_2}w_{q_2})_\sharp]_\sharp
=(-\lambda)^{\ell_1} (\lambda+\del)^{\ell_2}\,[(v_{p_1}w_{q_1})_\sharp , (v_{p_2}w_{q_2})_\sharp]_\sharp \,,
\end{equation}
where the Lie bracket on the right-hand side is given in \eqref{Eq:SemiS-4}.
Note from Proposition \ref{Pr:ComLeft} that we could also have read the Lie vertex bracket by applying the functor $\sharp_{\mathtt{V}}$ to $\kk Q_{p,q}^\infty$ with the double $\lambda$-bracket given in \eqref{Eq:SemiS-2}.


Hereafter we work with the dimension vector $\underline{n}=(n_1,n_2)$ fixed to $(n,1)$ for some $n\geq 1$.
A representation $\rho\in \Rep_B(\kk Q_{p,q},\underline{n})$ is such that
\begin{equation}  \label{Eq:SemiS-6}
 \rho(v_{p'})=\left(
\begin{array}{c|c}
 0_{n\times n}&V_{p'} \\
 \hline
0_{1 \times n} &0
\end{array}
 \right)\,, \quad
 \rho(w_{q'})=\left(
\begin{array}{c|c}
 0_{n\times n}&0_{n\times 1} \\
 \hline
W_{q'} &0
\end{array}
 \right)\,,
\end{equation}
with $V_{p'}\in \kk^n$ and $W_{q'}\in (\kk^n)^\ast$ for any $1\leq p'\leq p$, $1\leq q'\leq q$.
Denoting by $(V_{p'})_j$ and $(W_{q'})_j$, $1\leq j \leq n$, the elements of $(\kk Q_{p,q})_{\underline{n}}$ returning the entries of the nonzero blocks of $\rho(v_{p'})$ and $\rho(w_{q'})$, we obtain an obvious isomorphism
\begin{equation} \label{Eq:SemiS-iso}
 (\kk Q_{p,q})_{\underline{n}} \simeq \kk[(\kk^n)^{\oplus p} \oplus ((\kk^n)^\ast)^{\oplus q}]\,.
\end{equation}
The Poisson bracket is then uniquely determined by
\begin{equation} \label{Eq:SemiS-7} 
\br{(V_{p_1})_j,(W_{q_1})_k}=\delta_{p_1,q_1} \delta_{(p_1\leq c)}\,\delta_{kj}  \,, \quad 
\br{(V_{p_1})_j,(V_{p_2})_k}=0=\br{(W_{q_1})_j,(W_{q_2})_k}\,,  
\end{equation}
(for all possible indices)
after combining \eqref{Eq:SemiS-1} and Theorem \ref{Thm:RepdP}.

As we are in $\PA^{\Gl_{\underline{n}}}$, the Poisson bracket \eqref{Eq:SemiS-7} is $\Gl_{\underline{n}}$-invariant for the action by simultaneous conjugation of all the matrices in \eqref{Eq:SemiS-6}.
Since $(c\Id_n,c)\in \Gl_{\underline{n}}$ acts trivially for any $c \in \kk^\times$, we are only interested in the action of $\Gl_n$ given by
\begin{equation} \label{Eq:CpCq-act}
 g\cdot (V_{1},\ldots,V_{p},W_{1},\ldots,W_q) = (g V_{1},\ldots,g V_{p},W_{1}g^{-1},\ldots,W_qg^{-1})\,.
\end{equation}
Therefore \eqref{Eq:SemiS-7} defines a $\Gl_n$-invariant Poisson bracket on $Y:=(\kk^n)^{\oplus p} \oplus ((\kk^n)^\ast)^{\oplus q}$, i.e. $\kk[Y]$ is an object in $\PA^{\Gl_n}$.
Let us emphasize that for the coordinate ring $A=\kk[Y]$, 
\eqref{Eq:InvMorph} 
is an isomorphism (cf.~Example \ref{ex:LSS_quiver}).  
In particular, the last node of the cube that we are describing will be $\Jinf(\kk[Y]^{\Gl_n}) \simeq \Jinf(\kk[Y])^{\Jinf(\Gl_n)}$.


Next, we take jets and obtain
\begin{equation}
 \Jinf((\kk Q_{p,q})_{\underline{n}}) \simeq \Jinf(\kk[Y])\,, \quad 
 Y:= (\kk^n)^{\oplus p} \oplus ((\kk^n)^\ast)^{\oplus q}\,, 
\end{equation}
in $\Jinf \PA^{\Gl_n}$. In terms of the functions
\begin{equation*}
 (V_{p'}^{(\ell)})_j,\,\, (W_{q'}^{(\ell)})_j, \qquad 1\leq p'\leq p,\,\, 1\leq q'\leq q,\,\, \ell \geq 0,\,\, 1\leq j \leq n\,,
\end{equation*}
defined in the obvious way with differential acting as the shift $\ell \mapsto \ell+1$, the $\lambda$-bracket satisfies
\begin{equation} \label{Eq:SemiS-8}
\br{(V_{p'}^{(\ell)})_j {}_\lambda (W_{q'}^{(m)})_k}=(-1)^\ell \,\lambda^{\ell+m}\, \delta_{p',q'} \delta_{(p'\leq c)}\,\delta_{kj}  \,,
\end{equation}
and it is zero on the other pairs of generators.
This follows directly from \eqref{Eq:SemiS-7} and Lemma \ref{Lem:PAtoPVA}. Alternatively, one can consider the double $\lambda$-bracket from \eqref{Eq:SemiS-2} and go to the representation algebra $(\Jinf(\kk Q_{p,q}))_{\underline{n}}$ using Theorem \ref{Thm:RepdPV}.


A non-constant invariant function on $Y$  under the action \eqref{Eq:CpCq-act} of $\Gl_n$
is a linear combination of functions of the form
\begin{equation*}
 \tr(V_{p_1}W_{q_1}\cdots V_{p_t}W_{q_t})=\tr(V_{p_1}W_{q_t})\, \prod_{2\leq t'\leq t} \tr(V_{p_{t'}}W_{q_{t'-1}})\,,
\end{equation*}
where $t\geq 1$ and $1\leq p_{t'}\leq p$, $1\leq q_{t'}\leq q$.
Hence the Poisson bracket on $\kk[Y]^{\Gl_n}$ obtained by Poisson reduction is determined by its value on the generators $\tr(V_{p'}W_{q'})$.
It can be easily written from the Lie bracket on $H_0(\kk Q_{p,q})$: taking together \eqref{Eq:relHoP} and \eqref{Eq:SemiS-4}, one finds
\begin{equation} \label{Eq:SemiS-9}
 \begin{aligned}
\br{ \tr(V_{p_1}W_{q_1}) , \tr(V_{p_2}W_{q_2}) }
=\,& \delta_{p_1,q_2} \, \delta_{(p_1\leq c)} \tr(V_{p_2}W_{q_1}) \\
&- \delta_{p_2,q_1} \, \delta_{(p_2\leq c)} \tr(V_{p_1}W_{q_2})\,,
 \end{aligned}
\end{equation}
where $1\leq p_1,p_2\leq p$, $1\leq q_1,q_2\leq q$.


Thanks to the isomorphism \eqref{Eq:SemiS-iso},
recall that the last node of the cube in $\PVA$ is $\Jinf(\kk[Y]^{\Gl_n}) \simeq \Jinf(\kk[Y])^{\Jinf(\Gl_n)}$, cf. Example \ref{ex:LSS_quiver}.
By taking jets in the previous paragraph, we see that we only need to know the $\lambda$-bracket
on the generators $\del^\ell\tr(V_{p'}W_{q'})$, where we identify $\tr(V_{p'}W_{q'})$ and $\del^0\tr(V_{p'}W_{q'})$.
Its value follows directly from Lemma \ref{Lem:PAtoPVA} and \eqref{Eq:SemiS-9}, which give
\begin{equation} \label{Eq:SemiS-10}
 \begin{aligned}
\br{ \del^{\ell_1} \tr(V_{p_1}W_{q_1})  {}_\lambda  \del^{\ell_2} \tr(V_{p_2}W_{q_2}) }
=\,&(-\lambda)^{\ell_1} (\lambda+\del)^{\ell_2}\, \delta_{p_1,q_2} \, \delta_{(p_1\leq c)} \tr(V_{p_2}W_{q_1}) \\
&-(-\lambda)^{\ell_1} (\lambda+\del)^{\ell_2} \delta_{p_2,q_1} \, \delta_{(p_2\leq c)} \tr(V_{p_1}W_{q_2})\,,
 \end{aligned}
\end{equation}
where $1\leq p_1,p_2\leq p$, $1\leq q_1,q_2\leq q$ and $\ell_1,\ell_2\geq 0$.
We can also obtain \eqref{Eq:SemiS-10} easily from \eqref{Eq:SemiS-5} by applying the functor $\tr_{\underline{n}}:\mathtt{V}_\infty\HoP \to \Jinf \PA_{\Gl_{\underline{n}};0}$ and considering the image in $\PVA$.


\section{Hamiltonian Reduction in the commutative setting}  
\label{Sec:HamRed}

\subsection{Hamiltonian reduction} \label{ss:HamRed}


We follow \cite[\S5.2,5.4]{LGPV} for the algebraic treatment of Hamiltonian reduction. 
Fix $(A,\br{-,-})$ a Poisson algebra. 
Let $I\subset A$ be an ideal of~$A$. 
Consider the projection $\pi\colon A\to A/I$ which is an algebra homomorphism. 
\begin{definition}
A subalgebra $A_{\red}\subset A/I$ is a \emph{Poisson reduction} of $A$ if 
it is endowed with a Poisson bracket $\br{-,-}_{A_{\red}}$ satisfying 
\begin{equation} \label{Eq:PRed-FG}
    \br{F,G}_{A_{\red}} = \pi (\br{\tilde{F},\tilde{G}})\,,
\end{equation}
for any $F,G\in A_{\red}$ and $\tilde{F},\tilde{G}\in A$ such that $\pi(\tilde{F})=F$, $\pi(\tilde{G})=G$. 
\end{definition}

Set $Y=\Spec(A)$ and let $\GG$ be an affine algebraic group with a left action $\GG\times Y \to Y$. Assume that the corresponding left action of $\GG$ on $A$  
is by Poisson automorphisms. We obtain an action by derivations on $A$ of the Lie algebra $\g$ of $\GG$, which is denoted\footnote{We simply wrote this action as $x.\tilde{F}$ in the proof of Theorem \ref{Th:invariants_are_PVA}. Hereafter, we keep track of the algebra on which $\g$ is acting to avoid confusion when takings jets in \S\ref{ss:Red-PVA-jet}.} $(x, \tilde{F})\mapsto x_A(\tilde{F})$ for $x\in \g$ and $\tilde{F}\in A$. 
Note that $x_V(A^\GG)=0$.  

A \emph{comoment map} is a morphism of Lie algebras $\tilde{\mu}\colon \g\to A$ which is equivariant (for the adjoint action on $\g$) and such that $x_A(\tilde{F})=\br{\tilde{\mu}(x),\tilde{F}}$ for any $x\in \g$ and $\tilde{F}\in A$.   
It will be convenient to see the comoment map as a morphism of Poisson algebras $\tilde{\mu}\colon \kk[\g^\ast]\to A$ under the identification $\kk[\g^\ast]\simeq Sym(\g)$, where $\g^\ast$ is the dual of $\g$. 

Denote by  $\langle-,-\rangle:\g^\ast\times \g\to \kk$ the natural pairing $\langle \xi,x\rangle=\xi(x)$.  
If we view $A$ as the finitely generated algebra of functions on the affine scheme $Y=\Spec(A)$, 
then $\tilde{\mu}$ corresponds to an equivariant map $\mu\colon Y\to \g^\ast$ through $\tilde{\mu}(x)(y)=\langle \mu(y),x \rangle$  referred to as the \emph{moment map}. 
If $\xi\in \g^*$ is invariant under the coadjoint action of $\GG$, 
we get that $\mu^{-1}(\xi)$ is $\GG$-invariant. 
Imposing the relation $\mu=\xi$ is equivalent to requiring $\langle \mu,x\rangle=\langle \xi,x\rangle$ for any $x\in \g$; hence the closed invariant subspace $\mu^{-1}(\xi)$ corresponds to the $\GG$-invariant ideal 
\begin{equation}     \label{Eq:I-Xinv} 
I_{\xi}:=(\tilde{\mu}(x)-\langle \xi,x\rangle \mid x \in \g\,)\,.
\end{equation}

The definition of a comoment map entails $\br{I_{\xi},A^\GG}\subset I_{\xi}$, hence the next result.

\begin{proposition} \label{Pr:HamRed-0}
 The subalgebra $\pi(A^\GG) \subset A/I_{\xi}$ is a Poisson reduction of $A$.     
\end{proposition}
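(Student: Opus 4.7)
The plan is to verify the four claims required to call $\pi(A^\GG)$ a Poisson reduction: (i) it is a subalgebra of $A/I_\xi$; (ii) the formula \eqref{Eq:PRed-FG} unambiguously defines a bilinear map on $\pi(A^\GG)$; (iii) this map takes values in $\pi(A^\GG)$; (iv) it satisfies the Poisson axioms. Point (i) is immediate: $A^\GG$ is a subalgebra of $A$ and $\pi$ is an algebra morphism.

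The heart of the argument is the stated inclusion $\br{I_\xi,A^\GG}\subset I_\xi$. I would first establish it carefully, since the rest follows formally. For a generator $\tilde{\mu}(x)-\langle\xi,x\rangle$ of $I_\xi$ and any $\tilde G\in A^\GG$, the defining property of the comoment map gives
\[
\br{\tilde\mu(x)-\langle\xi,x\rangle,\tilde G}=\br{\tilde\mu(x),\tilde G}=x_A(\tilde G)=0,
\]
where the last equality uses $\tilde G\in A^\GG$. An arbitrary element of $I_\xi$ is a finite sum of products $a(\tilde\mu(x)-\langle\xi,x\rangle)$ with $a\in A$, and the Leibniz rule gives
\[
\br{a(\tilde\mu(x)-\langle\xi,x\rangle),\tilde G}=a\br{\tilde\mu(x)-\langle\xi,x\rangle,\tilde G}+(\tilde\mu(x)-\langle\xi,x\rangle)\br{a,\tilde G}\in I_\xi,
\]
proving the inclusion.

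For (ii), fix $F,G\in\pi(A^\GG)$ and lifts $\tilde F,\tilde G\in A^\GG$. Define $\br{F,G}_{A_{\red}}:=\pi(\br{\tilde F,\tilde G})$. If $\tilde F'\in A^\GG$ is another lift, then $\tilde F-\tilde F'\in I_\xi$, so by the inclusion just proved $\br{\tilde F-\tilde F',\tilde G}\in I_\xi$, and hence $\pi(\br{\tilde F,\tilde G})=\pi(\br{\tilde F',\tilde G})$; symmetrically for $G$. For (iii), since $\GG$ acts by Poisson automorphisms, $A^\GG$ is closed under $\br{-,-}$, so $\br{\tilde F,\tilde G}\in A^\GG$ and its class lies in $\pi(A^\GG)$. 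Note that this proves more: the formula \eqref{Eq:PRed-FG} actually holds for \emph{any} lifts $\tilde F,\tilde G\in A$ of $F,G$, because a general lift differs from an invariant lift by an element of $I_\xi$ and the same inclusion applies once on each slot.

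Finally, for (iv), skewsymmetry, the Leibniz rule, and the Jacobi identity on $\pi(A^\GG)$ follow from the corresponding properties of $\br{-,-}$ on $A$ by choosing invariant lifts and applying $\pi$. The main (and really the only) obstacle in this proof is establishing $\br{I_\xi,A^\GG}\subset I_\xi$, which in turn rests on the equivariance encoded in the comoment map condition $x_A(\tilde F)=\br{\tilde\mu(x),\tilde F}$; everything else is a formal consequence.
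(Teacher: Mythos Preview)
Your approach is correct and matches the paper's, which simply records the key inclusion $\br{I_\xi,A^\GG}\subset I_\xi$ and leaves the rest as standard (cf.\ \cite{LGPV}); your write-up fills in exactly those standard details.

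There is one small gap. The definition of Poisson reduction requires \eqref{Eq:PRed-FG} for \emph{arbitrary} lifts $\tilde F,\tilde G\in A$, not only invariant ones, and your ``once on each slot'' justification does not quite close this. If you first replace $\tilde F_0$ by an arbitrary lift $\tilde F$ (keeping $\tilde G_0\in A^\GG$), the inclusion $\br{I_\xi,A^\GG}\subset I_\xi$ applies; but when you then replace $\tilde G_0$ by an arbitrary $\tilde G$, the other slot now holds $\tilde F\notin A^\GG$, so the same inclusion no longer suffices. Expanding directly, the obstruction is the cross-term $\br{\tilde F-\tilde F_0,\tilde G-\tilde G_0}\in\br{I_\xi,I_\xi}$. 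You therefore also need $\br{I_\xi,I_\xi}\subset I_\xi$. This holds: on generators,
\[
\br{\tilde\mu(x)-\langle\xi,x\rangle,\tilde\mu(y)-\langle\xi,y\rangle}=\br{\tilde\mu(x),\tilde\mu(y)}=\tilde\mu([x,y])\in I_\xi,
\]
using that $\tilde\mu$ is a Lie algebra morphism and that $\langle\xi,[x,y]\rangle=0$ by coadjoint invariance of $\xi$; the Leibniz rule then extends this to all of $I_\xi$. (The paper makes this second inclusion explicit in the Poisson vertex analogue, cf.\ Lemma~\ref{Lem:PVred2} and the proof of Proposition~\ref{Pr:HamRedV-b}.) With this addition, your argument is complete.
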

A refinement of this proposition is obtained by considering the $\GG$ action induced on $A/I_{\xi}$ and noting that $\br{I_{\xi},a}\in I_{\xi}$ for any $a\in \pi^{-1}((A/I_{\xi})^\GG)$.  
\begin{proposition} \label{Pr:HamRed}
 The subalgebra of invariant functions $(A/I_{\xi})^\GG \subset A/I_{\xi}$ is a Poisson reduction of $A$, which is called the \emph{Hamiltonian reduction} of $A$ at $\xi$.  
Equivalently, $\Spec ((A/I_{\xi})^\GG)$ is an affine Poisson scheme called the \emph{Hamiltonian reduction} of $Y$ at $\xi$.
\end{proposition}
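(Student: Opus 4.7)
My plan is to follow the hint in the text and extend the argument of Proposition \ref{Pr:HamRed-0} by working with $\pi^{-1}((A/I_{\xi})^\GG)$ instead of just $A^\GG$. The strategy is to exhibit a Poisson bracket on $(A/I_{\xi})^\GG$ by $\{\pi(a),\pi(b)\}_{\red}:=\pi(\{a,b\})$ for any lifts $a,b\in \pi^{-1}((A/I_{\xi})^\GG)$, and verify that this operation is well-defined, lands in $(A/I_{\xi})^\GG$, and is a Poisson bracket.

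The key lemma is the claim that $\{I_\xi,a\}\subset I_\xi$ whenever $a\in \pi^{-1}((A/I_\xi)^\GG)$. First, observe that for a generator $\tilde{\mu}(x)-\langle\xi,x\rangle$ of $I_\xi$, the defining property of the comoment map gives $\{\tilde{\mu}(x)-\langle\xi,x\rangle,a\}=\{\tilde{\mu}(x),a\}=x_A(a)$, since constants are Poisson-central. The hypothesis $\pi(a)\in(A/I_\xi)^\GG$ means that $x_{A/I_\xi}(\pi(a))=0$ for all $x\in\g$, i.e., $x_A(a)\in I_\xi$. A general element of $I_\xi$ is a sum of products $(\tilde{\mu}(x)-\langle\xi,x\rangle)c$ with $c\in A$, so the Leibniz rule $\{bc,a\}=b\{c,a\}+c\{b,a\}$ together with the fact that $I_\xi$ is a two-sided (in fact just an) ideal gives the claim.

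With the lemma in hand, well-definedness of $\{-,-\}_{\red}$ is immediate: if $a'=a+i$ with $i\in I_\xi$ and $b\in \pi^{-1}((A/I_\xi)^\GG)$, then $\{a',b\}-\{a,b\}=\{i,b\}\in I_\xi$, so $\pi(\{a',b\})=\pi(\{a,b\})$, and symmetrically for $b$. To show the bracket closes inside $(A/I_\xi)^\GG$, I use that $\GG$ acts by Poisson automorphisms preserving $I_\xi$, so for $g\in\GG$ and $a,b\in\pi^{-1}((A/I_\xi)^\GG)$, writing $g\cdot a=a+i_1$ and $g\cdot b=b+i_2$ with $i_1,i_2\in I_\xi$, bilinearity expands
\begin{equation*}
g\cdot\{a,b\}=\{g\cdot a,g\cdot b\}=\{a,b\}+\{a,i_2\}+\{i_1,b\}+\{i_1,i_2\},
\end{equation*}
and every term after $\{a,b\}$ lies in $I_\xi$ by the lemma (noting that elements of $I_\xi$ themselves belong to $\pi^{-1}((A/I_\xi)^\GG)$). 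Hence $\pi(\{a,b\})$ is $\GG$-invariant. Finally, skewsymmetry, the Leibniz rule, and the Jacobi identity on $(A/I_\xi)^\GG$ are inherited from $A$ by applying $\pi$ to the corresponding identities.

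The main technical point — and the only place where the assumption that $\xi$ is coadjoint-invariant is needed — is ensuring that $I_\xi$ is $\GG$-stable, so that $\GG$ acts on $A/I_\xi$ and the computation above makes sense; this is standard but worth spelling out since the constants $\langle\xi,x\rangle$ must transform compatibly with the adjoint action of $\GG$ on $\tilde{\mu}(\g)$. Everything else is a routine but careful verification, and Proposition \ref{Pr:HamRed-0} can then be viewed as the restriction of the present statement to $\pi(A^\GG)\subset(A/I_\xi)^\GG$.
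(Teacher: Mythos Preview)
Your proof is correct and follows exactly the approach sketched in the paper, which only records the key observation that $\{I_\xi,a\}\subset I_\xi$ for $a\in\pi^{-1}((A/I_\xi)^\GG)$ and leaves the verification to the reader; you have carried out that verification in full. One small phrasing point: when you write that $\pi(a)\in(A/I_\xi)^\GG$ ``means that'' $x_{A/I_\xi}(\pi(a))=0$, you only need (and only have, if $\GG$ is disconnected) the implication from $\GG$-invariance to $\g$-invariance, which is what your argument actually uses.
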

We can remark that the two propositions are equivalent if $\GG$ is linearly reductive as the map $A^\GG \to (A/I_{\xi})^\GG$ is surjective. 

\subsection{Hamiltonian reduction in $\PVA$} \label{ss:HamRed-PVA}

Fix a Poisson vertex algebra $V$ with differential $\del$ and $\lambda$-bracket $\br{-_\lambda-}$.  
Let $I$ be a differential ideal of $V$, i.e. an ideal such that $\partial(I)\subset I$. 
Consider the projection $\pi\colon V\to V/I$ which is a morphism of differential algebras. 
\begin{definition}  
A differential subalgebra $V_{\red}\subset V/I$ is a \emph{Poisson vertex reduction} of $V$ if it is a Poisson vertex algebra with $\lambda$-bracket $\br{-_\lambda-}_{V_\red}\colon V_\red\times V_\red\to V_\red[\lambda]$ satisfying  
\begin{equation} \label{Eq:PVRed-FG}
    \br{F_\lambda G}_{V_\red} = \pi (\br{\tilde{F}_\lambda\tilde{G}})\,,
\end{equation}
for any $F,G\in V_\red$ and $\tilde{F},\tilde{G}\in V$ such that $\pi(\tilde{F})=F$, $\pi(\tilde{G})=G$. 
\end{definition}
First, we seek a Poisson vertex analogue of Proposition \ref{Pr:HamRed-0}; 
this requires some preparation. 

\begin{lemma} \label{Lem:PVred1}
    Let $V,I,\pi$ be as above. 
A differential subalgebra $V_\red \subset V/I$ is a Poisson vertex reduction of $V$ if and only if the following two conditions hold:  
\begin{enumerate}
    \item $\pi^{-1}(V_\red)_{(n)} I\subset I$ for all $n\geq 0$; 
    \item $\pi^{-1}(V_\red)$ is a Lie conformal subalgebra of $V$. 
\end{enumerate}
In particular, the $\lambda$-bracket on $V_\red$ can be computed using \eqref{Eq:PVRed-FG}. 
\end{lemma}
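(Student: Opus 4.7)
The plan is to prove the two directions of the stated equivalence; the reverse direction will simultaneously justify the last sentence of the lemma.

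For the forward direction, suppose $V_\red$ is a Poisson vertex reduction of $V$, so \eqref{Eq:PVRed-FG} determines a $\lambda$-bracket valued in $V_\red[\lambda]$. To obtain (2), pick $\tilde F,\tilde G\in \pi^{-1}(V_\red)$ and set $F=\pi(\tilde F)$, $G=\pi(\tilde G)\in V_\red$; then $\pi(\br{\tilde F_\lambda\tilde G})=\br{F_\lambda G}_{V_\red}\in V_\red[\lambda]$, whence $\br{\tilde F_\lambda\tilde G}\in \pi^{-1}(V_\red)[\lambda]$. To obtain (1), fix $\tilde F\in\pi^{-1}(V_\red)$ and $a\in I$: both $a$ and $0$ are lifts of $0\in V_\red$, so applying \eqref{Eq:PVRed-FG} to these two lifts yields $\pi(\br{\tilde F_\lambda a})=\pi(\br{\tilde F_\lambda 0})=0$, hence $\br{\tilde F_\lambda a}\in I[\lambda]$, i.e. $\tilde F_{(n)}a\in I$ for every $n\geq 0$.

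For the reverse direction, assume (1) and (2), and try to define a bracket on $V_\red$ via \eqref{Eq:PVRed-FG}. Well-definedness requires that replacing $\tilde F$ by $\tilde F+b$ with $b\in I$ changes $\br{\tilde F_\lambda\tilde G}$ by an element of $I[\lambda]$, and symmetrically for the second argument. Skewsymmetry \eqref{Eq:A2} gives $\br{b_\lambda\tilde G}=-\big|_{x=\del}\br{\tilde G_{-\lambda-x}b}$; by (1) each coefficient $\tilde G_{(n)}b$ lies in $I$, and since $I$ is a differential ideal its images under powers of $\del$ remain in $I$, forcing $\br{b_\lambda\tilde G}\in I[\lambda]$. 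Independence in the second argument is immediate from (1). Condition (2) then guarantees $\br{\tilde F_\lambda\tilde G}\in \pi^{-1}(V_\red)[\lambda]$, so its image under $\pi$ lies in $V_\red[\lambda]$, yielding a bilinear map $V_\red\times V_\red\to V_\red[\lambda]$.

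It remains to check that this bracket satisfies the Poisson vertex algebra axioms on $V_\red$: sesquilinearity \eqref{Eq:A1}, skewsymmetry \eqref{Eq:A2}, left and right Leibniz rules \eqref{Eq:lL}--\eqref{Eq:rL}, and Jacobi identity \eqref{Eq:A3}. Each of these is obtained by lifting elements of $V_\red$ to $\pi^{-1}(V_\red)$, invoking the corresponding identity in $V$, and pushing it down through $\pi$, using that $\pi\colon V\to V/I$ is a morphism of differential algebras and that $V_\red$ is a differential subalgebra of $V/I$. The ``in particular'' assertion is then just the observation that the bracket is built from \eqref{Eq:PVRed-FG}. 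The main subtlety will be translating condition (1), phrased with $\pi^{-1}(V_\red)$ in the first slot of the $\lambda$-bracket, into the symmetric statement with $I$ in the first slot needed to verify independence of the first lift; this rests crucially on $I$ being closed under $\del$, so that skewsymmetry preserves the ideal structure.
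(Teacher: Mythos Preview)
Your proof is correct and follows exactly the approach the paper indicates: the paper's own proof merely says ``adapt Proposition~5.5 in \cite{LGPV} to the vertex setting using that $\pi$ is a morphism of differential algebras'' and leaves the details to the reader, so you have in effect supplied those omitted details. Your handling of the one genuinely vertex-specific point --- using skewsymmetry together with $\partial(I)\subset I$ to transfer condition~(1) from the first slot to the second --- is the right observation.
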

\begin{proof}
It suffices to adapt Proposition 5.5 in \cite{LGPV} to the vertex setting using that $\pi$ is a morphism of differential algebras. 
We leave the details to the reader. 
\end{proof}

\begin{lemma} \label{Lem:PVred2}
Let $I$ be a differential ideal in $V$ with projection $\pi\colon V\to V/I$. 
Let $V^\star\subset V$ be a Poisson vertex subalgebra. 
Then the differential algebra $V_\red=\pi(V^\star)$ is a Poisson vertex reduction of $V$ if 
for any $n\geq0$, we have $V^\star{}_{(n)} I\subset I$ and $I_{(n)} I\subset I$. 
\end{lemma}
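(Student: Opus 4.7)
My plan is to reduce the statement to the criterion provided by Lemma~\ref{Lem:PVred1}, noting that $\pi^{-1}(V_\red) = V^\star + I$ because $V_\red = \pi(V^\star)$ and $\ker(\pi) = I$. So I need to verify the two conditions:
\begin{enumerate}
    \item $(V^\star + I)_{(n)} I \subset I$ for all $n \geq 0$;
    \item $V^\star + I$ is a Lie conformal subalgebra of $V$, i.e.\ it is stable under $\partial$ and under all the $(n)$-products for $n \geq 0$.
\end{enumerate}

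The first condition is immediate by bilinearity of the $(n)$-products: it decomposes as $V^\star{}_{(n)} I \subset I$ and $I_{(n)} I \subset I$, both of which are the standing hypotheses. For $\partial$-stability in the second condition, $V^\star$ is a differential subalgebra (being a Poisson vertex subalgebra) and $I$ is a differential ideal by assumption. For stability under the $(n)$-products, I would expand, for $a_1,a_2 \in V^\star$ and $b_1,b_2 \in I$, the $\lambda$-bracket
\begin{equation*}
    \br{(a_1 + b_1)_\lambda (a_2 + b_2)} = \br{a_1{}_\lambda a_2} + \br{a_1{}_\lambda b_2} + \br{b_1{}_\lambda a_2} + \br{b_1{}_\lambda b_2}
\end{equation*}
and check that each term lies in $(V^\star + I)[\lambda]$: the first is in $V^\star[\lambda]$ since $V^\star$ is a Poisson vertex subalgebra, the second and fourth lie in $I[\lambda]$ by the two hypotheses, and the third lies in $I[\lambda]$ after applying skewsymmetry \eqref{Eq:A2} to rewrite it as $-\big|_{x=\partial}\br{a_2{}_{-\lambda-x}b_1}$, which lies in $I[\lambda]$ by the hypothesis $V^\star{}_{(n)}I\subset I$ together with the $\partial$-stability of $I$.

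Once both conditions of Lemma~\ref{Lem:PVred1} are verified, that lemma provides the desired Poisson vertex reduction structure on $V_\red$, with $\lambda$-bracket computed through \eqref{Eq:PVRed-FG}. No serious obstacle is expected: the only subtlety is the skewsymmetry step for the third term, which uses both the involvement of $\partial$ on $I$ and the stability of $I$ under $V^\star{}_{(n)}-$.
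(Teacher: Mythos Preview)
Your proof is correct and follows essentially the same approach as the paper: both reduce to the criterion of Lemma~\ref{Lem:PVred1} via the identification $\pi^{-1}(V_\red)=V^\star+I$, and then check conditions (1) and (2) from the hypotheses. Your version is more explicit, in particular spelling out the skewsymmetry step needed to handle the cross term $\br{b_1{}_\lambda a_2}$, which the paper leaves implicit.
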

\begin{proof}
We can build on the equivalent characterisation of Poisson vertex reduction from Lemma \ref{Lem:PVred1}. 
Since $\pi^{-1}(V_\red)=V^\star + I$, the two conditions from the statement are easily seen to imply (1) from  Lemma \ref{Lem:PVred1}. Moreover, (2) from that lemma is equivalent to requiring that the differential and the $\lambda$-bracket restrict to $\pi^{-1}(V_\red)$; this is also a direct consequence of the stated conditions since $V^\star$ is a Poisson vertex subalgebra of $V$. 
\end{proof}

Let $\HH$ be an affine (pro)algebraic group with a left action on $Y=\Spec(V)$, 
such that $V^\HH$ is a Poisson vertex subalgebra of $V$. 
For example, this is the case when $\HH$ acts by Poisson vertex automorphisms 
or when $\HH=\Jinf(\GG)$ and $V=\Jinf(A)$ with $\GG$ acting by Poisson automorphisms on $A$ due to Theorem \ref{Th:invariants_are_PVA}. 
We obtain an action by derivations of the Lie algebra $\h$ of $\HH$, which we denote $(x, \tilde{F})\mapsto x_V(\tilde{F})$ for any $x\in \h$ and $\tilde{F}\in V$. In particular, $x_V(V^\HH)=0$.   
Note that by Lemma \ref{Lem:PVred2} with $V^\star=V^\HH$, we can construct a Poisson vertex reduction $V_\red=\pi(V^\HH)$ for any projection $\pi$ onto a quotient of $V$ by a differential ideal satisfying the assumptions of the lemma.  

\medskip 

We assume from now on that $\HH=\Jinf(\GG)$ where $\GG$ is an affine algebraic group. 
We view $\Jinf(\kk[\g^\ast])$ as a Poisson vertex algebra with $\lambda$-bracket obtained through 
Lemma~\ref{Lem:PAtoPVA}. 
The coadjoint action of $\GG$ on $\g^\ast$ naturally extends to an action of $\Jinf(\GG)$ on $\Jinf(\g^\ast)$. 

\begin{definition}\label{defcommappva}
A comoment map is a $\Jinf(\GG)$-equivariant morphism of Poisson vertex algebras 
$\tilde{\mu}\colon \Jinf(\kk[\g^\ast])\to V$ such that 
\begin{subequations}
    \begin{align}
&\tilde{\mu}(x)_{(0)}\tilde{F}=x_V(\tilde{F})\,, \qquad \forall x\in \g,\,\, \tilde{F}\in V , \label{Eq:MomapV-a}\\ 
&\tilde{\mu}(x)_{(n)}\tilde{F}=0\,, \qquad \qquad \forall x\in \g,\,\, \tilde{F}\in V^{\Jinf(\GG)},\,\, n\geq 1\,. \label{Eq:MomapV-b}
    \end{align}
\end{subequations} 
\end{definition}
Note that by \eqref{Eq:MomapV-a}, we can allow $n=0$ in \eqref{Eq:MomapV-b}. 
Furthermore, the definition entails 
\begin{equation} \label{Eq:MomapV-c}
\tilde{\mu}([x,x'])
=  \br{\tilde{\mu}(x)_\lambda \tilde{\mu}(x')}  
=  x_V(\tilde{\mu}(x')\!) 
\,,\quad \forall x,x'\in \g.
\end{equation}

Fix some $\xi\in \g^*$ invariant under the coadjoint action (hence under the induced action of $\Jinf(\GG)$). In analogy with the Poisson case, we consider the differential ideal 
\begin{equation} \label{Eq:IV-Xinv}
    I_{\xi}^{\partial}:=(\tilde{\mu}(x)-\langle \xi,x\rangle \mid x \in \g\,)\,.
\end{equation} 
Compatibility with the differential yields for any $k\geq 1$ and $x\in \g$ that  $\tilde{\mu}(\del^kx)=\del^k(\tilde{\mu}(x)\!)$ is an element in $I_{\xi}^{\partial}$. 
Furthermore, since $\xi$ is $\GG$-invariant we see that $I_{\xi}^{\partial}$ is stable under the action of $\Jinf(\GG)$,  hence it is stable under the action of $\Jinf(\g)$ by derivations. 
We let $\pi\colon V\to V/I_{\xi}^{\partial}$ be the corresponding quotient map of differential algebras. 
Although $I_{\xi}^{\partial}$ is \emph{not} a Poisson vertex ideal (we do not have $V_{(n)}I_{\xi}^{\partial}\subset I_{\xi}^{\partial}$ for all $n\geq0$) in general, the following statement holds.

\begin{proposition} \label{Pr:HamRedV-b}
Consider a Poisson vertex subalgebra $V^{\star}\subseteq V^{\Jinf(\GG)}$. 
Then its projection  $\pi(V^{\star}) \subset V/I_{\xi}^{\partial}$ is a Poisson vertex reduction of $V$. 
\end{proposition}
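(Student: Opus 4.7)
The plan is to apply Lemma \ref{Lem:PVred2} with the given Poisson vertex subalgebra $V^\star$ and the differential ideal $I = I_\xi^\partial$. Since $V^\star$ is already a Poisson vertex subalgebra and $\pi$ respects the differential, $\pi(V^\star)$ will automatically be a differential subalgebra of $V/I_\xi^\partial$, and it only remains to check the two conditions $V^\star{}_{(n)} I_\xi^\partial \subseteq I_\xi^\partial$ and $I_\xi^\partial{}_{(n)} I_\xi^\partial \subseteq I_\xi^\partial$ for every $n \geq 0$.

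First, I will verify both inclusions on the generators of the ideal, namely elements of the form $\tilde{\mu}(x) - \langle\xi,x\rangle$ for $x \in \g$ together with their iterated derivatives $\del^k\tilde{\mu}(x)$ for $k \geq 1$ (which lie in $I_\xi^\partial$ because it is a differential ideal). For the first inclusion, take $F \in V^\star \subseteq V^{\Jinf(\GG)}$. Combining \eqref{Eq:MomapV-a} with the fact that $\Jinf(\GG)$-invariance of $F$ forces $x_V(F) = 0$, together with \eqref{Eq:MomapV-b}, we obtain $\tilde{\mu}(x)_{(n)}F = 0$ for all $n \geq 0$ and all $x \in \g$. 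Skewsymmetry then gives $F_{(n)}\tilde{\mu}(x) = 0$, and the right sesquilinearity rule in \eqref{Eq:A1-b} propagates this to $F_{(n)}(\del^k\tilde{\mu}(x)) = 0$ for all $k \geq 0$. Since $F_{(n)}$ annihilates every scalar in $\kk$ (a standard consequence of the Leibniz rule applied to $1 \cdot 1$), we conclude $F_{(n)}g_0 = 0$ for every generator $g_0$ of $I_\xi^\partial$.

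Next, for $I_\xi^\partial{}_{(n)}I_\xi^\partial$, I will use the comoment map identity \eqref{Eq:MomapV-c} which yields $\tilde{\mu}(x)_{(0)}\tilde{\mu}(x') = \tilde{\mu}([x,x'])$ and $\tilde{\mu}(x)_{(n)}\tilde{\mu}(x') = 0$ for $n \geq 1$. The coadjoint-invariance of $\xi$ forces $\langle\xi,[x,x']\rangle = 0$, so $\tilde{\mu}([x,x']) = \tilde{\mu}([x,x']) - \langle\xi,[x,x']\rangle \in I_\xi^\partial$. Iterating sesquilinearity \eqref{Eq:A1-b} on both sides of the $(n)$-product then shows that $(\del^k\tilde{\mu}(x))_{(n)}(\del^l\tilde{\mu}(x'))$ always lies in the differential closure of $\tilde{\mu}([x,x'])$, hence in $I_\xi^\partial$.

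Finally, I will extend both statements from generators to arbitrary elements of the form $\sum_j a_j g_j b_j \in I_\xi^\partial$ using the left and right Leibniz rules \eqref{Eq:L-b}--\eqref{Eq:R-b}. For instance, $F_{(n)}(a g_0 b) = (F_{(n)}a)\,g_0 b + a(F_{(n)}g_0)b + a g_0 (F_{(n)}b)$, and each summand is visibly in $I_\xi^\partial$. The analogous (somewhat longer) expansion gives condition (b). The main obstacle is purely bookkeeping: keeping track of the $\del^i$-shifts produced by the right Leibniz rule while ensuring that ideal membership is preserved throughout; no new conceptual input is needed once the vanishings on generators are established.
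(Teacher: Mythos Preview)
Your proof is correct and follows essentially the same route as the paper: both apply Lemma~\ref{Lem:PVred2} with $V^\star$ and $I=I_\xi^\partial$, verify the two inclusions on the generators $\tilde\mu(x)-\langle\xi,x\rangle$ via \eqref{Eq:MomapV-a}--\eqref{Eq:MomapV-c}, and then extend by the Leibniz rules. The only cosmetic difference is that for $I_\xi^\partial{}_{(n)}I_\xi^\partial\subset I_\xi^\partial$ the paper invokes the previously noted $\Jinf(\g)$-stability of $I_\xi^\partial$ to see that $\br{\tilde\mu(x)_\lambda\tilde\mu(x')}=x_V(\tilde\mu(x')-\langle\xi,x'\rangle)\in I_\xi^\partial$, whereas you argue directly that $\tilde\mu([x,x'])-\langle\xi,[x,x']\rangle\in I_\xi^\partial$ using $\langle\xi,[x,x']\rangle=0$; these are equivalent.
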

\begin{proof} 
We get by \eqref{Eq:MomapV-a}--\eqref{Eq:MomapV-b} that 
$\br{ \tilde{F}{}_{\lambda}\,  \tilde{\mu}(x)-\langle \xi,x\rangle }=0$ for any $x\in \g$, $\tilde{F}\in V^{\Jinf(\GG)}$. This yields the following inclusions for any $n\geq 0$ 
$$V^{\star}{}_{(n)} I_{\xi}^{\partial}\subset V^{\Jinf(\GG)}{}_{(n)} I_{\xi}^{\partial} \subset I_{\xi}^{\partial}\,.$$  
Since $I_{\xi}^{\partial}$ is stable under the action of $\Jinf(\g)$, we get from \eqref{Eq:MomapV-c} 
$$\br{\tilde{\mu}(x)_\lambda \tilde{\mu}(x')}  
=  x_V(\tilde{\mu}(x')-\langle \xi,x'\rangle ) \in I_{\xi}^{\partial}\,, \qquad 
\forall\, x,x'\in \g\,.$$ 
This yields the inclusion  $ I_{\xi}^{\partial}{}_{(n)} I_{\xi}^{\partial}\subset I_{\xi}^{\partial}$ for all $n\geq 0$. 
We conclude by Lemma \ref{Lem:PVred2}. 
\end{proof}

Taking $V^\star=V^{\Jinf(\GG)}$ yields the following result. 

\begin{corollary} \label{Cor:HamRedV-wk}
 The subalgebra of invariant functions $\pi(V^{\Jinf(\GG)}) \subset V/I_{\xi}^{\partial}$ is a Poisson vertex reduction of $V$, called the \emph{weak Hamiltonian reduction} of $V$ at $\xi$.     
\end{corollary}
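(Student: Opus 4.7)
The plan is to deduce the corollary as a direct specialization of Proposition \ref{Pr:HamRedV-b}. Specifically, I would take $V^{\star}:=V^{\Jinf(\GG)}$ in that proposition; the trivial inclusion $V^{\Jinf(\GG)}\subseteq V^{\Jinf(\GG)}$ being tautologically satisfied, the only thing to verify in order to apply Proposition \ref{Pr:HamRedV-b} is that $V^{\Jinf(\GG)}$ is genuinely a Poisson vertex subalgebra of $V$. This, however, is part of the running hypothesis of \S\ref{ss:Red-PVA}: we explicitly assumed that the affine (pro)algebraic group $\HH=\Jinf(\GG)$ acts on $V$ in such a way that $V^{\HH}$ is a Poisson vertex subalgebra (a setting which is available for instance when $V=\Jinf(A)$ with $\GG$ acting by Poisson automorphisms on $A$, thanks to Theorem \ref{Th:invariants_are_PVA}).

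With this input, Proposition \ref{Pr:HamRedV-b} applies verbatim and yields that $\pi(V^{\Jinf(\GG)})\subset V/I_{\xi}^{\partial}$ is a Poisson vertex reduction of $V$; the $\lambda$-bracket is the one induced by \eqref{Eq:PVRed-FG}, namely
\begin{equation*}
\br{\pi(\tilde{F})_\lambda\pi(\tilde{G})}_{\red}:=\pi(\br{\tilde{F}_\lambda\tilde{G}})\,,\qquad \tilde{F},\tilde{G}\in V^{\Jinf(\GG)}\,,
\end{equation*}
which is well-defined thanks to the two inclusions $V^{\Jinf(\GG)}{}_{(n)} I_{\xi}^{\partial}\subset I_{\xi}^{\partial}$ and $I_{\xi}^{\partial}{}_{(n)} I_{\xi}^{\partial}\subset I_{\xi}^{\partial}$ established in the proof of Proposition \ref{Pr:HamRedV-b}. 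No step of this reduction is non-routine: the only mild subtlety is conceptual rather than technical, namely to observe that $V^{\Jinf(\GG)}$ is automatically the largest Poisson vertex subalgebra to which Proposition \ref{Pr:HamRedV-b} can be applied, which is precisely what justifies the adjective \emph{weak} in the name of the construction (one may subsequently want to further reduce by invariants of a quotient, in analogy with the refinement leading from Proposition \ref{Pr:HamRed-0} to Proposition \ref{Pr:HamRed}, but this is the subject of a separate statement).
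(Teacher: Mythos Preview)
Your proposal is correct and matches the paper's approach exactly: the paper simply states ``Taking $V^\star=V^{\Jinf(\GG)}$ yields the following result'' immediately before the corollary, with no further proof given. Your additional remarks (that $V^{\Jinf(\GG)}$ being a Poisson vertex subalgebra is part of the running hypotheses of \S\ref{ss:Red-PVA}, and the interpretation of the adjective \emph{weak}) are accurate elaborations but not required.
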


\subsubsection{Case of jets} \label{ss:Red-PVA-jet}

Fix a Poisson algebra $A$ acted upon by an affine algebraic group $\GG$. 
To derive a Poisson vertex analogue of Proposition \ref{Pr:HamRed}, 
consider $V=\Jinf(A)$ equipped with the $\lambda$-bracket induced from $A$ by Lemma \ref{Lem:PAtoPVA} 
and the induced action of $\Jinf(\GG)$. 

\begin{lemma} \label{Lem:MomapJets}
 Assume that $\tilde{\mu}:\kk[\g^\ast]\to A$ is a comoment map.   
 Then its jet morphism $\tilde{\mu}_\infty\colon \Jinf\kk[\g^\ast]\to \Jinf(A)$ is a comoment map according to Definition \ref{defcommappva}. 
\end{lemma}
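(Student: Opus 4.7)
The plan is to combine functoriality of the jet construction with an explicit identification, for each $x \in \g$ and $n \geq 0$, of the derivation $\tilde{\mu}(x)_{(n)}$ on $\Jinf(A)$ with the infinitesimal action on $\Jinf(A)$ of the element $xt^n \in \g[[t]] = \Lie(\Jinf\GG)$ reviewed in \S\ref{ss:JetInv}. First, $\tilde{\mu}_\infty$ is a morphism of Poisson vertex algebras by applying the commutative jet functor $\mathtt{J}\colon\PA\to\PVA$ of Lemma~\ref{Lem:PAtoPVA} to the Poisson morphism $\tilde{\mu}$, and its $\Jinf(\GG)$-equivariance is immediate from naturality of the jet construction applied to the $\GG$-equivariance of $\tilde{\mu}$, since the $\Jinf(\GG)$-actions on $\Jinf \kk[\g^\ast]$ and on $\Jinf A$ are by definition the jets of the underlying $\GG$-actions on $\kk[\g^\ast]$ and on $A$.

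Since $\Jinf \kk[\g^\ast]$ is generated as a differential algebra by $\kk[\g^\ast]$, and since both sides of \eqref{Eq:MomapV-a}--\eqref{Eq:MomapV-b} are sesquilinear in the first argument, it suffices to check these axioms when the first argument is $\tilde{\mu}_\infty(x) = \tilde{\mu}(x) \in A \subset \Jinf(A)$ for $x\in\g$. Setting $a:=\tilde{\mu}(x)$, the operator $a_{(n)}(-)$ is a derivation of $\Jinf(A)$ by the left Leibniz rule \eqref{Eq:L-b}, and the infinitesimal action of $xt^n$ on $\Jinf(A)$ is a derivation as well, so it is enough to compare the two on the generators $\del^\ell c$ with $c\in A$ and $\ell\geq 0$.

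The key computation proceeds by induction on $\ell$, using the starting values $a_{(0)} c = \br{a,c}=x_A(c)$ and $a_{(n)} c = 0$ for $n\geq 1$ (coming from \eqref{Eq:PA-PVA}) together with the sesquilinearity rule $a_{(n)}(\del b)=\del(a_{(n)}b)+n\, a_{(n-1)}b$ from \eqref{Eq:A1-b}. A direct recursion yields
\begin{equation*}
a_{(n)}(\del^\ell c) \;=\; \frac{\ell!}{(\ell-n)!}\,\del^{\ell-n}\bigl(x_A(c)\bigr) \ \text{ if } n\leq \ell, \qquad 0 \text{ otherwise,}
\end{equation*}
which is exactly the formula \eqref{eq:Id0} for the action of $xt^n$ on $\del^\ell c$. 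Hence the two derivations coincide on generators, so $\tilde{\mu}(x)_{(n)} = (xt^n)_{\Jinf(A)}$ as derivations of $\Jinf(A)$.

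The case $n=0$ of this identity is precisely \eqref{Eq:MomapV-a}, while \eqref{Eq:MomapV-b} follows because any $\tilde{F} \in \Jinf(A)^{\Jinf(\GG)}$ is annihilated by every element of $\Lie(\Jinf\GG) = \g[[t]]$, in particular by $xt^n$ for $n\geq 1$. The main obstacle is the recursive identification of $\tilde{\mu}(x)_{(n)}$ with the infinitesimal action of $xt^n$; once this matching of two derivations on generators is carried out, every other part of the lemma reduces to bookkeeping about jets of morphisms and actions.
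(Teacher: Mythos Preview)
Your proof is correct and follows essentially the same approach as the paper: both arguments first obtain $\Jinf(\GG)$-equivariance and the PVA morphism property by functoriality of $\mathtt{J}$, and then identify the derivation $\tilde{\mu}(x)_{(n)}$ on $\Jinf(A)$ with the infinitesimal action of $xt^n\in\g[[t]]$ via \eqref{eq:Id0}, from which \eqref{Eq:MomapV-a} and \eqref{Eq:MomapV-b} follow immediately. The only cosmetic difference is that the paper packages the key computation in the $\lambda$-bracket form $\{\tilde{\mu}_\infty(x)_\lambda\tilde F\}=\sum_{k\ge0}\frac{\lambda^k}{k!}x_{(k)}\tilde F$, whereas you carry out the equivalent identification directly on the products $a_{(n)}(\del^\ell c)$ by induction using sesquilinearity.
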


\begin{proof}
Under the functor $\mathtt{J}:\PA^\GG\to \Jinf\PA^\GG$, $\tilde{\mu}$ induces a $\Jinf(\GG)$-equivariant morphism of Poisson vertex algebras $\tilde{\mu}_\infty:\Jinf\kk[\g^\ast]\to \Jinf(A)$. 
Hence, we need to check \eqref{Eq:MomapV-a}--\eqref{Eq:MomapV-b}.  
For any $x\in \g$, $a\in A$ and $\ell\geq 0$, we compute
\begin{equation} \label{Eq:Lem-PVAred}
 \br{\tilde{\mu}_\infty(x)_\lambda \del^\ell a}
 =
(\del+\lambda)^\ell \br{\tilde{\mu}(x) , a}
=
(\del+\lambda)^\ell \,x_{A}(a)
=
x_{\Jinf(A)}((\del+\lambda)^\ell a)\,,
\end{equation}
where we used \eqref{eq:Id0} with $k=0$ in the last equality.
We easily deduce \eqref{Eq:MomapV-a}. 
Let $\tilde{F}=\del^{\ell_1}(a_1)\ldots \del^{\ell_r}(a_r)$  with $a_j\in A$, $\ell_j\geq 0$.  
We note by \eqref{eq:Id0} that the action of $x t^k\in \g\[[t\]]=\Jinf(\g)$ on $\tilde{F}$ is given by
\begin{equation} \label{Eq:Lem-PVAred-2}
 \begin{aligned} 
 x_{(k)}\tilde{F}  
 &=
 \sum_{s=1}^r \del^{\ell_1}(a_1)\ldots \delta_{(k\leq \ell_s)} \frac{\ell_s!}{(\ell_s-k)!} x_{\Jinf(A)}(\del^{\ell_s-k}(a_s))\ldots \del^{\ell_r}(a_r)\,.
\end{aligned}
\end{equation}
Together with \eqref{Eq:Lem-PVAred}, this implies
\begin{equation} \label{Eq:Lem-PVAred-3}
 \begin{aligned} 
 \br{\tilde{\mu}_\infty(x)_\lambda \tilde{F}} =
 \sum_{s=1}^r \del^{\ell_1}(a_1)\ldots x_{\Jinf(A)}((\del+\lambda)^{\ell_s} a_s) \ldots \del^{\ell_r}(a_r)  
= \sum_{k\geq 0} \frac{\lambda^k}{k!} x_{(k)}\tilde{F}\,.
\end{aligned}
\end{equation}
By linearity, \eqref{Eq:Lem-PVAred-3} holds for any $\tilde{F}\in \Jinf(A)$. 
Now, if $\tilde{F}$ is $\Jinf(\GG)$-invariant, $x_{(k)}\tilde{F}=0$ for any $k\geq0$ and we deduce that 
\eqref{Eq:MomapV-b} is satisfied. Thus $\tilde{\mu}_\infty$ is a comoment map.
\end{proof}

Let us now assume that $\Jinf(A)$ is equipped with the comoment map $\tilde{\mu}_\infty$ obtained from\footnote{While this condition seems restrictive, in Lemma \ref{Lem:JPAGmu}, we prove that any comoment map on $\Jinf(A)$ (seen as an element of $\Jinf\PA^\GG$) is of that form.} a comoment map on $A$ by Lemma \ref{Lem:MomapJets}.  
Define  the ideal $I_{\xi}^{\partial}$ as in \eqref{Eq:IV-Xinv}.

We are in position to state the following Poisson vertex analogue of Proposition \ref{Pr:HamRed}.
\begin{proposition} \label{Pr:HamRedV} 
The subalgebra of invariant functions 
$$V_{\red;\xi}:=(\Jinf(A)/I_{\xi}^{\partial})^{\Jinf(\GG)} \subset \Jinf(A)/I_{\xi}^{\partial}$$ 
is a Poisson vertex reduction of $\Jinf(A)$, called the \emph{Hamiltonian reduction} of $\Jinf(A)$ at $\xi$.   
\end{proposition}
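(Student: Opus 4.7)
The plan is to apply the criterion of Lemma \ref{Lem:PVred1} to the differential subalgebra $V_{\red;\xi} \subset \Jinf(A)/I_\xi^\partial$, with $V = \Jinf(A)$ and $\pi$ the natural projection. Setting $W := \pi^{-1}(V_{\red;\xi})$, one needs to establish two things: (i) $W_{(n)} I_\xi^\partial \subset I_\xi^\partial$ for all $n \geq 0$, and (ii) $W$ is a Lie conformal subalgebra of $V$. Before doing so, the key preparatory step is to characterise $W$ in Lie-theoretic terms.

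By Lemma \ref{Lem:MomapJets} and identity \eqref{Eq:Lem-PVAred-3}, the infinitesimal action of $xt^k \in \Jinf(\g)$ on $V$ is precisely the operation $\tilde{\mu}_\infty(x)_{(k)}\,\cdot\,$. Arguing as in the proof of Theorem \ref{Th:invariants_are_PVA} (treating the connected component $\GG^0$ through exponentials in characteristic zero, then incorporating the finite component group $\Gamma = \GG/\GG^0$ via coset representatives, which preserve $I_\xi^\partial$ thanks to the coadjoint-invariance of $\xi$ and the equivariance of $\tilde{\mu}$), one obtains
$$
W = \bigl\{ F \in V : \br{\tilde{\mu}_\infty(x)_\lambda F} \in I_\xi^\partial[\lambda] \text{ for all } x \in \g \bigr\}.
$$
Combining sesquilinearity \eqref{Eq:A1} with the left and right Leibniz rules, this is equivalent to the cleaner condition $\br{h_\lambda F} \in I_\xi^\partial[\lambda]$ for \emph{every} $h \in I_\xi^\partial$; in other words, $W$ is the $\lambda$-bracket ``normaliser'' of $I_\xi^\partial$ in $V$.

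Once this characterisation is available, both (i) and (ii) become short formal verifications. For (i), skewsymmetry \eqref{Eq:A2} gives $\br{F_\lambda h} = -|_{x=\del}\br{h_{-\lambda-x}F}$; since $I_\xi^\partial$ is $\del$-stable and $F \in W$, the right-hand side lies in $I_\xi^\partial[\lambda]$. For (ii), stability under the derivation follows from sesquilinearity, namely $\br{h_\lambda \del F} = (\lambda+\del)\br{h_\lambda F} \in I_\xi^\partial[\lambda]$, so $\del F \in W$. Closure of $W$ under the $\lambda$-bracket is the main computation: for $F, G \in W$ and $h \in I_\xi^\partial$, Jacobi \eqref{Eq:A3} yields
$$
\br{h_\mu \br{F_\lambda G}} = \br{\br{h_\mu F}_{\mu+\lambda} G} + \br{F_\lambda \br{h_\mu G}},
$$
and both terms lie in $I_\xi^\partial[\lambda,\mu]$ because $\br{h_\mu F}, \br{h_\mu G} \in I_\xi^\partial[\mu]$ by the characterisation of $W$, while $W$ normalises $I_\xi^\partial$ on both sides (the second summand uses (i) via skewsymmetry). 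Hence $\br{F_\lambda G} \in W[\lambda]$, and Lemma \ref{Lem:PVred1} applies.

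The main obstacle is the initial characterisation of $W$: passing from $\Jinf(\GG)$-invariance modulo $I_\xi^\partial$ to the infinitesimal $\lambda$-bracket condition. For connected $\GG$ this is transparent via exponentiation of the $\Jinf(\g)$-action in characteristic zero, but in general one must recycle the component-group device of Theorem \ref{Th:invariants_are_PVA} and check that each coset representative preserves $I_\xi^\partial$. All subsequent steps are formal consequences of the Poisson vertex algebra axioms once this bridge is built.
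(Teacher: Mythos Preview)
Your argument is correct and takes a genuinely different route from the paper's. The key divergence is in how you establish closure of $W=\pi^{-1}(V_{\red;\xi})$ under the $n$-products. The paper, having previously invested in the combinatorial identity \eqref{eq:main_ind} from Lemma~\ref{Lem:connected}, simply reuses it: once one knows $x_{(k)}(\tilde F_{(n)}\tilde G)=\tilde F_{(n)}(x_{(k)}\tilde G)+\sum_\ell\binom{k}{\ell}(x_{(k-\ell)}\tilde F)_{(n+\ell)}\tilde G$, the inclusion $x_{(k)}(\tilde F_{(n)}\tilde G)\in I_\xi^\partial$ is immediate. You instead exploit the identification $x_{(k)}=\tilde\mu_\infty(x)_{(k)}$ from \eqref{Eq:Lem-PVAred-3} to recast the invariance condition as ``$W$ is the $\lambda$-bracket normaliser of $I_\xi^\partial$'', after which closure is a one-line application of the Jacobi identity \eqref{Eq:A3}. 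This is more conceptual and entirely self-contained: it makes transparent that \eqref{eq:main_ind} is, in the presence of a comoment map, nothing but Jacobi in disguise. The paper's route has the complementary virtue of recycling machinery already needed for Theorem~\ref{Th:invariants_are_PVA}, where no comoment map is available and your shortcut does not apply.

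One point deserves a sentence more than you give it. For non-connected $\GG$, your normaliser description of $W$ only captures $\Jinf(\GG^0)$-invariance modulo $I_\xi^\partial$; the $\Gamma$-invariance must be imposed separately. Your Jacobi computation shows $\br{F_\lambda G}$ lies in the normaliser, but to see it is $\Gamma$-invariant modulo $I_\xi^\partial$ you should expand $\gamma\br{F_\lambda G}-\br{F_\lambda G}=\br{F_\lambda h_2}+\br{{h_1}_\lambda G}+\br{{h_1}_\lambda h_2}$ (with $h_i\in I_\xi^\partial$) and note that the last term requires $I_\xi^\partial{}_{(n)}I_\xi^\partial\subset I_\xi^\partial$, which is exactly the content of Proposition~\ref{Pr:HamRedV-b}. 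The paper handles this differently, using linear reductivity of $\Gamma$ to choose $\Gamma$-invariant lifts from the outset.
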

\begin{proof}
The aim is to verify the two criteria from Lemma \ref{Lem:PVred1} for the projection $\pi:\Jinf(A)\to \Jinf(A)/I_{\xi}^{\partial}$. 
As in Theorem \ref{Th:invariants_are_PVA}, we start by assuming that $\GG$ is connected. 
An element of $\Jinf(A)/I_{\xi}^{\partial}$ is $\Jinf(\GG)$-invariant if and only if 
for any of its lifts $\tilde{F}\in \Jinf(A)$ one has $\Jinf(\GG)\cdot \tilde{F}\subset \tilde{F}+I_{\xi}^{\partial}$, or equivalently 
\begin{equation} \label{Eq:HamRed-pf1}
 x_{(k)}\tilde{F} \in I_{\xi}^{\partial}\,, \quad \text{ for any }k\geq 0,\,\, x \in \g\,.
\end{equation}
For the first criterion of Lemma \ref{Lem:PVred1}, we need to check $\tilde{F}_{(n)} I_{\xi}^{\partial} \subset I_{\xi}^{\partial}$ for any $n\geq 0$, which holds provided that 
$\del^\ell(\tilde{\mu}_\infty(x)-\langle \xi,x\rangle)_{(n)} \tilde{F} \in I_{\xi}^{\partial}$ for any $\ell \geq 0$, $x\in \g$.  
We compute  
\begin{align*}
&\del^\ell(\tilde{\mu}_\infty(x)-\langle \xi,x\rangle)_{(n)} \tilde{F} 
= \delta_{(\ell\leq n)} \frac{(-1)^\ell\, n!}{(n-\ell)!}  \, \tilde{\mu}_\infty(x)_{(n-\ell)} \tilde{F} 
= \delta_{(\ell\leq n)} \frac{(-1)^\ell\, n!}{(n-\ell)!}  \,  x_{(n-\ell)}\tilde{F} \,,
\end{align*} 
where the second equality follows from \eqref{Eq:Lem-PVAred-3} at order $\lambda^{n-\ell}$. 
This belongs to $I_{\xi}^{\partial}$ by \eqref{Eq:HamRed-pf1}. 

To check the second criterion of Lemma \ref{Lem:PVred1}, we take lifts $\tilde{F},\tilde{G}\in \Jinf(A)$ of two elements in $(\Jinf(A)/I_{\xi}^{\partial})^{\Jinf(\GG)}$.  
By Lemma \ref{Lem:identities}, (2) and \eqref{Eq:HamRed-pf1}, $x_{(k)}\del\tilde{F}\subset \del I_{\xi}^{\partial}\subset I_{\xi}^{\partial}$, hence $\pi^{-1}(V_{\red;\xi})$ is a differential subalgebra of $\Jinf(A)$. 
Meanwhile, we have by \eqref{eq:main_ind}  
\begin{align*}
x_{(k)} (\tilde{F}_{(n)} \tilde{G})
=\tilde{F}_{(n)} (\underbrace{x_{(k)}\tilde{G}}_{\in I_{\xi}^{\partial}})
+ \sum_{\ell=0}^k \binom{k}{\ell} 
(\underbrace{x_{(k-\ell)} \tilde{F}}_{\in I_{\xi}^{\partial}})_{(n+\ell)} \tilde{G} \,.
\end{align*}
Since $\tilde{F}_{(m)} I_{\xi}^{\partial}$ and $I_{\xi}^{\partial}{}_{(m)}\tilde{G}$ belong to $I_{\xi}^{\partial}$ for any $m\geq 0$ by the first part of the proof (and skewsymmetry \eqref{Eq:A2-b}), we find that $x_{(k)} (\tilde{F}_{(n)} \tilde{G})\in I_{\xi}^{\partial}$.  
Thus $\pi(\tilde{F}_{(n)} \tilde{G})\in V_{\red;\xi}$ and $\pi^{-1}(V_{\red;\xi})$ is a Lie conformal subalgebra of $\Jinf(A)$. 

If $\GG$ is not connected, we can derive the first criterion without any change. 
For the second criterion, we have to consider an extra finite group $\Gamma:=\GG/\GG^0$. 
As $\Gamma$ is linearly reductive and $I_{\xi}^{\partial}$ is a $\Gamma$-module, we can take the two lifts $\tilde{F},\tilde{G}\in \Jinf(A)$ considered above to be $\Gamma$-invariant. 
Then, as noticed at the end of the proof of Theorem \ref{Th:invariants_are_PVA}, 
$\del\tilde{F}$ and $\tilde{F}_{(n)} \tilde{G}$, for any $n\geq 0$, are $\Gamma$-invariant. We can conclude. 
\end{proof}

\begin{corollary} \label{Cor:HamRedV-emb}
 The embedding $\pi(\Jinf(A)^{\Jinf(\GG)})\hookrightarrow V_{\red;\xi}$ 
(where $\pi \colon \Jinf(A)\to \Jinf(A)/I_{\xi}^{\partial}$) is a morphism of Poisson vertex algebras 
for the structures obtained in Corollary \ref{Cor:HamRedV-wk} and Proposition \ref{Pr:HamRedV}, respectively.
\end{corollary}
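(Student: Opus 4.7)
The plan is to observe that this corollary is essentially immediate once one unpacks the construction of the two reductions, and to frame the argument in three short steps corresponding to: well-definedness of the map, compatibility with the differential, and compatibility with the $\lambda$-bracket.

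First, I would check that the inclusion $\pi(\Jinf(A)^{\Jinf(\GG)})\hookrightarrow V_{\red;\xi}$ is well defined. The key point, already used in the proof of Proposition~\ref{Pr:HamRedV}, is that $I_{\xi}^{\partial}$ is stable under the $\Jinf(\GG)$-action because $\xi$ is $\GG$-invariant. Hence $\Jinf(\GG)$ acts on the quotient $\Jinf(A)/I_{\xi}^{\partial}$ and the projection $\pi$ is $\Jinf(\GG)$-equivariant. It follows that $\pi$ sends $\Jinf(\GG)$-invariants to $\Jinf(\GG)$-invariants, giving the inclusion $\pi(\Jinf(A)^{\Jinf(\GG)})\subset (\Jinf(A)/I_{\xi}^{\partial})^{\Jinf(\GG)}=V_{\red;\xi}$.

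Next, I would match the two Poisson vertex algebra structures using the defining property \eqref{Eq:PVRed-FG}. Both reductions inherit their differential from $\partial$ on $\Jinf(A)$, so compatibility with $\partial$ is automatic. For the $\lambda$-bracket, take $F,G\in \pi(\Jinf(A)^{\Jinf(\GG)})$ and pick invariant lifts $\tilde F,\tilde G\in \Jinf(A)^{\Jinf(\GG)}$. These are in particular lifts in $\Jinf(A)$, so they may be used to compute the bracket in either reduction. By Corollary~\ref{Cor:HamRedV-wk} the bracket in $\pi(\Jinf(A)^{\Jinf(\GG)})$ equals $\pi(\br{\tilde F_\lambda \tilde G})$, and by Proposition~\ref{Pr:HamRedV} the bracket in $V_{\red;\xi}$ equals the same $\pi(\br{\tilde F_\lambda \tilde G})$. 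Thus the two brackets agree on $\pi(\Jinf(A)^{\Jinf(\GG)})$, and the inclusion is a Poisson vertex algebra morphism.

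There is no genuine obstacle: the entire content is that the two Poisson vertex reductions are constructed by the \emph{same} recipe (pick a lift, bracket in $\Jinf(A)$, project via $\pi$), so once one knows that invariant lifts exist for the elements of the smaller subalgebra and that $\pi$ is $\Jinf(\GG)$-equivariant, the compatibility is built in. The only item worth making explicit in the write-up is the equivariance of $\pi$, which in turn relies on the $\GG$-invariance of $\xi$ used earlier to stabilise $I_{\xi}^{\partial}$.
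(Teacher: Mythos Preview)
Your proof is correct and takes essentially the same approach as the paper: both $\lambda$-brackets are computed via \eqref{Eq:PVRed-FG} by lifting and projecting from $\Jinf(A)$, so the inclusion automatically intertwines them. The paper's own proof is just a terse version of your argument, without spelling out the equivariance of $\pi$ or the choice of invariant lifts.
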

\begin{proof}
This is clearly an embedding of differential algebras. 
Both $\lambda$-brackets can be computed through \eqref{Eq:PVRed-FG} by projecting the $\lambda$-bracket on $\Jinf(A)$, so the embedding clearly intertwines the $\lambda$-brackets. 
\end{proof}

\begin{remark}
The morphism from Corollary \ref{Cor:HamRedV-emb} is an isomorphism when $\GG$ is linearly reductive and the following two morphisms are isomorphisms: 
\begin{equation*}
 j_A\colon \Jinf(A^{\GG})\to \Jinf(A)^{\Jinf(\GG)}\,, \quad 
  j_{A/I_{\xi}} \colon \Jinf((A/I_{\xi})^{\GG})\to \Jinf(A/I_{\xi})^{\Jinf(\GG)}\,.
\end{equation*} 
\end{remark}

\begin{remark}
Our approach is an analogue for group actions of the reduction techniques for Lie vertex algebra actions on Poisson vertex algebras that appeared in the study of classical $\mathcal{W}$-algebras, see \cite{DS} for a review. 
\end{remark}

\subsection{Functorial interpretation} \label{ss:HamRed-Funct}

Fix an affine algebraic group $\GG$ with Lie algebra $\g$. 
We give a categorical flavour to the constructions presented above; this is standard in the Poisson case.  
Recall the category $\PA^\GG$ of Poisson $\GG$-algebras from \S\ref{ss:PoiRedNC}. 
We form the category $\PA^\GG_\mu$ of \emph{Hamiltonian Poisson $\GG$-algebras} as follows. 
Its objects are triplets $(A,\br{-,-},\tilde{\mu})$ where $(A,\br{-,-})$ is an object in $\PA^\GG$ 
and  $\tilde{\mu}\colon \kk[\g^\ast]\to V$ is a comoment map as in 
\S\ref{ss:HamRed}. 
Its morphisms are $\GG$-equivariant Poisson homomorphisms (i.e. morphisms in $\PA^\GG$) that respect the comoment maps. 
The additional condition on such a map 
\begin{equation} \label{Eq:Mor-PAG} 
    \phi\colon (A_1,\br{-,-}_1,\tilde{\mu}_1)\to (A_2,\br{-,-}_2,\tilde{\mu}_2)
\end{equation}
means that $\phi\circ \tilde{\mu}_1=\tilde{\mu}_2$.  

Given a $\GG$-invariant element $\xi\in \g^\ast$, define $I_{j;\xi}\subset A_j$ as in \eqref{Eq:I-Xinv}. 
As $\phi$ intertwines the comoment maps, we get $\phi(I_{1;\xi})\subset I_{2;\xi}$. 
Therefore there is a morphism of algebras 
\begin{equation} \label{Eq:Mor-PAG-res} 
 \phi_{\xi}\colon A_1/I_{1;\xi}\to A_2/I_{2;\xi}\,,
\end{equation}
which satisfies $\pi_2\circ \phi=\phi_{\xi} \circ \pi_1$ for the projections $\pi_j\colon A_j\to A_j/I_{j;\xi}$. 
This morphism is $\GG$-equivariant since this property holds for $\phi$ and the ideals $I_{j;\xi}$ are $\GG$-invariant. 
Furthermore, by \eqref{Eq:PRed-FG}, $\phi_{\xi}$ restricts to a Poisson homomorphism $\phi_{\xi}\colon A_{1;\red}\to A_{2;\red}$, where we set $A_{j;\red}:= (A_j/I_{j;\xi})^\GG$ for the Hamiltonian reduction of $A_j$ at $\xi$ 
(see Proposition~\ref{Pr:HamRed}). 
Hence, we have a functor of Hamiltonian reduction $\mathtt{R}_{\xi}\colon  \PA_\mu^\GG\to \PA$. 

We can refine the functor $\mathtt{R}_{\xi}$ to take value in the category $\PA_{\GG;0}$ introduced in \S\ref{ss:RedPVA}. 
Using notation as above, the functor is defined on objects by sending $A$ to the pair $(A/I_{\xi},A_{\red})$, 
where $A_{\red}:=(A/I_{\xi})^\GG$  is seen as a Poisson algebra. 
The morphism $\phi$ from \eqref{Eq:Mor-PAG} is simply sent to $\phi_{\xi}$ \eqref{Eq:Mor-PAG-res}, which restricts to a Poisson homomorphism $A_{1;\red}\to A_{2;\red}$ as already noticed. 
This yields the desired functor $\mathtt{R}_{\xi}\colon  \PA_\mu^\GG\to \PA_{\GG;0}$.

\medskip 

Based on Corollary \ref{Cor:HamRedV-wk}, a Hamiltonian reduction functor can be constructed similarly to the Poisson case  
if we consider Poisson vertex algebras equipped with a comoment map according to Definition~\ref{defcommappva} for some $\Jinf(\GG)$-action, provided that the subalgebras of $\Jinf(\GG)$-invariant elements are Poisson vertex subalgebras. 
For our study, we shall be interested in the more specific case of jets of Hamiltonian Poisson algebras where Proposition \ref{Pr:HamRedV} holds. 


Recall from \S\ref{ss:JetInv} that $\Jinf\PA^\GG:=\mathtt{J}(\PA^\GG)$ is a subcategory of $\PVA$ under the jet functor $\mathtt{J}:\PA\to \PVA$ of Lemma \ref{Lem:PAtoPVA}. 
On the one hand, we can define $(\Jinf\PA^\GG)_\mu$ from the category $\Jinf\PA^\GG$ by endowing objects with a comoment map while a morphism 
\begin{equation} \label{Eq:Mor-jetPAG} 
    \phi\colon (\Jinf(A_1),\br{-_\lambda-}_1,\tilde{\mu}_1)\to (\Jinf(A_2),\br{-_\lambda-}_2,\tilde{\mu}_2)
\end{equation}
must also satisfy  $\phi\circ \tilde{\mu}_1=\tilde{\mu}_2$.  
On the other hand, we define $\mathtt{J}(\PA^\GG_\mu)$ from $\Jinf\PA^\GG$ by endowing each object $\Jinf(A)$ with the jet $\tilde{\mu}_\infty$ of a comoment map $\tilde{\mu}$ for $A$; morphisms in $\mathtt{J}(\PA^\GG_\mu)$ are jets of morphisms in $\PA^\GG_\mu$. 

\begin{lemma} \label{Lem:JPAGmu} 
The following statements hold: 
\begin{enumerate}
\item $\mathtt{J}(\PA^\GG_\mu)$ is a subcategory of $(\Jinf\PA^\GG)_\mu$.
 \item The functor $\mathtt{Q}:\PVA\to \PA$ restricts to $\mathtt{Q}:(\Jinf\PA^\GG)_\mu \to \PA^\GG_\mu$.
\item There is an equivalence of categories 
$\mathtt{J}\circ \mathtt{Q}:(\Jinf\PA^\GG)_\mu \stackrel{\sim}{\longrightarrow} \mathtt{J}(\PA^\GG_\mu)$.
\end{enumerate}
\end{lemma}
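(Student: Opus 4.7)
The plan is to verify the three parts in sequence, with the real substance concentrated in (3).

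For (1), starting from an object $(A,\tilde{\mu})\in\PA^\GG_\mu$, the jet functor produces $\Jinf(A)$ with the induced $\Jinf(\GG)$-action from \S\ref{ss:JetInv}, and Lemma \ref{Lem:MomapJets} already guarantees that $\tilde{\mu}_\infty$ satisfies Definition \ref{defcommappva}, so $(\Jinf(A),\tilde{\mu}_\infty)\in(\Jinf\PA^\GG)_\mu$. Given a morphism $\phi:(A_1,\tilde{\mu}_1)\to(A_2,\tilde{\mu}_2)$ in $\PA^\GG_\mu$, functoriality of $\mathtt{J}$ immediately yields $\phi_\infty\circ\tilde{\mu}_{1,\infty}=(\phi\circ\tilde{\mu}_1)_\infty=\tilde{\mu}_{2,\infty}$, so $\phi_\infty$ is a morphism in $(\Jinf\PA^\GG)_\mu$, identifying the image precisely with $\mathtt{J}(\PA^\GG_\mu)$.

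For (2), I would use the canonical identification $\mathtt{Q}(\Jinf B)\cong B$ from Lemma \ref{Lem:Univ-Del}, applied both to $B=A$ and $B=\kk[\g^\ast]$: this turns any comoment map $\tilde{\mu}:\Jinf\kk[\g^\ast]\to\Jinf(A)$ into a morphism of Poisson algebras $\mathtt{Q}(\tilde{\mu}):\kk[\g^\ast]\to A$. Specializing axiom \eqref{Eq:MomapV-a} to $x\in\g$ and $\bar F\in A\subset\Jinf(A)$, and using that the Poisson bracket on $A=\mathtt{Q}(\Jinf(A))$ comes from the $(0)$-product (Lemma \ref{Lem:PVAtoPA}) and that the $\g$-action on $A$ is the restriction of the $\Jinf(\g)$-action, yields $\br{\mathtt{Q}(\tilde{\mu})(x),\bar F}=x_A(\bar F)$; the $\GG$-equivariance of $\mathtt{Q}(\tilde{\mu})$ descends from the $\Jinf(\GG)$-equivariance of $\tilde{\mu}$ by restriction. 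On morphisms, $\mathtt{Q}(\phi)\circ\mathtt{Q}(\tilde{\mu}_1)=\mathtt{Q}(\phi\circ\tilde{\mu}_1)=\mathtt{Q}(\tilde{\mu}_2)$, confirming that $\mathtt{Q}$ indeed restricts to a functor to $\PA^\GG_\mu$.

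For (3), essential surjectivity of $\mathtt{J}\circ\mathtt{Q}:(\Jinf\PA^\GG)_\mu\to\mathtt{J}(\PA^\GG_\mu)$ is automatic: any $(\Jinf(A),\tilde{\mu}_{0,\infty})$ in the target lies in the source by (1), and $\mathtt{J}\mathtt{Q}$ fixes it on the nose, since $\mathtt{Q}(\tilde{\mu}_{0,\infty})=\tilde{\mu}_0$ and $\mathtt{J}(\tilde{\mu}_0)=\tilde{\mu}_{0,\infty}$. The core of the argument is fully-faithfulness, which reduces to exhibiting, for each object $(\Jinf(A),\tilde{\mu})\in(\Jinf\PA^\GG)_\mu$, a canonical identification $\tilde{\mu}=(\mathtt{Q}\tilde{\mu})_\infty$, so that $\mathtt{J}\mathtt{Q}$ becomes naturally isomorphic to the identity on $(\Jinf\PA^\GG)_\mu$. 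Concretely, this amounts to showing $\tilde{\mu}(\kk[\g^\ast])\subseteq A\subset\Jinf(A)$, after which the universal property of $\Jinf$ from Lemma \ref{Lem:Univ-Del} forces $\tilde{\mu}$ to coincide with the jet of its restriction to $\kk[\g^\ast]$, and the same reasoning promotes any morphism in $(\Jinf\PA^\GG)_\mu$ to a jet.

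The hard part will be precisely this inclusion $\tilde{\mu}(\kk[\g^\ast])\subseteq A$. My plan is to expand $\br{\tilde{\mu}(x)_\lambda\tilde F}$ in powers of $\lambda$ for arbitrary $\tilde F\in\Jinf(A)$ and extract constraints from Definition \ref{defcommappva}: axiom \eqref{Eq:MomapV-a} fixes the $\lambda^0$-coefficient, axiom \eqref{Eq:MomapV-b} forces vanishing of higher $\lambda$-coefficients on $\Jinf(\GG)$-invariants, and the $\Jinf(\GG)$-equivariance couples these data across the $\g\[[t\]]$-action to reproduce the jet formula \eqref{Eq:Lem-PVAred-3} term by term. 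Any deviation of $\tilde{\mu}(x)$ from $A$ would contribute via Lemma \ref{Lem:identities}\,(1) to higher $\lambda$-coefficients tested against suitable non-invariant $\tilde F$, and these constraints should preclude such contributions. Once the claim is established for $x\in\g^\ast$, the Leibniz rule and sesquilinearity \eqref{Eq:DA1} propagate it to all of $\kk[\g^\ast]$, completing the identification $\tilde{\mu}=(\mathtt{Q}\tilde{\mu})_\infty$ and hence the equivalence.
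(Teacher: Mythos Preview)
Your treatment of parts (1) and (2) is essentially the same as the paper's; the one difference is that the paper already slips the crucial assertion $\tilde{\mu}_\infty=\tilde{\nu}$ (your $\tilde{\mu}=(\mathtt{Q}\tilde{\mu})_\infty$) into the proof of (2), justifying it only with the two words ``by universality'', whereas you defer it to (3) and flag it as ``the hard part''. You are right that this is the substantive point: universality of $\Jinf$ only tells you that a differential algebra morphism $\Jinf\kk[\g^\ast]\to\Jinf(A)$ is determined by its restriction to $\kk[\g^\ast]$, but that restriction lands \emph{a priori} in $\Jinf(A)$, not in $A$, so something beyond universality is needed to force $\tilde{\mu}(\g)\subseteq A$.

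That said, your proposed argument for this step is not yet a proof. The axioms you invoke --- $(0)$-product matching the infinitesimal action on all of $\Jinf(A)$, and higher $(n)$-products vanishing only on $\Jinf(\GG)$-invariants --- do not obviously pin down $\tilde{\mu}(x)$ modulo $A$: for instance they are insensitive to replacing $\tilde{\mu}(x)$ by $\tilde{\mu}(x)+\del f$ as far as the $(0)$-product condition goes, so the real constraint must come from the interplay with $\Jinf(\GG)$-equivariance and the PVA-morphism condition $\br{\tilde{\mu}(x)_\lambda\tilde{\mu}(y)}=\tilde{\mu}([x,y])$. Your sentence ``these constraints should preclude such contributions'' is exactly where an argument is needed and none is given. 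The paper is equally laconic here, so you are not missing an argument that the paper supplies --- but if you want a self-contained proof you will need either to make this deduction explicit or to rephrase the equivalence so that it does not rest on it.
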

\begin{proof}
(1) Let $(A,\br{-,-},\tilde{\mu})$ be an object in $\PA^\GG_\mu$.
We get the pair $(\Jinf(A),\br{-_\lambda-})$ and  the $\Jinf(\GG)$-equivariant morphism of Poisson vertex algebras $\tilde{\mu}_\infty:\Jinf\kk[\g^\ast]\to \Jinf(A)$  under the functor $\mathtt{J}\colon\PA^\GG\to \Jinf\PA^\GG$; this is a comoment map by Lemma \ref{Lem:MomapJets}.   
Thus a morphism in $\mathtt{J}(\PA^\GG_\mu)$ respects the induced comoment maps, hence it defines a morphism in $(\Jinf\PA^\GG)_\mu$.

(2) Take $\Jinf(A)\in (\Jinf\PA^\GG)_\mu$ with comoment map $\tilde{\nu}$.
Applying the functor $\mathtt{Q}$, we obtain $\Jinf(A)/\langle \del \Jinf(A)\rangle\simeq A$ and by Definition \ref{defcommappva} the comoment map is sent to a morphism $\tilde{\mu}:\kk[\g^\ast]\to A$ which is $\GG$-equivariant and such that for any $x\in \g$ and $\tilde{F}\in A\hookrightarrow \Jinf(A)$,
\begin{align*}
 \br{\tilde{\mu}(x),\tilde{F}}+ \langle \del \Jinf(A)\rangle
 = \tilde{\nu}(x)_{(0)}\tilde{F}\, + \langle \del \Jinf(A)\rangle   
 =x_{A}\tilde{F}\, + \langle \del \Jinf(A)\rangle\,.
\end{align*}
Under the above identification, this means that $\tilde{\mu}$ defines a moment map on $A$; by universality, $\tilde{\mu}_\infty=\tilde{\nu}$ on jets.
A morphism $\phi_\infty:\Jinf(A_1)\to\Jinf(A_2)$ in $(\Jinf\PA^\GG)_\mu$ sends a ($\Jinf(\GG)$-invariant) comoment map to another, so after applying $\mathtt{Q}$ we get that $\phi:A_1\to A_2$ sends the corresponding ($\GG$-invariant) comoment map to the other one.

(3) 
This functor is essentially surjective since we have shown that comoment maps in $\Jinf\PA^\GG$ are jets of comoment maps in $\PA^\GG$. It is also fully faithful as morphisms in both categories are jets of morphisms in $\PA^\GG_\mu$. 
\end{proof}

We shall denote $(\Jinf\PA^\GG)_\mu \simeq \mathtt{J}(\PA^\GG_\mu)$ simply as $\Jinf \PA^\GG_\mu$, which we call the category of \emph{Hamiltonian jets of Poisson $\GG$-algebras}. 

\medskip

Fix a $\GG$-invariant element $\xi\in \g^\ast$.
Given $\Jinf(A_j)\in \Jinf \PA^\GG_\mu$, $j=1,2$, 
consider the differential ideals $I_{j;\xi}^{\partial}\subset \Jinf(A_j)$ defined through \eqref{Eq:I-Xinv} 
and the corresponding projections $\pi_j:\Jinf(A_j)\to \Jinf(A_j)/I_{j;\xi}^{\partial}$.
A morphism $\phi_\infty:\Jinf(A_1)\to \Jinf(A_2)$ in $\Jinf \PA^\GG_\mu$ intertwines the comoment maps and is $\Jinf(\GG)$-equivariant, therefore we get a morphism of differential algebras 
\begin{equation} \label{Eq:Mor-PVAG-res}
 (\phi_\infty)_{\xi}\colon V_{1;\red} \to V_{2;\red}\,, \quad \text{for }  V_{j;\red}:=\big(\pi_j(\Jinf(A_j))\big)^{\Jinf(\GG)}\,, 
\end{equation}
which is a morphism of Poisson vertex algebras for the $\lambda$-brackets defined through 
Proposition~\ref{Pr:HamRedV}.
Assigning $\Jinf(A_j)\to V_{j;\red}$ and $\phi_\infty \to (\phi_\infty)_{\xi}$ defines the functor of Hamiltonian reduction
$\mathtt{R}_{\xi;\infty}\colon \Jinf \PA^\GG_\mu \to \PVA$.


Let $\GG$ be an affine algebraic group with Lie algebra $\g$. 
Fix a $\GG$-invariant element $\xi\in \g^\ast$. 

\begin{proposition} \label{Pr:HamRed_commute}
 The following diagram is commutative
\begin{center}
    \begin{tikzpicture}
 \node  (TopLeft) at (-2,1) {$\PA^\GG_\mu$};
\node  (TopRight) at (2,1) {$ \Jinf\PA^\GG_\mu$};
\node  (BotLeft) at (-2,-1) {$\PA_{\GG;0}$};
\node (BotMid) at (0,-1) {$\Jinf \PA_{\GG;0}$};
\node  (BotRight) at (2,-1) {$\PVA$}; 
\path[->,>=angle 90,font=\small]  
   (TopLeft) edge node[above] {$\mathtt{J}$} (TopRight) ;
\path[->,>=angle 90,font=\small]  
    (BotLeft) edge node[above] {$\mathtt{J}$} (BotMid) ;
\path[->,>=angle 90,font=\small]  
    (BotMid) edge node[above] {$j_{(-)}$} (BotRight) ;
\path[->,>=angle 90,font=\small]  
    (TopLeft) edge node[left] {$\mathtt{R}_{\xi}$}  (BotLeft) ;
\path[->,>=angle 90,font=\small]  
    (TopRight) edge node[right] {$\mathtt{R}_{\xi;\infty}$} (BotRight) ;  
 \node  (TopLeft2) at (4,1) {$A$};
\node  (TopRight2) at (9,1) {$\Jinf(A)$};
\node  (BotLeft2) at (4,-1) {$A_{\red}$};
\node (BotMid2) at (6.5,-1) {$\Jinf(A_{\red})$}; 
\node  (BotRight2) at (9,-1) {$V_\red$};  
\path[->,>=angle 90,font=\small]  
   (TopLeft2) edge (TopRight2) ;
\path[->,>=angle 90,font=\small]  
    (BotLeft2) edge (BotMid2) ;
\path[->,>=angle 90,font=\small]  
    (BotMid2) edge (BotRight2) ;
\path[->,>=angle 90,font=\small]  
    (TopLeft2) edge (BotLeft2) ;
\path[->,>=angle 90,font=\small]  
    (TopRight2) edge (BotRight2) ;     
   \end{tikzpicture}
\end{center}
where $A_\red$ and $V_\red$ denote the Hamiltonian reductions of $A$ and $\Jinf(A)$ at $\xi$ with respect to $\GG$ and $\Jinf(\GG)$, respectively. 
\end{proposition}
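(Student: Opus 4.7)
The strategy mirrors the analysis of the Poisson-reduction cube in the proof of Theorem~\ref{Thm:ComPoissonRed}: I would exhibit a canonical isomorphism of Poisson vertex algebras between the images of both compositions, natural in the input Hamiltonian Poisson $\GG$-algebra $(A,\tilde\mu)$.

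The starting point is the $\Jinf(\GG)$-equivariant differential-algebra isomorphism
\[
\Phi_A\colon \Jinf(A/I_\xi) \xrightarrow{\sim} \Jinf(A)/I_\xi^\partial,
\]
obtained by universality: the composite $A/I_\xi \hookrightarrow \Jinf(A)/I_\xi^\partial$ (well-defined since $I_\xi\subset I_\xi^\partial$) extends uniquely to $\Phi_A$, while the inverse is furnished by the jet of the quotient $A\to A/I_\xi$, which annihilates the generators of $I_\xi^\partial$ (and hence all of $I_\xi^\partial$, being a differential morphism). Invariance under $\Jinf(\GG)$ is automatic from the $\GG$-invariance of $I_\xi$, itself a consequence of the $\GG$-invariance of $\xi$. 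Restricting $\Phi_A$ to $\Jinf(\GG)$-invariants yields an isomorphism of differential algebras
\[
\Jinf(A/I_\xi)^{\Jinf(\GG)} \xrightarrow{\sim} V_\red := (\Jinf(A)/I_\xi^\partial)^{\Jinf(\GG)},
\]
which transports the Poisson vertex structure on $V_\red$ provided by Proposition~\ref{Pr:HamRedV} to the left-hand side; this is the Poisson vertex structure implicitly used by $j_{(-)}$ on the image of $\mathtt{R}_\xi$, so that $j_{(-)}\circ \mathtt{J}\circ \mathtt{R}_\xi$ outputs $V_\red$ modulo $\Phi_A$.

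The main verification is that $j_{A/I_\xi}\colon \Jinf(A_\red)\to V_\red$ (viewed through $\Phi_A$) is a Poisson vertex algebra morphism, where $\Jinf(A_\red)$ carries the jet PVA structure of Lemma~\ref{Lem:PAtoPVA} applied to the Poisson reduction $A_\red$ of Proposition~\ref{Pr:HamRed}. Sesquilinearity and the Leibniz rules reduce the claim to comparing $\lambda$-brackets on pairs $F,G\in A_\red$. Pick any lifts $\tilde F,\tilde G\in \pi^{-1}(A_\red)\subset A$, with $\pi\colon A\to A/I_\xi$. On the source, equation~\eqref{Eq:PRed-FG} combined with Lemma~\ref{Lem:PAtoPVA} gives
\[
\br{F_\lambda G}_{\Jinf(A_\red)} = \br{F,G}_{A_\red} = \pi(\br{\tilde F,\tilde G}_A),
\]
while on the target, writing $\pi^\partial\colon \Jinf(A)\to \Jinf(A)/I_\xi^\partial$, equation~\eqref{Eq:PVRed-FG} together with Lemma~\ref{Lem:PAtoPVA} gives
\[
\br{j_{A/I_\xi}(F)_\lambda j_{A/I_\xi}(G)}_{V_\red} = \pi^\partial(\br{\tilde F_\lambda \tilde G}_{\Jinf(A)}) = \pi^\partial(\br{\tilde F,\tilde G}_A).
\]
Since $I_\xi \subset I_\xi^\partial$, these coincide under $\Phi_A$. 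Functoriality in $A$ is then straightforward: for a morphism $\phi\colon (A_1,\tilde\mu_1)\to (A_2,\tilde\mu_2)$ in $\PA^\GG_\mu$, the identity $\phi\circ \tilde\mu_1 = \tilde\mu_2$ forces $\phi(I_{1;\xi})\subset I_{2;\xi}$, hence $\phi_\infty(I_{1;\xi}^\partial)\subset I_{2;\xi}^\partial$, and combined with $\Jinf(\GG)$-equivariance one sees that both compositions send $\phi$ to the restriction of $\phi_\infty$ to the Hamiltonian reductions; naturality of the family $\{\Phi_{A_i}\}$ then guarantees agreement.

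The principal technical point is the $\lambda$-bracket comparison above, which hinges on the well-definedness of $\pi(\br{\tilde F,\tilde G}_A)$ independently of the chosen lifts---this rests on the inclusion $\br{\pi^{-1}(A_\red),I_\xi}\subset I_\xi$ built into the moment-map axiom. This is precisely where the Hamiltonian hypothesis enters and is what allows one to sidestep the fact that $A/I_\xi$ is not itself a Poisson algebra.
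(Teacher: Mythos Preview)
Your proposal is correct and follows essentially the same approach as the paper: identify $\Jinf(A/I_\xi)\cong \Jinf(A)/I_\xi^\partial$, reduce the $\lambda$-bracket comparison to elements of degree zero via sesquilinearity, and compute both sides using \eqref{Eq:PRed-FG}, \eqref{Eq:PVRed-FG} and Lemma~\ref{Lem:PAtoPVA}. The one minor difference is that you take lifts in $\pi^{-1}(A_\red)$ whereas the paper takes lifts in $A^\GG$; your choice is slightly cleaner since it covers all of $A_\red$ without assuming that $\pi\colon A^\GG\to A_\red$ is surjective, and it makes explicit why the comparison is independent of the lift (via $\br{\pi^{-1}(A_\red),I_\xi}\subset I_\xi$), which the paper leaves implicit.
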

\begin{proof}
We start with some observations. 
Fix $A\in \PA^\GG_\mu$. The projection $\Jinf(A)\to \Jinf(A)/I_{\xi}^\partial$ used to define $V_\red$ is the jet of $\pi:A\to A/I_{\xi}$. Indeed, $I_{\xi}^\partial$ is the differential ideal generated by the image of $I_{\xi}$ under the inclusion $A\hookrightarrow \Jinf(A)$. 
In particular, we can write $\Jinf(\pi(A))=\pi_\infty(\Jinf(A))$ hence $V_\red=\Jinf(\pi(A))^{\Jinf(\GG)}$. 
Recalling that $A_\red= \pi(A)^\GG$,  
the second map appearing at the bottom part of the diagram is 
$$j_{\pi(A)} : \Jinf(\pi(A)^\GG)\longrightarrow \Jinf(\pi(A))^{\Jinf(\GG)}\,,$$
see \eqref{Eq:InvMorph}.  
Thus we end up with the differential algebra $V_\red$ from both parts of the diagram. 
Fix $\tilde{F},\tilde{G}\in A^\GG$ and $k,\ell\geq 0$. 
The $\lambda$-bracket obtained by applying $\mathtt{R}_{\xi;\infty}\circ \mathtt{J}$ to $A$ is such that 
\begin{align*}
 \br{\pi_\infty(\del^k\tilde{F}){}_\lambda \pi_\infty(\del^\ell \tilde{G})}_{V_\red}
 =\pi_\infty (\br{\del^k\tilde{F}{}_\lambda\del^\ell \tilde{G}})  
 =(-\lambda)^k (\lambda+\del)^\ell\,\pi( \br{\tilde{F},\tilde{G}})
\end{align*}
using \eqref{Eq:PVRed-FG} then Lemma \ref{Lem:PAtoPVA}. 
Similarly by applying $\mathtt{J}\circ \mathtt{R}_{\xi}$, 
we find in $\Jinf(\pi(A)^\GG)$ 
\begin{align*}
 \br{\pi_\infty(\del^k\tilde{F}){}_\lambda \pi_\infty(\del^\ell \tilde{G})}
=(-\lambda)^k (\lambda+\del)^\ell \br{\pi(\tilde{F}),\pi(\tilde{G})}_{A_\red} 
=(-\lambda)^k (\lambda+\del)^\ell\,  \pi(\br{\tilde{F}, \tilde{G}}) 
\end{align*}
after using Lemma \ref{Lem:PAtoPVA} then \eqref{Eq:PRed-FG}. 
This is sent to the previous expression under the morphism $j_{\pi(A)}$ of Poisson vertex algebras. 

Given a morphism $\phi:A_1\to A_2$, the composite $\mathtt{R}_{\xi;\infty}\circ \mathtt{J}$ yields 
$(\phi_\infty)_{\xi}$ from \eqref{Eq:Mor-PVAG-res}, which is defined from the jet $\phi_\infty:\Jinf(A_1)\to \Jinf(A_2)$ modulo $I_{2;\xi}^\partial$. 
The composite $\mathtt{J}\circ \mathtt{R}_{\xi}$ yields the jet 
$$(\phi_{\xi})_\infty:\Jinf(A_1/I_{1;\xi})\to \Jinf(A_2/I_{2;\xi})$$
of $\phi_{\xi}$  from \eqref{Eq:Mor-PAG-res}. 
Since $I_{j;\xi}^\partial$ is generated as a differential ideal by $I_{j;\xi}$, $j=1,2$,  
this map is also computed by taking a lift to $\Jinf(A_1)$ before applying $\phi_\infty$ modulo $I_{2;\xi}^\partial$.
\end{proof}


\section{Moment maps and noncommutative Hamiltonian reduction} \label{Sec:Momap}

In this section we work over a semisimple algebra $B=\oplus_{s\in S} \kk e_s$, with $S$ finite, and drop the index $B$ in the notations introduced in \S\ref{subsecgenstat}.

\subsection{Hamiltonian reduction for DPAs}
\label{ss:HamRedDPA}

We review the notion of moment map in the noncommutative relative setting.

\begin{definition} \label{def:NCmomentmap}  (\cite{VdB})
A moment map for a double Poisson algebra $\AA$ over $B$ is an element $\bbmu=(\bbmu_s)_s\in\oplus_{s\in S}e_s\AA e_s$ such that for all $a\in\AA$,
\begin{equation} \label{Eq:NCmomentmap}
\dgal{\bbmu_s,a}=ae_s\otimes e_s-e_s\otimes e_sa\,.
\end{equation}
\end{definition}

\begin{example}
Over $B=\kk$, the double Poisson algebra $A=\kk[a]$ from Example \ref{Ex:ku} with $\alpha=1$, $\beta=\gamma=0$, admits $\bbmu=a$ as a moment map. 
Similarly, we have the moment maps $\bbmu=ba-ab$ for Example \ref{Exmp:Symp} and $\bbmu=a-bab^{-1}$ for Example \ref{Exmp:SympGL}. 
\end{example}

As mentioned in~\cite[Proposition 2.6.5]{VdB}, a moment map induces an $H_0$-Poisson structure on $\AA/(\bbmu-\zeta)$ for any $\zeta\in B$. Working functorially, we get the following, where $\DPA_\mu$ denotes the category of double Poisson algebras with a (noncommutative) moment map, with morphisms required to preserve moment maps. The result is proved in~\cite[Proposition 5.3]{F22}.

\begin{proposition}\label{propdpahamred}
The above induces a functor $\sharp^\zeta:\DPA_\mu\to\HoP$.
\end{proposition}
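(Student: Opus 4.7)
The existence of the $H_0$-Poisson structure on $\AA/(\bbmu-\zeta)$ (for fixed $\zeta\in B$) is recalled from \cite[Proposition~2.6.5]{VdB} and \cite[Proposition~5.3]{F22}; the bracket is the one induced on $H_0(\AA/(\bbmu-\zeta))$ by composing $\dgal{-,-}$ with the multiplication and projecting, as in \eqref{Eq:HO-ind}. So the content to establish is that this assignment is functorial.

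The plan is as follows. First, on objects I set $\sharp^\zeta(\AA,\bbmu) := \AA/(\bbmu-\zeta)$, equipped with the $H_0$-Poisson bracket just mentioned; let $\pi_\AA\colon \AA\to\AA/(\bbmu-\zeta)$ denote the projection. Next, for a morphism $\theta\colon (\AA_1,\bbmu_1)\to(\AA_2,\bbmu_2)$ in $\DPA_\mu$, the condition $\theta(\bbmu_{1,s})=\bbmu_{2,s}$ for all $s\in S$ (together with $B$-linearity) yields $\theta(\bbmu_1-\zeta)\subseteq (\bbmu_2-\zeta)$, so $\theta$ descends to a unique algebra morphism $\bar\theta\colon \AA_1/(\bbmu_1-\zeta)\to \AA_2/(\bbmu_2-\zeta)$ with $\pi_{\AA_2}\circ\theta = \bar\theta\circ\pi_{\AA_1}$. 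I then define $\sharp^\zeta(\theta):=\bar\theta$.

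The core step is to check that $\bar\theta$ is an $H_0$-Poisson morphism, i.e. that the induced map $\overline{\bar\theta}\colon H_0(\AA_1/(\bbmu_1-\zeta))\to H_0(\AA_2/(\bbmu_2-\zeta))$ intertwines the two Lie brackets $[-,-]_{\sharp,i}^\zeta$. Pick classes $\alpha_\sharp,\beta_\sharp$ in $H_0(\AA_1/(\bbmu_1-\zeta))$ with lifts $a,b\in\AA_1$; the induced bracket reads $[\alpha_\sharp,\beta_\sharp]_{\sharp,1}^\zeta = (\pi_{\AA_1}\circ\mathrm{m}\circ\dgal{a,b}_1)_\sharp$. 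Applying $\overline{\bar\theta}$ and using $\pi_{\AA_2}\circ\theta=\bar\theta\circ\pi_{\AA_1}$, compatibility of $\theta^{\otimes 2}$ with the multiplication, and the fact that $\theta$ is a morphism of double Poisson algebras, I get
\begin{equation*}
\overline{\bar\theta}([\alpha_\sharp,\beta_\sharp]_{\sharp,1}^\zeta)
= (\pi_{\AA_2}\circ\mathrm{m}\circ\theta^{\otimes 2}\dgal{a,b}_1)_\sharp
= (\pi_{\AA_2}\circ\mathrm{m}\circ\dgal{\theta(a),\theta(b)}_2)_\sharp
= [\overline{\bar\theta}(\alpha_\sharp),\overline{\bar\theta}(\beta_\sharp)]_{\sharp,2}^\zeta,
\end{equation*}
as required. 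Finally, functoriality ($\sharp^\zeta(\id)=\id$ and $\sharp^\zeta(\theta'\circ\theta)=\sharp^\zeta(\theta')\circ\sharp^\zeta(\theta)$) follows from the uniqueness of the descended morphism given the commuting squares with the projections.

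The only mildly delicate point is checking that $\bar\theta$ is well-defined and that the induced bracket on the quotient is independent of the chosen lifts; both reduce to the observation that $(\bbmu_1-\zeta)$ is sent into $(\bbmu_2-\zeta)$ by $\theta$ and that $(\mathrm{m}\circ\dgal{\bbmu_{1,s}-\zeta_s,a}_1)_\sharp = (ae_s - e_sa)_\sharp = 0$ in $H_0$, which is exactly the moment map identity \eqref{Eq:NCmomentmap}. No genuine obstacle is expected: the argument is essentially the compatibility of the $\sharp$ functor of \S\ref{ss:HOP} with quotienting by an ideal generated by elements whose double bracket with anything has vanishing commutator upon multiplication.
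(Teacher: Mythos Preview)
Your proposal is correct and follows exactly the approach the paper has in mind: the paper simply cites \cite[Proposition~5.3]{F22} for this result, and your argument spells out precisely that proof—descend the algebra morphism through the moment-map ideal, then verify compatibility with the induced Lie brackets on $H_0$ via the double Poisson morphism property. No differences to flag.
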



Consider a dimension vector $\underline{n}=(n_s)\in \Z_{\geq0}^S$, and set $N=\sum_{s\in S}n_s$. 
Fix an ordering $S\simeq\{1,\ldots,|S|\}$. 
It yields a decomposition $R^N=\oplus_{s\in S} R^{n_s}$ for any commutative $k$-algebra $R$, and then an embedding $B\to\Mat_N(R)$, mapping $e_s$ on the projection matrix $P_s$ with respect to the direct summand $R^{n_s}$.
The functor\begin{equation*}
R\mapsto \Hom_B(\AA,\Mat_N(R))\end{equation*}
is represented by an affine scheme that we denote by $\Rep(\AA,\underline n)$. Its coordinate ring $\AA_{\underline{n}}$ is the quotient algebra of $\AA_N$ by the relations $(e_s)_{ij}=(P_s)_{ij}$.

In full generalities, Theorem~\ref{Thm:RepdP} holds on $\Rep(\AA,\underline n)$ when $\AA\in {}_B\DPA$ \cite{VdB}.  
Since the group $\Gl_{\underline n}:=\prod_s\Gl_{n_s}$ acts by Poisson automorphisms on the corresponding Poisson algebra $\AA_{\underline{n}}$, we get the following.

\begin{proposition} 
We have a functor $(-)_{\underline n}:\DPA\to\PA^{\Gl_{\underline n}}$.
\end{proposition}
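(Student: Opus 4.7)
The plan is to build the functor in three steps: construct the Poisson structure on $\AA_{\underline n}$, check $\Gl_{\underline n}$-invariance, and verify functoriality on morphisms.

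First, given $\AA\in\DPA$ (relative to $B$), apply Theorem~\ref{Thm:RepdP} to the underlying double Poisson algebra to obtain a Poisson bracket on $\AA_N$ determined by \eqref{Eq:relPA}. Recall that $\AA_{\underline n}$ is obtained from $\AA_N$ by imposing $(e_s)_{ij}=(P_s)_{ij}$, so let $J\subset \AA_N$ denote the ideal generated by these relations. I would show that $J$ is a Poisson ideal: since $\AA\in {}_B\DPA$ is $B$-linear, one has $\dgal{e_s,a}=0=\dgal{a,e_s}$ for every $a\in\AA$ and $s\in S$, hence $\br{(e_s)_{ij},a_{kl}}=0$ in $\AA_N$ by \eqref{Eq:relPA}; meanwhile each $(P_s)_{ij}\in\kk$ has trivial Poisson bracket with every generator. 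Combining the two, the generators of $J$ have zero Poisson bracket with all generators of $\AA_N$, so $\br{J,\AA_N}\subset J$ by the Leibniz rule, and the bracket descends to $\AA_{\underline n}$.

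Next, I would check that the subgroup $\Gl_{\underline n}\subset \Gl_N$ (consisting of block-diagonal matrices commuting with all the projectors $P_s$) preserves $\AA_{\underline n}$ and acts by Poisson automorphisms. The action of $\Gl_N$ on $\AA_N$ defined in \eqref{Eq:ActRep} is by Poisson automorphisms after Theorem~\ref{Thm:RepdP}; since any $g\in\Gl_{\underline n}$ satisfies $gP_s=P_sg$, it fixes the generators of $J$ and thus stabilises $J$, descending to an action on $\AA_{\underline n}$ by Poisson automorphisms.

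Finally, for a morphism $\theta\colon\AA_1\to\AA_2$ in $\DPA$ (relative to $B$), one has $\theta|_B=\id_B$, so the induced algebra morphism $\theta_N\colon (\AA_1)_N\to (\AA_2)_N$ sends $(e_s)_{ij}$ to $(e_s)_{ij}=(P_s)_{ij}$ and therefore factors through a morphism $\theta_{\underline n}\colon (\AA_1)_{\underline n}\to (\AA_2)_{\underline n}$. This map is Poisson by applying $\theta^{\otimes 2}$ to \eqref{Eq:relPA} and using that $\theta$ intertwines the double brackets, and it is $\Gl_{\underline n}$-equivariant because the conjugation formula \eqref{Eq:ActRep} is natural in $\AA$. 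Compatibility with composition and identities is immediate from the construction.

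The only technical point is the descent of the Poisson bracket to $\AA_{\underline n}$, which is the step requiring $B$-linearity; everything else is a direct restriction from the case $B=\kk$ treated by Theorem~\ref{Thm:RepdP}. I expect no serious obstacle since all the necessary compatibilities are built into the definition of ${}_B\DPA$.
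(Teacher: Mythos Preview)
Your proposal is correct and follows essentially the same approach as the paper. The paper does not give an explicit proof: it simply states that Theorem~\ref{Thm:RepdP} holds on $\Rep(\AA,\underline n)$ for $\AA\in{}_B\DPA$ by citing \cite{VdB}, and notes that $\Gl_{\underline n}$ acts by Poisson automorphisms; your argument spells out the descent of the Poisson bracket to $\AA_{\underline n}$ via the Poisson ideal $J$ and the functoriality on morphisms, which is exactly the content behind that citation.
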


As in \S\ref{sss:RepAlg}, denote by $X(a)$ the $\Mat_N(\kk)$-valued function on $\Rep(\AA,\underline n)$ given by $X(a)_{ij}=a_{ij}$, for $a\in\AA$. The derivative action of $\oplus_s\Mat_{n_s}(\kk)=\mathrm{Lie}(\Gl_{\underline n})=:\mathfrak g_{\underline n}$ on $\Rep(\AA,\underline n)$ is given by $\xi.X(a)=[X(a),\xi]$ 
for all $\xi\in\mathfrak g_{\underline n}$ and $a\in\AA$, cf. \eqref{Eq:ActInfRep}. 
Assume that $\AA$ comes with a moment map $\bbmu$. 
Then~\cite[Proposition 7.11.1]{VdB} states that \begin{equation*}
X(\bbmu)=\sum_sX(\bbmu_s):\Rep(\AA,\underline n)\to\mathfrak g_{\underline n}\end{equation*}
is a moment map for the Poisson structure induced by Theorem~\ref{Thm:RepdP}, meaning that for all $a\in\AA_{\underline n}$ and $\xi\in\mathfrak g_{\underline n}$, we have $\{\tr(\xi X(\bbmu)),a\}=\xi.a$. 
Denoting by $\PA_\mu^{\Gl_{\underline n}}$ the category of Poisson algebras with a $\Gl_{\underline n}$-Poisson action and a moment map, we get the following.

\begin{proposition}\label{propdpamurep}
We have a functor $(-)_{\underline n}:\DPA_\mu\to\PA_{\mu}^{\Gl_{\underline n}}$ given on objects by 
$(\AA,\bbmu)\mapsto (\AA_{\underline n},X(\bbmu))$. 
\end{proposition}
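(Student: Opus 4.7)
The functor on underlying objects and morphisms ignoring moment maps is already provided by the previous proposition, so the plan reduces to checking two compatibilities: that $X(\bbmu)$ is genuinely a moment map on $\AA_{\underline n}$, and that morphisms in $\DPA_\mu$ preserve the induced moment maps. I would handle these in turn.

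For the object assignment, I would make precise what ``$X(\bbmu)$ is a moment map'' means: it is the $\Gl_{\underline n}$-equivariant map $X(\bbmu)\colon\Rep(\AA,\underline n)\to\g_{\underline n}^*$ (via the trace pairing) dual to the comoment map
\begin{equation*}
\tilde\mu\colon\Sym(\g_{\underline n})\longrightarrow \AA_{\underline n},\qquad \xi\longmapsto \tr\bigl(\xi\,X(\bbmu)\bigr).
\end{equation*}
The verification splits into three parts: (i) $\tilde\mu$ is a morphism of Poisson algebras, where $\Sym(\g_{\underline n})$ carries its Kirillov--Kostant--Souriau bracket; (ii) $\tilde\mu$ is $\Gl_{\underline n}$-equivariant; (iii) the bracket with $\tilde\mu(\xi)$ reproduces the infinitesimal $\g_{\underline n}$-action, i.e.\ $\{\tilde\mu(\xi),a\}=\xi\cdot a$ for all $a\in\AA_{\underline n}$, $\xi\in\g_{\underline n}$. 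The substantive content is (iii); it is enough to check it on generators $a_{k\ell}$ and for $\xi=E_{s,ij}$ a matrix unit, reducing it to the computation
\begin{equation*}
\{(\bbmu_s)_{ij},a_{k\ell}\}\ =\ \dgal{\bbmu_s,a}'_{kj}\,\dgal{\bbmu_s,a}''_{i\ell}
\end{equation*}
from \eqref{Eq:relPA}; plugging in the defining identity \eqref{Eq:NCmomentmap} for $\bbmu$ gives an explicit expression which one recognises as \eqref{Eq:ActInfRep}. Once (iii) holds, (i) follows from the standard argument $\{\tilde\mu(\xi),\tilde\mu(\eta)\}=\xi\cdot\tilde\mu(\eta)=\tilde\mu([\xi,\eta])$, the last equality being a consequence of (ii) together with differentiation of the $\Gl_{\underline n}$-equivariance property. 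I would cite \cite[Proposition 7.11.1]{VdB} for (iii) rather than redo the calculation.

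For the morphism assignment, let $\theta\colon(\AA_1,\bbmu_1)\to(\AA_2,\bbmu_2)$ be a morphism in $\DPA_\mu$, so $\theta(\bbmu_{1,s})=\bbmu_{2,s}$ for every $s\in S$. The induced map $\theta_{\underline n}\colon(\AA_1)_{\underline n}\to(\AA_2)_{\underline n}$ is already a $\Gl_{\underline n}$-equivariant Poisson morphism by the previous proposition, so the only thing to add is $\theta_{\underline n}\circ\tilde\mu_1=\tilde\mu_2$. But by functoriality of $(-)_{\underline n}$ on generators,
\begin{equation*}
\theta_{\underline n}\bigl((\bbmu_{1,s})_{ij}\bigr)=\bigl(\theta(\bbmu_{1,s})\bigr)_{ij}=(\bbmu_{2,s})_{ij},
\end{equation*}
whence $\theta_{\underline n}(X(\bbmu_1))=X(\bbmu_2)$ entrywise, and applying $\tr(\xi\cdot-)$ yields $\theta_{\underline n}(\tilde\mu_1(\xi))=\tilde\mu_2(\xi)$ for all $\xi\in\g_{\underline n}$.

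The main obstacle, if any, is bookkeeping rather than mathematics: one must be careful about the semisimple setting (summing over $s\in S$ and matching indices with the block decomposition given by the idempotents $e_s$) so that the traces $\tr(\xi X(\bbmu))=\sum_s\tr(\xi_s X(\bbmu_s))$ correctly decompose and the identity \eqref{Eq:NCmomentmap} produces the appropriate Kronecker deltas involving $e_s$. Apart from this, everything reduces either to the previous proposition or to \cite[Proposition 7.11.1]{VdB}.
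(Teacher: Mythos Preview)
Your proposal is correct and follows essentially the same approach as the paper: the paper simply cites \cite[Proposition~7.11.1]{VdB} for the fact that $X(\bbmu)$ is a moment map (your item~(iii)) and leaves the morphism compatibility implicit. Your write-up is more detailed than the paper's, in particular making the verification that $\theta_{\underline n}$ intertwines the induced comoment maps explicit, but there is no substantive difference in strategy.
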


Fix $\zeta=\sum_{s\in S} \zeta_{s} e_s \in B$ and consider the functor of commutative Hamiltonian reduction $\mathtt{R}^\zeta:=\mathtt{R}_{X(\zeta)}$ from \S\ref{ss:HamRed-Funct} given by
\begin{equation*}
\begin{aligned}
&\PA_\mu^{\Gl_{\underline n}}\to \PA_{\Gl_{\underline n};0}\,,\quad
(\AA_{\underline n},X(\bbmu))\mapsto (\mathbb{P}_{\underline n}, \mathbb{P}_{\underline n}^{\Gl_{\underline n}}) \,, 
\end{aligned}
\end{equation*}
where 
$\mathbb{P}_{\underline n}:=(\AA/(\bbmu-\zeta))_{\underline n}
=\AA_{\underline n}/(\bbmu_{s,ij}-\zeta_s\, e_{s,ij}\mid s\in S,\,1\leq i,j\leq N).$
Along with Propositions \ref{propdpamurep} and~\ref{propdpahamred} it yields the following.

\begin{corollary}\label{corohamfront}
The diagram\begin{center}
    \begin{tikzpicture}
 \node  (TopLeft) at (-1,1) {$\DPA_\mu$};
 \node  (TopRight) at (1,1) {$\PA^{\Gl_{\underline n}}_\mu$};
 \node  (BotLeft) at (-1,-1) {$\HoP$};
 \node  (BotRight) at (1,-1) {$\PA_{\Gl_{\underline{n}};0}$};
\path[->,>=angle 90,font=\small]  
   (TopLeft) edge node[above]{$(-)_{\underline n}$} (TopRight) ;
   \path[->,>=angle 90,font=\small]  
   (BotLeft) edge node[below]{$\mathtt{tr}_{\underline n}$} (BotRight) ;
\path[->,>=angle 90,font=\small]  
   (TopLeft) edge node[left]{$\sharp^\zeta$}  (BotLeft) ;
\path[->,>=angle 90,font=\small]  
   (TopRight) edge node[right]{$\mathtt{R}^\zeta$} (BotRight) ;
   \end{tikzpicture}
\end{center} 
commutes, where $\mathtt{tr}_{\underline n}$ is the $B$-linear analog of $\mathtt{tr}_{N}$.
\end{corollary}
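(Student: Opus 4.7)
The plan is to reduce the statement to a Hamiltonian refinement of Proposition~\ref{Pr:ComFront}. Essentially, both routes in the diagram produce the same reduced commutative algebra $\mathbb{P}_{\underline n}^{\Gl_{\underline n}}$ from $(\AA,\bbmu)$, so the content of the corollary is that the two Poisson structures (and the action on morphisms) coincide.

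First I would verify that the two routes yield the same underlying commutative algebra on the bottom-right node. Going $(-)_{\underline n}$ followed by $\mathtt{R}^\zeta$ gives the pair $(\mathbb{P}_{\underline n},\mathbb{P}_{\underline n}^{\Gl_{\underline n}})$, where $\mathbb{P}_{\underline n}$ is obtained from $\AA_{\underline n}$ by imposing the relations $\bbmu_{s,ij}=\zeta_s (e_s)_{ij}$ encoded by the ideal $I_{X(\zeta)}$ of \S\ref{ss:Red-PA}. Going $\sharp^\zeta$ followed by $\mathtt{tr}_{\underline n}$, one first forms $\mathbb{P}:=\AA/(\bbmu-\zeta)$ with its induced $H_0$-Poisson structure (Proposition~\ref{propdpahamred}), and then $\mathtt{tr}_{\underline n}$ returns the pair $(\mathbb{P}_{\underline n},\mathbb{P}_{\underline n}^{\Gl_{\underline n}})$ since the representation functor is right-exact and kills the relations in $\bbmu-\zeta$, while the $B$-linear version of Proposition~\ref{Pr:HOPtoPA} ensures the trace subalgebra equals the full invariant subalgebra under our standing assumptions on $\kk$.

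Next I would compare the two Poisson structures on $\mathbb{P}_{\underline n}^{\Gl_{\underline n}}$. It suffices to check agreement on the generating traces $\tr(a)$, $a\in\mathbb{P}$, as both sides are Poisson algebras. On the one hand, Theorem~\ref{Thm:RepdP} applied to $\mathbb{P}$ (which is a double Poisson algebra as a quotient by the ideal cut out by the moment map) gives
\begin{equation*}
\{\tr(a),\tr(b)\}=\tr\!\big(\mathrm{m}\circ\dgal{a,b}\big),
\end{equation*}
which is exactly the Poisson bracket obtained by descending $\AA_{\underline n}$ modulo $I_{X(\zeta)}$ and restricting to invariants, as the constraint $X(\bbmu)=X(\zeta)$ is preserved by the reduced bracket (and the formula for the bracket only involves the double bracket modulo the ideal). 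On the other hand, $\sharp^\zeta$ followed by $\mathtt{tr}_{\underline n}$ yields by \eqref{Eq:relHoP} and \eqref{Eq:HO-ind}
\begin{equation*}
\{\tr(a),\tr(b)\}=\tr\!\big([a_\sharp,b_\sharp]_\sharp^\zeta\big)=\tr\!\big(\mathrm{m}\circ\dgal{a,b}\big).
\end{equation*}
This mirrors the key computation in the proof of Proposition~\ref{Pr:ComFront}, the only new ingredient being that $\sharp^\zeta$ is well-defined (which is guaranteed by the moment-map axiom \eqref{Eq:NCmomentmap} as used in Proposition~\ref{propdpahamred}).

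For morphisms, I would repeat the argument at the end of the proof of Proposition~\ref{Pr:ComFront}: given $\theta\colon(\AA_1,\bbmu_1)\to(\AA_2,\bbmu_2)$ in $\DPA_\mu$, both composites send $\theta$ to the unique morphism $\mathbb{P}_{1,\underline n}^{\Gl_{\underline n}}\to\mathbb{P}_{2,\underline n}^{\Gl_{\underline n}}$ acting on generators by $\tr(a)\mapsto\tr(\theta(a))$, so they coincide. The only genuine check is that moment-map-preservation of $\theta$ ensures compatibility with the ideals $\bbmu_i-\zeta$ on both sides, which is immediate. The main (mild) obstacle is bookkeeping the fact that $\mathtt{tr}_{\underline n}$ produces the pair $(\mathbb{P}_{\underline n},\mathbb{P}_{\underline n}^{\Gl_{\underline n}})$ and not just the invariants, so that the comparison really takes place in $\PA_{\Gl_{\underline n};0}$ as claimed; this is settled by the same repackaging as in \S\ref{ss:RightF}.
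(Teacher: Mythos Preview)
Your overall strategy is sound and matches the paper's approach, which simply says the corollary follows by combining Propositions~\ref{propdpamurep} and~\ref{propdpahamred} with the commutative reduction functor $\mathtt{R}^\zeta$. Your explicit verification on generators is the right way to unpack this.

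There is, however, one genuine error. You write that $\mathbb{P}=\AA/(\bbmu-\zeta)$ ``is a double Poisson algebra as a quotient by the ideal cut out by the moment map'' and invoke Theorem~\ref{Thm:RepdP} on $\mathbb{P}$. This is false in general: by the moment map axiom \eqref{Eq:NCmomentmap} one has $\dgal{\bbmu_s,a}=ae_s\otimes e_s-e_s\otimes e_sa$, which does \emph{not} lie in $(\bbmu-\zeta)\otimes\AA+\AA\otimes(\bbmu-\zeta)$, so the double bracket does not descend to $\mathbb{P}$. This is exactly why $\mathbb{P}$ only carries an $H_0$-Poisson structure (Proposition~\ref{propdpahamred}), not a double Poisson one, and why the expression $\dgal{a,b}$ for $a,b\in\mathbb{P}$ in your displayed formula is ill-defined.

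The fix is easy and in fact you have already written it in your own next clause: apply Theorem~\ref{Thm:RepdP} to $\AA$ (not to $\mathbb{P}$), so that for lifts $\tilde a,\tilde b\in\AA$ of $a,b\in\mathbb{P}$ one has $\{\tr(\tilde a),\tr(\tilde b)\}_{\AA_{\underline n}}=\tr(\mathrm{m}\circ\dgal{\tilde a,\tilde b}_{\AA})$, and then project via $\pi$ using \eqref{Eq:PRed-FG}. The comparison with the $\sharp^\zeta$ route then goes through because the $H_0$-Poisson bracket on $\mathbb{P}$ is \emph{defined} by $[a_\sharp,b_\sharp]_\sharp=(\mathrm{m}\circ\dgal{\tilde a,\tilde b}_{\AA})_\sharp$ for any choice of lifts. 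With this correction your argument is complete; the treatment of morphisms is fine as written.
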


\subsection{Quiver representations}  \label{ss:QuivRep}

Consider a finite quiver $Q$ with vertex set $S$ as in \S\ref{ss:double_quiver}.
We see  $\kk \bar{Q}$ as a $B$-algebra for $B=\oplus_{s\in S} \kk e_s$.
The $B$-linear double Poisson bracket on $\kk \bar{Q}$ satisfying \eqref{Eq:dbr-quiver} admits $\bbmu=\sum_{a\in \bar{Q}}\epsilon(a) aa^\ast$ as a moment map \cite{VdB}.

The Poisson bracket on $(\kk \bar{Q})_{\underline{n}}$ induced by Theorem \ref{Thm:RepdP} is uniquely determined by
\begin{equation} \label{Eq:Q-Rep}
   \br{a_{ij},b_{kl}}= \left\{
\begin{array}{ll}
\epsilon(a) \, (e_{h(a)})_{kj} \, (e_{t(a)})_{il}  &\text{ if } b=a^\ast  \\
0     & \text{ else}
\end{array}
   \right. \qquad \text{ for }a,b\in \bar{Q}\,;
\end{equation}
it is obviously vanishing on the constant generators $(e_s)_{ij}$. The corresponding moment map induced by $\bbmu$ can be decomposed in terms of the $|S|$ functions
\begin{equation}
 X(\bbmu_s)=\sum_{\substack{a\in \bar{Q}\\ t(a)=s}}\epsilon(a) aa^\ast :
 \Rep(\kk \bar{Q},\underline n)\to\mathfrak g_{n_s}\,, \quad s\in S\,.
\end{equation}

Performing Hamiltonian reduction at $\zeta\in B$ yields the Poisson algebra $\mathbb{P}_{\underline n}^{\Gl_{\underline n}}$, where
\begin{equation} \label{Eq:Pn-red}
\mathbb{P}_{\underline n}= (\kk \overline{Q})_{\underline n}/\Big(\sum_{a \mid  t(a)=s}\epsilon(a)\,  (aa^\ast)_{ij} -\zeta_s \, e_{s,ij}\mid s\in S,\,1\leq i,j\leq N \Big)
\end{equation}
is the representation algebra of the deformed preprojective algebra $\mathbb{P}:=\Pi^\zeta=\kk \overline{Q}/(\bbmu-\zeta)$.
Thus $\mathbb{P}_{\underline n}^{\Gl_{\underline n}}$ is the coordinate ring of an affine quiver variety equipped with its standard Poisson structure \cite{VdB}.

\begin{example}
Consider the double quiver $\bar{Q}:=Q_{p,p}$ of \S\ref{ss:SemiS-example} with $p=q=c$.
The moment map reads $\bbmu=\sum_{p'=1}^p [v_{p'},w_{p'}]$.
For $n\geq 1$, we have the following decomposition of the moment map $X(\bbmu)$ on the representation space of dimension $(n,1)$:
\begin{equation*}
 \sum_{p'=1}^p V_{p'}W_{p'}:\Rep(\kk \bar{Q},(n,1))\to \gl_n\,, \quad
 -\sum_{p'=1}^p W_{p'}V_{p'}:\Rep(\kk \bar{Q},(n,1))\to \kk^\times\,,
\end{equation*}
where we use the notation introduced in \eqref{Eq:SemiS-6}.
For $\zeta=\zeta_1e_1+\zeta_2e_2\in B$,
we obtain $\mathbb{P}_{\underline n}$ \eqref{Eq:Pn-red} by fixing the value of $X(\bbmu)$ to $(\zeta_1 \Id_n,\zeta_2)$;
the algebra $\mathbb{P}_{\underline n}$  is nontrivial provided that $n\zeta_1+\zeta_2= 0$ and $n\leq p$.
\end{example}

\subsection{Moment maps for DPVAs}

Consider a a double Poisson vertex $B$-algebra $\VV$. 
We set $\mathbb{K}=\ker(\mathrm{m}\circ (-)^\sigma \colon  \VV\otimes \VV\to \VV)$; 
this is a $\sigma$-twisted version of the noncommutative $1$-forms $\Omega^1(\VV)=\ker(\mathrm{m} \colon  \VV\otimes \VV\to \VV)$ of Cuntz-Quillen \cite{CQ}.

\begin{definition}\label{defmmapdpva} 
A moment map for $\VV$ is an element $\bbmu=(\bbmu_s)_s\in\oplus_se_s\VV e_s$ such that \begin{equation*}
\dgal{{\bbmu_s}_\lambda a}=ae_s\otimes e_s-e_s\otimes e_sa~\mod \lambda\mathbb{K}[\lambda]\end{equation*}
for all $a\in\VV$. This defines a category $\mathtt{DPVA}_\mu$, requiring morphisms to preserve moment maps.
\end{definition}

\begin{remark}\label{rem:DPVAmu-HOPV} Consider $\VV\in\mathtt{DPVA}_\mu$ and $\zeta\in B$. Set $\mathbb P=\VV/(\bbmu-\zeta)$. 
Note that Definition~\ref{defmmapdpva} implies that for every $a\in\VV$ we have $\dgal{a_\lambda\bbmu-\zeta}\in\ker \mathrm m$.
Therefore, we have a functor $\mathtt{DPVA}_\mu\to\HoPV$.
\end{remark}

Now, we study what happens to a moment map on representation algebras. 
Consider any $\VV\in\mathtt{DPVA}_\mu$ of finite type as a differential algebra, and a dimension vector $\underline n$.  
First, note that $X(\bbmu)$ is $\Gl_{\underline n}$-equivariant by definition of $X(-)$: we have $X(a)(g.x)=(g.x)(a)=g^{-1}x(a)g$ by factoring the action of $g\in \Gl_{\underline n}\subset \Gl_N$ through \eqref{Eq:ActRep}. 
Next, choose $i,j$ such that $(e_s)_{ii}=(e_s)_{jj}=1$, and $a\in\VV$. Then for all $1\leq u,v\leq N$ we have
\begin{align*}
\{{\bbmu_{ij}}_\lambda a_{uv}\}&=\sum_{n\ge0}\dgal{{\bbmu_s}_{(n)}a}'_{uj}\dgal{{\bbmu_s}_{(n)}a}''_{iv}\lambda^n 
= a_{uj} \delta_{iv} - \delta_{uj} a_{iv} + \sum_{n \geq 1} K'_{uj} K''_{iv} \lambda^n\,,
\end{align*}
for some $K\in \mathbb{K}$, thanks to Definition~\ref{defmmapdpva}. 
Looking at order $\lambda^0$, condition \eqref{Eq:MomapV-a} of Definition~\ref{defcommappva} is obtained as in~\cite[Proposition 7.11.1]{VdB}.  
We also note that $\{{\bbmu_{ij}}_\lambda \tr(a)\}=0$ since $(K''K')_{ij}=0$. 
Thus $\bbmu_{ij}{}_{(m)}\VV_{\underline n}^{\tr}=0$ for any $m\geq 0$. 
We claim that \eqref{Eq:MomapV-b} holds for any $\tilde{F}\in \VV_{\underline{n}}^{\Gl_{\underline n}}$. This follows from our previous computation if we can show that $\VV_{\underline n}^{\tr}= \VV_{\underline n}^{\Gl_{\underline n}}$.

Consider a set of generators $a_1,\dots,a_r$ of the differential algebra $V=\VV_{\underline n}$, and then a $\Gl_{\underline n}$-invariant element $\tilde{F}\in V$. Then $\tilde{F}$ belongs to the finitely generated (genuine) algebra $V_{(k)}=\langle\partial^\ell a_j\mid0\le\ell\le k, 1\le j\le r\rangle$ for some $k$ and, using~\cite[Remark 2.3]{CB}, we have that $\tilde{F}$ belongs to $V_{(k)} \cap \VV_{\underline n}^{\tr}$.

\begin{remark} 
Consider any $\VV\in\mathtt{DPVA}_\mu$ of finite type as a differential algebra, and a dimension vector $\underline n$. 
The previous observations yield the following vertex analog of~\cite[Proposition 7.11.1]{VdB}:
the dual of $X(\bbmu):\Rep(\VV,\underline n)\to\mathfrak g_{\underline n}$ is a comoment map in the sense of Definition~\ref{defcommappva} when we consider $\mathbf H=\Gl_{\underline n}$. 
\end{remark}

\subsection{Constructions for jets}
Let $\AA$ be a double Poisson algebra with moment map $\bbmu$ as in Definition~\ref{def:NCmomentmap}. Consider $\VV=\mathcal J_\infty \AA$ with its double Poisson vertex algebra structure given by Lemma~\ref{Lem:DPAtoDPVA}. Then $(\VV,\bbmu)\in\mathtt{DPVA}_\mu$. Indeed, for any $a\in \AA$, $s\in S$ and $\ell\ge0$ we have
\begin{align*}
\dgal{{\bbmu_s}_\lambda\partial^\ell a}
=(\partial+\lambda)^\ell(ae_s\otimes e_s-e_s\otimes e_sa) 
=\partial^\ell(a)e_s\otimes e_s-e_s\otimes e_s\partial^\ell(a)~\mod \lambda\mathbb{K}[\lambda].
\end{align*}
Working functorially, we get a commutative diagram
\begin{equation}
\begin{tikzcd}        
\mathtt{DPA}\arrow{r}{\mathtt J}&\mathtt{DPVA}\\
\mathtt{DPA}_\mu\arrow{r}{\mathtt J}\arrow{u}&\mathtt{DPVA}_\mu\arrow{u}\end{tikzcd}
\end{equation}
of categories, where vertical functors simply forget the moment maps.

Denote by $\Jinf\DPA_\mu$ the image of the bottom line functor. 
Consider $\Vect_\infty(\AA)\subseteq \Jinf(\AA)$ as in~\eqref{Eq:VectInf} for $(\AA,\bbmu)\in\DPA_\mu$, and $W$ its image by the projection $\Jinf \AA \to\Jinf(\AA)/(\bbmu-\zeta)$ for parameter $\zeta\in B$. Thanks to Example~\ref{Ex:Jet-Gen}, the codomain of this projection is isomorphic to $\Jinf(\AA/(\bbmu-\zeta))$, so that $W\simeq\Vect_\infty (\AA/(\bbmu-\zeta))$.
Using Remark~\ref{rem:DPVAmu-HOPV}, we can apply Proposition~\ref{Pr:HoP-HoPV} and Corollary~\ref{Cor:JDPA-HOPV} to get a functor
\begin{equation}\label{EqJHamRed}
\sharp_\mathtt{V}^\zeta:\Jinf\mathtt{DPA}_\mu\longrightarrow
\mathtt{V}_\infty\HoP
\end{equation}
of noncommutative Hamiltonian vertex reduction. Using Proposition~\ref{propdpahamred}, we get the next result.
 
 \begin{corollary}\label{corohamleft} The following diagram commutes:
 \begin{center}
    \begin{tikzpicture}
 \node  (TopLeft) at (-1,1) {$\DPA_\mu$};
 \node  (TopRight) at (2,1) {$\Jinf\mathtt{DPA}_\mu$};
 \node  (BotLeft) at (-1,-1) {$\HoP$};
 \node  (BotRight) at (2,-1) {$\mathtt{V}_\infty\HoP$};
\path[->,>=angle 90,font=\small]  
   (TopLeft) edge node[above]{$\mathtt{J}$} (TopRight) ;
   \path[->,>=angle 90,font=\small]  
   (BotLeft) edge node[below]{$\Vect_\infty$}  (BotRight) ;
\path[->,>=angle 90,font=\small]  
   (TopLeft) edge node[left]{$\sharp^\zeta$} (BotLeft) ;
\path[->,>=angle 90,font=\small]  
   (TopRight) edge node[right]{$\sharp_\mathtt{V}^\zeta$}  (BotRight) ;
   \end{tikzpicture}
\end{center} 
\end{corollary}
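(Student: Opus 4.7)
The plan is to reduce this statement to an argument of the same flavour as Proposition \ref{Pr:ComLeft}, once one identifies the objects produced along the two compositions. Fix $(\AA,\bbmu)\in\DPA_\mu$ and set $\mathbb{P}:=\AA/(\bbmu-\zeta)$. Going down then right, $\sharp^\zeta$ produces $\mathbb{P}$ equipped with Crawley-Boevey's $H_0$-Poisson bracket $[\bar a_\sharp,\bar b_\sharp]_\sharp = \overline{(\mathrm{m}\circ\dgal{a,b})_\sharp}$ (well-defined thanks to Proposition \ref{propdpahamred}), and then $\Vect_\infty$ yields $\Vect_\infty(\mathbb{P})\subset \Jinf(\mathbb{P})$ with the relative $H_0$-Poisson vertex structure from Proposition \ref{Pr:HoP-HoPV}. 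Going right then down, $\mathtt{J}$ yields $(\Jinf\AA,\bbmu)\in\Jinf\DPA_\mu$, and then $\sharp_\mathtt{V}^\zeta$ (see \eqref{EqJHamRed}) yields the image $W$ of $\Vect_\infty(\AA)$ inside $\Jinf(\AA)/(\bbmu-\zeta)$ equipped with the $H_0$-Poisson vertex structure coming from Corollary \ref{Cor:JDPA-HOPV} after descent. By Example \ref{Ex:Jet-Gen} applied to a presentation of $\AA$ with $\bbmu-\zeta$ among the relations, the natural map provides a canonical isomorphism of differential algebras
\begin{equation*}
\Jinf(\AA)/(\bbmu-\zeta)\;\simeq\;\Jinf(\mathbb{P}),
\end{equation*}
under which $W$ is identified with $\Vect_\infty(\mathbb{P})$. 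So on objects, both paths yield the same differential algebra pair, and it remains to compare the two Lie vertex brackets installed on $H_0(\Vect_\infty(\mathbb{P}))$.

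For this comparison I would evaluate both structures on generators of the form $(\del^r\bar a)_\sharp$ and $(\del^s\bar b)_\sharp$ with $a,b\in\AA$ and $r,s\geq 0$. Going down-then-right, Proposition \ref{Pr:HoP-HoPV} dictates
\begin{equation*}
[(\del^r\bar a)_\sharp\,{}_\lambda\,(\del^s\bar b)_\sharp]_\sharp
\;=\;(-\lambda)^r(\lambda+\del)^s\,[\bar a_\sharp,\bar b_\sharp]_\sharp
\;=\;(-\lambda)^r(\lambda+\del)^s\,\overline{(\mathrm{m}\circ\dgal{a,b})_\sharp}.
\end{equation*}
Going right-then-down, formula \eqref{Eq:HOvert-ind} applied after descent to the quotient gives $[\overline{\del^r(a)}_\sharp\,{}_\lambda\,\overline{\del^s(b)}_\sharp]_\sharp = \overline{(\mathrm{m}\circ\dgal{\del^r(a){}_\lambda\del^s(b)})_\sharp}$, and Lemma \ref{Lem:DPAtoDPVA} expands $\dgal{\del^r(a){}_\lambda\del^s(b)} = (-\lambda)^r(\lambda+\del)^s\dgal{a,b}$, producing exactly the same expression. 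The two Lie vertex brackets therefore agree on generators, and since both are $H_0$-Poisson vertex structures, they coincide identically.

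Functoriality on morphisms is then essentially automatic: a morphism $\theta\colon(\AA_1,\bbmu_1)\to(\AA_2,\bbmu_2)$ in $\DPA_\mu$ respects the moment maps, hence induces $\bar\theta\colon\mathbb{P}_1\to\mathbb{P}_2$ and a corresponding $\bar\theta_\infty\colon\Jinf(\mathbb{P}_1)\to\Jinf(\mathbb{P}_2)$, and the same morphism is obtained by jetting and then restricting to the image of $\Vect_\infty$. The argument is identical to the morphism-level check at the end of the proof of Proposition \ref{Pr:ComLeft}, now applied after quotienting by the moment map relation.

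The only genuinely delicate point is that the two constructions actually descend to the quotient by $\bbmu-\zeta$, which is not a Poisson ideal in either the associative or the vertex sense. In both cases this descent is precisely what is granted by the moment map axioms (Definition \ref{def:NCmomentmap} and Definition \ref{defmmapdpva}), and has already been established in Proposition \ref{propdpahamred} and Remark \ref{rem:DPVAmu-HOPV}; invoking these two results is what makes the bracket computations above meaningful on the quotients, and completes the proof.
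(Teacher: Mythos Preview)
Your proof is correct and follows essentially the same approach as the paper, which records the result in one line (``Using Proposition~\ref{propdpahamred}, we get the next result'') after having set up the functor $\sharp_\mathtt{V}^\zeta$ via Example~\ref{Ex:Jet-Gen}, Remark~\ref{rem:DPVAmu-HOPV}, Proposition~\ref{Pr:HoP-HoPV} and Corollary~\ref{Cor:JDPA-HOPV}. You have simply unpacked this by replaying the generator-level computation of Proposition~\ref{Pr:ComLeft} after passing to the quotient $\mathbb{P}=\AA/(\bbmu-\zeta)$, which is exactly what the paper leaves implicit.
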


Also note that, thanks to Proposition~\ref{propdpamurep} and Lemma~\ref{Lem:MomapJets}, we get the following.

 \begin{theorem}\label{thmhamtop} The following diagram commutes: 
 \begin{center}
    \begin{tikzpicture}
 \node  (TopLeft) at (-1.5,1) {$\DPA_\mu$};
 \node  (TopRight) at (1.5,1) {$\Jinf\mathtt{DPA}_\mu$};
 \node  (BotLeft) at (-1.5,-1) {$\PA_\mu^{\Gl_{\underline n}}$};
 \node  (BotRight) at (1.5,-1) {$\Jinf\PA_\mu^{\Gl_{\underline n}}$};
\path[->,>=angle 90,font=\small]  
   (TopLeft) edge node[above]{$\mathtt{J}$} (TopRight) ;
   \path[->,>=angle 90,font=\small]  
   (BotLeft) edge node[below]{$\mathtt{J}$}  (BotRight) ;
\path[->,>=angle 90,font=\small]  
   (TopLeft) edge node[left]{$(-)_{\underline n}$} (BotLeft) ;
\path[->,>=angle 90,font=\small]  
   (TopRight) edge node[right]{$(-)_{\underline n}$} (BotRight) ;
   \end{tikzpicture}
\end{center} 
\end{theorem}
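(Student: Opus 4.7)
The underlying commutativity of the diagram (forgetting moment maps) is exactly the $B$-relative version of the right-hand square in Theorem~\ref{Thm:QJ-Rep}, which provides a natural isomorphism of Poisson vertex algebras
\begin{equation*}
\Phi_{\AA} \colon \Jinf(\AA_{\underline n}) \stackrel{\sim}{\longrightarrow} (\Jinf \AA)_{\underline n}, \qquad \del^k(a_{ij}) \longmapsto (\del^k a)_{ij},
\end{equation*}
intertwining the induced $\Gl_{\underline n}$-actions (the target being equipped with the $\Jinf(\Gl_{\underline n})$-action obtained by functoriality). Hence the plan is to show that, given $(\AA,\bbmu)\in\DPA_\mu$, the two moment-map structures produced by the compositions $(-)_{\underline n}\circ \mathtt{J}$ and $\mathtt{J}\circ (-)_{\underline n}$ agree under $\Phi_{\AA}$, and similarly to verify naturality on morphisms.

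\textbf{First step (objects).} Going down-then-right, $(\AA_{\underline n}, X_\AA(\bbmu))$ is the object in $\PA_\mu^{\Gl_{\underline n}}$ produced by Proposition~\ref{propdpamurep}. Taking jets via Lemma~\ref{Lem:MomapJets}, we obtain $(\Jinf(\AA_{\underline n}), (X_\AA(\bbmu))_\infty)$ in $\Jinf\PA_\mu^{\Gl_{\underline n}}$, whose comoment map is characterised on the generators $\bbmu_{s,ij} \in \AA_{\underline n} \hookrightarrow \Jinf(\AA_{\underline n})$ by the same formulas as $X_\AA(\bbmu)$. Going right-then-down, $(\Jinf\AA,\bbmu)$ lies in $\Jinf\DPA_\mu$ (with $\bbmu$ viewed as the image of $\del^0(\bbmu)$), and applying the $B$-relative representation functor yields $((\Jinf\AA)_{\underline n}, X_{\Jinf\AA}(\bbmu))$. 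Since $\bbmu\in \AA$, the matrix entries $\bbmu_{s,ij} \in (\Jinf\AA)_{\underline n}$ are precisely $\Phi_{\AA}(\bbmu_{s,ij})$, so the two moment maps coincide on these generators. This identification of moment maps on generators automatically promotes $((\Jinf\AA)_{\underline n}, X_{\Jinf\AA}(\bbmu))$ to an object of $\Jinf\PA_\mu^{\Gl_{\underline n}}$ via $\Phi_\AA$, because by Lemma~\ref{Lem:JPAGmu}(3) any such object is determined by its classical shadow under $\mathtt{Q}$, and both sides yield the same classical datum $(\AA_{\underline n},X_\AA(\bbmu))$.

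\textbf{Second step (morphisms).} Let $\theta\colon (\AA_1,\bbmu_1)\to (\AA_2,\bbmu_2)$ be a morphism in $\DPA_\mu$, so that $\theta(\bbmu_{1,s})=\bbmu_{2,s}$ for every $s\in S$. The $B$-relative version of Theorem~\ref{Thm:QJ-Rep} already shows that both compositions send $\theta$ to the same morphism of Poisson vertex algebras after identification by $\Phi_\AA$; what remains is to check compatibility with moment maps, which reduces to the identity $\theta_{\underline n}(\bbmu_{1,s,ij})=\bbmu_{2,s,ij}$, valid on both sides because $\theta$ is an element of $\DPA_\mu$ and the two jet morphisms agree on the classical subalgebra $\AA_j\subset \Jinf\AA_j$.

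\textbf{Main obstacle.} The only non-routine point is ensuring that $((\Jinf\AA)_{\underline n}, X_{\Jinf\AA}(\bbmu))$ really belongs to the subcategory $\Jinf\PA_\mu^{\Gl_{\underline n}}$ (and not merely to $(\Jinf\PA^{\Gl_{\underline n}})_\mu$ \emph{a priori}). This is precisely what Lemma~\ref{Lem:JPAGmu} was designed for: combined with the explicit identification $\Phi_\AA$ and the fact that $X_{\Jinf\AA}(\bbmu) = \Phi_\AA \circ (X_\AA(\bbmu))_\infty$ on the generating symbols $\bbmu_{s,ij}$, it yields the required equality of objects and completes the argument.
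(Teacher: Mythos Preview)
Your proof is correct and follows essentially the same approach as the paper. The paper's proof is considerably terser: it observes that commutativity is immediate once the right vertical functor $(-)_{\underline n}\colon \Jinf\DPA_\mu \to \Jinf\PA_\mu^{\Gl_{\underline n}}$ is shown to exist, and then reduces this to checking that $X(\bbmu)$ is $\Jinf(\Gl_{\underline n})$-equivariant, which holds because it is the jet of the $\Gl_{\underline n}$-equivariant map $X_\AA(\bbmu)$. Your argument unpacks the same content via the explicit isomorphism $\Phi_\AA$, matching the two moment-map structures on generators; your ``main obstacle'' paragraph is exactly the paper's one substantive step. One minor remark: your invocation of Lemma~\ref{Lem:JPAGmu}(3) in the first step is somewhat superfluous, since the identification $X_{\Jinf\AA}(\bbmu)=\Phi_\AA\circ (X_\AA(\bbmu))_\infty$ together with Lemma~\ref{Lem:MomapJets} already suffices to place the object in $\Jinf\PA_\mu^{\Gl_{\underline n}}$.
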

\begin{proof}
 The statement is direct if we prove that the functor $(-)_{\underline n}: \Jinf\mathtt{DPA}_\mu \to \Jinf\PA_\mu^{\Gl_{\underline n}}$ exists. This follows if we can show that the function $X(\bbmu)$ obtained from a moment map $\bbmu$ satisfies Definition~\ref{defcommappva}. From the discussion presented after Remark \ref{rem:DPVAmu-HOPV} with $\VV=\Jinf(\AA)$ we are left to show that $X(\bbmu)$ is not only $\Gl_{\underline{n}}$-equivariant, 
 but that it is $\Jinf(\Gl_{\underline{n}})$-equivariant. This is the case because $X(\bbmu)$ is the jet of a  
 $\Gl_{\underline{n}}$-equivariant morphism by Lemma \ref{Lem:JPAGmu}. 
\end{proof}

\begin{theorem} 
\label{Th:cube_Hamiltonian}
The cube depicted in Figure \ref{Fig:HamRed} is commutative.  
\end{theorem}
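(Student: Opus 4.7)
The plan is to verify the commutativity of each of the six faces of the cube separately, in the spirit of the proof of Theorem \ref{Thm:ComPoissonRed}. Three of the six faces have already been established: the top face is Theorem \ref{thmhamtop}, the front face is Corollary \ref{corohamfront}, and the left face is Corollary \ref{corohamleft}. It remains to handle the right, bottom, and back faces.

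For the right face, I would apply Proposition \ref{Pr:HamRed_commute} with $\GG=\Gl_{\underline n}$ and $\xi=\zeta$, to the objects $(\AA_{\underline n},X(\bbmu))\in\PA_\mu^{\Gl_{\underline n}}$ obtained by the representation functor. The only thing that needs to be checked is that the relevant data indeed lifts to objects in $\Jinf\PA_\mu^{\Gl_{\underline n}}$, but this is ensured by Theorem \ref{thmhamtop}. The middle node $\Jinf(\mathbb{P}_{\underline n})^{\Jinf(\Gl_{\underline n})}$ of the right face appears precisely as the image of the Hamiltonian reduction through Corollary \ref{Cor:HamRedV-emb} and Proposition \ref{Pr:HamRedV}, while the morphism $j_{\mathbb{P}_{\underline n}}\colon \Jinf(\mathbb{P}_{\underline n}^{\Gl_{\underline n}})\to \Jinf(\mathbb{P}_{\underline n})^{\Jinf(\Gl_{\underline n})}$ intertwines everything by Theorem \ref{Th:invariants_are_PVA}.

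For the bottom face, the approach mirrors \S\ref{subsubbottom} of the proof of Theorem \ref{Thm:ComPoissonRed}: starting from $H_0(\mathbb{P})\in\HoP$, both compositions produce the differential algebra $(\Vect_\infty(\mathbb{P}))_{\underline n}^{\tr}\simeq\Jinf(\mathbb{P}_{\underline n}^{\Gl_{\underline n}})$, under the identification $\tr(\partial^r a)\leftrightarrow\partial^r\tr(a)$. I would compare the two induced $\lambda$-brackets on generators of the form $\tr(\partial^r a)$ using \eqref{Eq:relHoPV} and Proposition \ref{Pr:HoP-HoPV} on one side, versus Lemma \ref{Lem:PAtoPVA} and \eqref{Eq:relHoP} on the other, noting that in both cases one lands on $(-\lambda)^r(\lambda+\partial)^s\tr([a_\sharp,b_\sharp]_\sharp)$ when evaluated on a pair of generators attached to $a,b\in\mathbb{P}$. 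Functoriality on morphisms reduces to checking that the same jet of a morphism in $\HoP$ is produced on both sides.

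For the back face, the argument mimics the treatment of the back face of Figure \ref{Fig:PoiRed}: starting with $(\Jinf(\AA),\bbmu)\in\Jinf\DPA_\mu$, both compositions yield the Poisson vertex algebra $\Jinf(\mathbb{P}_{\underline n})^{\Jinf(\Gl_{\underline n})}$. Commutativity at the level of $\lambda$-brackets is checked on generators $\tr(\partial^r a)$ (for $a\in\mathbb{P}$), using \eqref{Eq:relDPVA} on one side, and Corollary \ref{Cor:JDPA-HOPV} combined with \eqref{Eq:relHoPV} on the other; in both cases one finds $\tr(\mathrm{m}\circ\dgal{\partial^r(a)_\lambda\partial^s(b)})$. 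For morphisms, the argument parallels \eqref{Eq:ComFrom}. The main obstacle in carrying out this step is bookkeeping: one has to ensure that the Hamiltonian reduction ideals $I^\partial_{X(\zeta)}\subset\Jinf(\AA)_{\underline n}$ and the ideal generated in $\Jinf(\AA_{\underline n})$ by the differential closure of the entries of $X(\bbmu)-X(\zeta)$ coincide under the canonical isomorphism of differential algebras $\Jinf(\AA_{\underline n})\simeq\Jinf(\AA)_{\underline n}$ from \S\ref{Proof_Thm:QJ-Rep}, so that the projections line up, and that the corresponding invariants match under $j_{\mathbb{P}_{\underline n}}$. Once this identification is in place, the computations reduce to the algebraic identities already verified in the Poisson reduction cube.
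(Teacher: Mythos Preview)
Your plan is correct and, for five of the six faces (top, front, left, right, bottom), coincides with the paper's proof: these are indeed handled by Theorem~\ref{thmhamtop}, Corollary~\ref{corohamfront}, Corollary~\ref{corohamleft}, Proposition~\ref{Pr:HamRed_commute}, and \S\ref{subsubbottom} respectively.

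The difference lies in the back face. You propose to verify it directly, repeating the computations carried out for the back face of the Poisson reduction cube (Figure~\ref{Fig:PoiRed}): identify the common target $\Jinf(\mathbb{P}_{\underline n})^{\Jinf(\Gl_{\underline n})}$, compare the $\lambda$-brackets on trace generators, and check compatibility of morphisms, with the additional bookkeeping of matching the Hamiltonian ideals under $\Jinf(\AA_{\underline n})\simeq\Jinf(\AA)_{\underline n}$. This works. The paper instead bypasses the computation entirely with a categorical argument: the four front-to-back functors ($\mathtt{J}$ on each of the four front categories) are by construction essentially surjective and full onto their images $\Jinf\DPA_\mu$, $\Jinf\PA_\mu^{\Gl_{\underline n}}$, $\mathtt{V}_\infty\HoP$, $\Jinf\PA_{\Gl_{\underline n};0}$. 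Once the other five faces commute, the two composite functors around the back face agree after precomposition with $\mathtt{J}$, and essential surjectivity plus fullness forces them to agree outright. Your approach has the virtue of being self-contained and making explicit the identities the structures satisfy; the paper's shortcut is shorter and avoids the ideal-matching bookkeeping you flagged, at the cost of relying on the way the ``jet'' subcategories are defined as images.
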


\begin{proof}
The bottom, right, front, left and top faces are being treated by 
Theorem~\ref{subsubbottom} (bottom face), 
Proposition~\ref{Pr:HamRed_commute}, 
Corollary~\ref{corohamfront}, 
Corollary~\ref{corohamleft} and 
Theorem~\ref{thmhamtop} respectively. Since functors to the back are essentially surjective and full, the back face is also commutative and we are done. 
\end{proof}

\subsection{Poisson reduction from Hamiltonian reduction} 
Let $(\AA,\dgal{-,-},\bbmu)$ be any double Poisson algebra with moment map over $B=\oplus_{s\in S}\kk e_s$.
If we forget about $\bbmu$, we can perform the Poisson reduction presented in Theorem \ref{Thm:rel-ComPoissonRed} and get commutativity of the cube depicted in Figure \ref{Fig:PoiRed} in its $B$-relative form. In particular, the lower face of the cube corresponds to the following commutative diagram: 
\begin{center}
    \begin{tikzpicture}
 \node  (TopLeft) at (-1,1) {$H_0(\AA)$};
 \node  (TopRight) at (2,1) {$H_0(\Vect_\infty(\AA))$};
 \node  (BotLeft) at (-1,-0.6) {$\AA_{\underline{n}}^{\Gl_{\underline n}}$};
 \node  (BotRight) at (2,-0.6) {$\Jinf(\AA_{\underline{n}})^{\Jinf(\Gl_{\underline n})}$};
\path[->,>=angle 90,font=\small]
   (TopLeft) edge (TopRight) ;
   \path[->,>=angle 90,font=\small]
   (BotLeft) edge  (BotRight) ;
\path[->,>=angle 90,font=\small]
   (TopLeft) edge (BotLeft) ;
\path[->,>=angle 90,font=\small]
   (TopRight) edge (BotRight) ; 
   \end{tikzpicture}
\end{center}
This diagram can be obtained by applying Hamiltonian reduction as follows.
We form the algebra $\widetilde{\AA}$ by adding to $\AA$ the generators $(z_s)_{s\in S}$ satisfying $z_s=e_s z_s e_s$.
(If $\AA$ is represented as a quiver with relations, this amounts to add a loop $z_s$ at each vertex $s\in S$.)
We can naturally embed $\AA$ into $\widetilde{\AA}$ and obtain a unique double bracket $\dgal{-,-}^{\sim}$ on  $\widetilde{\AA}$ satisfying
\begin{equation}
\dgal{a,b}^{\sim}=\dgal{a,b}, \quad
 \dgal{a,z_s}^{\sim}=0,\quad \dgal{z_r,z_s}^{\sim}=\delta_{rs} (z_s\otimes e_s - e_s \otimes z_s)\,,
\end{equation}
for any $a,b\in \AA$, $r,s\in S$.
This double bracket is Poisson and it admits $\widetilde{\bbmu}:=\bbmu+\sum_{s\in S}z_s$ as its moment map.
For any $\zeta\in B$, we have an isomorphism $\widetilde{\AA}/(\widetilde{\bbmu}-\zeta)\simeq \AA$ since the ideal is generated by the $|S|$ relations $z_s=\zeta_s-\bbmu_s$.
Therefore, it is clear that applying Hamiltonian reduction to $\widetilde{\AA}$ (for any $\zeta$!) according to Theorem \ref{Th:cube_Hamiltonian} gives the above diagram as the lower face of the cube in Figure~\ref{Fig:HamRed}.

\begin{remark}
It is not possible to deduce Theorem \ref{Thm:ComPoissonRed} from Theorem \ref{Th:cube_Hamiltonian} because an arbitrary double Poisson algebra may not admit a moment map. 
Indeed, any associative $B$-algebra $\AA$ can be endowed with the zero double Poisson bracket; however the existence of a moment map $\bbmu$ entails by \eqref{Eq:NCmomentmap} that $\dgal{\bbmu,a}\neq 0$ for any $a\in \AA\setminus B$. 
\end{remark}


\section{Connections with vertex algebras and open problems}
\label{Sec:Open}
We address in this section 
the problem of {\em double versions} for vertex algebras, 
and we discuss  
connections with the Poisson and Hamiltonian reductions. 

\subsection{Double versions of vertex algebras} 
Just like Poisson structures coming from double ones are easier to compute, 
we expect that 
a double version for vertex algebras can be used to compute the OPEs 
that govern the algebraic law in vertex algebras. 

In more detail, we expect a category, let us say $\mathtt{DVA}$, 
of {\em double vertex algebras} 
whose objects should be analogs of 
vertex algebras, 
and the following functors:
 $(-)_N:\mathtt{DVA}\to \mathtt{VA}$ similar to the representation functor for each $N\in \Z_{\geq 0}$,
and $\mathtt{G}:\mathtt{DVA}\to \mathtt{DPVA}$ which is a counterpart to the functor
from $\mathtt{VA}$ to $\mathtt{PVA}$ sending a vertex algebra 
to its graded vertex algebra with respect to Li's filtration. 
Then the composition of this functor with the functor 
$\mathtt{Q}\colon \mathtt{DPVA} \to \mathtt{DPA}$ might be an analog of Zhu's $C_2$-functor 
sending a vertex algebra $\mathcal{V}$ 
to its Zhu's $C_2$ algebra $R_{\mathcal{V}}$. 
We formulate the following problem. 

\begin{problem}
\label{Pb:vertex} 
Assume that there is a family of 
vertex algebras $\mathcal{V}_N$ 
indexed by $N\in \Z_{\geq 0}$
and a double Poisson algebra 
$\AA$ such that 
for all $N \geq 0$, 
we have 
$R_{\mathcal{V}_N} \cong \AA_N$. 
Assume furthermore that the isomorphism \eqref{eq:arc_iso} holds 
for each $N \geq 0$, that is, 
${\rm gr}\,\mathcal{V}_N
\cong \Jinf R_{\mathcal{V}_N}.$ 
Under these hypothesis, is there a 
double analogue of the vertex algebras $\mathcal{V}_N$, 
that is, an object $\mathcal{A}$ of 
$\mathtt{DVA}$ such that $(-)_N$ sends $\mathcal{A}$ to $\mathcal{V}_N$ for each $N$, and  $(\mathtt{Q} \circ \mathtt{G})\mathcal{A}=\mathbb{A}$?
\end{problem}

The {\em Zhu's algebra} Zhu$(\mathcal{V})$  of a vertex algebra $\mathcal{V}$ 
is a certain quotient (\cite{Zhu}) which has a natural 
 structure of a filtered associative algebra 
whose graded algebra ${\rm gr}({\rm Zhu}\, \mathcal{V})$ is commutative.  
In addition to the above conditions, it is also reasonable to assume that 
${\rm gr}({\rm Zhu}\, \mathcal{V}_N) \cong R_{\mathcal{V}_N}$ 
for each $N \geq 0$. 
In general, for an arbitrary vertex algebra 
$\mathcal{V}$ we only have a surjective Poisson algebra morphism 
$R_{\mathcal{V}} \twoheadrightarrow {\rm gr}({\rm Zhu}\, \mathcal{V})$, 
\cite[Proposition 2.17(c)]{DSK} and \cite[Proposition 3.3]{ALY}.

We have already encountered examples of such vertex algebras in the introduction, 
cf.~\S\ref{ss:IntroVA}.  
Let us give some other examples. 
Consider the {\em universal affine vertex algebra} 
\begin{align}
\label{eq:VOA}
\mathcal{V}^\kappa(\g) =U(\widehat{\g}) \otimes_{\g[t]\oplus \kk {\bf 1}} \kk
\end{align}
associated with a reductive Lie algebra $\g$ and an invariant symmetric bilinear
form $\kappa$,   
where $\widehat{\g}:=\g[t^\pm1] \oplus \kk {\bf 1}$ 
is the affine Kac-Moody algebra  
with Lie bracket 
\begin{align*}
& [xt^n,yt^m] = [x,y]t^{m+n} +  m\delta_{m+n,0} \kappa(x,y) {\bf 1}, 
& x,y \in \g, \, m,n \in \Z_{\geq 0},  
\end{align*}
the element ${\bf 1}$ being central. 
In \eqref{eq:VOA}, $\kk$ stands for the one-dimensional representation of 
$\g[t]\oplus \kk {\bf 1}$ on which $\g[t]$ acts
trivially and ${\bf 1}$ acts as the identity.  
It is well-known that the Zhu's algebra of $\mathcal{V}^\kappa(\g)$ is the enveloping algebra $U(\g)$ 
filtered by the  
PBW filtration and that, as Poisson algebras, we have: 
$${\rm gr}({\rm Zhu}\,\mathcal{V}^\kappa(\g)) \cong 
{\rm gr}\,U(\g) \cong \kk[\g^*]  \cong R_{\mathcal{V}^\kappa(\g)},$$
 with the Kirillov-Kostant-Souriau Poisson structure. 
 On the other hand,  
${\rm gr}\,\mathcal{V}^\kappa(\g) \cong S(t^{-1} \g [t^{-1}]),$ 
 the symmetric algebra of 
$t^{-1} \g [t^{-1}]:=\g\otimes t^{-1}  \kk[t^{-1}]$, which is canonically 
isomorphic to $\Jinf \kk[\g^*]$ through the mapping 
$xt^{-n-1} \mapsto \del^{n} x$, for $n\geq 0$ and $x\in \g \subset \kk[\g^*]$. 

Assume now that $\g=\gl_N$. 
Recall that 
$\kk[\gl_N^\ast] \cong \AA_N$, 
where $\AA=\kk[a]$ is the double Poisson algebra from Example \ref{Ex:ku} 
with $\beta=\gamma=0$. 
To sum up,  
the conditions of Problem~\ref{Pb:vertex} hold for the 
family 
of vertex algebras  
$\mathcal{V}^\kappa(\gl_N)$ 
with any $\kappa$. 
One can draw a picture similar to Figure~\ref{Fig:cdo}. 
Like the vertex algebra of chiral differential operators, one can wonder 
whether there are double versions of $U(\gl_N)$ 
and $\mathcal{V}^\kappa(\gl_N)$, respectively; cf.~Figure \ref{Fig:VOA}. 
Namely, we expect a category, let us say 
$\mathtt{NCDPA}$,  
of noncommutative double Poisson algebras together with a functor
from $\mathtt{DVA}$ to $\mathtt{NCDPA}$, 
similar to Zhu functor, and
another functor from $\mathtt{NCDPA}$ to 
$\mathtt{DPA}$, an analogue of the functor sending  
${\rm Zhu}(\mathcal{V})$ to ${\rm gr}\,{\rm Zhu}(\mathcal{V})$ 
for $V$ a vertex algebra, 
such that the 
double version of $U(\gl_N)$ is 
sent to $\kk[a]$ under this functor from $\mathtt{NCDPA}$ to $\mathtt{DPA}$.

\medskip

{\tiny
\begin{figure}
\centering
\begin{tikzpicture}[scale=.8]
 \node  (TopLeft) at (-5,2) {$\Jinf \kk[a]$};
 \node  (TopRight) at (3,2) {$\Jinf \kk[\gl_N^\ast]$};
 \node  (BotLeft) at (-5,-2) {$\kk[a]$}; 
 \node  (BotRight) at (3,-2) {$\kk[\gl_N^\ast]$};
 \node  (BackTopLeft) at (-2,4) {$?$};
 \node  (BackTopRight) at (6,4) {$\mathcal{V}^\kappa(\gl_N)$};
 \node   (BackBotLeft) at (-2,0) {$?$}; 
 \node  (BackBotRight) at (6,0) {$U(\gl_N)$};
\path[->,>=angle 90,font=\small]  
   (TopLeft) edge node[above] {\;\;{\tiny $(-)_N$}} (TopRight) ;
   \path[->,>=angle 90,font=\small]  
   (BotLeft) edge node[above] {\;\;{\tiny $(-)_N$}} (BotRight) ;
\path[<-,>=angle 90,font=\small]  
   (TopLeft) edge node[left,above] {\!\!\!\!\!\!\!{\tiny $\mathtt{J}$}} (BotLeft) ;
\path[<-,>=angle 90,font=\small]  
   (TopRight) edge node[right,above] {\quad{\tiny $\mathtt{J}$}} (BotRight) ;
\path[->,>=angle 90,dashed,font=\small]  
   (BackTopLeft) edge (BackTopRight) ;
   \path[->,>=angle 90,dashed,font=\small]  
   (BackBotLeft) edge (BackBotRight) ;
\path[->,>=angle 90,dashed,font=\small]  
   (BackTopLeft) edge (BackBotLeft) ;
\path[->,>=angle 90,font=\small]  
   (BackTopRight) edge node[right] {{\tiny $\text{Zhu}(-)$}} (BackBotRight) ;
\path[->,>=angle 90,dashed,font=\small]  
   (BackTopLeft) edge (TopLeft) ;
   \path[->,>=angle 90,dashed,font=\small]  
   (BackBotLeft) edge (BotLeft) ;
\path[->,>=angle 90,font=\small]  
   (BackTopRight) edge node[right] {\,\;{\tiny ${\rm gr}$}} (TopRight) ;
\path[->,>=angle 90,font=\small]  
   (BackBotRight) edge node[right] {\;\;{\tiny ${\rm gr}$}} (BotRight) ;
\node[gray,fill=gray!10,rounded corners, above=0cm of TopLeft] (CatTL) {$\DPVA$};
\node[gray,fill=gray!10,rounded corners, above=0cm of TopRight] (CatTR) {$\PVA$};
\node[gray,fill=gray!10,rounded corners, below=-0.1cm of BotLeft] (CatBL) {$\DPA$};
\node[gray,fill=gray!10,rounded corners, below=-0.1cm of BotRight] (CatBR) {$\PA$};
\node[gray,fill=gray!10,rounded corners, above=0cm of BackTopLeft] (CatBTL) {$?$};
\node[gray,fill=gray!10,rounded corners, above=0cm of BackTopRight] (CatBTR) {$\mathtt{VA}$};
\node[gray,fill=gray!10,rounded corners, below=-0.1cm of BackBotLeft] (CatBBL) {$?$};
\node[gray,fill=gray!10,rounded corners, below=-0.1cm of BackBotRight] (CatBBR) {$\mathtt{AA}$};
   \end{tikzpicture}
   \caption{Universal affine vertex algebra and relative objects.} 
   \label{Fig:VOA}
\end{figure}
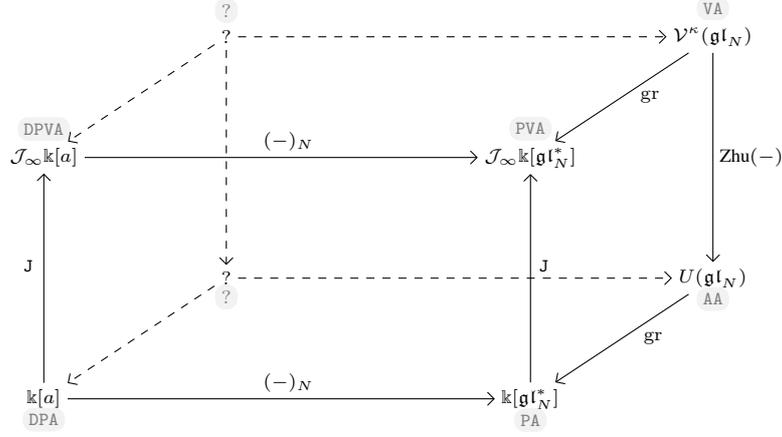
}

One can build another interesting example from the previous one. 
Let us consider the {\em Feigin-Frenkel center} (\cite{FF}) of $\mathcal{V}^{\kappa_c}(\g)$ 
for the {\em critical level} 
$$\kappa_c := - \dfrac{1}{2} \kappa_{\g},$$
where $\kappa_{\g}$ is the Killing form of $\g$.  
This is the vertex center of $\mathcal{V}^{\kappa_c}(\g)$:
$$\mathfrak{z}(\widehat{\g}) = \mathcal{V}^{\kappa_c}(\g)^{\g\[[t\]]}
:=\{u \in \mathcal{V}^{\kappa_c}(\g) \mid (xt^n).u=0 \text{ for all } x\in\g, \, n\in\Z_{\geq 0}\}.$$
Note that the vertex center of $\mathcal{V}^\kappa(\g)$ is trivial outside 
the critical level.
The Zhu's $C_2$-algebra of $\mathfrak{z}(\widehat{\g})$ 
is $\kk[\g^*]^{\GG} = \kk[\g/\!\!/\GG]$, 
the Poisson center of $\kk[\g^*]$, where $\GG$ is the adjoint group of $\g$. 
The Zhu's algebra of $\mathfrak{z}(\widehat{\g})$ 
is the center $Z(\g)$ of the enveloping algebra of $\g$,  
and we have: 
$${\rm gr}\, \mathfrak{z}(\widehat{\g}) \cong \Jinf  \kk[\g/\!\!/\GG]
\cong \Jinf \kk[\g]^{\Jinf(\GG)},$$
the second isomorphism resulting from \cite{RT,BD1}; see Example \ref{Ex:inv_adjoint}. 
In particular, the isomorphism \eqref{eq:arc_iso} holds for this example, too. 

For $\g=\gl_N$, this is compatible with the $H_0$-reduction on the double Poisson side.  
Indeed $H_0(\AA)=\kk[a]$, with $\AA=\kk[a]$ the double Poisson algebra as in the previous 
example, equipped with the trivial Lie structure ($\AA$ is commutative). 
All this is depicted in Figure \ref{Fig:FFcenter}. 

\medskip

{\tiny
\begin{figure}
\centering
\begin{tikzpicture}[scale=.8]
 \node  (TopLeft) at (-5,2) {$H_0({\rm Vect}_\infty \kk[a])$};
 \node  (TopRight) at (3,2) {$\Jinf \kk[\gl_N^*]^{\Jinf(\Gl_N)}$};
 \node  (BotLeft) at (-5,-2) {$\kk[a]=H_0(\kk[a])$}; 
 \node  (BotRight) at (3,-2) {$\kk[\gl_N^*/\!\!/\Gl_N]$};
 \node  (BackTopLeft) at (-2,4) {$?$};
 \node  (BackTopRight) at (6,4) {$\mathfrak{z}(\widehat{\gl}_N)$};
 \node   (BackBotLeft) at (-2,0) {$?$}; 
 \node  (BackBotRight) at (6,0) {$Z(\gl_N)$}; 
\path[->,>=angle 90,font=\small]  
   (TopLeft) edge node[above] {\;\;{\tiny $\tr_N$}} (TopRight) ;
   \path[->,>=angle 90,font=\small]  
   (BotLeft) edge node[above] {\;\;{\tiny $\tr_N$}} (BotRight) ;
\path[<-,>=angle 90,font=\small]  
   (TopLeft) edge node[left] {\!{\tiny $\mathtt{Vect}_\infty$}} (BotLeft) ;
\path[<-,>=angle 90,font=\small]  
   (TopRight) edge node[right,above] {\quad{\tiny $\mathtt{J}$}} (BotRight) ;
\path[->,>=angle 90,dashed,font=\small]  
   (BackTopLeft) edge (BackTopRight) ;
   \path[->,>=angle 90,dashed,font=\small]  
   (BackBotLeft) edge (BackBotRight) ;
\path[->,>=angle 90,dashed,font=\small]  
   (BackTopLeft) edge (BackBotLeft) ;
\path[->,>=angle 90,font=\small]  
   (BackTopRight) edge node[right] {{\tiny $\text{Zhu}(-)$}} (BackBotRight) ;
\path[->,>=angle 90,dashed,font=\small]  
   (BackTopLeft) edge (TopLeft) ;
   \path[->,>=angle 90,dashed,font=\small]  
   (BackBotLeft) edge (BotLeft) ;
\path[->,>=angle 90,font=\small]  
   (BackTopRight) edge node[right] {\,\;{\tiny ${\rm gr}$}} (TopRight) ;
\path[->,>=angle 90,font=\small]  
   (BackBotRight) edge node[right] {\;\;{\tiny ${\rm gr}$}} (BotRight) ;
\node[gray,fill=gray!10,rounded corners, above left=0cm and -1.2cm of TopLeft] (CatTL) {$\mathtt{V}_\infty\HoP$};
\node[gray,fill=gray!10,rounded corners, above=0cm of TopRight] (CatTR) {$\PVA$};
\node[gray,fill=gray!10,rounded corners, below=-0.1cm of BotLeft] (CatBL) {$\HoP$};
\node[gray,fill=gray!10,rounded corners, below=-0.1cm of BotRight] (CatBR) {$\PA$};
\node[gray,fill=gray!10,rounded corners, above=0cm of BackTopLeft] (CatBTL) {$?$};
\node[gray,fill=gray!10,rounded corners, above=0cm of BackTopRight] (CatBTR) {$\mathtt{VA}$};
\node[gray,fill=gray!10,rounded corners, below=-0.1cm of BackBotLeft] (CatBBL) {$?$};
\node[gray,fill=gray!10,rounded corners, below=-0.1cm of BackBotRight] (CatBBR) {$\mathtt{AA}$};
   \end{tikzpicture}
   \caption{Feigin-Frenkel center and relative objects.} 
   \label{Fig:FFcenter}
\end{figure}
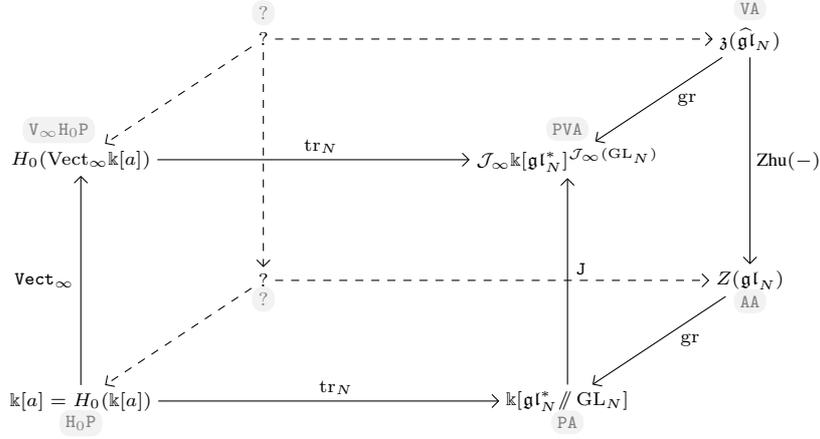
}

\subsection{Hamiltonian reduction in a more general setting}
\label{ss:Pb_red}
To motivate our second problem, let us consider an illustrating example.  
On the cotangent bundle $T^*\GG$  
 there are two commuting Hamiltonian 
 $\GG$-actions:  
\begin{align}\label{eq:G-action-on-T*G} 
g \cdot_L(h,x)=(hg^{-1}, ({\rm Ad}^*g).x),\quad 
g\cdot_R(h,x)=(gh,x), \quad g,h \in \GG, \,x\in \g^*,
\end{align} 
where ${\rm Ad}^*$ is the coadjoint action, 
with corresponding moment maps  
\begin{align*}  
\mu_L\colon T^* \GG \longrightarrow  \g^*, \, (h,x) \longmapsto x, 
\quad \mu_R\colon T^* \GG \longrightarrow  \g^*,\, (h,x) \longmapsto - ({\rm Ad}^*h).x. 
\end{align*} 

For $\GG=\Gl_N$, the Poisson structure on $\kk[T^*\Gl_N]$ 
comes from the double Poisson algebra 
$\AA=\kk\langle a,b^{\pm1}\rangle$ as in Example \ref{Exmp:SympGL}. 
This algebra is equipped with the noncommutative moment 
map $\bbmu=a-bab^{-1}$ which yields the moment 
map $X(\bbmu) = \mu_L+\mu_R$. 
It could be interesting to consider as well noncommutative 
moment maps for $\mu_L$ and $\mu_R$, respectively.  
In this example, natural candidates are easy: $\bbmu_L=a$ 
and $\bbmu_R=-bab^{-1}$, see Appendix~\ref{App:cot}. 
We notice that the embedding 
$\kk[a]\hookrightarrow \AA$ corresponds to the reduction by the left action $(\,\cdot_R)$ since the 
$\gl_N^\ast$ component is unchanged, 
and the embedding   $\kk[bab^{-1}]\hookrightarrow \AA$ 
corresponds to the reduction by the right action $(\,\cdot_L)$ since $h x h^{-1}$ 
generates $\kk[T^*\Gl_N]^{(\Gl_N,\cdot_L)}$. 

This leads us to the second problem. 

\begin{problem}
\label{Pb:reduction}
Are there analogues of $H_0$-Poisson structures 
and noncommutative moment maps that yield
more general Hamiltonian actions on representation spaces 
(not necessarily coming from the action by conjugation 
of $\Gl_N$)?
\end{problem}

Consider the vertex algebra of chiral differential operators 
$\mathscr{D}_{\GG,\kappa}^{ch}$ associated with the smooth scheme 
$\GG$ and the invariant symmetric bilinear form~$\kappa$ (\cite{MSV,BD2}): 
$$ \mathscr{D}_{\GG,\kappa}^{ch}=\mathcal{V}^\kappa(\g) 
\otimes_{\g\[[t\]]\oplus \kk{\bf 1}} \kk[\Jinf \GG],$$
where $\g\[[t\]]$ acts as left-invariant vector fields on $\kk[\Jinf \GG]$ 
and ${\bf 1}$ as the identity.  
This is a particular case of the examples in \S\ref{ss:IntroVA}. 
There are vertex algebra embeddings: 
\begin{align*}
\pi_L\colon \mathcal{V}^{\kappa^*}(\g) \longrightarrow \mathscr{D}_{\GG,\kappa}^{ch}, 
\quad \pi_R\colon \mathcal{V}^{\kappa}(\g) \longrightarrow \mathscr{D}_{\GG,\kappa}^{ch}, 
\end{align*}  
where 
$\kappa^*=-\kappa-\kappa_{\g}$ 
such that 
\begin{align}
\label{eq:iso_cdo}
(\mathscr{D}_{\GG,\kappa}^{ch})^{\pi_L(\g\[[t\]])}\cong \mathcal{V}^{\kappa^*}(\g), 
\quad (\mathscr{D}_{\GG,\kappa}^{ch})^{\pi_R(\g\[[t\]])}\cong \mathcal{V}^\kappa(\g).
\end{align} 
The vertex algebra morphisms $\pi_L$ and $\pi_R$  
are {\em chiral quantized comoment maps}  
of $\mu_L^*$ and $\mu_R^*$, respectively,  
in the sense that 
$${\rm gr}\,\pi_{L,R} =\Jinf (\mu_{L,R}) ^*
\colon \kk[\Jinf \g^*] \longrightarrow {\rm gr}\,\mathscr{D}_{\GG,\kappa}^{ch}.$$  
The Zhu's algebra of $\mathscr{D}_{\GG,\kappa}^{ch}$ 
is the algebra $\mathcal{D}_{\GG} \cong  \kk[\GG] \otimes U(\g)$ 
of global differential operators on $\GG$, 
and the subalgebra of left (or right) $\GG$-invariants differential operators 
on $\GG$ identifies with $U(\g)$; 
if we consider the invariants by two actions, we get the center $Z(\g)$ 
of $U(\g)$. 
For $\kappa=\kappa^*=\kappa_c$, then there is a vertex algebra 
morphism 
$$\pi_L\otimes \pi_R 
\colon \mathcal{V}^{\kappa_c}(\g)
\otimes \mathcal{V}^{\kappa_c}(\g)  \longrightarrow \mathscr{D}_{\GG,\kappa_c}^{ch}$$ 
such that (cf.~\cite{A18,AM}): 
$$(\mathscr{D}_{\GG,\kappa_c}^{ch})^{\g\[[t\]] \times \g\[[t\]] }
:= \pi_L( \mathcal{V}^{\kappa_c}(\g)) \cap \pi_R( \mathcal{V}^{\kappa_c}(\g)) 
\cong \mathfrak{z}(\widehat{\g}).$$ 

For $\GG=\Gl_N$, the Poisson structure on the Zhu $C_2$-algebra $R_{\mathscr{D}_{\Gl_N,\kappa}^{ch}} 
\cong \kk[T^*\Gl_N]$ comes from the double Poisson algebra 
$\AA=\kk\langle a,b^{\pm1}\rangle$ as in Example \ref{Exmp:SympGL}. 
This is a particular case of the examples in \S\ref{ss:IntroVA}.
We illustrate the reduction for $\mathscr{D}_{\Gl_N,\kappa}^{ch}$ and related objects 
with respect to the left action 
 in Figure \ref{Fig:inv_cdo}. 

{\tiny
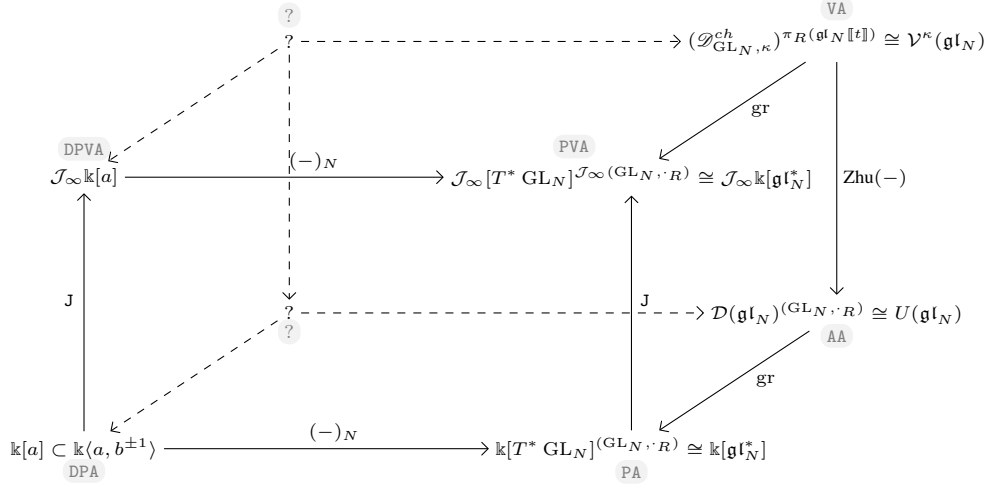
\begin{figure}
\centering
\begin{tikzpicture}[scale=.9]
 \node  (TopLeft) at (-5,2) {$\Jinf \kk[a]$};
 \node  (TopRight) at (3,2) {$\Jinf [T^*\Gl_N]^{{\Jinf(\Gl_N,\cdot_R)}} 
 \cong \Jinf \kk[\gl_N^\ast]$};
 \node  (BotLeft) at (-5,-2) {$\kk[a] \subset \kk\langle a,b^{\pm1}\rangle$}; 
 \node  (BotRight) at (3,-2) {$\kk[T^*\Gl_N]^{(\Gl_N,\cdot_R)} \cong \kk[\gl_N^\ast]$};
 \node  (BackTopLeft) at (-2,4) {$?$};
 \node  (BackTopRight) at (6,4) {$(\mathscr{D}_{\Gl_N,\kappa}^{ch})^{\pi_R(\mathfrak{g\!l}_N\[[t\]])}
 \cong \mathcal{V}^{\kappa}(\gl_N)$};
 \node   (BackBotLeft) at (-2,0) {$?$}; 
 \node  (BackBotRight) at (6,0) {$\mathcal{D}(\gl_N)^{(\Gl_N,\cdot_R)}\cong U(\gl_N)$};
\path[->,>=angle 90,font=\small]  
   (TopLeft) edge node[above] {\;\;\;\;\;\;\;\;{\tiny $(-)_N$}} (TopRight) ;
   \path[->,>=angle 90,font=\small]  
   (BotLeft) edge node[above] {\;\;{\tiny $(-)_N$}} (BotRight) ;
\path[<-,>=angle 90,font=\small]  
   (TopLeft) edge node[left,above] {\!\!\!\!\!\!\!{\tiny $\mathtt{J}$}} (BotLeft) ;
\path[<-,>=angle 90,font=\small]  
   (TopRight) edge node[right,above] {\quad{\tiny $\mathtt{J}$}} (BotRight) ;
\path[->,>=angle 90,dashed,font=\small]  
   (BackTopLeft) edge (BackTopRight) ;
   \path[->,>=angle 90,dashed,font=\small]  
   (BackBotLeft) edge (BackBotRight) ;
\path[->,>=angle 90,dashed,font=\small]  
   (BackTopLeft) edge (BackBotLeft) ;
\path[->,>=angle 90,font=\small]  
   (BackTopRight) edge node[right] {{\tiny $\text{Zhu}(-)$}} (BackBotRight) ;
\path[->,>=angle 90,dashed,font=\small]  
   (BackTopLeft) edge (TopLeft) ;
   \path[->,>=angle 90,dashed,font=\small]  
   (BackBotLeft) edge (BotLeft) ;
\path[->,>=angle 90,font=\small]  
   (BackTopRight) edge node[right] {\,\;{\tiny ${\rm gr}$}} (TopRight) ;
\path[->,>=angle 90,font=\small]  
   (BackBotRight) edge node[right] {\;\;{\tiny ${\rm gr}$}} (BotRight) ;
\node[gray,fill=gray!10,rounded corners, above=0cm of TopLeft] (CatTL) {$\DPVA$};
\node[gray,fill=gray!10,rounded corners, above left=0cm and -2cm of TopRight] (CatTR) {$\PVA$};
\node[gray,fill=gray!10,rounded corners, below=-0.1cm of BotLeft] (CatBL) {$\DPA$};
\node[gray,fill=gray!10,rounded corners, below=-0.1cm of BotRight] (CatBR) {$\PA$};
\node[gray,fill=gray!10,rounded corners, above=0cm of BackTopLeft] (CatBTL) {$?$};
\node[gray,fill=gray!10,rounded corners, above=0cm of BackTopRight] (CatBTR) {$\mathtt{VA}$};
\node[gray,fill=gray!10,rounded corners, below=-0.1cm of BackBotLeft] (CatBBL) {$?$};
\node[gray,fill=gray!10,rounded corners, below=-0.1cm of BackBotRight] (CatBBR) {$\mathtt{AA}$};
   \end{tikzpicture}
   \caption{Poisson reduction for the vertex algebra 
   of chiral differential operators on $\Gl_N$ with respect to the left action and relative objects.} 
   \label{Fig:inv_cdo}
\end{figure}
}

This example 
suggests that an answer to Problem \ref{Pb:vertex} 
must be compatible with the reduction, possibly in a wider sense (see  
Problem~\ref{Pb:reduction}).

\subsection{Double Poisson version of Slodowy slices}
The previous notion of a chiral quantized comoment map 
is particularly useful when we wish to perform the BRST reduction 
(that is, a certain chiral quantized Hamiltonian reduction) used 
to define {\em affine $W$-algebras} 
which are obtained by the Drinfeld--Sokolov reduction 
from affine vertex algebras, see \cite{KRW,FF0}. 
The Zhu's $C_2$-algebras of the universal affine $W$-algebras 
are the coordinate rings of {\em Slodowy slices} associated with nilpotent elements,  
obtained by Hamiltonian reduction 
from a reductive Lie algebra $\g\cong \g^*$. 
For $\g=\gl_N$, it was proved by Maffei \cite{Ma} that Slodowy slices in a nilpotent orbit closure
can be described in term of quiver varieties. 
So the following problem is natural. 

\begin{problem}
\label{Pb:Slodowy}
Can we describe the Poisson structure of Slodowy slices 
in $\gl_N$ from double Poisson algebras? 
\end{problem}

Here, we have a quite clear idea of how to solve this problem. 
This will be the subject of a future work. 
Combing the three problems, our hope would be to describe the OPEs for 
the affine $W$-algebras associated with $\gl_N$ in terms of double Poisson 
vertex brackets. 
In particular, for the regular $W$-algebras $\mathcal{W}^k(\gl_N)$,   
that is, those associated with a regular nilpotent element  
at level $k \in \kk$, 
there should be a double vertex analog $\mathcal{A}^k$ (independent of~$N$)
corresponding to the expected double Poisson algebra describing 
the Poisson structure of the regular Slodowy slices. 
This is coherent with the fact that the $W$-algebras  
$\mathcal{W}^k(\gl_N)$ are all quotients of the 
universal 2-parameter $W_{1+\infty}$-algebra, which is independent of $N$; cf.~\cite{Lin}. 
Thus the existence of such a double vertex analog 
should give a compact way of encoding the OPE's 
of $W_{1+\infty}$.\footnote{We thank the anonymous 
referee for this interesting observation.}

\subsection{Double vertex analogues of quasi Poisson structures} \label{ss:qPVA}

Recall from \S\ref{ss:QuivRep} that Van den Bergh defined the double Poisson bracket \eqref{Eq:dbr-quiver}
on the path algebra of an arbitrary double quiver $\overline{Q}$ so that it induces
the canonical Poisson structure of the corresponding quiver varieties $\Rep(\kk\overline{Q}/(\bbmu-\zeta),\underline{n})/\!\!/\Gl_{\underline{n}}$.
In fact, Van den Bergh's original construction was motivated by unveiling a Poisson structure on \emph{multiplicative quiver varieties} \cite{CBS}.
This required the introduction of \emph{double quasi-Poisson algebras} \cite{VdB}, which are non-commutative versions of the algebra of functions on a quasi-Poisson manifold \cite{AKSM}.
Roughly speaking, the definition of a (resp.~double) Poisson bracket is modified so that the right-hand side of the Jacobi identity \eqref{Eq:J1} (resp. \eqref{Eq:DJ}) takes a specific nontrivial form; in the geometric picture this is given by the infinitesimal action of the Cartan trivector $\phi\in \wedge^3 \mathrm{Lie}(\GG)$ of the algebraic group $\GG$ acting on the algebra of functions.
These notions are compatible and lead to the following analogue of Proposition \ref{Pr:ComFront}.

\begin{proposition} \label{Pr:ComFront-q}
Fix $N\geq 1$.
 The following diagram is commutative:
 \begin{center}
    \begin{tikzpicture}
 \node  (TopLeft) at (-2,1) {$q\DPA$};
\node  (TopRight) at (1.5,1) {$q\PA^{\Gl_N}$};
\node  (BotLeft) at (-2,-1) {$\HoP$};
\node  (BotRight) at (1.5,-1) {$\PA$};
\path[->,>=angle 90,font=\small]
   (TopLeft) edge node[above] {$(-)_N$} (TopRight) ;
\path[->,>=angle 90,font=\small]
    (BotLeft) edge node[above] {$\tr_N$} (BotRight) ;
\path[->,>=angle 90,font=\small]
    (TopLeft) edge node[left] {$\sharp$}  (BotLeft) ;
\path[->,>=angle 90,font=\small]
    (TopRight) edge node[right] {$\mathtt{R}$} (BotRight) ;
 \node  (TopLeft2) at (4,1) {$\AA$};
\node  (TopRight2) at (7,1) {$\AA_N$};
\node  (BotLeft2) at (4,-1) {$H_0(\AA)$};
\node  (BotRight2) at (7,-1) {$\AA_N^{\Gl_N}$};
\path[->,>=angle 90,font=\small]
   (TopLeft2) edge (TopRight2) ;
\path[->,>=angle 90,font=\small]
    (BotLeft2) edge (BotRight2) ;
\path[->,>=angle 90,font=\small]
    (TopLeft2) edge (BotLeft2) ;
\path[->,>=angle 90,font=\small]
    (TopRight2) edge (BotRight2) ;
   \end{tikzpicture}
\end{center}
where $q\DPA$ (resp. $q\PA^{\Gl_N}$) in the category of double quasi-Poisson algebras (resp. quasi-Poisson algebras with a $\Gl_N$-action by quasi-Poisson automorphisms).
\end{proposition}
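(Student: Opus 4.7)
The plan is to mirror the proof of Proposition \ref{Pr:ComFront} step by step, adapting the arguments to the quasi-Poisson setting. The strategy decomposes into three tasks: (i) constructing the four functors, (ii) verifying agreement of the two composites on objects, and (iii) verifying agreement on morphisms.

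For (i), the functor $(-)_N\colon q\DPA\to q\PA^{\Gl_N}$ is provided by \cite[\S7]{VdB}: the formula \eqref{Eq:relPA} still defines a skew-symmetric biderivation, now quasi-Poisson, and $\Gl_N$ acts by quasi-Poisson automorphisms. The functor $\mathtt{R}\colon q\PA^{\Gl_N}\to \PA$ is taking $\Gl_N$-invariants; the Cartan trivector $\phi\in\wedge^3\gl_N$ produces a correction term built from the infinitesimal $\gl_N$-action, which vanishes identically on $\Gl_N$-invariants, so the restricted bracket is genuinely Poisson. The functor $\tr_N\colon\HoP\to\PA$ is Proposition \ref{Pr:HOPtoPA}. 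The only functor requiring care is $\sharp\colon q\DPA\to\HoP$: defining $[a_\sharp,b_\sharp]_\sharp:=(\mathrm{m}\circ\dgal{a,b})_\sharp$ as in \eqref{Eq:HO-ind}, the left Leibniz rule (unchanged in the quasi setting) makes $\del_a:=\mathrm{m}\circ\dgal{a,-}$ a derivation lifting $[a_\sharp,-]_\sharp$, while the quasi-Jacobi correction term will drop out upon applying $\mathrm{m}$ and projecting modulo $[\AA,\AA]$, since this correction is cyclically symmetric in its three arguments and cyclic-symmetric triple products lie in $[\AA,\AA]$.

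For (ii), starting with $(\AA,\dgal{-,-})\in q\DPA$, both routes produce the same underlying commutative algebra $\AA_N^{\Gl_N}$. The Poisson brackets agree on the generating set $\{\tr(a)\mid a\in\AA\}$: along $\mathtt{R}\circ(-)_N$ one computes
\begin{equation*}
\{\tr(a),\tr(b)\}=\sum_{j,k=1}^N\{a_{jj},b_{kk}\}=\sum_{j,k=1}^N\dgal{a,b}'_{kj}\dgal{a,b}''_{jk}=\tr(\dgal{a,b}'\dgal{a,b}''),
\end{equation*}
while along $\tr_N\circ\sharp$, combining \eqref{Eq:relHoP} and \eqref{Eq:HO-ind} directly yields $\tr([a_\sharp,b_\sharp]_\sharp)=\tr(\mathrm{m}\circ\dgal{a,b})$; these coincide. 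For (iii), any morphism $\theta\colon\AA_1\to\AA_2$ in $q\DPA$ is sent by both composites to the unique algebra map $(\AA_1)_N^{\Gl_N}\to(\AA_2)_N^{\Gl_N}$ satisfying $\tr(a)\mapsto\tr(\theta(a))$, by the same direct computation as in \eqref{Eq:ComFrom}; this map is uniquely determined since the commutative algebra $(\AA_j)_N^{\Gl_N}$ is generated by the elements $\tr(a)$.

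The main obstacle is establishing the $\sharp$ functor, specifically verifying the Jacobi identity of $[-,-]_\sharp$ on $H_0(\AA)$. One must show that the quasi correction term (absent in the ordinary double Poisson case) drops out upon applying the multiplication and quotienting by commutators. The key input is that the quasi-Poisson correction term in the Jacobi identity is expressed as a cyclic combination of triple tensors built from the Cartan trivector $\phi$ and the bimodule generators, so that after multiplying out and projecting to $H_0(\AA)$ one obtains a sum of cyclically symmetric monomials $(abc+bca+cab)_\sharp=3(abc)_\sharp$ type expressions whose precise coefficient vanishes thanks to the total antisymmetry of $\phi$. This is essentially already contained in \cite[\S2.7,\S6.7]{VdB} and \cite{CB}, and once spelled out, the remaining agreement of Poisson brackets and of morphisms is a direct transcription of the argument of Proposition \ref{Pr:ComFront}.
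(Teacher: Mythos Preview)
Your proposal is correct and follows the same approach as the paper, which merely states that the proof is a straightforward adaptation of Proposition~\ref{Pr:ComFront} based on \cite{CB,F22,VdB}. Your heuristic for why the quasi-Jacobi correction vanishes on $H_0(\AA)$ is imprecise (cyclically symmetric triple products need not lie in $[\AA,\AA]$, and the Cartan trivector $\phi$ is a commutative object not directly present in the double setting), but you correctly defer to \cite{VdB} where this is established, and the remaining verification on objects and morphisms is exactly the transcription of Proposition~\ref{Pr:ComFront} that the paper intends.
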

The proof is a straightforward adaptation of the Poisson case, which is again based on \cite{CB,F22,VdB}. (This holds in the $B$-relative case, too.)
In fact, we can derive an analogue of Corollary \ref{corohamfront} for quasi-Hamiltonian reduction since there is a notion of \emph{multiplicative moment maps} in the quasi-Poisson case, cf. \cite{AKSM,VdB}.

\begin{problem}
\label{Pb:Quasi}
What is the vertex analogue of the categories $q\DPA$ and $q\PA^{\Gl_N}$?
Are there properties that completely characterize the vertex analogues of multiplicative moment maps?
\end{problem}
The crux of this problem is to understand what type of structure is obtained under application of the jet functor $\mathtt{J} \colon (\mathtt{Comm})\mathtt{Alg}\to (\mathtt{Comm})\mathtt{DiffAlg}$ to a (double) quasi-Poisson algebra.
This issue is far from being trivial since it requires to understand how to modify the Jacobi identities \eqref{Eq:A3} and \eqref{Eq:DA3} in the vertex case.
Assuming that this problem can be solved, it raises the question of obtaining the commutative cubes from Figures \ref{Fig:PoiRed} and \ref{Fig:HamRed} in the quasi-Poisson setting.
The latter case shall lead to Poisson vertex algebras associated with multiplicative quiver varieties that may be of independent interest.

\medskip

\appendix 

\section{The moment maps on the cotangent bundle}
\label{App:cot} 
We view $\gl_N^\ast$ as $\Mat_N$ and consider `entry functions' $a_{ij}\in \kk[\gl_N^\ast]$, $\xi \mapsto \xi_{ij}$, $1\leq i,j\leq N$.
An element $x\in \gl_N$ is seen as a linear function on $\gl_N^\ast$ by $x(\xi):=\langle \xi,x \rangle$. 
We identify $\gl_N\simeq \gl_N^\ast =\Mat_N$ so that $\langle -,-\rangle$ is the trace pairing. 
Denote by $E_{ij}\in \gl_N$ the elementary matrix having $+1$ as $(i,j)$ entry and $0$ for all other entries.  
The identification yields $E_{ij}=\tr(E_{ij}\xi)=\xi_{ji}$ so that $E_{ij}(-)=a_{ji}$. 
The Poisson bracket on $\kk[\gl_N^\ast]$ then satisfies 
\begin{equation}
 \br{a_{ij},a_{kl}} = \br{E_{ji},E_{lk}}=[E_{ji},E_{lk}] = a_{kj} \delta_{il} - \delta_{kj} a_{il}\,.
\end{equation}

For $f\in \kk[\Gl_N]$, the left- and right-invariant vector field associated with $x\in \gl_N$ are given by  (in their analytic form)
\begin{equation*}
 x_L(f)(h):=\frac{d}{dt}\Big|_{t=0} f(h e^{tx})\,, \quad 
 x_R(f)(h):=\frac{d}{dt}\Big|_{t=0} f(e^{tx} h)\,.
\end{equation*} 
On the coordinate functions $b_{ij}\in \kk[\Gl_N]$, $h\mapsto h_{ij}$, we find that 
$x_L(b_{kl})=(bx)_{kl}$ and $x_R(b_{kl})=(xb)_{kl}$ after forming the matrix-valued function $b=(b_{kl})_{1\leq k,l\leq N}$. 
Thus the Poisson bracket on $T^\ast\Gl_N$ is such that (cf.~\S\ref{ss:Pb_red}) 
\begin{equation}
 \br{a_{ij},b_{kl}}=(E_{ji})_L(b_{kl}) =(bE_{ji})_{kl}=b_{kj} \delta_{il}\,.
\end{equation}
The Poisson bracket is zero for 2 functions depending only on the base, hence $\br{b_{ij},b_{kl}}=0$. 
Writing these identities according to \eqref{Eq:relPA} after realising $\kk[T^\ast\Gl_N]\simeq \kk\langle a,b^{\pm1}\rangle_N$ in the obvious way, we precisely get the double bracket from Example \ref{Exmp:SympGL}.

The actions \eqref{eq:G-action-on-T*G} of $\Gl_N$ give at the level of functions  
\begin{equation*}
 g\cdot_L(b_{ij},a_{ij})=((bg)_{ij},(g^{-1}ag)_{ij})\,, \quad 
 g\cdot_R(b_{ij},a_{ij})=((g^{-1}b)_{ij},a_{ij})\,, \quad g\in \Gl_N\,,
\end{equation*}
from which we get the following infinitesimal action 
\begin{equation*}  
 x\cdot_L(b_{ij},a_{ij})=((bx)_{ij},(ax-xa)_{ij})\,, \quad 
 x\cdot_R(b_{ij},a_{ij})=(-(xb)_{ij},0)\,, \quad x\in \gl_N\,.
\end{equation*}
The corresponding moment maps $\mu_{L,R}:T^\ast\Gl_N\to \gl_N^\ast$ must satisfy 
$$x\cdot_{L,R}f=\br{\langle \mu_{L,R}(-),x\rangle, f}\,, \quad \text{ for }x\in \gl_N,\,\, f\in \kk[T^\ast \Gl_N]\,.$$  
It suffices to look at these identities for the functions $a_{kl},b_{kl}$. 
We have on the one hand, 
\begin{equation}
 \br{(\mu_L)_{ij},b_{kl}}=E_{ji}\cdot_L b_{kl}=b_{kj} \delta_{il}\,, \quad 
  \br{(\mu_L)_{ij},a_{kl}}=E_{ji}\cdot_L a_{kl}=a_{kj} \delta_{il}- \delta_{kj} a_{il}\,. 
\end{equation}
Using \eqref{Eq:relPA}, this means $\dgal{\bbmu_L,b}=b\otimes 1$ and $\dgal{\bbmu_L,a}=a\otimes 1 - 1 \otimes a$ 
for some $\bbmu_L$ such that $\mu_L=X(\bbmu_L)$. 
Looking at the double bracket from Example \ref{Exmp:SympGL}, 
we should take $\bbmu_L=a$ and the moment map is $\mu_L\colon (h,\xi)\mapsto \xi$. 
On the other hand 
\begin{equation}
 \br{(\mu_R)_{ij},b_{kl}}=E_{ji}\cdot_R b_{kl}=-\delta_{kj} b_{il}\,, \quad 
  \br{(\mu_R)_{ij},a_{kl}}=E_{ji}\cdot_R a_{kl}=0\,, 
\end{equation}
or, using \eqref{Eq:relPA}, $\dgal{\bbmu_R,b}=-1\otimes b$ and $\dgal{\bbmu_R,a}=0$ 
with $\bbmu_R$ satisfying $\mu_R=X(\bbmu_R)$. 
It is a simple computation that the double bracket from Example \ref{Exmp:SympGL} yields 
these equalities for $\bbmu_R=-bab^{-1}$, i.e. the moment map $\mu_R \colon (h,\xi)\mapsto -h\xi h^{-1}$. 
E.g. 
\begin{equation*} 
\dgal{-bab^{-1},b}=-b\ast \dgal{a,b} \ast b^{-1} = - bb^{-1}\otimes b1=-1\otimes b\,. 
\end{equation*}


\end{document}